\documentclass[11pt]{article}
\usepackage{graphicx} % Required for inserting images
\usepackage{bbm}
\usepackage{bm}
\usepackage[margin=1in]{geometry}
\usepackage{color}
\usepackage{amsfonts}
\usepackage{algorithm}
\usepackage{algorithmic}
\usepackage{latexsym, amssymb, amsmath, amscd, amsthm, amsxtra}
\usepackage{mathtools}
\usepackage{enumerate}
\usepackage[all]{xy}
\usepackage{mathrsfs}
\usepackage{fancyhdr}
\usepackage{listings}
\usepackage{hyperref}
\usepackage{enumitem}
\usepackage{multirow}
\usepackage{tikz}

\def\ER{Erd\H{o}s-R\'enyi\ }

\def\op{\operatorname{op}}
\newcommand{\osc}{\operatorname{osc}}
\newcommand{\dist}{\operatorname{dist}}

\newcommand{\Bin}{\operatorname{Bin}}

\newcommand{\atanh}{\operatorname{arctanh}}
\newcommand{\sech}{\operatorname{sech}}
\newcommand{\e}{\mathrm e}
\newcommand{\dd}{\,\mathrm d}

\def\Pr{\mathrm{P}}

\newcommand{\Ent}{\mathrm{Ent}}
\newcommand{\Var}{\mathrm{Var}}

\def\E{\mathbb{E}}

\def\R{\mathbb{R}}

\newcommand{\Pb}{\mathbb P}

\def\Eb{\mathbb E}

\newcommand{\cB}{\mathcal B}

\newcommand{\cL}{\mathcal L}

\newcommand{\T}{\mathbf T}

\newtheorem{thm}{Theorem}[section]

\newtheorem{proposition}[thm]{Proposition}

\newtheorem{lemma}[thm]{Lemma}
\newtheorem{cor}[thm]{Corollary}
\newtheorem{defn}[thm]{Definition}

\newtheorem{claim}[thm]{Claim}

\newtheorem{remark}[thm]{Remark}

\numberwithin{equation}{section}



\hypersetup{colorlinks=true,linkcolor=blue}

\title{Improved rapid phase ordering for Ising dynamics on random regular graphs}

\author{
Zhangsong Li\thanks{School of Mathematical Sciences, Peking University. Email: \textit{ramblerlzs@pku.edu.cn}} 
\and 
Allan Sly\thanks{Department of Mathematics, Princeton University. Email: \textit{allansly@princeton.edu}}
}

\date{\today}
\begin{document}
\maketitle

\begin{abstract}
We study Glauber dynamics for the Ising model on random $d$-regular graphs with $n$ vertices at low temperature, in the regime $\beta \geq C/d$ where $C,d$ are sufficiently large constants. In this regime, the full mixing time is exponential in $n$, because of the bottleneck between the two dominant phases: configurations with a majority of plus spins and configurations with a majority of minus spins. We show that, for sufficiently large $d$, if the dynamics is initialized with an excess of at least $\epsilon_{C,d} n$ plus spins over minus spins, then it quasi-equilibrates in time $O(\log n)$ to the Gibbs measure conditioned on positive magnetization. In addition, the required bias $\epsilon_{C,d}$ can be chosen so that $\epsilon_{C,d}\to 0$ as $d,C\to\infty$.

Previous bound of \cite{GSS25+} in the Ising and large-$d$ setting required $\beta \gtrsim (\log d)/d$. A key difficulty is that, even from the all-plus initialization, one must control the propagation of negative information in spacetime in a low-temperature model with strong local correlations. We address this by developing a new bootstrap approach to proving subcriticality of disagreement clusters under the grand coupling.
\end{abstract}

\tableofcontents

\section{Introduction}{\label{sec:intro}}

The out-of-equilibrium dynamics of spin systems, including the Ising and Potts models, have been studied extensively across statistical physics, probability, and theoretical computer science. A defining feature of these systems at low temperatures is the emergence of multiple ordered phases. Transitions between these phases require the dynamics to pass through configurations of very small stationary probability, creating a bottleneck and leading to exponentially slow worst-case mixing. This raises a fundamental question: is the bottleneck between phases the only obstruction to rapid relaxation? More concretely, if the dynamics is initialized with a bias toward one of the ground states, does it rapidly approach the Gibbs distribution restricted to the corresponding phase?

This question is closely related to the classical theory of phase-ordering kinetics. A longstanding prediction in the mathematical physics literature is that, when the Ising dynamics is started from a random configuration, its magnetization first moves away from zero and selects one of the two phases, after which equilibration within the selected phase occurs comparatively rapidly; see the influential survey \cite{Bray94}. The same question has an algorithmic interpretation. Markov Chain Monte Carlo (MCMC) algorithms for low-temperature sampling are often initialized either from a product measure or from a high-temperature distribution, as in simulated annealing \cite{KGV83}. Understanding whether such natural initializations rapidly produce an equilibrium sample within one phase is therefore important even when transitions between phases (and hence worst-case mixing) remain prohibitively slow. The Ising and Potts models on nontrivial geometries provide canonical settings in which to place these predictions on a rigorous footing.

A major mathematical difficulty is that most general tools for bounding mixing times are designed to control convergence from the worst possible initialization. Classical methods based on functional inequalities, spectral gaps, and bottleneck estimates \cite{LP17}, as well as more recent approaches based on spectral independence \cite{ALG20} and localization schemes \cite{CE22}, typically rely on suitable correlation-decay estimates and aim to establish global mixing. In a phase-coexistence regime, however, global mixing can be exponentially slow even though relaxation within each individual phase may be rapid. Proving fast convergence from a structured or biased class of initial distributions therefore requires methods that distinguish between inter-phase tunneling and intra-phase equilibration. Indeed, despite more than three decades of work on the mixing of Ising Glauber dynamics, relatively few positive-temperature results make this distinction. Results of this form can be summarized as follows. First, on the complete graph, the dynamics is essentially one-dimensional, being characterized by the birth-and-death chain of the magnetization; it was shown in \cite{DLP09, LJP10} that at all low temperatures, the mixing time of continuous-time Glauber dynamics restricted to positive magnetization is $O(\log n)$. On the infinite tree, \cite{CM06} showed that from i.i.d.\ $\mathsf{Rad}(1-\epsilon)$ initializations, at all sufficiently low temperatures the dynamics converges locally to the plus Gibbs measure. For the integer lattice, it was shown a few years ago in \cite{GS22} that at low temperatures, the quasi-equilibration to the plus phase is fast from the all-plus initialization. This specific initialization gives access to monotonicity tools, since it stochastically dominates the stationary measure, whereas even a very biased product initialization does not. Recently, it was shown in \cite{GS26+} that the Ising Glauber dynamics on $\mathbb Z^2$ initialized from i.i.d.\ spins on each vertex that are $+1$ with sufficiently large probability converges to the plus Gibbs measure quickly throughout the low-temperature regime.

Sparse, locally tree-like graphs such as random regular graphs and \ER graphs form another fundamental setting for graphical models in statistical physics, combinatorics, and theoretical computer science \cite{MM09}. Models on these graph ensembles have long been predicted to display thermodynamic behavior analogous to that of classical lattice systems, particularly in the nature of their phase transitions and the structure of their Gibbs measures \cite{ACTA73, CLR79, Tholess86}. At the same time, the scarcity of short cycles and the recursive structure of local neighborhoods often make locally tree-like models more analytically tractable. For low-temperature Ising and Potts dynamics on random $d$-regular graphs, it was shown in \cite{GS22} that when the inverse-temperature $\beta$ is a sufficiently large constant, the quasi-equilibration to the plus phase is fast from the all-plus initialization. This result was significantly improved in a recent work \cite{GSS25+}, where the same result was established for sufficiently biased initializations provided that the inverse temperature satisfies $\beta \gtrsim \frac{\log d}{d}$. A complementary result of \cite[Theorem~1.7]{GPY25+} shows that, for $\beta<1/(8\sqrt{d-1})$ in our normalization, Ising Glauber dynamics on random $d$-regular graphs converges to the full Gibbs measure in subexponential time from a uniformly random initialization. Thus, this result permits an unbiased initialization, but gives a weaker time bound than the rapid within-phase relaxation considered here.

We finally mention some works that have studied mixing of low-temperature dynamics when forced to stay in narrow subsets of the state space. One such chain is the Kawasaki dynamics where the Ising dynamics is forced to have a fixed magnetization, and updates are made by swapping adjacent $+1$ and $-1$ spins. While sampling from this distribution is hard on general maximum-degree $d$ graphs at low temperatures \cite{KPPY24}, the recent paper \cite{BBD25} showed that for every magnetization, it mixes quickly as long as $\beta \lesssim 1/\sqrt{d}$ (well into the low-temperature regime on random regular graphs). In another direction, \cite{CGG+21} showed that Ising and Potts dynamics are fast to mix at sufficiently low temperatures on random $d$-regular graphs if constrained to remain close to the monochromatic configuration not just in the sense of total spin count, but in the much stronger sense of never allowing connected components of the non-dominant state larger than $c\log n$. These constrained chains can be used to sample from the Ising and Potts distributions quickly, but do not imply anything for the unconstrained Glauber dynamics because with such strong constraints, the standard Glauber chain leaves the subset of the state space more quickly than the restricted chain mixes.

In this paper, we study the phase ordering problem for low (but positive) temperature Ising Glauber dynamics initialized from biased initializations on random $d$-regular graphs. Our main result is that such dynamics quasi-equilibrate in optimal $O(\log n)$ time to the corresponding metastable distribution, i.e., the Ising distribution conditioned on positive magnetization, provided that the degree $d$ is large and the inverse temperature $\beta \gtrsim \frac{1}{d}$. This improvement closes a logarithmic gap and reaches the natural degree-dependent scale at which ordered behavior emerges on locally tree-like graphs. Thus, rapid phase ordering is not confined to the deeply ordered, very-low-temperature regime. More conceptually, our result establishes a sharp separation of time scales: although transitions between the two phases may be exponentially slow, phase selection and equilibration within the selected phase occur in polylogarithmic time from biased initializations. This provides rigorous support for the physical picture that inter-phase tunneling is the principal obstruction to fast relaxation and strengthens the theoretical foundation for low-temperature MCMC algorithms started from natural, non-extremal distributions.

\subsection{Main results}{\label{subsec:main-results}}

We begin by introducing the Ising Glauber dynamics. The Ising model on a graph $G=(V,E)$ with $|V|=n$, at inverse temperature $\beta>0$, is the following probability distribution over assignments $\Omega=\{ -1,+1 \}^n$:
\begin{align}{\label{eq-def-Ising-measure}}
    \pi_{G,\beta}(\sigma) \propto \exp\left( \beta \sum_{u \sim w} \sigma_u \sigma_w \right) \,.
\end{align}
We will simply denote the measure by $\pi$ if $G$ and $\beta$ are clear from the context. The (continuous-time) Glauber dynamics $(X_t)_{t \geq 0}$ for the Ising model on $G$ is the Markov chain that is initialized from $X_0 \in \Omega$, and: 
\begin{itemize}
    \item Assigns each vertex $v \in V$ a rate-$1$ Poisson clock;
    \item If the clock at vertex $v$ rings at time $t$, then it generates $X_t$ from $X_{t^-}$ by resampling 
    \begin{align}{\label{eq-def-Glauber-dynamics}}
        X_t(v) \sim \pi_{G,\beta}\Big( \sigma_v \in \cdot \mid (\sigma_w)_{w \neq v} = (X_{t^-}(w))_{w \neq v} \Big) \,,
    \end{align}
    and leaving $X_t(w)=X_{t^-}(w)$ for all $w\neq v$.
\end{itemize}
We denote the law of this Markov chain by $\Pb_{X_0}(X_t \in \cdot)$.

We consider the Glauber dynamics of the Ising model on graphs $G$ drawn from the uniform distribution on $d$-regular graphs on $n$ vertices, which we denote by $G \sim \mathcal G_d(n)$. The Ising model on $G \sim \mathcal G_d(n)$ undergoes a phase transition at $\beta_c(d):=\tanh^{-1}(\frac{1}{d-1})$. For high temperatures $\beta<\beta_c(d)$, the worst-case mixing time (the time to be at total-variation distance at most $\frac{1}{4}$ to the stationary distribution from a worst-case initialization) is $O(\log n)$, which is optimal \cite{MS13}; whereas for $\beta>\beta_c(d)$, the worst-case mixing time is $\exp(\Theta(n))$, because of a bottleneck between configurations with a plus majority, 
\begin{align}{\label{eq-def-plus-phase}}
    \Omega^+=\{ \sigma:m(\sigma) \geq 0 \} \quad \mbox{where} \quad m(\sigma):= \frac{1}{n} \sum_{v \in V} \sigma(v)  \,,
\end{align}
and those with a minus majority $\Omega^-:=\{\sigma:m(\sigma)\le0\}$ \cite{DM10}. This in particular means that if we initialize from the plus phase, by which we mean the Gibbs distribution conditioned on being in $\Omega^+$, i.e., $\pi_+=\pi(\cdot \mid \Omega^+)$, the time for the Markov chain to hit $\{\sigma:m(\sigma)<0\}$ is exponentially long. Exactly at the critical point $\beta_c(d)$, polynomial mixing time was recently shown by combining \cite{BD24} with \cite{Weitz06} (see \cite[Example~6.19]{BBD24}).
\begin{thm}{\label{MAIN-THM}}
    There exist sufficiently large constants $C_0>0$ such that for all $d \ge 7$ and $C>C_0$, there exist $\epsilon=\epsilon(C,d) \in (0,1)$ and $L=L({C,d})>0$ such that the following holds with probability $1-o(1)$ over $G \sim \mathcal G_d(n)$. For any $x_0 \in \{ \pm 1 \}^V$ such that $m(x_0) \ge \epsilon$, suppose $(X_t)_{t \geq 0}$ is the continuous-time Ising Glauber dynamics on $G$ with inverse temperature $\beta=\frac{C}{d}$ and initialized from $X_0=x_0$. Then
    \begin{equation*}
        \mathsf{TV}\Big( \Pb_{x_0}(X_t \in \cdot), \pi_G(\cdot\mid\Omega^+) \Big) \leq n^{-10} \quad \mbox{for all} \quad L\log n \leq t \leq \exp(n/L) \,.
    \end{equation*}
    In addition, we have $\epsilon(C,d) \to 0$ as $C,d \to \infty$. By symmetry, we have the same bound on the distance to $\pi_G(\cdot\mid\Omega^-)$ if $X_0=x_0$ satisfies $m(x_0) \le -\epsilon$.
\end{thm}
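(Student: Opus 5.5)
The plan follows the template of \cite{GS22,GSS25+} and splits into a ``burn-in'' phase and a ``mixing-from-a-warm-start'' phase, both driven by the grand (monotone) coupling of the Glauber dynamics.

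\emph{Step 1 (all-plus dynamics).} Let $X_t^+$ denote the chain started from all-plus, and $X_t^{\pi_+}$ the chain started from $\pi_+=\pi(\cdot\mid\Omega^+)$. I will show that under the grand coupling, for $t\ge L\log n$, $\Pb(X^+_t(v)\ne X^{\pi_+}_t(v))\le n^{-20}$ for every $v$.

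The route is through the spacetime disagreement (``minus information'') cluster. A minus spin at $(v,t)$ in $X^+$ must be traced back through a chain of clock rings at which the update was not forced to be plus. Since $\beta=C/d$, an update is forced to plus unless the number of minus neighbors reaches roughly a constant fraction of $d$, with the threshold depending on the uniform variable. Hence the backward history of a minus spin is a branching structure. The goal is to prove it is subcritical, with exponential tails in size and in spacetime volume.

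The subcriticality is where the bootstrap enters. I would assume a tail bound $\Pb(\text{cluster of }(v,t)\text{ has size}\ge k)\le e^{-ck}$ on the window $[0,T]$. Then, using the local tree-likeness and the expansion of $G$ (every small vertex set $S$ has few internal edges, by \cite{GSS25+}-type lemmas), I would show that the number of minus neighbors at an update is dominated by a sum of weakly dependent indicators with mean $\le\delta d$, with $\delta=\delta(C)\to0$. A large-deviation bound then gives a non-forced probability of at most $e^{-c'C}$, which improves the assumed tail on $[0,T+1]$ and closes the induction.

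\emph{Step 2 (from $\pi_+$ and from biased $x_0$).} Monotonicity sandwiches $X^{x_0}_t\le X^+_t$. For the lower bound, I would compare with a chain started from a ``typical'' plus configuration, showing that within $O(\log n)$ time the magnetization exceeds $m_\star-\eta$ and every local neighborhood looks plus-dominated. For the first claim I would use a drift argument on $m(X_t)$: for $m\ge\epsilon$ the expected drift is positive, since each vertex sees about $d(1+m)/2$ plus neighbors in expectation, and the expander mixing lemma makes this uniform over configurations. Once the chain is ``well-plus'' (no small set has too many minus neighbors), the Step 1 argument applied to the chain started from all-minus-inside-bad-sets shows that it couples with $X^+$ by time $L\log n$. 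Choosing $\epsilon(C,d)$ as the threshold at which the drift is positive, expansion forces $\epsilon\to0$ as $C,d\to\infty$.

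\emph{Step 3 (conclude).} Combining Steps 1 and 2 via a union bound over $v$ gives $\mathsf{TV}\le n^{-10}$. The same bound holds for $\pi_+$, because $\pi_+$ is preserved by the dynamics, up to exponentially small leakage out of $\Omega^+$ before time $\exp(n/L)$; this leakage follows from the bottleneck bound of \cite{DM10}. That explains the upper limit on $t$.

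The main obstacle is the bootstrap in Step 1 at $\beta\approx C/d$. At this temperature single flips are not forced with probability $1-o_d(1)$ per edge, which is what the $\log d/d$ threshold bought in \cite{GSS25+}. Strong correlations between neighboring minus spins therefore must be controlled via the inductive tail bound rather than by a crude union bound. I would first try to encode the disagreement clusters as witness trees in spacetime and count them using expansion.
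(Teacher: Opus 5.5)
There is a genuine gap, and it sits exactly at the obstacle you flag. In Step 1 you want to make subcritical the backward history of the minus spins of $X^+$ through non-forced updates. That object is \emph{supercritical} at $\beta=C/d$ when $C$ is a large constant and $d\to\infty$, and better control of correlations cannot change this. Your claim that an update is forced to plus unless a constant fraction of neighbors is minus is not accurate: even with an all-plus neighborhood, an update is minus with probability $\approx e^{-2C}$, independently of $d$. So the thermal minus density is $\approx e^{-2C}\gg 1/d$, and a typical vertex has $k\approx de^{-2C}$ minus neighbors.

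An update is minus \emph{because of} its neighbors only when $U\in(g(d-2k),g(d)]$, which has probability $\approx\frac{4Ck}{d}e^{-2C}$. Thus a fraction $\approx 4Ck/d$ of minus spins are neighbor-dependent, and each must be explained by order $k$ minus neighbors. The mean offspring is therefore of order $Ck^2/d\approx Cde^{-4C}\gg1$. Equivalently, a witness-tree count pays $\binom dj$ for $j$ certified minus neighbors against only $e^{-\Theta(Cj)}$. The difficulty is entropic, so an inductive tail bound plus large deviations cannot close the bootstrap: it yields a non-forced probability $e^{-c'C}$ against $\Theta(d)$-fold branching. This is exactly why \cite{GSS25+} needed $C\gtrsim\log d$.

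The missing idea is to linearize. Follow \emph{disagreements} between two grand-coupled chains, one of them stationary, and attribute each new disagreement to a single disagreeing neighbor. These are the paper's minimal-index causal arrows, with per-edge intensity at most $|q_i^x(s)-q_{i-1}^x(s)|\le C/d$. At a vertex whose reference field exceeds $d/2$ and which has few disagreeing neighbors, this intensity is $\lesssim\frac Cd e^{-0.96C}$. A disagreement then dies at rate $\approx1$ and reproduces at total rate $O(Ce^{-0.96C})$, with no factor of $d$. The thermal minus spins stay in the environment and never enter the cluster. What remains is to control rare bad vertices and locally dense disagreement sets. The paper does this with:
\begin{itemize}
\item the closure $A_{\mathsf f}$ and the tree decomposition around it;
\item spectral walk counting near $A_{\mathsf f}$;
\item the dominating process $\overline{\mathcal L}$ inside the trees;
\item log-Sobolev and hypercontractivity on trees, to decorrelate bad sets across time.
\end{itemize}

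Step 2 has a related gap. A drift argument for $m(X_t)$ controls only the \emph{number} of minus spins (at best $\approx e^{-2C}n$), not their arrangement. So it does not give that every local neighborhood looks plus-dominated. Moreover, a monotone lower comparison configuration must be minus at \emph{every} minus site of $X^{x_0}_{t_1}$, thermal ones included. The lower chain is therefore just as non-stationary and unstructured as $X^{x_0}$ itself. Your subcriticality argument needs an environment with bad-vertex density $e^{-\Theta(Cd)}$, which the paper gets only from stationarity under $\pi_+$ via the Edwards--Sokal estimates, so it cannot be run on this chain.

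The paper never passes through $X^+$ and never needs any structure of $X^{x_0}_t$. It couples $X^{x_0}$ directly with the stationary restricted chain. It then drives $|\mathcal D_t|$, not $m$, down to $4e^{-0.002Cd}n$ in time $t_0=O_{C,d}(1)$ by a drift argument. The dense regime uses Definition~\ref{def-typical-graph}(2), and the sparse regime uses the linearized bound $a(v)\le\frac{4C}{d}e^{-0.96C}\deg_{\mathcal D}(v)$ off the bad set. Finally it uses that the stationary configuration is admissible: disagreement sets of \emph{every} shape $A$ with $|A|\le e^{-0.02d}n$ contract in expectation. This is iterated through a supermartingale over $O(\log n)$ epochs. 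Your Step 3 is fine and plays the role of Lemma~\ref{lem-equal-to-restricted-dynamics}.
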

\begin{remark}
    In concurrent work, \cite{PS26+} establish rapid phase ordering for the Ising model on random $d$-regular graphs when $d$ is sufficiently large and $c_0/d \le\beta\le 1/(16\sqrt d)$, for a sufficiently large absolute constant $c_0$. They show that Glauber dynamics initialized from any configuration with $m(X_0)\ge2\beta$ quasi-equilibrates to $\pi_+$ in $O(n\log n)$ single-site updates, corresponding to $O(\log n)$ time in our normalization. Their result therefore also reaches the inverse-temperature scale $1/d$ and gives a sharper explicit initialization condition in the common parameter range. Their proof establishes modified log-Sobolev inequalities for Ising measures conditioned on sufficiently large magnetization, using a sparse spectral width condition, stochastic localization, and magnetization drift. Our proof develops a direct bootstrap analysis of disagreement clusters in spacetime under the grand coupling, controlling their propagation through a typical plus-phase environment.
\end{remark}

\subsection{Heuristics and proof overview}{\label{subsec:proof-overview}}

We first provide some heuristics behind Theorem~\ref{MAIN-THM}. Note that in \cite{GSS25+}, the same result has been proven for all $d \geq 7$ and $C \geq C_0' \log d$ for a sufficiently large absolute constant $C_0'$. Thus, from now on we will assume that
\begin{equation}{\label{eq-assump-C-d-epsilon}}
    C_0<C<C_0\log d, \quad d_0<d, \quad C_0,100\log C<C\epsilon \quad \mbox{for sufficiently large } C_0,d_0 \,.
\end{equation}
The starting point of our approach is to directly bound the coupling time between the chain $X_t=X_t^{x_0}$ initialized from $x_0$ where $m(x_0) \ge \epsilon$, and the chain $X_t^{\pi_+}$ started from the plus measure $\pi_+=\pi_G(\cdot\mid\Omega^+)$. 
\begin{defn}{\label{def-grand-coupling}}
    Independently for each vertex $v \in V(G)$ we assign an infinite sequence of times $(T^v_1,T^v_2,\ldots)$ given by the rings of a rate-$1$ Poisson clock, and an infinite sequence $(U^v_1,U^v_2,\ldots)$ of $\mathsf{Unif}[0,1]$ random variables. For any $x_0 \in \{ -1,+1 \}^n$, the dynamics $(X^{x_0}_t)_{t \geq 0}$ is generated as follows (where $\pi$ denotes the stationary distribution):
    \begin{enumerate}
        \item[(1)] Set $X_0^{x_0}=x_0$. Let $t_1<t_2<\ldots$ be the times in $\cup_k \cup_v \{ T^v_k \}$ in increasing order (these times are distinct almost surely). Also let $t_0=0$.
        \item[(2)] For $j \geq 1$, let $(v,k)$ be the unique pair such that $t_j=T^v_k$. Set $X^{x_0}_t = X^{x_0}_{t_{j-1}}$ for all $t\in [t_{j-1},t_j)$; then set $X^{x_0}_{t_j}(u)=X^{x_0}_{t_{j-1}}(u)$ for all $u \neq v$ and 
        \begin{align*}
            X^{x_0}_{t_j}(v) = 
            \begin{cases}
                +1, & \mbox{if } U_k^v \leq \pi\left( \sigma_v=+1 \mid \sigma_{u}=X^{x_0}_{t_{j-1}}(u), u \neq v \right) \,; \\
                -1, & \mbox{otherwise} \,.
            \end{cases}
        \end{align*}
        By using the same times $(T^v_1,T^v_2,\ldots)$ and uniform random variables $(U^v_1,U^v_2,\ldots)$, and taking $\pi=\pi_{G,\beta}$, the above construction gives a grand coupling of $(X^{x_0}_t)_{t \geq 0}$, the Markov chains initialized from all possible configurations for all $x_0 \in \{ -1,+1 \}^{V(G)}$. It is well known that in the case of the Ising model this coupling is monotone, in the sense that if $x_0 \geq x_0'$, then $X^{x_0}_t \geq X^{x_0'}_t$ for all $t \geq 0$.
    \end{enumerate}
\end{defn}
We will also consider the Glauber dynamics $X_t^{\pi_+}$ restricted to $\Omega^+$. This chain, denoted $\widehat{X}_t^{\pi_+}$, is defined exactly as $X_t^{\pi_+}$ except that if the update in \eqref{eq-def-Glauber-dynamics} would cause $\sigma\not\in\Omega^+$ we leave $\sigma_v$ unchanged. It is easy to check that $\widehat{X}_t^{\pi_+}$ is a Markov chain reversible with respect to $\pi_+$. We first need a result from \cite{GS22}, which states that with probability $1-o(1)$, we have $X_t^{\pi_+}=\widehat{X}_t^{\pi_+}$ for all $t\le e^{n/L}$.
\begin{lemma}[Lemma~3.5 in \cite{GS22}]{\label{lem-equal-to-restricted-dynamics}}
    Define
    \begin{equation}{\label{eq-def-E-0}}
        \mathcal E_{\ref{lem-equal-to-restricted-dynamics}} = \Big\{ \widehat{X}^{\pi_+}_t=X_t^{\pi_+} \mbox{ for all } 0 \leq t \leq \exp\big( n/L \big) \Big\} \,.
    \end{equation}
    We have $\Pb(\mathcal E_{\ref{lem-equal-to-restricted-dynamics}})=1-\exp(-\Omega(n))$ for a sufficiently large constant $L=L(C,d)$.
\end{lemma}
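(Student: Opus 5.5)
The plan is to show that, started from stationarity in $\Omega^+$, the chain $X_t^{\pi_+}$ essentially never comes near the boundary $\{m(\sigma)<0\}$ before time $\exp(n/L)$. Since $X_0^{\pi_+}\sim\pi_+$ and $\widehat{X}^{\pi_+}_t$ is reversible with respect to $\pi_+$, the restricted chain is stationary: $\widehat{X}^{\pi_+}_t\sim\pi_+$ for every $t$. The two chains $X^{\pi_+}_t$ and $\widehat X^{\pi_+}_t$ are driven by the same clocks and uniforms, so they agree up to the first time an update of $\widehat X$ is rejected. A rejection at time $t$ requires $\widehat X^{\pi_+}_{t^-}$ to lie in the boundary set
\[
\partial\Omega^+=\{\sigma\in\Omega^+ : m(\sigma)\le 2/n\},
\]
that is, in a configuration where a single flip can exit $\Omega^+$. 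It therefore suffices to bound the probability that the stationary chain $\widehat X$ visits $\partial\Omega^+$ at some update time in $[0,e^{n/L}]$.

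First, discretize time. With probability $1-e^{-\Omega(n)}$, the number of clock rings in $[0,e^{n/L}]$ is at most $2n e^{n/L}$. At each ring $t_j$, the state $\widehat X_{t_{j-1}}$ has law $\pi_+$, by stationarity of the jump chain; this holds because the Poisson clocks are independent of the state. A union bound then gives
\[
\Pb(\mathcal E_{\ref{lem-equal-to-restricted-dynamics}}^c)\le e^{-\Omega(n)}+2ne^{n/L}\,\pi_+(\partial\Omega^+).
\]

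Second, prove the key equilibrium estimate $\pi_+(\partial\Omega^+)\le e^{-cn}$ for some $c=c(C,d)>0$, holding with probability $1-o(1)$ over $G\sim\mathcal G_d(n)$. This is a statement about the static Ising measure at $\beta=C/d>\beta_c(d)$: the magnetization concentrates near $\pm m^*$ with $m^*>0$, and $\pi(|m|\le 2/n)\le e^{-cn}$. I would obtain it from the known bottleneck result of \cite{DM10}, which is the same input yielding exponential worst-case mixing, or directly via a first-moment/annealed computation of the free energy restricted to $m\approx 0$ versus $m\approx m^*$ on random regular graphs. Since $\pi(\Omega^+)\ge 1/2$ by symmetry, we get $\pi_+(\partial\Omega^+)\le 2e^{-cn}$. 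Choosing $L>2/c$ finishes the argument.

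The main obstacle is the second step, the quantitative large-deviation bound for the magnetization near $0$ that holds typically over $G$. Everything else is soft. I would cite it from \cite{DM10} and \cite[Lemma~3.5]{GS22}, where it underlies the exponential lower bound, rather than reprove it.
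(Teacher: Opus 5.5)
Your argument is correct and follows the standard route behind \cite[Lemma~3.5]{GS22}, which the paper imports without proof. The grand-coupled chains agree until the first rejected update, the restricted chain has law $\pi_+$ at every deterministic-index ring, and a union bound over the $O(ne^{n/L})$ rings reduces everything to the static bound $\pi_+(m\le 2/n)\le e^{-cn}$.

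You do not need to import that static bound. An annealed first-moment computation alone would not give it anyway, since it lacks a quenched lower bound on the partition function. Instead, for every $G\in\mathcal G$, Definition~\ref{def-typical-graph}(1) together with Lemma~\ref{lem-FK-giant-size} at $\eta=0.01$ gives $\pi^{\mathsf{RC}}_G(|\mathcal C_1|\le 0.99n)\le e^{-10^{-4}Cn}$. Under the Edwards--Sokal coupling, $|\mathcal C_1|>0.99n$ forces $|m(\sigma)|>0.98$, so $\pi_+(m\le 2/n)\le 2e^{-10^{-4}Cn}$ and any $L>10^4/C$ works. This also makes the lemma hold for each fixed $G\in\mathcal G$, rather than on a separate $1-o(1)$ event, which is how it is used in the proof of Theorem~\ref{MAIN-THM}.
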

The main focus of our proof is the disagreement set between $(X_t)_{t \geq 0}$ and $(X^{\pi_+}_t)_{t \geq 0}$, defined as
\begin{equation}{\label{eq-def-disagreement-cluster}}
    \mathcal D_t := \Big\{ v \in V: X_t(v) \neq X^{\pi_+}_t(v) \Big\} \,.
\end{equation}
We will also consider the ``bad'' vertices, defined to be the vertices whose neighborhood field is at most $0.5d$,
\begin{equation}{\label{eq-def-bad-cluster}}
    \mathcal B_t := \Big\{ v \in V: \sum_{u \sim v} \widehat{X}^{\pi_+}_t(u) \leq 0.5d \Big\} \,.
\end{equation}
The main approach in our argument is to prove that under the grand coupling of $X_t,X^{\pi_+}_t$, the disagreement set $\mathcal D_t$ will shrink to empty quickly. We first give a heuristic explanation for the expected negative drift of \(|\mathcal D_t|\). Write
\begin{align*}
    H_t(v):=\sum_{u\sim v}X_t(u),\quad H_t^+(v):=\sum_{u\sim v}X_t^{\pi_+}(u), \quad g(h):=\frac{1}{1+e^{-2\beta h}} \,.
\end{align*}
If the clock at \(v\) rings, then, under the common uniform random variable in the grand coupling, the conditional probability that the two updated spins disagree is $a_t(v):=|g(H_t(v))-g(H_t^+(v))|$. Consequently, the instantaneous conditional drift of the number of disagreements is
\begin{align*}
    \lim_{s\downarrow0}\frac{1}{s} \mathbb E\big[ |\mathcal D_{t+s}|-|\mathcal D_t| \mid \mathcal F_t \big] = -|\mathcal D_t|+\sum_{v\in V} a_t(v) \,.
\end{align*}
Indeed, each disagreeing vertex is refreshed at rate one, while \(a_t(v)\) is the probability that a disagreement is present at \(v\) after its refresh. Thus the question is whether the total rate at which disagreements remain or are created is substantially smaller than \(|\mathcal D_t|\). To see why this should hold once \(\mathcal D_t\) is sparse, let \(k_t(v):=\deg_{\mathcal D_t}(v)\). The two local fields differ only through the disagreeing neighbors, and hence $|H_t(v)-H_t^+(v)|\leq 2k_t(v)$. On the event in Lemma~\ref{lem-equal-to-restricted-dynamics}, if \(v\notin B_t\) and \(k_t(v)\leq 0.01d\), then $H_t^+(v)>0.5d$ and $\min\{H_t(v),H_t^+(v)\}>0.48d$. Since
\[
    g'(h)=\frac{2\beta e^{-2\beta h}}{(1+e^{-2\beta h})^2} \leq 2\beta e^{-2\beta h}, \quad h\geq0,
\]
the mean-value theorem suggests
\[
    a_t(v) \leq 2k_t(v)\sup_{h\geq0.48d}g'(h) \leq \frac{4C}{d}e^{-0.96C}k_t(v) \,.
\]
If this estimate held at every vertex, then
\[
    \sum_{v\in V}a_t(v) \leq \frac{4C}{d}e^{-0.96C} \sum_{v\in V}\deg_{\mathcal D_t}(v)= 4Ce^{-0.96C}|\mathcal D_t| \ll |\mathcal D_t|,
\]
which would give a strict negative drift for \(|\mathcal D_t|\). Equivalently, one may think of each disagreement as dying at rate close to one and producing new disagreements at total rate \(O(Ce^{-0.96C})<1\).

This calculation also indicates the main difficulties. The exponential suppression may fail at a bad vertex, where the plus-phase local field is not sufficiently positive, and it may fail when a vertex has an atypically large number of disagreeing neighbors. Intuitively, we will use the fact that the chain is near the plus phase to make the first type of exceptional vertex exponentially sparse, and we will use the expansion and local tree structure of a typical random regular graph to prevent a small disagreement set from being locally dense around many vertices. 

We now explain our proof strategy. We first show that a random regular graph $G \sim \mathcal G_d(n)$ belongs to a family $\mathcal G$ with suitable expansion properties and local tree structure with probability $1-o(1)$; this is formalized in Section~\ref{sec:structure-random-regular-graph}. Fix such a ``typical'' graph $G\in\mathcal G$. A preliminary burn-in argument uses the initial bias and expansion to bring the disagreement set into a sparse regime. More precisely, for a sufficiently large constant $t_0=t_0(C,d)$,
\begin{equation}\label{eq-initial-sparsity}
    \Pb\left(
        \sup_{0\leq t\leq n}|\mathcal B_t|\leq e^{-0.004Cd}n,\quad
        \sup_{t_0\leq t\leq n}|\mathcal D_t|\leq 4e^{-0.002Cd}n
    \right)\geq 1-n^{-12} \,.
\end{equation}
The precise statement and proof of this preliminary estimate are given in Lemma~\ref{lem-bound-initial-steps} in Section~\ref{sec:initial-analysis}.

The main challenge, and the focus of our proof, is to show that these sparse disagreements disappear in $O(\log n)$ time. We analyze their evolution in spacetime: a disagreement generates a branching-like causal cluster, but away from the rare bad regions each possible transmission across an edge pays the additional $e^{-\Omega(C)}$ factor above. Thus the effective offspring mean is subcritical, and the rare bad regions should not be numerous or connected enough to compensate for this decay. Making this picture rigorous requires controlling the interaction between the evolving disagreements and the bad regions. Our main argument is a bootstrap analysis that yields the following quantitative contraction property.
\begin{defn}{\label{def-admissible-configuration}}
    For all $A \subset V$ and $\sigma \in \{ -1,+1 \}^n$, define 
    \begin{equation}{\label{eq-def-sigma-A}}
        \sigma_A(u) = 
        \begin{cases}
            \sigma(u), & u \not\in A \,; \\
            -\sigma(u), & u \in A \,.
        \end{cases}
    \end{equation}
    Let $(X^{\sigma}_t)_{t \ge 0}$ and $(X^{\sigma_A}_t)_{t\ge 0}$ be the Ising Glauber dynamics under the grand coupling starting from $\sigma$ and $\sigma_A$, respectively. We say a configuration $\sigma \in \{ -1,+1 \}^{V}$ is admissible if for all $A \subset V$ with $|A|=pn$ with $0<p\leq e^{-0.02d}$, we have
    \begin{align}{\label{eq-def-shrink-time}}
        \mathbb E\Big[ \#\big\{ v \in V: X_t^{\sigma}(v) \neq X^{\sigma_A}_t(v) \big\} \Big] \leq 4e^{-t/(2C^3)}pn, \quad t = \frac{ \log(1/p) }{ 10^3C\log(d)^{10} } \,.
    \end{align}
    Define $\mathscr A$ to be the set of admissible configurations.
\end{defn}
\begin{lemma}{\label{lem-configuration-are-admissible}}
    Fix $G \in \mathcal G$. Define 
    \begin{align}{\label{eq-def-E-diamond-diamond}}
        \mathcal E_{\ref{lem-configuration-are-admissible}}= \Big\{ \widehat{X}_t^{\pi_+} \in \mathscr A \mbox{ for all } 0 \leq t \leq n \Big\} \,.
    \end{align}
    We have $\Pb(\mathcal E_{\ref{lem-configuration-are-admissible}}) \ge 1-n^{-12}$ for all $G \in \mathcal G$.
\end{lemma}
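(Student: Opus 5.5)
We prove the statement in two stages. First we identify a deterministic, checkable set $\mathscr A'\subset\mathscr A$ of ``good plus-phase environments''. Then we show that $\widehat X^{\pi_+}_t\in\mathscr A'$ for all $t\le n$ with probability $1-n^{-12}$.

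\textbf{Choice of $\mathscr A'$.} We let $\mathscr A'$ be the set of $\sigma$ such that, for every connected set $S\subset V$ of size $s\ge 1$ (and more generally for every ``lattice animal'' of bounded spread in $G$), the number of vertices of $S$ whose plus-field is at most $0.6d$ is at most $e^{-cCd}s+O(\log n)$, with $c>0$ a small constant. The $\log n$ slack is there so that this can hold for all $S$ simultaneously. On a random regular graph of typical type, the number of connected sets of size $s$ containing a given vertex is at most $(ed)^s$. Hence, to control the plus-phase measure, it suffices to show that for a fixed set $S$ the count of low-field vertices exceeds $\theta s$ with probability at most $(e^2d)^{-s}$.

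\textbf{Step 1: $\mathscr A'\subset\mathscr A$ (deterministic).} This step is the bootstrap. Fix $\sigma\in\mathscr A'$ and $A$ with $|A|=pn$. Under the grand coupling, we track the spacetime disagreement clusters started from $A$ using a branching-process domination, as in the heuristic drift computation. A disagreement at $v$ dies at rate $1$. It creates a disagreement at a neighbour $w$ at rate at most $(4C/d)e^{-0.96C}$ whenever $w$ is good, meaning $w$ has field $>0.5d$ in $\widehat X^{\pi_+}$ and fewer than $0.01d$ disagreeing neighbours; otherwise the rate is at most $1$.

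The environment moves, however. We need the fraction of bad vertices along any spacetime path of length $t=\log(1/p)/(10^3C\log(d)^{10})$ to stay small. We get this from the property of $\mathscr A'$ at time $0$, together with the observation that in time $t\ll C^{-1}\log(1/p)$ a vertex's field can only drop if $\Omega(d)$ of its neighbours flip. Each neighbour flips with probability at most $t\,e^{-\Omega(C)}$ near the plus phase, so a Chernoff-type bound handles this.

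The count of disagreements with many disagreeing neighbours is handled through local tree-likeness of $G\in\mathcal G$. A cluster of size $k\le p^{-1/2}$ contains few vertices with $\ge 0.01d$ neighbours inside it, by sparse expansion.

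Summing over the spacetime paths (``histories'') of a cluster started at a vertex of $A$ then gives expected size $\le \exp(-t(1-O(Ce^{-0.96C})-\text{bad fraction}))\le 4e^{-t/(2C^3)}$. The very weak rate $1/(2C^3)$ leaves plenty of room for losses.

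\textbf{Step 2: the plus-phase measure.} We show $\pi_+(\mathscr A'^c)\le n^{-20}$. For fixed $S$, we bound the probability that $\theta s$ vertices of $S$ all have low field. We peel off these vertices, using that $G$ is locally tree-like, so their neighbourhoods are nearly disjoint. Then we apply a Peierls/cluster-expansion bound for the Ising plus phase at $\beta=C/d$: each low-field vertex costs a factor $e^{-\Omega(Cd)}$. This step uses \cite{GSS25+}-style estimates, or monotonicity plus the all-plus dynamics. A union bound over $S$ finishes the step.

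\textbf{Step 3: all times.} The chain $\widehat X^{\pi_+}_t$ is stationary with law $\pi_+$. Discretize $[0,n]$ into $n^{3}$ times and take a union bound, giving failure probability $n^{3}\cdot n^{-20}$. Between grid points only $O(1)$ updates occur with high probability, and the condition defining $\mathscr A'$ is stable under $O(1)$ flips thanks to the $O(\log n)$ slack.

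\textbf{Main obstacle.} The main difficulty is Step 1. The environment seen by the disagreements is itself correlated with them through the shared randomness of the grand coupling. I would first try to decouple by revealing $\widehat X^{\pi_+}$ first. Since $X^{\sigma}$ equals $\widehat X^{\pi_+}$'s evolution from $\sigma$, the disagreements are then a function of $X^{\sigma_A}$ relative to a fixed trajectory. If that fails, I would carry out the bootstrap in time steps of length $1/C^2$, re-verifying the sparseness of bad vertices at each step.
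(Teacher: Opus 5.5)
Your Step~1, the claimed deterministic inclusion $\mathscr A'\subset\mathscr A$, is where all the difficulty lies, and the mechanism you propose for it does not work.

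Admissibility is a quenched statement about the dynamics from a fixed $\sigma$ up to time $t=\log(1/p)/(10^3C(\log d)^{10})$. This horizon is unbounded: it is of order $\log n$ when $p\approx 1/n$. Over such a horizon every vertex is resampled many times. In stationarity the local environment decorrelates on time scales much shorter than $t$, since the tree log-Sobolev constant is at least of order $1/C$. So a condition on the time-$0$ configuration alone cannot control the bad set along space-time histories of length $t$.

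Your proposed bridge fails for three reasons.
\begin{itemize}
\item The bound $te^{-\Omega(C)}$ on a neighbour ever flipping exceeds $1$ once $t\gg e^{\Omega(C)}$.
\item A neighbour's flip rate depends on its own evolving field, so the bound is circular.
\item Flips of the neighbours of one vertex are correlated, so no Chernoff bound is available.
\end{itemize}

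The $O(\log n)$ slack causes a further problem: it makes $\mathscr A'$ vacuous on connected sets of size up to order $\log n$. Admissibility must hold for adversarial $A$, for instance a single vertex ($p=1/n$) inside a low-field region that $\mathscr A'$ permits, where the target $4e^{-t/(2C^3)}$ is $n^{-\Omega(1)}$. So the ``bad fraction'' in your exponent is not small exactly where it matters. The correlation between disagreements and environment that you flag is also left unresolved.

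Step~2 has a separate, fixable error. A fixed $S$ contains more than $\theta s$ low-field vertices with probability only about $e^{-\Theta(Cd\theta s)}$. This beats the lattice-animal count $(ed)^s$ only if $\theta\gtrsim\log d/(Cd)$. Threading paths of length of order $Cd/\log d$ between low-field vertices does produce connected sets with such a fraction, so the threshold $e^{-cCd}$ is typically violated.

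The paper's proof shows why the good set has to be defined dynamically.
\begin{itemize}
\item It first localizes (Lemma~\ref{lem-disgreement-cluster-bound-1}). Disagreements within distance $r$ of the bootstrap closure $(A_{\mathsf f})^+$, and those descending from the rare bad pairs $\mathcal V$, are bounded for \emph{every} $\sigma$ by walk counting and finite speed of propagation. This is how adversarial $A$ is handled.
\item Only the part inside the disjoint trees $\mathbf T_u\simeq\mathbb T_{1,\ell}$ hanging off $(A_{\mathsf f})^+$ uses the environment. There, $\sigma|_{\mathbf T}$ is called typical if, under the local chain started from it, every skeleton of bad sets over time blocks of length $10^{-2}$ has probability exponentially small in its total size (Definition~\ref{def-mathcal-A-2-lem-5.4}).
\item Showing that all but $e^{-dt/(10^{20}C^5)}n\ll pn$ disjoint trees are typical under $\pi_+$ uses three ingredients, plus BP screening to replace the true outer boundary by plus:
  \begin{itemize}
  \item the log-Sobolev inequality for $\phi^+_{1,\ell}$;
  \item hypercontractive approximate independence across time blocks for the \emph{stationary} chain (Lemma~\ref{lem-almost-indepdent-in-time-ls});
  \item the annealed-to-quenched lift (Lemma~\ref{lem-BAD-lift-ls}).
  \end{itemize}
\item Typicality then drives a supermartingale for a dominating occupation process with predictable rates, which sidesteps the correlation issue.
\end{itemize}

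The missing idea in your plan is precisely this annealed-to-quenched transfer. Space-time control of the bad set is available for the stationary chain and must be pushed to most fixed initial configurations; it cannot be derived from a static property of $\sigma$.
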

The proof of Lemma~\ref{lem-configuration-are-admissible} contains the main bootstrap argument and is given in Section~\ref{sec:bootstrap-disagree-cluster}. We now deduce Theorem~\ref{MAIN-THM} from the preliminary estimate \eqref{eq-initial-sparsity} and Lemmas~\ref{lem-equal-to-restricted-dynamics} and \ref{lem-configuration-are-admissible}.
\begin{proof}[Proof of Theorem~\ref{MAIN-THM}]
    Fix a graph $G \in\mathcal G$ and condition throughout on this graph.  We write $\mathbb P_G$ and $\mathbb E_G$ for probability and expectation over the initial configurations and the graphical construction. Recall \eqref{eq-def-disagreement-cluster} and let $Z_t=|\mathcal D_t|$. Let $\mathcal E:=\mathcal E_{\ref{lem-equal-to-restricted-dynamics}} \cap \mathcal E_{\ref{lem-bound-initial-steps}} \cap \mathcal E_{\ref{lem-configuration-are-admissible}}$, where $\mathcal E_{\ref{lem-bound-initial-steps}}$ denotes the event in \eqref{eq-initial-sparsity}. By \eqref{eq-initial-sparsity} and Lemmas~\ref{lem-equal-to-restricted-dynamics} and \ref{lem-configuration-are-admissible}, for all sufficiently large $n$, 
    \[
        \mathbb P_G(\mathcal E^c) \leq \exp(-\Omega(n))+2n^{-12} \leq 3n^{-12} \,.
    \]
    On $\mathcal E$ we have
    \begin{align*}
        & X_t^{\pi_+} = \widehat X_t^{\pi_+} \text{ for every } 0\leq t\leq \exp(n/L),  \quad \widehat X_t^{\pi_+} \in \mathscr A \text{ for every } 0\leq t\leq n \,; \\
        & Z_t\leq p_\star n \text{ for every }t_0\leq t\leq n, \quad p_\star:=4e^{-0.002Cd} \overset{\eqref{eq-assump-C-d-epsilon}}{\le} e^{-0.02d} \,.
    \end{align*}
    Let $(\mathcal F_t)_{t\geq0}$ be the completed filtration generated by the initial states and the marked graphical construction. We now define a sequence of stopping times. Set $\tau_0:=t_0$. Having defined $\tau_i$, write $Z_i:=Z_{\tau_i}$ and define
    \[
        q_i:=\min\left\{ p_\star,\frac{\max\{Z_i,1\}}{n} \right\}, \quad \tau_{i+1}:=\tau_i+s(q_i), \quad s(p) := \frac{ \log(1/p) }{ 10^3C(\log d)^{10} } \,.
    \]
    Note that $\frac1n\leq q_i\leq p_\star$. The variable $q_i$ is $\mathcal F_{\tau_i}$-measurable and takes values in a finite set, so $\tau_i$ are indeed stopping times. Moreover, by \eqref{eq-assump-C-d-epsilon}, we have the deterministic increment bounds
    \begin{equation}{\label{eq:increment-bound}}
        4C^3 \log 4 \le s(p_\star) \leq \tau_{i+1}-\tau_i \leq \frac{ \log n }{ 10^3C(\log d)^{10} } \,.
    \end{equation}
    For each $i\geq0$, introduce the adapted event
    \[
        H_i:=\Big\{ \tau_i\leq n,\ Z_i\leq p_\star n,\ X^{\pi_+}_{\tau_i} = \widehat X^{\pi_+}_{\tau_i},\ \widehat X^{\pi_+}_{\tau_i} \in \mathscr A \Big\} \,.
    \]
    In addition, put $F_i:=\cap_{j=0}^i H_j$ and let $F_{-1}$ be the whole space. On the event $H_i$, suppose first that $Z_i>0$, and set $A_i:=\mathcal D_{\tau_i}$, $\sigma_i:=X^{\pi_+}_{\tau_i}$. The configurations $X_{\tau_i}$ and $X^{\pi_+}_{\tau_i}$ disagree precisely on $A_i$, so $X_{\tau_i}=(\sigma_i)_{A_i}$. Furthermore,
    \[
        \frac{|A_i|}{n}=\frac{Z_i}{n}\leq p_\star\leq e^{-0.02d}, \quad  \sigma_i=\widehat X^{\pi_+}_{\tau_i} \in \mathscr A \,.
    \]
    By the strong Markov property at $\tau_i$ and Definition~\ref{def-admissible-configuration}, 
    \[
        \mathbb E_G\left[ Z_{i+1} \mid \mathcal F_{\tau_i} \right] \leq 4e^{-s(Z_i/n)/(2C^3)}Z_i \leq e^{-(\tau_{i+1}-\tau_i)/(4C^3)} Z_i \text{ on }H_i \,.
    \]
    If $Z_i=0$, the two ordinary chains have already coupled and remain equal forever under the grand coupling, so the same inequality holds trivially. Thus, in all cases,
    \[
        \mathbf 1_{H_i} \cdot \mathbb E_G\left[ Z_{i+1} \mid \mathcal F_{\tau_i} \right] \leq \mathbf 1_{H_i} \cdot e^{-(\tau_{i+1}-\tau_i)/(4C^3)} Z_i \,.
    \]
    For $i \ge 0$, define $M_i:= \mathbf 1_{F_{i-1}} \cdot e^{(\tau_i-t_0)/(4C^3)}Z_i$. Since $F_i=F_{i-1}\cap H_i$ is $\mathcal F_{\tau_i}$-measurable,
    \begin{align}
        \mathbb E_G\left[ M_{i+1} \mid \mathcal F_{\tau_i} \right] &=e^{(\tau_{i+1}-t_0)/(4C^3)} \mathbf 1_{F_i} \mathbb E_G\left[ Z_{i+1} \mid \mathcal F_{\tau_i} \right] \nonumber \\
        &\leq e^{(\tau_i-t_0)/(4C^3)} Z_i \mathbf 1_{F_i} \leq e^{(\tau_i-t_0)/(4C^3)} Z_i \mathbf 1_{F_{i-1}} =M_i \,.  \nonumber
    \end{align}
    Thus, $(M_i)_{i\geq0}$ is a nonnegative supermartingale with respect to the discrete filtration $(\mathcal F_{\tau_i})_{i\geq0}$. In particular, if $\nu$ is any bounded stopping index for this discrete filtration, then optional stopping gives
    \begin{equation}\label{eq:M-martingale}
        \mathbb E_G\big[ M_\nu \big] \leq \mathbb E_G\big[ M_0 \big] = \mathbb E_G\big[ Z_{t_0} \big] \leq n \,.
    \end{equation}
    Let $L=L(C,d)$ to be sufficiently large. We now argue that two chains $X_t$ and $X^{\pi_+}_t$ are coupled with high probability after time $L\log n$. To this end, let
    \begin{equation}
        \nu:=\inf\left\{ i\geq0: \tau_i \geq L\log n - \frac{ \log n }{ 10^3C(\log d)^{10} } \right\} \,.
    \end{equation}
    Because every increment is at least $s(p_\star)>0$, the stopping index $\nu$ is bounded by a deterministic multiple of $\log n$. Moreover, by the upper increment bound \eqref{eq:increment-bound},
    \[
        L\log n - \frac{ \log n }{ 10^3C(\log d)^{10} } \leq \tau_\nu \leq L\log n \,.
    \]
    On the event $\mathcal E$, we have $\tau_j\leq\tau_\nu\leq L\log n<n$ and $H_j$ holds for $j \le \nu$. Hence $\mathcal E\subseteq F_{\nu-1}$. Also, once $\mathcal D_t$ becomes empty it remains empty. Since $\tau_\nu\leq L\log n$, we have $\mathcal D_{L\log n} \neq \emptyset$ implies that $Z_\nu\geq1$. It follows that, on the event $\{ \mathcal D_{L\log n} \neq\emptyset \} \cap\mathcal E$,
    \[
        M_\nu = \mathbf 1_{F_{\nu-1}} \cdot e^{(\tau_\nu-t_0)/(4C^3)} Z_\nu  \geq e^{(L-1)\log n/(4C^3)} \,.
    \]
    Therefore, by Markov's inequality and optional stopping,
    \begin{align*}
        \mathbb P_G\big( \mathcal D_{L\log n} \neq \emptyset, \mathcal E \big) \leq e^{-(L-1)\log n/(4C^3)}\mathbb E_G[M_\nu] \leq n e^{-(L-1)\log n/(4C^3)} \,.
    \end{align*}
    Thus, by choosing $L=L(C,d)$ sufficiently large,
    \[
        \mathbb P_G\big( \mathcal D_{L\log n} \neq \emptyset, \mathcal E \big) \leq n^{-13}
    \]
    for all sufficiently large $n$. Finally, fix any deterministic time satisfying $L\log n\leq t\leq\exp(n/L)$. On $\mathcal E\cap\{ \mathcal D_{L\log n}=\emptyset \}$, the ordinary chains $(X_t)$ and $(X_t^{\pi_+})$ remain coupled for all times after $L\log n$, while Lemma~\ref{lem-equal-to-restricted-dynamics} gives $X_s^{\pi_+}=\widehat X^{\pi_+}_s$ for every $s\leq\exp(n/L)$. Hence
    \[
        X_t=X_t^{\pi_+}=\widehat X_t^{\pi_+} \quad \text{on} \quad \mathcal E\cap \{\mathcal D_{L\log n}=\emptyset \} \,.
    \]
    Since the restricted chain is started from its reversible stationary distribution, $\widehat X_t^{\pi_+} \sim \pi_+$ for every $t\geq0$.  The coupling inequality therefore yields
    \begin{align*}
        \mathsf{TV}\left(\mathcal L_G(X_t), \pi_G(\,\cdot\mid\Omega^+) \right) &\leq \mathbb P_G(X_t\neq\widehat X^{\pi_+}_t) \\
        &\leq \mathbb P_G(\mathcal E^c) +\mathbb P_G( \mathcal D_{L\log n} \neq\emptyset,\mathcal E) \leq 3n^{-12}+n^{-13} \leq n^{-10} \,.
    \end{align*}
    The same coupling event works simultaneously for every deterministic $t$ in the displayed interval, so the bound is uniform over $L\log n\leq t\leq\exp(n/L)$.
\end{proof}

\subsection{Notation}{\label{subsec:notation}}

We record in this subsection some notation conventions. 
For any graph $H$, denote the excess of $H$ by $\tau(H)=|E(H)|-|V(H)|$. Given $u \in V(H)$, denote by $\mathcal{N}_H(u)$ the set of neighbors of $u$ in $H$, and denote by $\mathsf{deg}_H(u)$ the degree of $u$ in $H$. For two vertices $u,v\in V(H)$, we define $\mathsf{dist}_{H}(u,v)$ to be the graph distance. For a graph $H=(V,E)$ and a subset $A \subset V$, define $H_A =(A,E_A)$ to be the vertex-induced subgraph of $H$ in $A$, where $E_A = \{ (u,v) \in E: u,v \in A \}$. Also, define $H_{\setminus A} = (V \setminus A,E_{\setminus A})$ to be the subgraph of $H$ obtained by deleting all edges with endpoints intersecting $A$. Define $\mathcal B(v,H,r)$ to be the subgraph of $H$ induced by $\{ u \in V(H):\mathsf{dist}_H(u,v) \leq r \}$, and define $\mathcal B(A,H,r)=\cup_{v \in A} \mathcal B(v,H,r)$ for $A \subset V$. Denote by $\mathsf L(H)$ the set of leaves of $H$, and by $\mathsf I(H)$ the set of isolated vertices of $H$.  

Throughout, a path means a self-avoiding path. For a path $P$, denote by $\mathsf{EndP}(P)$ its endpoints. An oriented walk $P=(x_0,\ldots,x_m)$ has length $|P|=m$ and is non-backtracking if $x_{i-1}\ne x_{i+1}$ for $1\le i<m$. For a walk, $V(P)$ and $E(P)$ are the vertices and distinct edges of its trace, and $\mathsf{EndP}(P)=\{x_0,x_m\}$. In graph unions, we identify a walk with its trace, with repeated edges counted only once. Walks are unrestricted unless explicitly required to be non-backtracking.

Define $\rho(T)$ to be the root of a tree $T$. For a rooted tree $(T,o)$ and $u\in V(T)$, we define $\mathsf{dep}_{T}(u)$ to be the depth of $u$ in $T$, and define $\mathsf{dep}(T)$ to be the depth of $T$. For $u,v \in V(T)$, denote by $u \rightarrow v$ (or equivalently, $v \leftarrow u$) if $v$ is a child of $u$, and denote by $u \hookrightarrow v$ (or equivalently, $v \hookleftarrow u$) if $v$ is a descendant of $u$. For an unlabeled rooted tree $(T,o)$ and $u \in V(T)$, denote by $\mathsf{Des}_{T}(u)$ the descendant tree of $u$ in $T$, that is, the rooted tree with root $u$ and with vertices given by the descendants of $u$ in the unlabeled rooted tree $T$. 

For any two positive sequences $\{a_n\}$ and $\{b_n\}$, we write interchangeably $a_n=O(b_n)$, $b_n=\Omega(a_n)$, $a_n\lesssim b_n$ and $b_n\gtrsim a_n$ if there exists a positive absolute constant $c$ such that $a_n/b_n\leq c$ holds for all $n$. We write $a_n=o(b_n)$, $b_n=\omega(a_n)$, $a_n\ll b_n$, and $b_n\gg a_n$ if $a_n/b_n\to 0$ as $n\to\infty$.

\section{Preliminary results}{\label{sec:prelim}}

In this section, we will introduce preliminary lemmas that serve as useful ingredients for proving our main theorems.

\subsection{Combinatorial estimates}{\label{subsec:comb-enumeration}}

We collect in this subsection some estimates for the enumeration of trees and forests in bounded degree graphs.
\begin{lemma}{\label{lem-enu-tree-on-regular-graph}}
    For any graph $G$ with degree bounded by $d$ and $v \in V(G)$, we have
    \begin{align*}
        \#\Big\{ T \subset G: T \mbox{ is a tree}, v \in V(T), |V(T)|=k \Big\} \leq (4d)^{k-1} \,.
    \end{align*}
\end{lemma}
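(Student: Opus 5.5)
We encode each tree $T\ni v$ with $|V(T)|=k$ by a closed walk and count the walks. Root $T$ at $v$ and run a depth-first search. Fix an ordering of the at most $d$ neighbors of every vertex of $G$; within $T$, visit children in the order they appear in their parent's neighbor list. The search traverses each of the $k-1$ edges of $T$ exactly twice, once going down and once coming back up. It therefore produces a walk of length $2(k-1)$ starting and ending at $v$.

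The walk is recorded as a word over $\{\mathsf{down}_1,\dots,\mathsf{down}_d,\mathsf{up}\}$. At each step we either move up to the parent of the current vertex, or move down to the $j$-th neighbor in the current vertex's list. The key point is that this word determines $T$. Starting from $v$ and reading the word, each $\mathsf{down}_j$ names a specific neighbor in $G$ and each $\mathsf{up}$ returns along the stack of visited vertices. So the edge set of $T$ is recovered as the set of edges traversed by down-steps, and the map $T\mapsto$ word is injective.

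To count the words, note that the down/up pattern is a Dyck word of length $2(k-1)$. The number of such patterns is the Catalan number $\mathrm{Cat}_{k-1}\le 4^{k-1}$. Each of the $k-1$ down-steps carries a label in $\{1,\dots,d\}$, which gives at most $d^{k-1}$ labelings. The total is therefore at most $4^{k-1}d^{k-1}=(4d)^{k-1}$, which is the claimed bound.

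A cruder alternative is to bound the count by $2^{2(k-1)}d^{k-1}$ directly, using the $2^{2(k-1)}$ binary up/down strings in place of Dyck words. This yields the same $(4d)^{k-1}$ bound, so the Catalan refinement is not even needed. I expect no real obstacle here. The only care point is the injectivity of the encoding, which depends on fixing the neighbor orderings of $G$ in advance, so that labels refer to $G$ rather than to $T$.
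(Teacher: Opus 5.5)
Your proposal is correct and is essentially the paper's argument. Your depth-first Dyck word is the contour encoding of the rooted plane tree obtained by ordering children, and your down-step labels are exactly the paper's $d^{k-1}$ embedding choices, giving $\mathsf{Cat}(k-1)\,d^{k-1}\le(4d)^{k-1}$; fixing the neighbor orderings of $G$ in advance just makes explicit the injectivity that the paper leaves implicit.
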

\begin{proof}
    Root the tree $T$ at $v$. Every rooted tree with $k$ vertices can be viewed as a rooted plane tree after ordering the children. The number of rooted plane trees with $k$ vertices is the Catalan number $\mathsf{Cat}(k-1)\leq 4^{k-1}$. For a fixed rooted plane tree shape, the number of ways to embed it into $G$ with root mapped to $v$ is at most $d^{k-1}$, because each non-root vertex chooses one neighbor of its parent, and each parent has at most $d$ neighbors. Hence we have the desired bound.
\end{proof}

\begin{lemma}{\label{lem-enu-forest-on-regular-graph}}
    For any graph $G$ with degree bounded by $d$ and $A \subset V(G)$, we have
    \begin{align*}
        \#\Big\{ F \subset G: F=T_1 \sqcup \ldots \sqcup T_r, T_i \mbox{ are trees}, V(T_i) \cap A \neq \emptyset, |V(F)|=k \Big\} \leq \binom{|A|}{r} \binom{2k-r}{k-r} d^{k-r} \,.
    \end{align*}
\end{lemma}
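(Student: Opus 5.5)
We count forests $F=T_1\sqcup\cdots\sqcup T_r$ with $|V(F)|=k$ in which every tree meets $A$. The plan is to encode each such forest by a choice of roots in $A$ together with a plane-tree encoding of each component, exactly as in the proof of Lemma~\ref{lem-enu-tree-on-regular-graph}.

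First, we make the encoding well defined. The forest has $r$ components and each meets $A$, so in each component $T_i$ we select one vertex of $A\cap V(T_i)$ as its root; a canonical choice, such as the smallest in a fixed ordering of $V(G)$, works. The resulting root set is an $r$-element subset of $A$, so there are at most $\binom{|A|}{r}$ choices for it. Order the components by the order of their roots. Given the roots, each component $T_i$ with $k_i$ vertices is a rooted tree, and after ordering children it becomes a rooted plane tree. Each plane tree has a Dyck-path (depth-first contour) encoding of length $2(k_i-1)$. In the embedding, each non-root vertex chooses one of at most $d$ neighbors of its parent. There are $\sum_i (k_i-1)=k-r$ non-root vertices in total, which gives the factor $d^{k-r}$.

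Second, we count the shapes jointly. The ordered sequence of $r$ plane trees with total size $k$ is a plane forest with $r$ ordered roots. Equivalently, it is a single plane tree with $k+1$ vertices whose artificial root has exactly $r$ children. The number of ordered plane forests of $r$ trees on $k$ vertices is the ballot number $\frac{r}{2k-r}\binom{2k-r}{k}$. We then check that
\[
\frac{r}{2k-r}\binom{2k-r}{k}\le \binom{2k-r}{k-r}=\binom{2k-r}{k}.
\]
This holds since $r\le 2k-r$ (because $r\le k$). An alternative bound that avoids the exact ballot formula is to encode the forest by a sequence of $2k-r$ steps: for each non-root vertex, one "down" step and one "up" step, plus one "new tree" marker for each of the $r$ trees after the first (and at the start). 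We then arrange that the positions of the $k-r$ down steps determine the shape, which gives at most $\binom{2k-r}{k-r}$ shapes. Multiplying the three factors gives the claim.

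The main obstacle is getting the shape count exactly in the form $\binom{2k-r}{k-r}$ rather than something like $4^{k}$. The key point is to choose an injective encoding of the ordered plane forest into binary strings of length $2k-r$ containing $k-r$ marked positions. I would first try the {\L}ukasiewicz encoding. List the vertices in depth-first order across the trees, and record each vertex's number of children $c_v$ as the unary word $1^{c_v}0$. This produces a word of length $k+(k-r)=2k-r$ with exactly $k-r$ ones. Given $r$, the word is uniquely decodable by reading trees off sequentially: each tree ends when its running count of zeros exceeds its ones. This yields the bound $\binom{2k-r}{k-r}$ directly and completes the argument.
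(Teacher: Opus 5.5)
Your proof is correct and follows the paper's argument. You choose one root in $A$ per component (at most $\binom{|A|}{r}$ ways), bound the number of ordered plane forests with $r$ trees and $k$ vertices by $\frac{r}{2k-r}\binom{2k-r}{k-r}\le\binom{2k-r}{k-r}$, and embed with at most $d^{k-r}$ choices; your Lukasiewicz-word injection into binary strings of length $2k-r$ with $k-r$ ones is a self-contained justification of the shape count, which the paper simply quotes.
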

\begin{proof}
    Root each tree $T_i$ at $v_i \in A$. Every rooted tree with $k$ vertices can be viewed as a rooted plane tree after ordering the children. Note that the number of choices of $(v_1,\ldots,v_r)$ is bounded by $\binom{|A|}{r}$. In addition, the number of plane forests with exactly $r$ rooted ordered trees and total $k-r$ edges is given by
    \begin{align*}
        \frac{r}{2k-r} \binom{2k-r}{k-r} \,.
    \end{align*}
    For a fixed rooted plane forest shape, the number of ways to embed it into $G$ with roots mapped to $v_1,\ldots,v_r$ is at most $d^{k-r}$, because each non-root vertex chooses one neighbor of its parent, and each parent has at most $d$ neighbors. Hence we have the desired bound.
\end{proof}

\begin{lemma}{\label{lem:-enu-sequence-longrightarrow-forest}}
    For any tree $T$ with degree bounded by $d$ and $A,A' \subset V(T)$, denote by $A \Rightarrow A'$ if every connected component of $A'$ intersects $A$. For any $S \subset V(T)$ with $|S| \le 2$, we have
    \begin{align*}
        \#\Big\{ A_0,\ldots,A_N \subset V(T): |A_i|=m_i>0, S \Rightarrow A_0 \Rightarrow \ldots \Rightarrow A_N \Big\} \le 3\times (128d)^{m_0+\ldots+m_N-1} \,.
    \end{align*}
\end{lemma}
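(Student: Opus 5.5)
We count chains $S \Rightarrow A_0 \Rightarrow \cdots \Rightarrow A_N$ by encoding each $A_i$ as a forest of subtrees of $T$ rooted at vertices of $A_{i-1}$ (with $A_{-1}:=S$), and applying Lemma~\ref{lem-enu-forest-on-regular-graph} step by step. The key observation is that $T$ is a tree. Hence every connected component of $A_i$ is a subtree of $T$, and the induced subgraph $T_{A_i}$ is a forest whose components $K_1,\ldots,K_r$ each meet $A_{i-1}$. So $A_i$ is determined by its induced forest, with $r$ components and $|V|=m_i$, each rooted in $A_{i-1}$.

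Lemma~\ref{lem-enu-forest-on-regular-graph} (applied with $A=A_{i-1}$, $k=m_i$) then says that, given $A_{i-1}$, the number of choices of $A_i$ with exactly $r$ components is at most $\binom{m_{i-1}}{r}\binom{2m_i-r}{m_i-r}d^{m_i-r}$. Note that the induced forest is unique for the vertex set, so no overcounting occurs in this direction. Summing over $1\le r\le \min(m_{i-1},m_i)$ and using $\binom{m_{i-1}}{r}\le 2^{m_{i-1}}$ and $\binom{2m_i-r}{m_i-r}\le 4^{m_i}$, the count is at most
\[
m_i\, 2^{m_{i-1}}\,4^{m_i}\,d^{m_i-1}\le 2^{m_{i-1}}\,8^{m_i}\,d^{m_i-1}.
\]
The $d^{m_i-1}$ comes from $r\ge1$. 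We do not try to exploit the $d^{-r}$ gain beyond $r\ge1$, since it is not needed.

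For the initial step, $A_0$ has each component meeting $S$ with $|S|\le 2$, so there are at most $1$ or $2$ components. The same lemma gives at most $\sum_{r\le 2}\binom{2}{r}4^{m_0}d^{m_0-r}\le 3\cdot 4^{m_0}d^{m_0-1}$; this is where the constant $3$ enters. Multiplying over $i=0,\ldots,N$, the total is at most
\[
3\cdot 4^{m_0}d^{m_0-1}\prod_{i=1}^N 2^{m_{i-1}}8^{m_i}d^{m_i-1}\le 3\cdot 16^{\sum m_i}\, d^{\sum m_i -(N+1)}.
\]
This is bounded by $3(128d)^{\sum_i m_i-1}$ once we check that $16^{M}\le 128^{M-1}$ for $M=\sum m_i\ge N+1\ge1$. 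That inequality holds for $M\ge 2$, while for $M=1$ (so $N=0$, $m_0=1$) the bound on choices of $A_0$ is directly at most $|S|\le 2\le 3$.

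The main obstacle is bookkeeping rather than substance. One must make sure the sum over $r$ and the binomial factors are absorbed without losing a factor depending on $N$; the exponent $-(N+1)$ in $d$ and the slack $128$ versus $16$ handle this. One must also treat the degenerate case $m_0=1$ separately, as done above. If the polynomial factor $m_i$ from summing over $r$ is absorbed as $m_i\le 2^{m_i}$, the constants still fit within $128$.
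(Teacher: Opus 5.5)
This is essentially the paper's proof: you encode each $A_i$ by its induced forest with components rooted in $A_{i-1}$ (with $A_{-1}=S$), sum Lemma~\ref{lem-enu-forest-on-regular-graph} over the number of components, and multiply the step bounds. The paper differs only cosmetically: it uses Lemma~\ref{lem-enu-tree-on-regular-graph} for $A_0$ when $|S|=1$, and it does not save the factor $d^{-1}$ per step.

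One arithmetic claim is wrong: $16^M\le128^{M-1}$ fails at $M=2$, since $256>128$. The bound still survives. For $M=2$, your unsimplified product is $3\cdot4^2d=48d$ when $N=0$, or $3\cdot4\cdot2\cdot8=192$ when $N=1$, and both are at most $3\cdot128d$. A cleaner fix is to use $\sum_r\binom{m_{i-1}}{r}\le2^{m_{i-1}}$ directly, instead of (number of terms)$\times$(largest term). This removes the factor $m_i$ and yields $3\cdot8^Md^{M-1}\le3(128d)^{M-1}$ for every $M\ge2$.
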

\begin{proof}
    When $|S|=1$, suppose that $S=\{ v \}$. By Lemma~\ref{lem-enu-tree-on-regular-graph}, the number of connected choices for \(A_0\) of size \(m_0\) containing \(v\) is at most $(4d)^{m_0-1}$. Given \(A_j\) with \(|A_j|=m_j\), Lemma~\ref{lem-enu-forest-on-regular-graph}, summed over the possible number of components, gives the crude bound
    \begin{align}{\label{eq:enu-A-j+1-given-A-j}}
        \#\big\{ A_{j+1}: A_j\Rightarrow A_{j+1},\ |A_{j+1}|=m_{j+1} \big\} \le 2^{m_j} (8d)^{m_{j+1}} \,.
    \end{align}
    The estimate then follows from a simple multiplication principle. When $|S|=2$, suppose that $S=\{ u,v \}$. By Lemma~\ref{lem-enu-forest-on-regular-graph}, the number of choices for \(A_0\) of size \(m_0\) such that $\{ u,v \} \Rightarrow A_0$ is bounded by
    \begin{align*}
        2(4d)^{m_0-1} + \mathbf1_{\{m_0\ge2\}}(8d)^{m_0-1} \le 3 \times (8d)^{m_0-1} \,.  
    \end{align*}
    Combining with \eqref{eq:enu-A-j+1-given-A-j} and the multiplication principle again leads to the desired estimate.
\end{proof}

\subsection{Random-cluster representation}{\label{subsec:random-cluster}}

We first formally define the random-cluster representation of the Ising model, and the Edwards-Sokal coupling of the random-cluster model to the Ising model. For more details, and a discussion of the random-cluster model for non-integer $q$, we refer the reader to \cite{Grimmett04}.

For $p \in [0,1]$ and $q>0$, the random-cluster model with parameters $p,q$ on a (finite) graph $G=(V,E)$ is the distribution over edge-subsets $\omega \subset E$, naturally identified with $\omega\in\{ 0,1 \}^E$ via $\omega_e=1$ if and only if $e \in \omega$, given by
\begin{align}
    \pi^{\mathsf{RC}}_G(\omega) = \frac{ 1 }{ \mathcal Z^{\mathsf{RC}}_{G,p,q} } p^{|\omega|} (1-p)^{|E|-|\omega|} q^{\mathsf{Comp}(\omega)} \,,  \label{eq-def-Random-Cluster}
\end{align}
where $\mathsf{Comp}(\omega)$ is the number of connected components (clusters) in the subgraph $(V,\omega)$. We will write $\pi^{\mathsf{RC}}_G$ as $\pi^{\mathsf{RC}}$ if the graph $G$ is clear from the context. When $\omega_e=1$, we say that $e$ is open, and when $\omega_e=0$, we say it is closed. If $x,y\in V$ are in the same connected component of the subgraph $(V,\omega)$, we write $x \overset{\omega}{\longleftrightarrow} y$.

For $q=2$, there is a canonical way to couple the random-cluster model to the Ising model so that the random-cluster model encodes the correlation structure of the Ising model. (An analogous coupling holds for all integer $q$, relating the random-cluster model to the Potts model.) The $q=2$ Edwards-Sokal coupling $\pi_G^{\mathsf{ES}}$ is the probability distribution over spin-edge pairs $(\omega,\sigma) \in \{ 0,1 \}^{E} \times \{ -1,1 \}^{V}$ given by:
\begin{enumerate}
    \item[(1)] Sampling a random-cluster configuration $\omega \sim \pi^{\mathsf{RC}}_G$ with parameters $p=1-e^{-2\beta}$ and $q=2$;
    \item[(2)] Independently assigning each connected component $\mathcal C$ of $\omega$ a random variable $\eta_{\mathcal C}$ which equals $+1$ or $-1$, each with probability $\frac{1}{2}$, and setting $\sigma_v=\eta_{\mathcal C}$ for every $v \in \mathcal C$.
\end{enumerate}
The marginal of this coupling on $\sigma$ then gives a sample from $\pi_G$ at inverse temperature $\beta$. We now provide some basic properties of the random-cluster model on $G \sim \mathcal G_d(n)$.
\begin{lemma}\label{lem-domination-Ber-FK-perco}
    Let $G=(V,E)$ be any finite graph. Put
    \begin{align*}
        \rho=\frac{p}{p+2(1-p)} \geq \frac{p}{2} \,.
    \end{align*}
    Then $\pi_{G}^{\mathsf{RC}}$ stochastically dominates independent bond percolation on $G$ with probability $\rho$. Equivalently, there is a coupling of configurations $\xi \in \{ 0,1 \}^{E}$ and $\omega\in \{ 0,1 \}^E$ such that
    \begin{align*}
        \xi\sim\mathsf{Ber}_G(\rho), \quad \omega\sim\pi_{G}^{\mathsf{RC}}, \quad \xi\le \omega \quad\text{ almost surely} \,.
    \end{align*}
\end{lemma}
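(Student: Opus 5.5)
The plan is to verify the standard conditional-edge criterion (Holley-type local domination) for the random-cluster measure and then appeal to the usual sequential coupling. The key fact is that, for $q\geq 1$, the conditional probability that an edge $e=\{x,y\}$ is open, given the states of all other edges, is uniformly at least $\rho$. Writing $\omega^{e}$ for the configuration with $e$ open and $\omega_e$ for the one with $e$ closed (agreeing off $e$), the weight formula \eqref{eq-def-Random-Cluster} gives
\begin{align*}
    \pi^{\mathsf{RC}}_G\big(\omega_e=1 \mid \omega|_{E\setminus\{e\}}\big) = \frac{p\, q^{\mathsf{Comp}(\omega^e)}}{p\, q^{\mathsf{Comp}(\omega^e)}+(1-p)\, q^{\mathsf{Comp}(\omega_e)}} \,.
\end{align*}
There are two cases. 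If $x\overset{\omega}{\longleftrightarrow} y$ in $E\setminus\{e\}$, the component counts agree and the probability is $p$. Otherwise $\mathsf{Comp}(\omega_e)=\mathsf{Comp}(\omega^e)+1$ and the probability is $p/(p+q(1-p))$. With $q=2$ the minimum of these is $\rho=\frac{p}{p+2(1-p)}$, since $p+2(1-p)\ge 1$. The inequality $\rho\ge p/2$ holds because $p+2(1-p)=2-p\le 2$.

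Next, the uniform lower bound on the conditional open probability is converted into the coupling $\xi\le\omega$ with $\xi\sim\mathsf{Ber}_G(\rho)$. I would do this with an explicit sequential construction. Order the edges $e_1,\dots,e_m$ and draw i.i.d.\ $U_1,\dots,U_m\sim\mathsf{Unif}[0,1]$. Having determined $\omega_{e_1},\dots,\omega_{e_{j-1}}$, set $\omega_{e_j}=1$ iff $U_j\le \pi^{\mathsf{RC}}_G(\omega_{e_j}=1\mid \omega_{e_1},\dots,\omega_{e_{j-1}})$, and set $\xi_{e_j}=1$ iff $U_j\le\rho$. By the chain rule, $\omega$ has law $\pi^{\mathsf{RC}}_G$, and clearly $\xi\sim\mathsf{Ber}_G(\rho)$. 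The domination $\xi\le\omega$ requires the conditional probability given only the \emph{past} edges to be at least $\rho$. This follows because it is an average, over the law of the remaining edges, of the full conditional probabilities computed above, each of which is $\ge\rho$.

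The only point needing care is that last averaging step: the lower bound must hold conditioned on partial information, not just on all other edges. This is immediate from the tower property, since conditioning on a sub-$\sigma$-algebra preserves a pointwise lower bound. Alternatively, one can cite Holley's inequality, or the standard comparison inequality for random-cluster measures in \cite{Grimmett04}. Finally, the ``equivalently'' clause is Strassen's theorem, but the explicit construction gives the coupling directly, so nothing further is needed.
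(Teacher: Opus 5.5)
Your proposal is correct and follows essentially the same approach as the paper: compute the single-edge conditional probability, which is $p$ or $\frac{p}{p+2(1-p)}$ according to whether $x$ and $y$ are already connected, and deduce domination from this uniform lower bound. The only difference is that the paper simply cites the ``usual finite-volume domination criterion,'' whereas you spell it out via the sequential uniform-variable coupling and the tower-property averaging step, which is a valid proof of that criterion here.
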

\begin{proof}
    Fix an edge $e=(x,y)$ and condition on the states of all edges other than $e$. If $x$ and $y$ are already connected without using $e$, then opening $e$ does not change the number of clusters, and hence
    \begin{align*}
        \pi_{G}^{\mathsf{RC}}(\omega_e=1\mid \omega_{E\setminus\{e\}})=p \,.
    \end{align*}
    If $x$ and $y$ are not connected without using $e$, then opening $e$ decreases the number of clusters by one. Thus the conditional probability that $e$ is open is
    \begin{align*}
        \frac{2^{k-1}p}{2^{k-1}p+2^k(1-p)} =\frac{p}{p+2(1-p)}=\rho \,.
    \end{align*}
    Therefore every single-edge conditional opening probability is at least $\rho$. The usual finite-volume domination criterion implies stochastic domination of product Bernoulli-$\rho$ percolation.
\end{proof}

\begin{lemma}\label{lem-FK-giant-size}
    Given $G$, let $\omega \sim \pi_G^{\mathsf{RC}}$ with $p=1-e^{-2\beta}$ and $q=2$. Write $\mathcal C_1(\omega)$ for the largest open component of $\omega$, breaking ties by a deterministic rule depending only on $\omega$. If \eqref{eq-assump-C-d-epsilon} holds, then with probability $1-o(1)$ over $G \sim \mathcal G_d(n)$, we have
    \begin{align*}
        \pi_G^{\mathsf{RC}}\Big( |\mathcal C_1(\omega)|\le (1-\eta)n \Big) \le e^{-0.01C\eta n} \quad \mbox{for all} \quad \frac{100}{C} \leq \eta \leq 0.01 \,.
    \end{align*}
\end{lemma}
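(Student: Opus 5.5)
The plan is to reduce to independent bond percolation via Lemma~\ref{lem-domination-Ber-FK-perco} and then run a union bound over cuts, using the edge expansion of a typical random regular graph.

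\emph{Step 1 (monotonicity and domination).} The map $\omega\mapsto|\mathcal C_1(\omega)|$ is non-decreasing, since opening an edge can only merge clusters. Hence the event $\{|\mathcal C_1(\omega)|\le(1-\eta)n\}$ is decreasing. Under the coupling $\xi\le\omega$ of Lemma~\ref{lem-domination-Ber-FK-perco}, its $\pi^{\mathsf{RC}}_G$-probability is therefore at most its probability under $\xi\sim\mathsf{Ber}_G(\rho)$. Here $p=1-e^{-2C/d}$, and since $C<C_0\log d\ll d$ by \eqref{eq-assump-C-d-epsilon}, we have $\rho\ge p/2\ge 0.99\,C/d$ for $d>d_0$. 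Consequently $(1-\rho)^{m}\le e^{-0.99Cm/d}$ for every $m\ge0$.

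\emph{Step 2 (reduction to a closed cut).} Suppose every cluster of $\xi$ has size at most $(1-\eta)n$. I claim some union $S$ of clusters satisfies $\eta n/2\le|S|\le n/2$.
\begin{itemize}
\item If some cluster has size in $[\eta n/2,\,n/2]$, take $S$ to be that cluster.
\item If some cluster $\mathcal C$ has size in $(n/2,(1-\eta)n]$, take $S=V\setminus\mathcal C$. This set is a union of clusters with $\eta n\le|S|<n/2$.
\item Otherwise every cluster has size below $\eta n/2$. Adding clusters one at a time then produces a union whose size first enters $[\eta n/2,\eta n)$, which lies inside $[\eta n/2,n/2]$.
\end{itemize}
Since $S$ is a union of clusters, every edge of $E(S,S^c)$ is closed in $\xi$.

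\emph{Step 3 (expansion and union bound).} With probability $1-o(1)$, $G\sim\mathcal G_d(n)$ has second eigenvalue $\lambda\le 2\sqrt{d-1}+o(1)$ by Friedman's theorem. I would either cite this or check that it is among the properties defining the typical family $\mathcal G$. The expander mixing lemma then gives, for $|S|=s\le n/2$,
\[
e(S,S^c)\ \ge\ \frac{(d-\lambda)s(n-s)}{n}\ \ge\ 0.49\,ds
\]
for $d>d_0$. The number of sets of size $s$ is at most $\binom ns\le e^{s\log(en/s)}$, and $s\ge\eta n/2$ gives $\log(en/s)\le\log(2e/\eta)$. Combining these,
\[
\Pr_{\mathsf{Ber}(\rho)}\big(|\mathcal C_1|\le(1-\eta)n\big)\ \le\ \sum_{s\ge \eta n/2} \exp\Big(s\log(2e/\eta)-0.99\cdot0.49\,Cs\Big).
\]
Since $\eta\ge100/C$, we get $\log(2e/\eta)\le\log(eC/50)\le 0.01C$ for $C>C_0$. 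Each summand is then at most $e^{-0.47Cs}$. The sum is dominated by its first term, giving a total of at most $e^{-0.2C\eta n}\le e^{-0.01C\eta n}$ for large $n$.

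\emph{Main obstacle.} The only genuinely delicate point is the entropy–energy balance in Step 3. This is exactly why the lower bound $\eta\ge100/C$ is needed: it keeps the entropy $\log(1/\eta)$ of choosing $S$ at order $\log C$, well below the $\Theta(C)$ cost per vertex of closing the cut. The upper bound $\eta\le0.01$ is not actually used. The rest is routine once one confirms that the expansion estimate holds on the typical event for $G$.
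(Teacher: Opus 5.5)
Your proof is correct, but it takes a different route from the paper. The paper argues \emph{annealed}: it samples $G$ from the configuration model jointly with $\xi\sim\mathsf{Ber}_G(\rho)$. It then exposes the pairing sequentially. During the first $\lfloor ds/4\rfloor$ steps, each revealed partner lands outside $S$ and is retained with conditional probability at least $3\rho/8$. This shows that a fixed $S$ with $|S|=s\le n/2$ has no retained crossing edge with probability at most $e^{-Cs/40}$. The paper then takes the same kind of union bound over cuts, with $|S|\in[\eta n,n/2]$ rather than your $[\eta n/2,n/2]$, and only at the end passes to a fixed $G$ by Markov's inequality.

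You instead argue \emph{quenched}. The spectral bound gives $e(S,S^c)\ge c\,d|S|$ deterministically, after which the union bound concerns only percolation on that fixed graph. Definition~\ref{def-typical-graph}(3), proved in Lemma~\ref{lem:typical-expander}, would already suffice here, with constant $1/4$ in place of your $0.49$.

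Your version yields a cleaner conclusion: the estimate holds deterministically on the expansion event, simultaneously for all $\eta$, and with a better exponent. It also avoids the Markov step. In the paper, that step costs a constant in the exponent, and to cover all $\eta$ at once it needs a union bound over the at most $n$ relevant thresholds $\lfloor(1-\eta)n\rfloor$, which is left implicit. The paper's version, in exchange, uses only the elementary randomness of the pairing for this lemma rather than Friedman's theorem.

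One small correction: the upper bound on $\eta$ is not entirely unused. Your Step~2 needs $\eta\le 1/2$, both so that $[\eta n/2,\eta n)\subseteq[\eta n/2,n/2]$ in the third case and so that the second case is well posed.
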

\begin{proof}
    We first work under the annealed setting where $\Pb$ is the joint law of $G \sim \mathcal G_d(n)$ and $\omega \sim \pi^{\mathsf{RC}}_G$. Using Lemma~\ref{lem-domination-Ber-FK-perco}, it suffices to show that if $\xi \sim \mathsf{Ber}_G(\rho)$, then for all sufficiently large $n$ with $nd$ even we have
    \begin{align*}
        \Pb\Big( |\mathcal C_1(\xi)|\le (1-\eta)n \Big) \le e^{-0.015C\eta n} \quad \mbox{for all} \quad \frac{100}{C} \leq \eta \leq 0.01 \,.
    \end{align*}
    We will work under the configuration model. Fix a deterministic set $S\subseteq[n]$ with $s=|S|\le n/2$, and let $a=ds$ be the number of half-edges belonging to $S$. Let $E_S$ be the event that, after percolation, there is no retained edge between $S$ and $S^c$. Expose the random pairing of half-edges sequentially. During the first $m=\lfloor a/4 \rfloor$ such steps there is always an unmatched half-edge in $S$ as each step removes at most two half-edges of $S$. Also, before step $j<m$, at most $j$ half-edges outside $S$ have been removed. Since $s\le n/2$, initially there are at least $dn/2$ outside half-edges, so the number of unmatched outside half-edges is at least
    \begin{align*}
        \frac{dn}{2}-m \ge \frac{dn}{2}-\frac{ds}{4} \ge \frac{3dn}{8}.
    \end{align*}
    The total number of unmatched half-edges is at most $dn$. Hence, at each of these steps, conditionally on the past, the probability that the revealed partner lies outside $S$ and that the resulting edge is retained is at least $\frac{3}{8}\rho$. If $E_S$ occurs, then none of these $m$ trials produces a retained crossing edge. Consequently,
    \begin{align*}
        \Pb(E_S) \le \left(1-\frac{3\rho}{8}\right)^m \le \exp\left(-\frac{3\rho m}{8}\right) \le \exp\left(-\frac{\rho ds}{20}\right) \leq \exp\left( -\frac{Cs}{40} \right) \,,
    \end{align*}
    where the last inequality follows from $\rho\geq \frac{p}{2}$ and $p=1-e^{-\frac{2C}{d}} \geq \frac{C}{d}$ from \eqref{eq-assump-C-d-epsilon}. Let $F$ be the event that there exists a set $S\subseteq[n]$ with $\eta n\le |S|\le n/2$ and no retained edge between $S$ and $S^c$. By the union bound and the estimate above,
    \begin{align*}
        \Pb(F) &\le \sum_{s=\lceil\eta n\rceil}^{\lfloor n/2\rfloor} \binom ns \exp\left(-\frac{Cs}{40}\right) \le \sum_{s=\lceil\eta n\rceil}^{\lfloor n/2\rfloor} \exp\left(s\log\frac{en}{s}-\frac{Cs}{40}\right) \\
        &\le \sum_{s=\lceil\eta n\rceil}^{\infty} \exp\left(-0.02Cs\right) \le e^{-0.019C\eta n} \,.
    \end{align*}
    It remains to observe that $F^c$ implies the existence of an open component of size at least $(1-\eta)n$. Indeed, if every open component had size at most $n/2$, then either some component itself had size in $[\eta n,n/2]$, or else all components had size less than $\eta n$ and one could form a union of components whose size lies in $[\eta n,2\eta n]\subseteq[\eta n,n/2]$. In both cases this union has no retained edge to its complement, contradicting $F^c$. Hence the largest open component has size greater than $n/2$. If its complement had size at least $\eta n$, that complement would itself be a union of open components of size in $[\eta n,n/2]$ with no retained edge to its complement, again contradicting $F^c$. Therefore $|\mathcal C_1(\xi)|>(1-\eta)n$ on $F^c$, proving the estimate in the annealed setting. The original lemma then follows from a Markov inequality.
\end{proof}

\begin{lemma}{\label{lem-TV-dist-giant-is-plus}}
    Consider the Edwards-Sokal coupling $(\omega,\sigma) \in \{ 0,1 \}^{E} \times \{ -1,1 \}^{V}$ such that $\sigma \sim \pi_{G,\beta}$ and $\omega \sim \pi_{G,p,q}^{\mathsf{RC}}$ where $p=1-e^{-2\beta}$ and $q=2$. Write $\mathcal C_1(\omega)$ for the largest open component of $\omega$, breaking ties by a deterministic rule depending only on $\omega$. Define
    \begin{align*}
        \pi_{G,+}^{\mathsf{giant}}:=\mathcal L(\sigma\mid A) \quad \mbox{where} \quad A=\big\{ \sigma_v=+ \mbox{ for all }v \in \mathcal C_1(\omega) \big\} \,.
    \end{align*}
    Suppose that $\beta=\frac{C}{d}$ where \eqref{eq-assump-C-d-epsilon} holds. Then, with probability at least $1-o(1)$ over $G \sim \mathcal G_d(n)$,
    \begin{align*}
        \mathsf{TV}\Big( \pi_{G,+}, \pi_{G,+}^{\mathsf{giant}} \Big) \le 4e^{-10^{-4}Cn} \,.
    \end{align*}
\end{lemma}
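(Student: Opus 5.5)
We reduce the comparison to a statement about the random-cluster marginal, using Lemma~\ref{lem-FK-giant-size} with a fixed choice $\eta=0.01$, which lies in the allowed range since $C>C_0\ge 10^4$. Let $\mathcal G_0$ be the event, of probability $1-o(1)$ over $G$, on which the conclusion of Lemma~\ref{lem-FK-giant-size} holds. Fix $G\in\mathcal G_0$ and define the good event $\mathcal W=\{|\mathcal C_1(\omega)|>0.99n\}$, so that $\pi^{\mathsf{ES}}_G(\mathcal W^c)\le e^{-10^{-4}Cn}$.

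\emph{Both target laws are close to the same law $\mathcal L(\sigma\mid A\cap\mathcal W)$.} First compare $\pi_{G,+}$ with $\mathcal L(\sigma\mid \{m(\sigma)\ge 0\}\cap\mathcal W)$. By the spin-flip symmetry $\sigma\mapsto-\sigma$, we have $\pi(\Omega^+)\ge 1/2$. Hence conditioning on $\Omega^+$ changes the mass of $\mathcal W^c$ by at most a factor $2$, and the total variation cost is at most $2\pi^{\mathsf{ES}}(\mathcal W^c)/\pi(\Omega^+)\le 4e^{-10^{-4}Cn}\cdot\tfrac12$. Next, on $\mathcal W$, the giant has more than $0.99n$ vertices, all of the same sign. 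Therefore $\{\sigma\equiv+ \text{ on }\mathcal C_1\}\cap\mathcal W$ and $\{m(\sigma)\ge0\}\cap\mathcal W$ coincide, since $m(\sigma)\ge 0.98>0$ in the first case and $m(\sigma)\le-0.98$ if the giant is minus. So $A\cap\mathcal W=\Omega^+\cap\mathcal W$ exactly. Similarly, $\mathbb P(A)=1/2$ exactly, because the sign of $\mathcal C_1$ is a fair coin independent of $\omega$. Hence $\mathsf{TV}(\mathcal L(\sigma\mid A),\mathcal L(\sigma\mid A\cap\mathcal W))\le \mathbb P(\mathcal W^c\mid A)=\pi^{\mathsf{RC}}(\mathcal W^c)\le e^{-10^{-4}Cn}$, using the independence of the coin from $\omega$.

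\emph{Constants.} We use the elementary bound $\mathsf{TV}(\mathcal L(X\mid B),\mathcal L(X\mid B\cap W))\le \mathbb P(W^c\mid B)$. With $\mathbb P(W^c\mid\Omega^+)\le 2e^{-10^{-4}Cn}$ and $\mathbb P(W^c\mid A)\le e^{-10^{-4}Cn}$, the triangle inequality gives a total bound of $3e^{-10^{-4}Cn}\le 4e^{-10^{-4}Cn}$. The required estimate is $e^{-0.01C\eta n}$ with $\eta=0.01$, which is exactly $e^{-10^{-4}Cn}$, so Lemma~\ref{lem-FK-giant-size} suffices.

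\emph{Possible obstacle.} The only subtle point is to check that Lemma~\ref{lem-FK-giant-size} is stated for $\omega\sim\pi^{\mathsf{RC}}_G$ with exactly the parameters used in the Edwards-Sokal coupling; it is. A second check is that the tie-breaking rule for $\mathcal C_1$ depends only on $\omega$, which the statement guarantees; this is what keeps the sign of the giant a fair coin independent of $\omega$.
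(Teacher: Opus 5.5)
Your proof is correct and is essentially the paper's argument: both use that $A$ and $\{m(\sigma)\ge 0\}$ coincide on $\{|\mathcal C_1(\omega)|>0.99n\}$, together with $\Pb(A)=\tfrac12$, a lower bound on $\pi(\Omega^+)$, and Lemma~\ref{lem-FK-giant-size} at $\eta=0.01$. The only difference is bookkeeping: you pass through the intermediate law $\mathcal L(\sigma\mid A\cap\mathcal W)$ and obtain the constant $3$, whereas the paper applies the general bound $\mathsf{TV}\le \Pb(A\triangle B)/(\Pb(A)\wedge\Pb(B))$ with both probabilities at least $\tfrac14$. Your intermediate display has a stray factor and should read $\pi^{\mathsf{ES}}(\mathcal W^c)/\pi(\Omega^+)\le 2e^{-10^{-4}Cn}$, which is the value you actually use later.
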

\begin{proof}
    We begin with a simple probabilistic observation. Let $X$ be a random variable on a probability space, and let $A,B,D$ be events such that $A\triangle B\subseteq D$ and $\Pb(A),\Pb(B)\ge 1/4$. We claim that
    \begin{align}
        \mathsf{TV}\Big( \mathcal L(X\mid A),\mathcal L(X\mid B) \Big) \le 4\Pb(D) \,.  \label{eq-TV-dist-conditional-measure}
    \end{align}
    Indeed, by the data-processing inequality, we have 
    \begin{align*}
        \mathsf{TV}\Big( \mathcal L(X\mid A),\mathcal L(X\mid B) \Big) &\leq \mathsf{TV}\Big( \Pb(\cdot \mid A),\Pb(\cdot \mid B) \Big) \\
        &=\frac{1}{2} \mathbb E_{\Pb}\Bigg[ \Big| \frac{\mathbf 1_A}{\Pb(A)}- \frac{\mathbf 1_B}{\Pb(B)} \Big| \Bigg] \leq \frac{\Pb(A\triangle B)}{\Pb(A)\wedge\Pb(B)} \le 4\Pb(D) \,.
   \end{align*}
   Now, note that 
   \begin{align*}
       &\pi_{G,+}^{\mathsf{giant}}:=\mathcal L(\sigma\mid A) \quad \mbox{where} \quad A=\Big\{ \sigma_v=+1 \mbox{ for all }v \in \mathcal C_1(\omega) \Big\} \,; \\
       &\pi_{G,+} := \mathcal L(\sigma\mid B) \quad \mbox{where} \quad B=\Big\{ \sum_{v \in V} \sigma_v\geq 0 \Big\} \,.
   \end{align*}
   Let $D=\{ |\mathcal C_1(\omega)|\le 0.99n \}$. It is clear that $A \triangle B \subseteq D$. In addition, by Lemma~\ref{lem-FK-giant-size}, we have $\Pb(D) \leq \exp(-10^{-4}Cn)$. Also, it is clear that $\Pb(A) =\frac{1}{2}$. Thus, $\Pb(B) \geq \Pb(A)-\Pb(D) \geq \frac{1}{4}$. We then have the desired lemma from \eqref{eq-TV-dist-conditional-measure}.
\end{proof}

\subsection{Log-Sobolev estimates}{\label{subsec:log-Sobolev-estimate}}

For a probability measure $\mu$ on $\{-1,+1\}^{V}$ and a nonnegative function $F$, write 
\[
  \Ent_\mu(F)=\mu[F\log F]-\mu[F]\log\mu[F] \,.
\]
For $v\in V$, let $\Var_v(f)$ and $\Ent_v(F)$ denote conditional variance and conditional entropy with respect to the spin at $v$, conditioned on all other spins. Define the log-Sobolev constant $c_{\mathsf{LS}}=c_{\mathsf{LS}}(\mu)$ by
\begin{equation}{\label{eq:cls-def}}
    c_{\mathsf{LS}}(\mu):=\inf_{f\not\equiv \mathrm{const}} \left\{ \frac{\mathcal E_\mu(f)}{\Ent_\mu(f^2)} \right\}, \quad \mathcal E_\mu(f):=\sum_{v\in V}\mu\left[\Var_v(f)\right] \,.
\end{equation}
In addition, define
\begin{equation}{\label{eq:tilde-cls-def}}
    \widetilde{c}_{\mathsf{LS}}(\mu):=\inf_{f\not\equiv \mathrm{const}} \left\{ \frac{\widetilde{\mathcal E}_\mu(f)}{\Ent_\mu(f^2)} \right\}, \quad \widetilde{\mathcal E}_\mu(f):=\sum_{v\in V}\mu\left[\Ent_v(f^2)\right] \,.
\end{equation}
We will collect in this subsection some facts about $c_{\mathsf{LS}}$ and $\widetilde{c}_{\mathsf{LS}}$. We will first need the following result for one-site log-Sobolev estimates in \cite{DSC96, GZ02}.
\begin{lemma}{\label{lem:two-point-LSI}}
    For $s\in\{ \pm 1 \}$, let $\nu_h(s)=\frac{e^{hs}}{2\cosh h}$. Then, for every $g:\{-1,+1\}\to\R$,
    \begin{equation}{\label{eq:two-point-exact}}
        \Ent_{\nu_h}(g^2) \le \ell(h)\,\Var_{\nu_h}(g), \quad \ell(h):=
        \begin{cases}
            \frac{2|h|}{\tanh |h|},  &h\ne0 \,; \\
            2,  &h=0 \,.
        \end{cases}
    \end{equation}
    Moreover, $\ell(h)\le 2(1+|h|)$.
\end{lemma}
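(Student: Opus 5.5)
Since $g$ is a function on a two-point space, the inequality \eqref{eq:two-point-exact} reduces to an explicit two-variable inequality. By symmetry ($s\mapsto -s$ swaps $h$ and $-h$) we may assume $h\ge 0$. Set $p=\nu_h(+1)=\frac{e^h}{2\cosh h}$ and $q=1-p$, so that $\tanh h = p-q$ and $h=\frac12\log(p/q)$. The statement is then equivalent to the classical sharp log-Sobolev inequality for the asymmetric Bernoulli measure on two points, due to Diaconis--Saloff-Coste \cite{DSC96}. That result states
\[
\Ent_{\nu}(g^2)\le \frac{\log(p/q)}{p-q}\cdot pq\,(g(1)-g(-1))^2 ,
\]
with constant $2$ when $p=q$. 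Since $\Var_{\nu}(g)=pq(g(1)-g(-1))^2$ and $\frac{\log(p/q)}{p-q}=\frac{2h}{\tanh h}$, this is exactly \eqref{eq:two-point-exact} with $\ell(h)=2h/\tanh h$. The $h=0$ case is the symmetric Gross inequality with constant $2$, which is the continuous limit since $2h/\tanh h\to 2$. So the first step is simply to translate the notation and invoke \cite{DSC96} (see also \cite{GZ02}).

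To keep the proof self-contained, one can sketch why the sharp constant holds. By homogeneity and the inequality $\Ent(g^2)\le \Ent(|g|^2)$ with $\Var(|g|)\le\Var(g)$, it suffices to take $g\ge0$. Normalize $g(1)=1+x\sqrt{q/p}$ and $g(-1)=1-x\sqrt{p/q}$, so that $\nu[g]=1$. One then studies
\[
\Phi(x)=\ell(h)\Var(g)-\Ent(g^2).
\]
Checking $\Phi\ge0$ is a one-variable calculus problem: $\Phi(0)=\Phi'(0)=0$, and one shows the second-order behavior dominates. This last verification is the main obstacle if done by hand. It is the genuinely delicate part of \cite{DSC96}, where the optimal constant is attained only asymptotically, so I would cite rather than reprove it.

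The final claim, $\ell(h)\le 2(1+|h|)$, is elementary. For $h>0$ it is equivalent to $h\le (1+h)\tanh h$, i.e.
\[
h\cosh h\le (1+h)\sinh h .
\]
For $h\ge 1$ this follows because $\tanh h\ge \tanh 1>1/2$, so $(1+h)\tanh h> (1+h)/2$. That handles $h\ge1$ directly, since $(1+h)/2\ge h$ fails only for $h>1$; for those $h$, use instead $\tanh h\ge 1-2e^{-2h}$, giving $(1+h)(1-2e^{-2h})\ge h$, which holds since $1\ge 2(1+h)e^{-2h}$ for $h\ge1$.

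For $0<h<1$, expand in power series. We have
\[
(1+h)\sinh h-h\cosh h=\sinh h+h(\sinh h-\cosh h)=\sinh h-he^{-h}.
\]
This is nonnegative because $\sinh h\ge h\ge he^{-h}$. In fact this identity works for all $h>0$, so the case split above is unnecessary. Combined with $\ell(0)=2$, this gives $\ell(h)\le 2(1+|h|)$ for all $h$.
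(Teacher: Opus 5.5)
Your proposal is correct and matches the paper's proof. With $p=\nu_h(+1)$ and $q=\nu_h(-1)$ one has $\log(p/q)=2h$ and $p-q=\tanh h$, so the sharp two-point constant of Diaconis--Saloff-Coste gives exactly $\ell(h)$, and $\ell(h)\le 2(1+|h|)$ reduces to $h/\tanh h\le 1+h$, which your identity $(1+h)\sinh h-h\cosh h=\sinh h-he^{-h}\ge 0$ settles for all $h>0$. You can drop the first case split and the calculus sketch, since you rely on the citation exactly as the paper does; the sketch, resting on $\Phi(0)=\Phi'(0)=0$ and second-order behaviour, would give only local nonnegativity anyway.
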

\begin{proof}
    Let $p=\nu_h(+1)$ and $q=\nu_h(-1)$. Using \cite[Theorem~A.2]{DSC96}, we have
    \begin{align*}
        \Ent_{(p,q)}(g^2) \le \frac{\log p-\log q}{p-q}\,\Var_{(p,q)}(g) \,,
    \end{align*}
    with the continuous value $2$ at $p=q=1/2$. Since $\log(p/q)=2h$ and $p-q=\tanh h$, this yields \eqref{eq:two-point-exact}. To see that $\ell(h) \le 2(1+|h|)$, it suffices to note that $x/\tanh x\le1+x$.
\end{proof}

Approximate tensorization of entropy provides a route from one-site log-Sobolev estimates to a log-Sobolev inequality for the full measure; see \cite{Marton15} for results of this type on discrete product spaces. Here we use the following Ising-specific approximate tensorization theorem of \cite[Theorem~4.1]{Lee24}.
\begin{lemma}\label{thm:lee}
    Let
    \begin{align*}
        \nu_{\mathcal J,h}(x) \propto \exp\!\left(\frac12\langle x,\mathcal Jx\rangle+\langle h,x\rangle\right), \quad x\in\{-1,+1\}^n \,,
    \end{align*}
    where $\mathcal J$ is symmetric and positive semidefinite. If $\|\mathcal J\|_{\op}<1$, then for every nonnegative $F$,
    \begin{equation*}
        \Ent_{\nu_{\mathcal J,h}}(F) \le \frac{1}{1-\|\mathcal J\|_{\op}} \sum_{i=1}^n \nu_{\mathcal J,h}\!\left[\Ent_i(F)\right] \,.
    \end{equation*}
\end{lemma}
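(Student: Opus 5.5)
This is Lee's approximate tensorization theorem, cited as \cite[Theorem~4.1]{Lee24}, so in the paper it may simply be quoted. If I were to prove it, I would use the Hubbard--Stratonovich (Gaussian) decomposition, which turns $\nu_{\mathcal J,h}$ into a mixture of product measures, combined with an entropic stability estimate for the mixing law.

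\emph{Step 1 (decomposition).} Since $\mathcal J\succeq 0$, write $\exp(\frac12\langle x,\mathcal Jx\rangle)=\E_{Y}\exp(\langle Y,x\rangle)$ with $Y\sim N(0,\mathcal J)$. Then $\nu=\int \mu_y\,\rho(\dd y)$, where $\mu_y$ is the product measure with external field $h+y$. The mixing law $\rho$ has density proportional to $e^{-\frac12\langle y,\mathcal J^{-1}y\rangle}\prod_i\cosh(h_i+y_i)$ on the range of $\mathcal J$.

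\emph{Step 2 (split the entropy).} Use the chain rule
\[
\Ent_\nu(F)=\int\Ent_{\mu_y}(F)\,\rho(\dd y)+\Ent_\rho\big(y\mapsto\mu_y[F]\big).
\]
The first term is handled by exact tensorization for product measures:
\[
\Ent_{\mu_y}(F)\le\sum_i\mu_y[\Ent_i^{\mu_y}(F)].
\]
Moreover, $\mu_y$ conditioned on $x_{-i}$ is the product marginal at $i$, whereas the $\nu$-conditional law at $i$ is the mixture over the posterior of $y$ given $x$. By convexity of entropy in the measure, averaging gives $\int\sum_i\mu_y[\Ent_i^{\mu_y}F]\,\rho(\dd y)\le\sum_i\nu[\Ent_i F]$. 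This is the careful point: one must check that it is the conditional entropies, not the marginal ones, that are being compared.

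\emph{Step 3 (the mixing term; main obstacle).} The whole difficulty is bounding $\Ent_\rho(\mu_y[F])\le\frac{\|\mathcal J\|_{\op}}{1-\|\mathcal J\|_{\op}}\sum_i\nu[\Ent_iF]$. I would run stochastic localization, driving the Gaussian channel observation of $x$ at rate $\mathcal J$. Along this process the covariance of the tilted measures is controlled by $\|\mathcal J\|_{\op}$, because one-site variances are at most $1$ and $\mathcal J$ is PSD. That gives entropic stability with constant $\|\mathcal J\|_{\op}$. Integrating the entropy decay then yields the factor $\frac{1}{1-\|\mathcal J\|_{\op}}$ after combining with Step 2. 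Equivalently, one can try Brascamp--Lieb for $\rho$, which is log-concave with Hessian at least $\mathcal J^{-1}-I$, but this only works directly when $\|\mathcal J\|_{\op}<1$ makes that positive. Summing the two contributions gives the stated bound.
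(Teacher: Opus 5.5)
The paper does not prove this lemma; it imports it from Lee's Theorem~4.1. Your Steps~1--2 are the right framework and are correct, up to two slips. The comparison $\int\sum_i\mu_y[\Ent_i^{\mu_y}F]\,\rho(\dd y)\le\sum_i\nu[\Ent_iF]$ comes from \emph{concavity} of $\pi\mapsto\Ent_\pi(F)$, not convexity. The relevant posterior is that of $y$ given $x_{-i}$, not given $x$.

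The gap is Step~3, which carries all the content. Your reason for covariance control (one-site variances at most $1$ and $\mathcal J\succeq0$) never uses $\|\mathcal J\|_{\op}<1$, and it does not bound $\|\mathrm{Cov}\|_{\op}$. Curie--Weiss with $\mathcal J=\frac{\beta}{n}\mathbf 1\mathbf 1^{\mathsf T}$ and $\beta>1$ satisfies both conditions yet has covariance of order $n$. A localization proof needs an entropic-stability bound along the whole path, a KL statement rather than just a covariance bound. It must hold for the intermediate measures with interaction $(1-t)\mathcal J$ and arbitrary field, with the time-dependent constant $\alpha_t=\|\mathcal J\|_{\op}/(1-(1-t)\|\mathcal J\|_{\op})$. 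Its integral $\int_0^1\alpha_t\,\dd t=\log\frac{1}{1-\|\mathcal J\|_{\op}}$ is what produces the stated factor. A time-uniform constant $\|\mathcal J\|_{\op}$, as you wrote, would integrate to $e^{\|\mathcal J\|_{\op}}$, not $\frac{1}{1-\|\mathcal J\|_{\op}}$. Proving that stability estimate is the real work, and the proposal does not supply it.

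The route you mention only in passing is the one that works. You need the Bakry--\'Emery log-Sobolev inequality rather than Brascamp--Lieb, which only controls variances, plus a gradient-to-entropy conversion.

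\begin{itemize}
\item \emph{Reduction.} Since $\langle x,x\rangle=n$ on the cube, replacing $\mathcal J$ by $\mathcal J+\epsilon I$ leaves $\nu$ unchanged. So assume $\mathcal J\succ0$ and let $\epsilon\downarrow0$ at the end.
\item \emph{Log-concavity.} Then $\rho\propto e^{-V}$ with $\nabla^2V=\mathcal J^{-1}-\mathrm{diag}(\sech^2(h_i+y_i))\succeq\kappa I$, where $\kappa:=\|\mathcal J\|_{\op}^{-1}-1>0$. This is exactly where the hypothesis enters.
\item \emph{Bakry--\'Emery.} With $G(y):=\mu_y[F]$, it gives $\Ent_\rho(G)\le\frac{1}{2\kappa}\int|\nabla G|^2/G\,\dd\rho$.
\item \emph{Gradient.} Because $\mu_y$ is a product measure, $\partial_{y_i}G=\mathrm{Cov}_{\mu_y}(F,x_i)=\mu_y[\mathrm{Cov}_{\mu_{y,i}}(F,x_i)]$. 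Cauchy--Schwarz then gives $(\partial_{y_i}G)^2/G\le\mu_y\bigl[\mathrm{Cov}_{\mu_{y,i}}(F,x_i)^2/\mu_{y,i}[F]\bigr]$.
\item \emph{Two-point bound.} For any law $\pi$ on $\{\pm1\}$, apply Pinsker to the tilt $F\pi/\pi[F]$ to get $\mathrm{Cov}_\pi(F,x)^2/\pi[F]\le2\Ent_\pi(F)$.
\item \emph{Conclusion.} Together with your Step~2 comparison, $\int|\nabla G|^2/G\,\dd\rho\le2\sum_i\nu[\Ent_iF]$. Hence $\Ent_\rho(G)\le\frac{\|\mathcal J\|_{\op}}{1-\|\mathcal J\|_{\op}}\sum_i\nu[\Ent_iF]$. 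Adding the product-measure term gives exactly $\frac{1}{1-\|\mathcal J\|_{\op}}$.
\end{itemize}

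This is the standard Hubbard--Stratonovich/Bakry--\'Emery argument, in the spirit of Bauerschmidt--Bodineau. It needs neither stochastic localization nor any covariance or entropic-stability estimate for the intermediate Ising measures.
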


We also use two standard comparison principles. They are stated in the forms needed below.
\begin{lemma}{\label{lem-bounded-perturbation}}
    Let $\mu$ and $\nu$ be probability measures on the same finite spin space. Suppose $e^{-A}\le \frac{\mathsf d\nu}{\mathsf d\mu}\le e^A$ pointwise. Then $\widetilde{c}_{\mathsf{LS}}(\nu)\ge e^{-4A} \widetilde{c}_{\mathsf{LS}}(\mu)$. Consequently, if $\nu\propto e^V\mu$ and $\mathsf{osc}(V):=\max V-\min V\le A$, then $\widetilde{c}_{\mathsf{LS}}(\nu)\ge e^{-4A} \widetilde{c}_{\mathsf{LS}}(\mu)$.
\end{lemma}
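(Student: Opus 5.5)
The plan is to compare the Dirichlet form and the entropy separately under the density bounds. The statement is symmetric in $\mu,\nu$ up to replacing $A$ by the same $A$, so it suffices to bound $\widetilde{\mathcal E}_\nu(f)$ from below and $\Ent_\nu(f^2)$ from above in terms of the corresponding $\mu$-quantities. Each comparison should cost a factor $e^{2A}$, for a total of $e^{-4A}$.

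\emph{Entropy.} I would use the variational characterization
\[
\Ent_\nu(F)=\inf_{c>0}\nu\big[F\log(F/c)-F+c\big],
\]
in which the integrand is pointwise nonnegative. Since $\frac{\mathsf d\nu}{\mathsf d\mu}\le e^{A}$, this gives $\Ent_\nu(F)\le e^{A}\Ent_\mu(F)$ for every $F\ge0$. The one-site version holds in exactly the same way.

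\emph{Local entropies.} For a fixed site $v$ and fixed outside spins $\sigma_{-v}$, the conditional measures $\nu(\cdot\mid\sigma_{-v})$ and $\mu(\cdot\mid\sigma_{-v})$ on the two-point space have density ratio in $[e^{-2A},e^{2A}]$. This is because the conditional density is the ratio of the full density to its conditional normalization. Applying the same variational bound in the reverse direction gives
\[
\Ent_v^{\nu}(f^2)\ge e^{-2A}\Ent^{\mu}_v(f^2)
\]
pointwise. Averaging against $\nu\ge e^{-A}\mu$ then yields $\widetilde{\mathcal E}_\nu(f)\ge e^{-3A}\widetilde{\mathcal E}_\mu(f)$.

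\emph{Combining.} Putting the two comparisons together gives
\[
\frac{\widetilde{\mathcal E}_\nu(f)}{\Ent_\nu(f^2)}\ge e^{-4A}\,\frac{\widetilde{\mathcal E}_\mu(f)}{\Ent_\mu(f^2)},
\]
and taking the infimum over $f$ proves the first claim. The main care point is this bookkeeping of exponents, since the conditional density ratio is $e^{2A}$ rather than $e^{A}$. If one obtains $e^{-5A}$ instead, the constant in the statement forces the sharper accounting above.

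\emph{Tilt.} For $\nu\propto e^V\mu$, the normalization $Z=\mu[e^V]$ lies in $[e^{\min V},e^{\max V}]$. Hence $\frac{\mathsf d\nu}{\mathsf d\mu}=e^V/Z\in[e^{-\osc(V)},e^{\osc(V)}]$. Moreover, the conditional ratios at a single site involve only differences of $V$, which are bounded by $\osc(V)\le A$, so the first part applies.
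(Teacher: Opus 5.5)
Your proposal is correct and follows the paper's proof essentially step for step. The variational formula gives $\Ent_\nu(f^2)\le e^{A}\Ent_\mu(f^2)$, the conditional two-point densities lie in $[e^{-2A},e^{2A}]$ so that $\Ent_v^\nu(f^2)\ge e^{-2A}\Ent_v^\mu(f^2)$, and one more factor $e^{-A}$ from averaging yields $e^{-4A}$; the only cosmetic difference is that you average against $\nu\ge e^{-A}\mu$ directly rather than through the exterior marginals, which amounts to the same estimate.
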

\begin{proof}
    Let $h=\mathsf d\nu/\mathsf d\mu$. For $a>0$ put $\Phi_a(t)= t\log(t/a)-t+a\ge0$. The variational identity
    \begin{align*}
        \Ent_\mu(f)=\inf_{a>0}\mathbb E_\mu[\Phi_a(f)]
    \end{align*}
    gives
    \begin{equation}{\label{eq-comparison-Ent(f)}}
        \Ent_\nu(f)\le e^A\Ent_\mu(f) \,. 
    \end{equation}
    Fix a site $x$. For any exterior configuration, the conditional density of $\nu_x(\cdot\mid\eta)$ relative to $\mu_x(\cdot\mid\eta)$ is $h$ divided by its conditional $\mu_x$-mean. Since both $h$ and this conditional mean lie in $[e^{-A},e^A]$, this conditional density is at least $e^{-2A}$. Hence
    \begin{align*}
        \Ent_x^\nu(f)(\eta)\ge e^{-2A}\Ent_x^\mu(f)(\eta) \,.
    \end{align*}
    The exterior marginal density of $\nu$ relative to the exterior marginal of $\mu$ is again at least $e^{-A}$. Therefore
    \begin{equation}{\label{eq-comparison-Entx(f)}}
        \mathbb E_\nu[\Ent_x^\nu(f)] \ge e^{-3A} \mathbb E_\mu[\Ent_x^\mu(f)] \,.
    \end{equation}
    Combining \eqref{eq-comparison-Ent(f)}, \eqref{eq-comparison-Entx(f)}, and the log-Sobolev inequality \eqref{eq:tilde-cls-def} for $\mu$ yields
    \begin{align*}
        \Ent_\nu(f) \le \frac{e^{4A}}{\widetilde{c}_{\mathsf{LS}}(\mu)} \sum_x \mathbb E_\nu[\Ent_x^\nu(f)]
    \end{align*}
    which proves the first claim. If $\nu\propto e^V\mu$ and $\mathsf{osc}(V)\le A$, then the normalized density has ratio at most $e^{A}$, and hence lies between $e^{-A}$ and $e^{A}$, so the second claim follows by directly applying the first claim.
\end{proof}

\begin{lemma}\label{lem-marginal}
    Let $\mu$ be a probability measure on $\{\pm1\}^{\Lambda}$ and let $S\subset\Lambda$. Let $\mu_S$ be the marginal of $\mu$ on $S$. Then
    \begin{align*}
        \widetilde{c}_{\mathsf{LS}}(\mu_S)\ge \widetilde{c}_{\mathsf{LS}}(\mu) \,.
    \end{align*}
\end{lemma}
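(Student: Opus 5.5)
We need to prove that the marginal inherits the entropy-type log-Sobolev constant, $\widetilde{c}_{\mathsf{LS}}(\mu_S)\ge \widetilde{c}_{\mathsf{LS}}(\mu)$. The plan is to test the inequality for $\mu$ on functions that depend only on the coordinates in $S$, and to compare both sides term by term.

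Fix $f:\{\pm1\}^S\to\R$, not constant, and let $\bar f:=f\circ\mathrm{pr}_S$ be its lift to $\{\pm1\}^\Lambda$. Since $\mu_S$ is the push-forward of $\mu$ under $\mathrm{pr}_S$, we have $\mu[\bar f^2\log\bar f^2]=\mu_S[f^2\log f^2]$ and $\mu[\bar f^2]=\mu_S[f^2]$. Hence
\[
\Ent_\mu(\bar f^2)=\Ent_{\mu_S}(f^2),
\]
and $\bar f$ is non-constant. Applying the definition \eqref{eq:tilde-cls-def} to $\bar f$ gives
\[
\widetilde{c}_{\mathsf{LS}}(\mu)\,\Ent_{\mu_S}(f^2)\le \sum_{v\in\Lambda}\mu\big[\Ent_v(\bar f^2)\big].
\]

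Next we bound the right-hand side. For $v\notin S$, the function $\bar f$ does not depend on $\sigma_v$. So $\Ent_v(\bar f^2)=0$ pointwise, and these terms vanish.

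The key step is the case $v\in S$. Here we must show
\[
\mu\big[\Ent^\mu_v(\bar f^2)\big]\le \mu_S\big[\Ent^{\mu_S}_v(f^2)\big],
\]
where the left side conditions on $\sigma_{\Lambda\setminus\{v\}}$ and the right side only on $\sigma_{S\setminus\{v\}}$. Write $\mathcal G=\sigma(\sigma_{\Lambda\setminus\{v\}})$ and $\mathcal H=\sigma(\sigma_{S\setminus\{v\}})$, so that $\mathcal H\subset\mathcal G$, and let $F=\bar f^2\ge0$. Then $\mu[\Ent_v^\mu(F)]=\mu[F\log F]-\mu[\mu(F\mid\mathcal G)\log\mu(F\mid\mathcal G)]$. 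The analogous expression with $\mathcal H$ in place of $\mathcal G$ equals $\mu_S[\Ent_v^{\mu_S}(f^2)]$, because $F$ and $\mu(F\mid\mathcal H)$ are functions of $\sigma_S$, and conditioning on $\mathcal H$ under $\mu$ coincides with conditioning under $\mu_S$.

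It therefore suffices to show
\[
\mu\big[\phi(\mu(F\mid\mathcal G))\big]\ge\mu\big[\phi(\mu(F\mid\mathcal H))\big],\qquad \phi(t)=t\log t.
\]
This follows from the tower property $\mu(F\mid\mathcal H)=\mu\big(\mu(F\mid\mathcal G)\mid\mathcal H\big)$ together with conditional Jensen for the convex function $\phi$. In words, conditional entropy with respect to a finer $\sigma$-field is smaller.

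Summing over $v\in S$ gives
\[
\widetilde{c}_{\mathsf{LS}}(\mu)\,\Ent_{\mu_S}(f^2)\le\sum_{v\in S}\mu_S\big[\Ent_v(f^2)\big].
\]
Taking the infimum over $f$ yields the claim.

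The only step needing care is the comparison of conditional entropies across the two $\sigma$-fields. A secondary point is measure-zero conditioning when $\mu$ has zeros. Both are handled by working with conditional expectations directly, since everything is on a finite space, and the Jensen argument needs no positivity of $\mu$.
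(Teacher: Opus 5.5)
Your proposal is correct and follows the same route as the paper: lift $f$ to $\Lambda$, observe that the entropies agree and that the single-site terms for $v\notin S$ vanish, and compare the single-site entropies for $v\in S$ using convexity of $t\log t$. Your tower-property and conditional-Jensen argument is a more explicit version of the paper's remark that the coarser conditional law under $\mu_S$ is a mixture of the finer conditional laws under $\mu$.
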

\begin{proof}
    Let $f$ be a nonnegative function depending only on the spins in $S$. Then $\Ent_{\mu_S}(f)=\Ent_\mu(f)$. For $x\notin S$, the single-site entropy $\Ent_x^\mu(f)$ is zero. For $x\in S$, the conditional law of $\sigma_x$ under $\mu_S$, given $\sigma_{S\setminus\{x\}}$, is a mixture of the finer conditional laws under $\mu$, where the spins in $\Lambda\setminus S$ are also conditioned on. Since $t\mapsto t\log t$ is convex, we have 
    \begin{align*}
        \mathbb E_{\mu_S}[\Ent_x^{\mu_S}(f)] \ge \mathbb E_\mu[\Ent_x^\mu(f)] \,.
    \end{align*}
    Applying the log-Sobolev inequality \eqref{eq:tilde-cls-def} for $\mu$ to functions depending only on $S$ gives the result.
\end{proof}

\begin{remark}\label{rmk-tv-ratio}
    We shall repeatedly use the following elementary consequence of a bounded density ratio. If two probability measures have densities whose pointwise ratio has supremum divided by infimum at most $R$, then their total variation distance is at most $(R-1)/(R+1)$ \cite{AZ97}. In particular, if the log-density oscillation is at most $4\beta$, then the total variation distance is at most
    \[
        \frac{e^{4\beta}-1}{e^{4\beta}+1}=\tanh(2\beta) \,.
    \]
\end{remark}

\subsection{Log-Sobolev estimates for the Ising model on trees}{\label{subsec:Log-Sobolev-on-tree}}

We first introduce some notation for trees. 
\begin{defn}{\label{def:specific-tree}}
    \underline{The rooted tree $\mathbb T_{k,\ell}$.} Let $\mathbb T_{0,\ell}$ denote the rooted finite $d$-regular tree of depth $\ell$: the root $o$ has $d$ children, every non-root non-leaf vertex has $d-1$ children, and the leaves are exactly the vertices at graph distance $\ell$ from $o$. For $0\le k\le d$, define $\mathbb T_{k,\ell}$ as follows. Start from $\mathbb T_{0,\ell}$ and delete $k$ of the $d$ depth-one descendant subtrees of the root. Thus the root of $\mathbb T_{k,\ell}$ has exactly $d-k$ genuine children, and below those children the tree is still $d$-regular away from the root. Let $\mathbb T_{k,\ell}^{\circ}$ be obtained from $\mathbb T_{k,\ell}$ by attaching $k$ new leaves $a_1,\dots,a_k$ to the root. The boundary of $\mathbb T^\circ_{k,\ell}$ consists of the ordinary depth-$\ell$ leaves of $\mathbb T_{k,\ell}$ together with the attached leaves $a_1,\dots,a_k$. Define $\phi_{k,\ell}^{+}$ to be the Ising measure on the non-boundary vertices of $\mathbb T^\circ_{k,\ell}$ with boundary condition
    \begin{align*}
        \tau_x=+1 \text{ on ordinary depth-$\ell$ leaves}, \quad \tau_{a_j}=-1 \text{ for } 1\le j\le k \,.
    \end{align*}
    Thus the root has $d-k$ genuine child subtrees and $k$ fixed minus leaves adjacent to it. For $k=0$, $\phi_{0,\ell}^{+}$ is the usual all-plus measure on $\mathbb T_{0,\ell}$.

    \underline{The spine tree $\mathbb S_{\gamma,\ell}$.} Let $\mathbb P_\gamma$ with $\gamma\le 2\ell$ be a path with vertices $u_1,u_2,\dots,u_\gamma$. For each $1\le i\le \gamma$, set
    \begin{align*}
        m_i:= \operatorname{dist}_{\mathbb P_\gamma}(u_i,\{u_1,u_\gamma\})= \min\{i-1,\gamma-i\}, \quad h_i := \ell-m_i \,.
    \end{align*}
    The tree $\mathbb S_{\gamma,\ell}$ is obtained from this path by attaching to each spine vertex $u_i$ exactly $d-2$ pairwise disjoint off-spine descendant trees of depth $h_i$: every off-spine vertex at distance strictly less than $h_i$ from $u_i$ has $d-1$ children farther from the spine, and the vertices at distance $h_i$ from $u_i$ are leaves. The boundary of $\mathbb S_{\gamma,\ell}$ is the set of all these off-spine leaves. Let $\mathbb S_{\gamma,\ell}^{\circ}$ be the tree obtained from $\mathbb S_{\gamma,\ell}$ by adding one extra leaf adjacent to $u_1$ and one extra leaf adjacent to $u_\gamma$. These two extra leaves are also treated as boundary vertices and are fixed to $-1$. We denote the corresponding plus-boundary (where the ordinary depth-$\ell$ leaves are $+1$ and the extra two leaves are $-1$) measure by $\mu_{\gamma,\ell}^{+}$. For $\gamma=1$, both extra leaves are attached to the single spine vertex.
\end{defn}

The primary goal is to show that $\phi_{k,\ell}^+$ have positive log-Sobolev constant. We first need a quantitative version of \cite[Theorem~1.2]{MSW04} regarding the log-Sobolev constant for $\phi_{0,\ell}^+$, as incorporated in the following two lemmas.
\begin{lemma}{\label{lem:compare-c-LS-widetilde-c-LS}}
    We have $c_{\mathsf{LS}}(\phi_{k,\ell}^+) \ge (4C)^{-1} \widetilde{c}_{\mathsf{LS}}(\phi_{k,\ell}^+)$.
\end{lemma}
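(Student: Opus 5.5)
The plan is to compare the two Dirichlet forms site by site and then take the infimum over $f$. For every nonconstant $f$, the goal is the local bound
\[
    \mu\big[\Ent_v(f^2)\big] \le 4C\,\mu\big[\Var_v(f)\big]
\]
at every non-boundary vertex $v$ of $\mathbb T^\circ_{k,\ell}$, where $\mu=\phi^+_{k,\ell}$. Summing over $v$ gives $\widetilde{\mathcal E}_\mu(f)\le 4C\,\mathcal E_\mu(f)$. Dividing by $\Ent_\mu(f^2)$ and taking the infimum in \eqref{eq:cls-def} and \eqref{eq:tilde-cls-def} then yields $c_{\mathsf{LS}}(\mu)\ge (4C)^{-1}\widetilde c_{\mathsf{LS}}(\mu)$.

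To prove the local bound, fix $v$ and a configuration $\eta$ off $v$, including the fixed boundary spins. The conditional law of $\sigma_v$ is the two-point measure $\nu_h$ from Lemma~\ref{lem:two-point-LSI}, with $h=\beta\sum_{u\sim v}\eta_u$. Since every vertex of $\mathbb T^\circ_{k,\ell}$ has degree at most $d$, we get $|h|\le \beta d = C$. Lemma~\ref{lem:two-point-LSI} then gives, pointwise in $\eta$,
\[
    \Ent_v(f^2)(\eta)\le \ell(h)\Var_v(f)(\eta)\le 2(1+C)\Var_v(f)(\eta).
\]
Integrating over $\eta$ against $\mu$ gives the local bound with constant $2(1+C)$. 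This is at most $4C$ because $C\ge 1$, which follows from $C>C_0$ in \eqref{eq-assump-C-d-epsilon}.

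The only point needing care is the degree count. In $\mathbb T^\circ_{k,\ell}$ the root has $d-k$ genuine children plus $k$ attached leaves, so its degree is exactly $d$. Interior non-root vertices have one parent and $d-1$ children. The boundary spins are fixed and do not contribute to the Dirichlet forms. Hence $|h|\le C$ holds uniformly over $v$ and $\eta$, and no step is a real obstacle.
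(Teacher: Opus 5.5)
Your proposal is correct and follows the paper's proof essentially verbatim. It applies Lemma~\ref{lem:two-point-LSI} pointwise with $|h_v|\le\beta d=C$ to get $\Ent_v(f^2)\le 2(1+C)\Var_v(f)\le 4C\Var_v(f)$, sums over sites, and combines the result with the definition of $\widetilde c_{\mathsf{LS}}$.
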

\begin{proof}
    Fix $\nu=\phi_{k,\ell}^+$ on $\mathbb T=\mathbb T_{k,\ell}$, and denote by $\widetilde{c}=\widetilde{c}_{\mathsf{LS}}(\nu)$. Using \eqref{eq:tilde-cls-def}, we have 
    \[
        \sum_{v \in V} \nu[\Ent_v(f^2)] \ge \widetilde{c} \cdot \Ent_\nu(f^2) \,.
    \]
    Condition on all spins except $\sigma(v)$, the conditional law of $\sigma(v)$ is the two-point measure in Lemma~\ref{lem:two-point-LSI} $\nu_h(s)$ with $h=h_v=\beta\sum_{u\sim v}\sigma_u$, where fixed plus-boundary spins are included in the sum. Since every vertex of $\mathbb T$ has degree bounded by $d$, we have $|h_v|\le \beta d=C$. By Lemma~\ref{lem:two-point-LSI} and \eqref{eq-assump-C-d-epsilon},
    \[
        \Ent_v(f^2) \le 2(1+|h_v|)\Var_v(f) \le 2(1+C)\Var_v(f) \le 4C\Var_v(f) \,.
    \]
    Thus, we get that
    \begin{align*}
        \mathcal E_\nu(f) \ge (4C)^{-1} \sum_{v \in V} \nu[\Ent_v(f^2)] \ge (4C)^{-1} \widetilde{c} \cdot \Ent_\nu(f^2) \,,
    \end{align*}
    leading to the desired result.
\end{proof}

\begin{lemma}{\label{lem-MSW}}
    Suppose the inverse-temperature $\beta=\frac{C}{d}$ where $C,d$ satisfy \eqref{eq-assump-C-d-epsilon}. The log-Sobolev constant satisfies $\widetilde{c}_{\mathsf{LS}}(\phi^+_{0,\ell}) \geq \frac{1}{2}$ and $c_{\mathsf{LS}}(\phi^+_{0,\ell}) \geq (8C)^{-1}$.
\end{lemma}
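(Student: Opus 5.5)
The second claim follows from the first and Lemma~\ref{lem:compare-c-LS-widetilde-c-LS} with $k=0$: $c_{\mathsf{LS}}(\phi^+_{0,\ell}) \ge (4C)^{-1}\cdot \frac12 = (8C)^{-1}$. So the work is to show $\widetilde{c}_{\mathsf{LS}}(\phi^+_{0,\ell})\ge \frac12$, uniformly in $\ell$. I would prove this by induction on the depth $\ell$, using a block decomposition in the spirit of \cite{MSW04}. The decay-of-correlations input is the strong plus-boundary polarization available at $\beta=C/d$ with $C$ large.

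\textbf{Setup of the recursion.} Write $\nu_\ell=\phi^+_{0,\ell}$. Condition on the root spin $\sigma_o$. Given $\sigma_o$, the measure factorizes over the $d$ child subtrees. Each child subtree is an Ising model on a tree of depth $\ell-1$ whose root has $d-1$ children, with plus leaves and an external field $\beta\sigma_o$ at its root. Entropy then decomposes as
\[
  \Ent_{\nu_\ell}(F)=\nu_\ell\big[\Ent(F\mid\sigma_o)\big]+\Ent_{\nu_\ell}\big(\nu_\ell[F\mid\sigma_o]\big).
\]
The first term tensorizes exactly over the independent subtrees. Each subtree measure differs from the corresponding plus-boundary subtree measure only through the root field, whose log-density oscillation is at most $2\beta$. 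Lemma~\ref{lem-bounded-perturbation} therefore bounds it by $e^{8\beta}$ times the inductive constant. This perturbation must be folded into a product $\prod_\ell(1+O(\text{small}))$ that stays bounded.

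\textbf{Handling the root term.} For the second term I would pass to a block of radius $r$ around the root rather than the root alone. The cleanest route is Lemma~\ref{thm:lee} applied to the ball: the ball is treated as an Ising model with interaction matrix $\mathcal J=\beta A_{\mathrm{ball}}$ shifted to be PSD, plus a field coming from the boundary. The difficulty is that $\|\beta A\|_{\op}\approx 2\beta\sqrt{d-1}\approx 2C/\sqrt d$, which is $<1/2$ only because $C<C_0\log d$ and $d$ is large. Under this condition Lemma~\ref{thm:lee} gives approximate tensorization with constant $\le 2$ for any field, in particular for the conditional measure given everything outside the ball.

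With this, a simpler global argument seems to suffice. Add the diagonal shift $\lambda I$ to make $\mathcal J=\beta A+\lambda I$ PSD with $\lambda=\beta\cdot 2\sqrt{d-1}$. The diagonal term is constant on $\{\pm1\}^n$, so the measure is unchanged. Then $\|\mathcal J\|_{\op}\le 4\beta\sqrt{d}=4C/\sqrt d\le 1/2$ for $d>d_0$ given $C<C_0\log d$. The plus boundary only contributes a field $h$, which Lemma~\ref{thm:lee} allows arbitrarily. Hence
\[
  \Ent_{\nu_\ell}(F)\le 2\sum_v\nu_\ell[\Ent_v(F)],
\]
i.e.\ $\widetilde c_{\mathsf{LS}}\ge\frac12$ directly, with no induction needed.

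\textbf{Main obstacle.} The one point to check is the operator norm of the tree adjacency matrix. It is not $2\sqrt{d-1}$ for a finite regular tree in general, but it is at most $\max(\sqrt d+\sqrt{d-1},\,2\sqrt{d-1})\le 2\sqrt d$. The shift must be $\lambda=\beta\|A\|_{\op}$, giving $\|\mathcal J\|_{\op}\le 2\beta\|A\|_{\op}\le 4C/\sqrt d$. If the constants are too tight, I would fall back to the inductive block scheme above.
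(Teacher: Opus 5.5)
Your global argument is exactly the paper's proof: shift $\beta A$ by a multiple of the identity so that $\mathcal J$ is positive semidefinite without changing the measure, absorb the plus leaves into the field $h$, apply Lemma~\ref{thm:lee} with $\|\mathcal J\|_{\op}\le 4C/\sqrt d\le\frac12$, and then invoke Lemma~\ref{lem:compare-c-LS-widetilde-c-LS}; the inductive block scheme is therefore unnecessary. The operator-norm bound you leave unproved is settled in the paper by the weighted AM--GM estimate $2|z_xz_y|\le\Lambda z_x^2+\Lambda^{-1}z_y^2$ with $\Lambda=(d-1)^{-1/2}$ over edges oriented away from the root, which gives $\|A_\ell\|_{\op}\le 2\sqrt{d-1}$ because the root's coefficient $d/\sqrt{d-1}$ is at most $2\sqrt{d-1}$. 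So your worry that the norm could exceed $2\sqrt{d-1}$ is unfounded, the shift $2\beta\sqrt{d-1}$ already works, and your looser bound $2\sqrt d$ would also suffice but still needs an argument of this kind.
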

\begin{proof}
    Let $A_{\ell}$ be the adjacency matrix of the free vertices in $V(\mathbb T_{0,\ell})$. We first argue that $\|A_{\ell}\|_{\op}\le 2\sqrt{d-1}$. In fact, orient every edge away from the root and put $\Lambda=(d-1)^{-1/2}$. For an oriented edge $x\to y$ and any $z\in \R^{V(\mathbb T_{0,\ell})}$, we have $2|z_xz_y|\le \Lambda z_x^2+\Lambda^{-1}z_y^2$. A non-root vertex receives coefficient $\Lambda^{-1}$ from the edge to its parent and coefficient at most $(d-1)\Lambda$ from its children, for a total of at most $2\sqrt{d-1}$. The root receives coefficient at most $d\Lambda=\frac{d}{\sqrt{d-1}}\le 2\sqrt{d-1}$. Therefore
    \[
        |\langle z,A_{\ell}z\rangle| \le 2\sqrt{d-1}\,\|z\|_2^2 \,,
    \]
    and thus $\|A_{\ell}\|_{\op}\le 2\sqrt{d-1}$. Define $\mathcal J:=\beta(A_\ell+2\sqrt{d-1}\, \mathbb I) \succeq 0$. Adding the diagonal term does not change the Ising measure because $\beta\sqrt{d-1} \langle\sigma,\sigma\rangle =\beta\sqrt{d-1} |V(\mathbb T_{0,\ell})|$ is constant. Thus $\phi_{0,\ell}^+$ is exactly an Ising measure $\nu_{\mathcal J,h}$ for a suitable external field $h$ encoding the fixed boundary spins. Moreover,
    \begin{equation}{\label{eq:Jnorm}}
        \|\mathcal J\|_{\op} \le \beta\bigl(\|A_\ell\|_{\op}+2\sqrt{d-1}\, \bigr) \le 4\beta\sqrt{d-1} =\frac{4C\sqrt{d-1}}{d} \le \frac{4C}{\sqrt d} \overset{\eqref{eq-assump-C-d-epsilon}}{\le} \frac{1}{2} \,.
    \end{equation}
    Applying Lemma~\ref{thm:lee} with $F=f^2$ yields
    \begin{align}
        \Ent_{\phi_{0,\ell}^+}(f^2) \le 2\sum_{v\in V} \phi_{0,\ell}^+\left[\Ent_v(f^2)\right] \,. \label{eq:large-d-lsi-1}
    \end{align}
    Thus $\widetilde{c}_{\mathsf{LS}}(\phi_{0,\ell}^+) \ge \frac{1}{2}$. By Lemma~\ref{lem:compare-c-LS-widetilde-c-LS}, we then have $c_{\mathsf{LS}}(\phi_{0,\ell}^+) \ge (8C)^{-1}$.
\end{proof}

Now we can prove that $\phi_{k,\ell}^+$ has positive log-Sobolev constant, as incorporated in the following lemma.
\begin{lemma}{\label{lem-log-Sobolev-Tkl}}
    In the regime \eqref{eq-assump-C-d-epsilon}, for every $\ell\ge1$ and every $1\le k\le10^4$, we have $\widetilde{c}_{\mathsf{LS}}(\phi_{k,\ell}^{+})\ge 0.49$ and $c_{\mathsf{LS}}(\phi_{k,\ell}^{+})\ge (9C)^{-1}$. 
\end{lemma}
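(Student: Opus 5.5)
We will redo the argument of Lemma~\ref{lem-MSW} directly for $\phi_{k,\ell}^+$, treating the $k$ minus leaves as part of the external field. This gives a constant only slightly worse than $\frac12$.

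\textbf{Setup.} Let $A_{k,\ell}$ be the adjacency matrix of the free vertices of $\mathbb T_{k,\ell}$, that is, all vertices except the depth-$\ell$ leaves. The attached leaves $a_1,\ldots,a_k$ are boundary vertices fixed to $-1$. Their only effect is a contribution $-k\beta$ to the external field at the root. Likewise, the plus leaves contribute a field $+\beta$ at each of their parents.

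\textbf{Step 1: operator norm.} Since $\mathbb T_{k,\ell}$ is a subgraph of $\mathbb T_{0,\ell}$ (we delete $k$ root subtrees), the free-vertex adjacency matrix $A_{k,\ell}$ is a principal submatrix of $A_\ell$. Hence $\|A_{k,\ell}\|_{\op}\le\|A_\ell\|_{\op}\le 2\sqrt{d-1}$. The weighted AM--GM argument from the proof of Lemma~\ref{lem-MSW} also applies verbatim, since the root now has only $d-k$ children.

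Set $\mathcal J=\beta(A_{k,\ell}+2\sqrt{d-1}\,\mathbb I)\succeq0$. As before, the diagonal shift only adds a constant, so $\phi^+_{k,\ell}=\nu_{\mathcal J,h}$ for a suitable field $h$. Exactly as in \eqref{eq:Jnorm},
\[
\|\mathcal J\|_{\op}\le 4C/\sqrt d\le \tfrac12 .
\]

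\textbf{Step 2: approximate tensorization.} Lemma~\ref{thm:lee} places no restriction on the external field $h$. It therefore gives
\[
\Ent_{\phi^+_{k,\ell}}(f^2)\le 2\sum_v\phi^+_{k,\ell}[\Ent_v(f^2)],
\]
so $\widetilde c_{\mathsf{LS}}(\phi^+_{k,\ell})\ge\frac12\ge0.49$.

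\textbf{Step 3: passing to $c_{\mathsf{LS}}$.} Apply Lemma~\ref{lem:compare-c-LS-widetilde-c-LS}. For this we need $|h_v|\le\beta d=C$ at every vertex, including the root, whose neighbourhood consists of the $k$ minus leaves and its $d-k$ children. This holds since the root still has exactly $d$ neighbours. We obtain
\[
c_{\mathsf{LS}}\ge(4C)^{-1}\cdot 0.49\ge(9C)^{-1}.
\]

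\textbf{Fallback.} If one prefers not to rely on Lemma~\ref{thm:lee} allowing an arbitrary field, there is an alternative. Compare $\phi^+_{k,\ell}$ with the measure in which the $k$ minus leaves are replaced by free spins (or by plus spins), using Lemma~\ref{lem-bounded-perturbation}. The resulting oscillation is at most $2k\beta\le 2\cdot10^4C/d$, which is tiny for large $d$ in the regime \eqref{eq-assump-C-d-epsilon}. This loses at most a factor $e^{-8\cdot10^4C/d}$, again giving $0.49$.

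\textbf{Main obstacle.} The only delicate point is bookkeeping: checking that deleting subtrees and adding fixed boundary leaves keeps the measure inside the class covered by Lemma~\ref{thm:lee}, with the same norm bound. The fallback comparison is the step where constants matter, and it is why the hypothesis $k\le10^4$ together with large $d$ suffices.
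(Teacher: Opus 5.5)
Your argument is correct, but it is not the route the paper takes.

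\textbf{The paper's route.} The paper never re-applies Lemma~\ref{thm:lee} to $\phi^+_{k,\ell}$. It starts from $\widetilde c_{\mathsf{LS}}(\phi^+_{0,\ell})\ge\frac12$ (Lemma~\ref{lem-MSW}) and passes to the marginal $\nu$ of $\phi^+_{0,\ell}$ on the vertices outside the $k$ deleted root branches; by Lemma~\ref{lem-marginal} this cannot decrease $\widetilde c_{\mathsf{LS}}$. It then observes that $\mathsf d\phi^+_{k,\ell}/\mathsf d\nu$ depends only on the root spin. Each deleted branch acts on the root through an effective weight $W_j(\sigma_o)$, whereas in $\phi^+_{k,\ell}$ it is replaced by a fixed minus leaf. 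The resulting log-density has oscillation at most $4\beta k$, so Lemma~\ref{lem-bounded-perturbation} gives $\widetilde c_{\mathsf{LS}}(\phi^+_{k,\ell})\ge\frac12 e^{-16\beta k}\ge0.49$.

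\textbf{Your route.} You note that the free-vertex adjacency matrix of $\mathbb T_{k,\ell}$ is a principal submatrix of that of $\mathbb T_{0,\ell}$, so the same shifted $\mathcal J$ still satisfies $\|\mathcal J\|_{\op}\le\frac12$. The fixed minus leaves, like the plus leaves already in Lemma~\ref{lem-MSW}, enter only through the external field, on which Lemma~\ref{thm:lee} imposes no condition.

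\textbf{Comparison.} Your route is shorter and gives the sharper constant $\frac12$. It works for every $0\le k\le d$, and for arbitrary signs on the attached leaves, so it also yields Corollary~\ref{cor-any-signs} at no cost. The paper's route is a modular comparison that invokes Lee's theorem only once, for the all-plus tree. Its perturbation loss $e^{-16\beta k}$ is exactly why the hypothesis $k\le10^4$ is needed there.

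\textbf{Your fallback.} It is unnecessary, and as written it would not actually avoid relying on fields. The comparison measure still needs its own log-Sobolev bound, which Lemma~\ref{lem-MSW} obtains from Lemma~\ref{thm:lee} with a boundary field. Also, the tilt $2\beta k\sigma_o$ has oscillation $4\beta k$, not $2\beta k$. Neither point affects your main proof.
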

\begin{proof}
    By Lemma~\ref{lem:compare-c-LS-widetilde-c-LS}, it suffices to show that $\widetilde{c}_{\mathsf{LS}}(\phi_{k,\ell}^{+})\ge 0.49$. Fix $k\le10^4$. Start with the full tree $\mathbb T_{0,\ell}$ and choose $k$ depth-one child branches $R_1,\dots,R_k$ of the root. Let
    \begin{align*}
        S = V(T_{0,\ell}) \setminus \cup_{j=1}^k R_j \,,
    \end{align*}
    where the root is retained. Let $\mu=\phi_{0,\ell}^{+}$ be the all-plus measure on $\mathbb T_{0,\ell}$, and let $\nu=\mu_S$ be its marginal on the retained vertices. By Lemmas~\ref{lem-MSW} and \ref{lem-marginal},
    \begin{equation}{\label{eq-log-sobolev-Tkl-1}}
        \widetilde{c}_{\mathsf{LS}}(\nu)\ge \widetilde{c}_{\mathsf{LS}}(\mu) \ge \tfrac{1}{2} \,.  
    \end{equation}
    We now compare $\nu$ with $\phi_{k,\ell}^{+}$. For a deleted branch $R_j$, let $r_j$ be the child of the root in that branch, and define the effective branch partition function (below we write $\Lambda(R_j)$ to be the collection of $\eta\in\{\pm1\}^{R_j}$ where the ordinary leaves of $R_j$ are fixed to $+1$)
    \begin{align*}
        W_j(s) = \sum_{ \eta\in\Lambda(R_j) } \exp\Bigg\{ \beta s\eta_{r_j} + \beta\sum_{xy\in E(R_j)} \eta_x \eta_y \Bigg\}, \quad s\in\{\pm1\} \,.
    \end{align*}
    If $\ell=1$, the same formula is interpreted as $W_j(s)=e^{\beta s}$, since the deleted child is itself a fixed plus boundary leaf. Under $\nu$, each deleted branch contributes the effective weight $W_j(\sigma_o)$ to the root spin. Under $\phi_{k,\ell}^{+}$, that branch is replaced by a fixed minus leaf and contributes $e^{-\beta\sigma_o}$. Hence on the common spin space,
    \begin{equation}{\label{eq-log-sobolev-Tkl-2}}
        \frac{\mathsf d\phi_{k,\ell}^{+}}{\mathsf d\nu}(\sigma) \propto \prod_{j=1}^k G_j(\sigma_o), \quad G_j(s):=\frac{e^{-\beta s}}{W_j(s)} \,.
    \end{equation}
    For every branch configuration, changing the root spin from $-1$ to $+1$ changes only the root-branch edge contribution, and the corresponding multiplicative factor lies between $e^{-2\beta}$ and $e^{2\beta}$. Therefore
    \begin{align*}
        e^{-2\beta} \le \frac{W_j(+)}{W_j(-)} \le e^{2\beta} \,.
    \end{align*}
    Thus,
    \begin{align*}
        \frac{G_j(+)}{G_j(-)} = e^{-2\beta}\frac{W_j(-)}{W_j(+)} \in [e^{-4\beta},1] \,.
    \end{align*}
    Thus the log-density in \eqref{eq-log-sobolev-Tkl-2} has oscillation at most $4\beta k$. Applying Lemma \ref{lem-bounded-perturbation} and \eqref{eq-log-sobolev-Tkl-1},
    \begin{align*}
        \widetilde{c}_{\mathsf{LS}}(\phi_{k,\ell}^{+}) \ge e^{-16\beta k} \widetilde{c}_{\mathsf{LS}}(\nu) \ge e^{-16\beta k} \cdot \frac{1}{2} \,.
    \end{align*}
    Since $k\le10^4$ and $\beta=C/d$ with \eqref{eq-assump-C-d-epsilon}, this is bounded below by $0.49$.
\end{proof}

\begin{cor}\label{cor-any-signs}
    Fix $r\le10^4$ and signs $\xi_1,\dots,\xi_r\in\{\pm1\}$. Consider the analogue of $\phi_{r,\ell}^{+}$ in which the $r$ attached root leaves are fixed to $\xi_1,\dots,\xi_r$, while the ordinary depth-$\ell$ leaves are fixed to $+1$. Then its log-Sobolev constant $\widetilde{c}_{\mathsf{LS}}$ is bounded below by $\frac{4}{9}+0.01$, uniformly over all sign choices and all $r\le10^4$.
\end{cor}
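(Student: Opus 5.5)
The plan is to run the proof of Lemma~\ref{lem-log-Sobolev-Tkl} almost verbatim, changing only the comparison step where the attached leaves enter. Start from the all-plus measure $\mu=\phi^+_{0,\ell}$ on $\mathbb T_{0,\ell}$. Delete $r$ depth-one child branches $R_1,\dots,R_r$ of the root, and let $\nu=\mu_S$ be the marginal on the retained vertex set $S$. By Lemmas~\ref{lem-MSW} and \ref{lem-marginal} we have $\widetilde c_{\mathsf{LS}}(\nu)\ge \frac12$. Write $\phi^{\xi}_{r,\ell}$ for the measure in the statement, i.e.\ the analogue of $\phi^+_{r,\ell}$ with root leaves fixed to $\xi_1,\dots,\xi_r$. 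Under $\nu$ each deleted branch contributes the effective weight $W_j(\sigma_o)$ to the root, exactly as before. Under $\phi^{\xi}_{r,\ell}$ that branch is replaced by a fixed leaf of sign $\xi_j$, contributing $e^{\beta\xi_j\sigma_o}$. Hence
\[
\frac{\mathsf d\phi^{\xi}_{r,\ell}}{\mathsf d\nu}(\sigma)\propto\prod_{j=1}^r G_j(\sigma_o),\qquad G_j(s):=\frac{e^{\beta\xi_j s}}{W_j(s)} \,.
\]

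The key step is to bound the oscillation of $\log G_j$ uniformly in the sign $\xi_j$. As shown in the proof of Lemma~\ref{lem-log-Sobolev-Tkl}, $W_j(+)/W_j(-)\in[e^{-2\beta},e^{2\beta}]$. Also $e^{\beta\xi_j s}$ changes by a factor $e^{\pm2\beta}$ between $s=\pm1$. So $G_j(+)/G_j(-)\in[e^{-4\beta},e^{4\beta}]$. This gives an oscillation of at most $4\beta$ per branch, hence at most $4\beta r$ for the log-density. (For $\xi_j=+1$ the ratio is in fact in $[1,e^{4\beta}]$, and for $\xi_j=-1$ in $[e^{-4\beta},1]$, but the crude two-sided bound suffices.) The degenerate case $\ell=1$ is handled as before with $W_j(s)=e^{\beta s}$.

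Apply Lemma~\ref{lem-bounded-perturbation} with $A=4\beta r$ to get
\[
\widetilde c_{\mathsf{LS}}(\phi^{\xi}_{r,\ell})\ge e^{-16\beta r}\cdot\tfrac12\ge \tfrac12 e^{-16\cdot 10^4 C/d} \,.
\]
Under \eqref{eq-assump-C-d-epsilon} we have $C<C_0\log d$ and $d>d_0$ large, so $C/d$ is tiny and the bound exceeds $0.49\ge \frac49+0.01$. The case $r=0$ is Lemma~\ref{lem-MSW} directly. The only real care needed is checking that the oscillation constant in Lemma~\ref{lem-bounded-perturbation} is applied to the normalized density; since normalization does not affect oscillation, this is immediate. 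No step appears genuinely obstructive, and all bounds are uniform in $\xi$ and in $r\le10^4$.
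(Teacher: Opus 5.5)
Your proposal is correct and is essentially the paper's argument: both apply Lemma~\ref{lem-bounded-perturbation} to a perturbation acting only on the root spin, with oscillation at most $4\beta r$. The one difference is the reference measure: you compare directly with the marginal $\nu$ of the all-plus measure, giving $\frac12 e^{-16\beta r}$, whereas the paper perturbs $\phi^+_{r,\ell}$ using its bound $0.49$ from Lemma~\ref{lem-log-Sobolev-Tkl}, giving $0.49\,e^{-16\beta r}$; both exceed $\frac49+0.01$.
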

\begin{proof}
    Changing an attached root leaf from $-1$ to $+1$ changes the Hamiltonian by $2\beta\sigma_o$. If at most $r$ attached leaves are changed, the perturbation has oscillation at most $4\beta r$. Lemma~\ref{lem-bounded-perturbation}, applied together with Lemma \ref{lem-log-Sobolev-Tkl}, gives the claim.
\end{proof}

We end this subsection by listing some corollaries of a positive log-Sobolev constant bound. 
\begin{lemma}{\label{lem-almost-indepdent-in-time-ls}}
    Suppose that the Ising measure \(\mu\) satisfies $c_{\mathsf{LS}}(\mu) \ge \rho>0$. Let \((X_t)_{t\ge 0}\) be the continuous-time Glauber dynamics reversible with respect to \(\mu\), started from \(X_0\sim\mu\). Fix \(\Delta=10^{-2}\).  Let \(A_1,\ldots,A_T\) be events such that \(A_i\) is measurable with respect to the path segment $\{X_t: i\Delta\le t\le (i+1)\Delta\}$. Then there is a constant \(c=c(\rho)>0\), depending only on \(\rho\), such that
    \[
        \Pb_\mu\left(\bigcap_{i=1}^T A_i\right) \le \left( \prod_{i=1}^T \Pb_\mu(A_i) \right)^c \,.
    \]
    In addition, one may take explicitly
    \[
        c=\frac14\tanh(\rho\Delta) =\frac14\tanh(10^{-2}\rho) \ge 10^{-3} \rho \mbox{ for } \rho<0.01 \,.
    \]
\end{lemma}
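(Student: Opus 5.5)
The plan is to reduce the joint probability to a product of single-block factors using the Markov property and hypercontractivity of the semigroup $P_t=e^{t\mathcal L}$, which follows from the log-Sobolev inequality. Put $f_i:=\mathbf 1_{A_i}$. For the continuous-time Glauber dynamics with generator $\mathcal L f(\sigma)=\sum_v(\mu_v f-f)(\sigma)$, the Dirichlet form is exactly $\mathcal E_\mu(f)=\sum_v\mu[\Var_v(f)]$. The hypothesis $c_{\mathsf{LS}}(\mu)\ge\rho$ therefore gives, by Gross's theorem, $\|P_t g\|_{q(t)}\le\|g\|_{2}$ with $q(t)=1+e^{4\rho t}$, or the equivalent form $\|P_tg\|_{p}\le\|g\|_{p'}$ for a dual pair of exponents. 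I will use the form
\[
    \|P_t g\|_{2}\le \|g\|_{1+e^{-4\rho t}} \,,
\]
obtained by duality.

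The events are handled as follows. Since $A_i$ depends only on the path on $[i\Delta,(i+1)\Delta]$, the Markov property lets me write $\mathbf 1_{A_i}$'s conditional probability given $X_{i\Delta}$ as $g_i(X_{i\Delta}):=\Pb_{X_{i\Delta}}(A_i')$, where $A_i'$ is the time-shifted event. This is not quite enough by itself, because consecutive blocks share the endpoint $(i+1)\Delta=i'\Delta$ and the conditioning interacts with the future. To decouple, I will use only every other block: the odd-indexed and the even-indexed blocks each form a family separated by gaps of length $\Delta$. By Cauchy--Schwarz (or Hölder), it then suffices to bound $\Pb(\cap_{i \text{ even}}A_i)$ and $\Pb(\cap_{i\text{ odd}}A_i)$ separately, at the cost of exponent $1/2$.

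For a separated family, I apply the Markov property backwards in time. Define $h_{\text{last}}:=g_{\text{last}}$, and recursively set $h_i(x):=\E_x[\mathbf 1_{A_i'}\, (P_\Delta h_{i+2})(X_\Delta)]$. Then $\Pb_\mu(\cap A_i)=\mu[h_{\text{first}}]$. The key one-step bound is
\[
    \|h_i\|_{p}\le \|g_i\|_{p}^{1-\theta}\,\|P_\Delta h_{i+2}\|_\infty^{\theta}\cdots \,,
\]
and it is better done via Hölder with hypercontractivity. Because $0\le h\le 1$, Hölder gives $h_i(x)\le \E_x[\mathbf 1_{A_i'}]^{1/r}\,\E_x[(P_\Delta h_{i+2})(X_\Delta)^{r'}]^{1/r'}$. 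Hypercontractivity over the gap $\Delta$ then controls $\|P_\Delta h_{i+2}\|_{q}$ by $\|h_{i+2}\|_{2}$ with $q=1+e^{4\rho\Delta}>2$. Choosing exponents so that the norms chain consistently along the recursion yields $\mu[h_{\text{first}}]\le\prod_i \mu(A_i)^{c'}$ with $c'$ comparable to $\frac{q-2}{2q}\asymp\tanh(\rho\Delta)$. For equal-length gaps, the standard computation (as in the proof that hypercontractive Markov chains satisfy ``$\Pb(\cap A_i)\le\prod\Pb(A_i)^{1-2/q}$'') gives exactly a $\tanh$-type exponent. Combining with the factor $1/2$ from splitting into parities, this produces $c=\frac14\tanh(\rho\Delta)$.

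The main obstacle is the bookkeeping of exponents in the recursive Hölder/hypercontractivity step. The exponent must not degrade with $T$: each step has to return to the same $L^p$ norm that it started from, so the chain closes. My plan is to fix a single pair $(p,q)$ with $q/p$ equal to the hypercontractive gain over time $\Delta$, and to prove inductively that $\|h_i\|_p\le\prod_{j\ge i}\mu(A_j)^{1/p-1/q}$. The step $\mu(A_i)\le \|g_i\|_p^p$ is used with $0\le g_i\le 1$. The final inequality $\tanh(10^{-2}\rho)\ge 10^{-3}\cdot 4\rho$ for $\rho<0.01$ is elementary.
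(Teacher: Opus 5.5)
Your argument is correct, but it is organized differently from the paper's. The paper does not split into parities. It runs the backward recursion $F_i=Q_iF_{i+1}$, with $Q_if(x)=\E_x[\mathbf 1_{\mathcal A_i}f(X_\Delta)]$, directly over adjacent blocks. The Markov property at a shared endpoint causes no difficulty, so the obstacle you cite is not real. The paper then uses Cauchy--Schwarz on the path law to get $Q_if\le (Q_i\mathbf 1)^{1/2}(P_\Delta f^2)^{1/2}$, applies hypercontractivity to $P_\Delta f^2$ over the block's own duration, and closes the recursion in $L^4(\mu)$.

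You instead spend the gap between same-parity blocks on hypercontractivity and use only stationarity inside a block. The exponent bookkeeping you left open does close. Take $p=2$ and $q=1+e^{4\rho\Delta}$; the relation you want is $q-1=(p-1)e^{4\rho\Delta}$, not a ratio $q/p$. Apply path-space H\"older with exponents $(\frac{q}{q-1},q)$, then H\"older in $L^2(\mu)$ with $\frac1a+\frac1q=\frac12$. Since $a(1-\frac1q)\ge1$ and $0\le g_i\le1$, the first factor is at most $\mu(A_i)^{1/a}$. For $F=P_\Delta h_{i+2}$, stationarity and hypercontractivity give $\|(P_\Delta F^q)^{1/q}\|_q=\|F\|_q\le\|h_{i+2}\|_2$. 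Together these yield $\|h_i\|_2\le\mu(A_i)^{1/2-1/q}\|h_{i+2}\|_2$.

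The two routes cost the same. Your factor $\frac12$ from the parity split plays exactly the role of the square root in the paper's Cauchy--Schwarz, and both give $c=\frac12(\frac12-\frac1b)$, where $b$ is the $2\to b$ hypercontractive exponent at time $\Delta$. Your $b=1+e^{4\rho\Delta}$ is the correct Gross exponent for this normalization of $c_{\mathsf{LS}}$. It gives $\frac14\tanh(2\rho\Delta)\ge\frac14\tanh(\rho\Delta)$, which suffices because $x^c$ is decreasing in $c$ for $x\in[0,1]$. The paper uses the weaker $b=1+e^{2\rho\Delta}$ and lands exactly on the stated constant. In short, the paper's version gives a single recursion with no splitting, while yours gives a cleaner one-step estimate in $L^2$ with hypercontractivity used only across gaps.
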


\begin{proof}
    By stationarity, shifting all events by time \(-\Delta\) does not change either side of the desired inequality.  Hence it is enough to prove the same bound under the notational convention that \(A_i\) is measurable with respect to $\{X_t:(i-1)\Delta\le t\le i\Delta\}$. For each \(i\), let \(\mathcal A_i\) be the corresponding event for a path of length \(\Delta\), so that $A_i= \{ (X_{(i-1)\Delta+s})_{0\le s\le \Delta}\in\mathcal A_i \}$. Define the restricted, or killed, path-segment operator
    \[
        Q_i f(x) := \E_x\left[ \mathbf 1_{\mathcal A_i}\, f(X_\Delta) \right], \quad x\in\Omega,
    \]
    for bounded measurable \(f\ge 0\). By the Markov property, if we let $F_{T+1}=\mathbf 1$ and $F_i=Q_iF_{i+1}$ for $1 \le i \le T$, then
    \[
        \Pb_\mu\left(\cap_{i=1}^T A_i\right) = \langle \mathbf 1,F_1 \rangle = \int F_1 \, \mathsf d\mu \,.
    \]
    Also, we have $\int Q_i\mathbf 1\,\mathsf d\mu = \Pb_\mu(A_i)$ and $0\le Q_i\mathbf 1\le 1$. We next prove the main one-step estimate. Since \(c_{\mathsf{LS}}(\mu)\ge\rho\), Gross hypercontractivity for the continuous-time semigroup (see, e.g., \cite[Theorem~1]{Gross75} and \cite[Theorem~3.5]{DSC96}) gives, for every \(q>1\),
    \[
        \|P_s h\|_{r(s)}\le \|h\|_q, \qquad r(s)=1+(q-1)e^{2\rho s} \,.
    \]
    We shall use this only with \(q=2\) and \(s=\Delta\). Put $b:=1+e^{2\rho\Delta}$. Then $\|P_\Delta h\|_{b}\le \|h\|_2$. Choose \(a>1\) by $\frac1a+\frac1b=\frac12$ (this is possible because \(b>2\)). For \(f\ge 0\), Cauchy-Schwarz under the law \(\Pb_x\) of the path segment gives
    \begin{align*}
        Q_i f(x) =\E_x\left[\mathbf 1_{\mathcal A_i} f(X_\Delta)\right] \le
        \left(\E_x[\mathbf 1_{\mathcal A_i}]\right)^{1/2}
        \left(\E_x[f(X_\Delta)^2]\right)^{1/2} =
        \bigl(Q_i\mathbf 1(x)\bigr)^{1/2}
        \bigl(P_\Delta f^2(x)\bigr)^{1/2} \,.
    \end{align*}
    Taking \(L^4(\mu)\)-norms and applying H\"older's inequality with \(1/a+1/b=1/2\), we obtain
    \begin{align*}
        \|{Q_i f}\|_4 \le \left(\int (Q_i\mathbf 1)^2 (P_\Delta f^2)^2\,\mathsf d\mu\right)^{1/4} \le
        \|{Q_i\mathbf 1}\|_a^{1/2}
        \|{P_\Delta f^2}\|_b^{1/2} \,.
    \end{align*}
    Because \(0\le Q_i\mathbf 1\le 1\),
    \[
        \|{Q_i\mathbf 1}\|_a
        =
        \left(\int (Q_i\mathbf 1)^a\,\mathsf d\mu\right)^{1/a}
        \le
        \left(\int Q_i\mathbf 1\,\mathsf d\mu\right)^{1/a}
        =
        \Pb_\mu(A_i)^{1/a} \,.
    \]
    The hypercontractive estimate gives $\|{P_\Delta f^2}\|_b \le \|{f^2}\|_2 = \|{f}\|_4^2$. Therefore $\|{Q_i f}\|_4 \le \Pb_\mu(A_i)^{\frac{1}{2a}}\|{f}\|_4$. Set $c:=\frac{1}{2a}$. Since \(1/a=1/2-1/b\), this constant equals
    \[
        c= \frac14-\frac{1}{2(1+e^{2\rho\Delta})}=
        \frac{e^{2\rho\Delta}-1}{4(1+e^{2\rho\Delta})} =
        \frac14\tanh(\rho\Delta)>0 \,.
    \]
    Thus the one-step estimate is $\|{Q_i f}\|_4 \le \Pb_\mu(A_i)^c\|{f}\|_4$. Applying this estimate recursively to \(F_i=Q_iF_{i+1}\) yields $\|{F_i}\|_4 \le \Pb_\mu(A_i)^c\|{F_{i+1}}\|_4$. Iterating from \(i=T\) down to \(i=1\), and using \(\|{F_{T+1}}\|_4=1\), we get
    \begin{align*}
        \Pb_\mu\left(\bigcap_{i=1}^T A_i\right) =\int F_1\,\mathsf d\mu 
        \le \|{F_1}\|_4 \le \prod_{i=1}^T \Pb_\mu(A_i)^c = \left(\prod_{i=1}^T \Pb_\mu(A_i)\right)^c .
    \end{align*}
    This proves the lemma.
\end{proof}

\begin{lemma}{\label{lem-BAD-lift-ls}}
    Suppose $c_{\mathsf{LS}}(\mu)\ge \rho_0>0$. Let $(X_s)_{s\ge0}$ be the Glauber dynamics started from stationary measure $X_0\sim\mu$. Let $E$ be an event measurable with respect to $\sigma(X_s:s\ge t)$, and set $p := \Pb_\mu(E)$. For $0<p<1$, define
    \[
        \mathcal I := \left\{ x\in\Omega: \Pb_\mu(E\mid X_0=x) \ge p^{1/2} \right\} \,.
    \]
    Then
    \begin{equation}\label{eq:explicit-bound}
        \mu(\mathcal I) \le p^{\frac16+\frac1{12}e^{2\rho_0 t}} \,.
    \end{equation}
\end{lemma}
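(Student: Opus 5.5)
We bound $\mu(\mathcal I)$ by combining a Markov-type inequality with hypercontractivity of the semigroup $P_s$. The idea is to write the conditional probability as $P_t$ applied to a function that averages to $p$, and then use hypercontractivity to get a higher moment of that conditional probability.

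First we set up the function. Let $g(y):=\Pb_\mu(E\mid X_t=y)=\Pb_y(E')$, where $E'$ is the time-shifted event. This uses the Markov property at time $t$, which is legitimate since $E\in\sigma(X_s:s\ge t)$. Then $0\le g\le 1$ and $\int g\,\mathsf d\mu=p$ by stationarity. Moreover $\Pb_\mu(E\mid X_0=x)=P_tg(x)$.

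Next we apply Gross hypercontractivity exactly as in the proof of Lemma~\ref{lem-almost-indepdent-in-time-ls}, with $q=2$: we get $\|P_tg\|_{r}\le\|g\|_2$ for $r=1+e^{2\rho_0 t}$. Since $0\le g\le1$, we have $\|g\|_2\le(\int g\,\mathsf d\mu)^{1/2}=p^{1/2}$.

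Markov's inequality applied to $(P_tg)^r$ then gives
\[
\mu(\mathcal I)\le \frac{\|P_tg\|_r^r}{p^{r/2}}\le \frac{p^{r/2}}{p^{r/2}}=1,
\]
which is useless. So the $L^2$ input must be improved. Instead we use $q$ close to $1$: for $0\le g\le 1$, $\|g\|_q\le p^{1/q}$. We will take $q=4/3$. Then $r=1+\tfrac13e^{2\rho_0t}$ and $\|P_tg\|_r\le p^{3/4}$. Markov's inequality now yields
\[
\mu(\mathcal I)\le p^{-r/2}\,p^{3r/4}=p^{r/4}=p^{\frac14+\frac1{12}e^{2\rho_0 t}}\le p^{\frac16+\frac1{12}e^{2\rho_0t}}
\]
(using $p<1$), which is the claimed bound.

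Two points need checking. The first is that hypercontractivity holds for all $q>1$ with $r(s)=1+(q-1)e^{2\rho s}$ under the normalization $c_{\mathsf{LS}}$ of \eqref{eq:cls-def}. This is the form already cited from Gross and Diaconis–Saloff-Coste in the previous proof, so we reuse it. Any constant mismatch in that normalization only affects the exponent, and the stated $\tfrac16$ leaves slack relative to the $\tfrac14$ obtained. The second point is measurability and the strong Markov reduction. This is routine since the state space is finite and the filtration is the natural one.
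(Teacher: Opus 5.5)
Your argument is correct and is essentially the paper's proof: you write $\Pb_\mu(E\mid X_0=x)=P_tg(x)$ with $\int g\,\mathsf d\mu=p$, apply Markov's inequality to $(P_tg)^r$, and use hypercontractivity from an $L^{q}$ input with $q\in(1,2)$ together with $\|g\|_q\le p^{1/q}$. The paper uses input exponent $3/2$ and gets exactly $p^{\frac16+\frac1{12}e^{2\rho_0t}}$, while your input exponent $4/3$ gives the slightly stronger $p^{\frac14+\frac1{12}e^{2\rho_0t}}$; in both cases the coefficient $(2-q)(q-1)/(2q)$ of $e^{2\rho_0t}$ equals $\frac1{12}$. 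One small correction to your closing remark: a mismatch in the hypercontractive rate would change the $e^{2\rho_0 t}$ term, which the $\frac14$ versus $\frac16$ slack cannot absorb, but this does not matter because you use the same rate $r(s)=1+(q-1)e^{2\rho s}$ that the paper cites.
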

\begin{proof}
    Define the future-hitting function $h(y) := \Pb_\mu(E\mid X_t=y)$. Clearly we have $0 \le h \le 1$. Because the chain is started from stationarity, $X_t\sim\mu$, and therefore
    \begin{equation}\label{eq:h-mean}
        \int h\,\mathsf d\mu = \E_\mu\bigl[\Pb_\mu(E\mid X_t)\bigr] = \Pb_\mu(E) = p \,.
    \end{equation}
    By the Markov property, conditioning first on $X_t$ gives, for every initial state $x$,
    \begin{equation*}
        \Pb_\mu(E\mid X_0=x) = \E_x[h(X_t)] = \mathsf P_t h(x) \,.
    \end{equation*}
    Thus $\mathcal I = \{x\in\Omega: \mathsf P_t h(x)\ge p^{1/2}\}$. Fix any $r\in(1,2)$ and set $q := 1+(r-1)e^{2\rho_0 t}$. Applying Markov's inequality to the nonnegative random variable $(\mathsf P_t h)^q$ under $\mu$ gives
    \begin{align}
        \mu(\mathcal I) = \mu\bigl(\mathsf P_t h\ge p^{1/2}\bigr) \le p^{-q/2}\int (\mathsf P_t h)^q\,\mathsf d\mu = p^{-q/2} \big\| {\mathsf P_t h} \big\|_{q}^{q}. \label{eq:markov}
    \end{align}
    By hypercontractivity, $0\le h\le1$, and \eqref{eq:h-mean},
    \begin{equation}\label{eq:hc-applied}
        \big\| \mathsf P_t h \big\|_{q}^{q} \le \big\| {h} \big\|_{r}^{q} = \left(\int h^r\,\mathsf d\mu\right)^{q/r} \le \left(\int h\,\mathsf d\mu\right)^{q/r} = p^{q/r} \,.
    \end{equation}
    Combined with \eqref{eq:markov}, we obtain
    \begin{equation}\label{eq:general-r}
        \mu(\mathcal I) \le p^{-q/2}p^{q/r} = p^{q(1/r-1/2)} \,.
    \end{equation}
    Now take $r=3/2$. Then $q = 1+\frac12 e^{2\rho_0 t}$ and $\frac1r-\frac12 = \frac16$. Hence \eqref{eq:general-r} becomes
    \begin{equation*}
        \mu(\mathcal I) \le p^{\left(1+\frac12 e^{2\rho_0 t}\right)/6} = p^{\frac16+\frac1{12}e^{2\rho_0 t}} \,,
    \end{equation*}
    which proves \eqref{eq:explicit-bound}.
\end{proof}

\subsection{Cavity field representation}{\label{subsec:BP-fixed-point}}

For the ferromagnetic Ising model on a tree $T=T(V,E)$ with inverse temperature $\beta$, the cavity field representation (Bethe recursion) \cite{EKPS00} is defined as follows. Define the cavity message from $u$ to $v$ by $m_{u \to v}=\tanh(h_{u\to v})$, where $h_{u \to v}$ is the effective field at $u$ after deleting the edge $(u,v)$. Then the exact recursion on a tree is
\begin{align*}
    h_{u \to v} = \sum_{ w\sim u, w \neq v } \tanh^{-1}\!\big( \tanh(\beta) m_{w \to u} \big) \,,
\end{align*}
or equivalently, we have the following belief propagation (BP)
\begin{align}{\label{eq:BP-recursion}}
    m_{u \to v} = \tanh\left( \sum_{ w\sim u, w \neq v } \tanh^{-1}\!\big( \tanh(\beta) m_{w \to u} \big) \right) \,.
\end{align}
It is known \cite{BRZ95, YP24} that in the low-temperature regime $\beta>\tanh^{-1}(\frac{1}{d-1})$, the BP recursion has three fixed points $-m_*(\beta),0,m_*(\beta)$. We first need the following result, which shows that when the cavity messages from the leaves are positive, the cavity message converges exponentially fast to the BP fixed point $m_*$.

\begin{lemma}{\label{lem-exponential-contraction-BP-fixed-point}}
    Consider the BP recursion on $\mathbb T_{1,\ell}$ where $\beta=\frac{C}{d}$ satisfying \eqref{eq-assump-C-d-epsilon}. Suppose that for all leaves $v \in \mathsf L(\mathbb T_{1,\ell})$, the message satisfies $m_{v \to p(v)} \geq 0.1$. Then  the message of the root satisfies 
    \begin{align*}
        \left| m_{o}-m_* \right| \leq 2^{-\ell} \,.
    \end{align*}
\end{lemma}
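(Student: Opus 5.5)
We reduce the lemma to a one-dimensional recursion along the depth of the tree. Write $\theta=\tanh\beta$ and define, for a non-root vertex,
\[
    F(m):=\tanh\bigl((d-1)\tanh^{-1}(\theta m)\bigr) \,.
\]
Since every non-root, non-leaf vertex of $\mathbb T_{1,\ell}$ has $d-1$ children, the outgoing message is $\tanh\bigl(\sum_{w}\tanh^{-1}(\theta m_{w\to u})\bigr)$ over those $d-1$ children. The map is monotone increasing in each incoming message. So by monotonicity it suffices to sandwich everything between two homogeneous recursions. The lower one starts with all leaf messages equal to $0.1$, and the upper one starts with all leaf messages equal to $1$ (messages never exceed $1$). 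At depth $j$ from the leaves, all messages then lie in $[F^{(j)}(0.1),F^{(j)}(1)]$. The root has $d-1$ genuine children, so its message is $F$ applied to its children's messages. Hence $m_o$ lies in $[F^{(\ell)}(0.1),F^{(\ell)}(1)]$, possibly up to the harmless convention of which vertices count as leaves; we will adjust by one iterate if needed. Here $m_*$ is understood as the positive fixed point of $F$.

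It therefore suffices to show $|F^{(j)}(x)-m_*|\le 2^{-j}$ for $x\in[0.1,1]$. We do this in two phases.

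\textbf{Phase 1 (escape from small values).} For $m\ge 0.1$ we have $(d-1)\tanh^{-1}(\theta m)\ge (d-1)\theta m\ge 0.09C$. Since $C>C_0$ is large, one step already gives $F(m)\ge\tanh(0.09C)\ge 1-2e^{-0.18C}$. Thus after one iterate both sandwich sequences lie in $[1-2e^{-0.18C},1]$. Also $m_*\ge 1-2e^{-0.18C}$, because $m_*$ is a fixed point and the same bound applies to it.

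\textbf{Phase 2 (contraction near $1$).} On $I=[1/2,1]$ we bound the derivative:
\[
    F'(m)=\operatorname{sech}^2\bigl((d-1)\tanh^{-1}(\theta m)\bigr)\cdot\frac{(d-1)\theta}{1-\theta^2m^2} \,.
\]
For $m\ge 1/2$, the argument of $\operatorname{sech}$ is at least $(d-1)\theta/2\ge 0.45C$, so $\operatorname{sech}^2(\cdot)\le 4e^{-0.9C}$. The second factor is at most $2(d-1)\theta\le 2C$, using $\theta\le\beta=C/d$ small. Hence
\[
    \sup_I |F'|\le 8Ce^{-0.9C}\le 1/4 \,.
\]
Since $F(I)\subset I$, $F$ is a $1/4$-contraction on $I$. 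Given any starting point $x\in[0.1,1]$, we have $|F(x)-m_*|\le 1$ trivially. For $j\ge 1$, $F(x)\in I$, so
\[
    |F^{(j)}(x)-m_*|\le 4^{-(j-1)}|F(x)-m_*|\le 4^{-(j-1)}\cdot 2e^{-0.18C}\le 2^{-j} \,.
\]
The last inequality uses $4\cdot 2e^{-0.18C}\le 1/2$. Applying this with $j=\ell$ to both sandwich endpoints gives $|m_o-m_*|\le 2^{-\ell}$. The case $\ell=1$ is covered by the same Phase 1 bound.

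\textbf{Expected obstacle.} The main point needing care is bookkeeping rather than analysis: matching the number of iterations to $\ell$ given the paper's convention for $\mathbb T_{1,\ell}$. The root has $d-1$ children exactly like an interior vertex, which is why $\mathbb T_{1,\ell}$ rather than $\mathbb T_{0,\ell}$ appears. We must also justify that $m_*$ coincides with the unique fixed point of $F$ in $I$. This follows from the contraction on $I$ together with $m_*\ge 1/2$, the latter obtained by applying Phase 1 to the fixed point itself.
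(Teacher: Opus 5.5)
Your proposal is correct and follows essentially the paper's proof: you sandwich the root message between the homogeneous iterates $F^{(\ell)}(0.1)$ and $F^{(\ell)}(1)$ using coordinatewise monotonicity of the BP update on $\mathbb T_{1,\ell}$, and then show that these iterates contract to $m_*$. The paper obtains a $\tfrac12$-contraction directly on all of $[0.1,1]$ (the field there is already at least $C/40$), so your separate lifting step into $[1/2,1]$ is unnecessary but harmless; one small fix is that placing $m_*$ in $[1/2,1]$ should come from the cited uniqueness of the positive fixed point (a fixed point exists in $[1/2,1]$ because $F$ maps that interval into itself), not from applying Phase~1 to $m_*$, which already presupposes $m_*\ge 0.1$.
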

\begin{proof}
    Recall \eqref{eq:BP-recursion}. Set $\theta:=\tanh\beta$ and $q:=d-1$. Then $m_*$ satisfies $m_*=F(m_*)$, where
    \[
        F(m):=\tanh\bigl((d-1)\atanh(m\tanh(\beta))\bigr),\quad 0\le m\le 1 \,.
    \]
    We first show that $F$ maps $I:=[0.1,1]$ into itself and is a uniform contraction there. After increasing $d_0$ if necessary, the upper bound $C<C_0\log d$ guarantees that $\beta=C/d\le 1/2$. For $0\le x\le 1/2$, we have $\frac{x}{2}\le \tanh x\le x$. Consequently,
    \[
        \frac C4 \le (d-1)\tanh(C/d)=(d-1)\tanh\beta \le C \,.
    \]
    Since $\atanh x\ge x$ for $x\ge0$, for $m\in I$ we have
    \[
        s(m):=(d-1)\atanh(m(\tanh\beta)) \ge (d-1)m(\tanh\beta) \ge C/40 \,.
    \]
    Thus, by \eqref{eq-assump-C-d-epsilon} we have $F(m)\ge \tanh(C/40)\ge \frac1{10}$. Also $F(m)<1$, so $F(I)\subseteq I$. Differentiating gives
    \[
        F'(m) =\frac{ (d-1)(\tanh\beta)\sech^2(s(m))}{1-(\tanh\beta)^2m^2} \,.
    \]
    Since $\tanh\beta \le\tanh(1/2)<1/2$, $(d-1)\tanh\beta \le C$, and $\sech^2 x\le 4e^{-2x}$ for $x\ge0$, we obtain, uniformly for $m\in I$,
    \[
        0\le F'(m) \le \frac{C\cdot 4e^{-2(C/40)}}{1-1/4} =\frac{16C}{3}e^{-C/20} \overset{\eqref{eq-assump-C-d-epsilon}}{\le} \frac{1}{2} \,.
    \]
    Therefore $|F(x)-F(y)|\le \frac12|x-y|$ for all $x,y\in I$. Since $F(I)\subseteq I$, the contraction theorem gives a unique fixed point of $F$ in $I$. In addition, it follows from the contraction estimate that, for every $x\in I$ and every $r\ge0$,
    \[
        |F^{(r)}(x)-m_*|\le 2^{-r}|x-m_*|\le 2^{-r}.
    \]
    We now pass to arbitrary, possibly non-homogeneous, leaf messages. Define
    \[
        a_0:=\frac1{10}, \quad u_0:=1, \quad a_r:=F^{(r)}(a_0), \quad u_r:=F^{(r)}(u_0) \,.
    \]
    The multivariate BP update
    \[
        \Phi(x_1,\dots,x_q) :=\tanh\left( \, \sum_{i=1}^{d-1} \atanh((\tanh\beta)x_i)\right)
    \]
    is increasing in each coordinate. By induction from the leaves toward the root, it is clear that a vertex whose descendant subtree has height $r$ sends a message in $[a_r,u_r]$. Thus, we obtain
    \[
        a_\ell\le m_o\le u_\ell, \quad a_\ell\le m_*\le u_\ell \,.
    \]
    Both endpoints $a_\ell,u_\ell$ are within $2^{-\ell}$ of $m_*$, and therefore
    \[
        |m_o-m_*| \le \max\big\{ |a_\ell-m_*|,|u_\ell-m_*| \big\} \le 2^{-\ell} \,. \qedhere
    \]
\end{proof}

%\textcolor{red}{amenable might be confusing word to use since there is a notion of amenable graphs}

We will also give a sufficient condition that ensures that the message from the root is positive.
\begin{defn}{\label{def:amenable-realization-in-tree}}
    Consider the boundary condition $\sigma_S \in \{ -1,+1 \}^S$ where $S=\mathsf L(\mathbb T_{1,\ell})$. We say a leaf $u$ is non-favorable with respect to $\sigma_S$, precisely when its boundary spin is minus. In addition, we say an internal vertex $u$ is non-favorable with respect to $\sigma_S$, if it has at least $0.1d$ non-favorable children. Equivalently, non-favorability produces a complete \(\lceil0.1d\rceil\)-ary witness whose terminal boundary spins are all minus. A boundary condition $\sigma_S \in \{ -1,+1 \}^S$ is called a favorable boundary condition, if the root $o$ is favorable with respect to $\sigma_S$.
\end{defn}
\begin{lemma}{\label{lem-amenable-boundary-condition-positive-field}}
    If the boundary condition $\sigma_S$ is favorable, then for every favorable vertex $v$ the cavity field at $v$ satisfies $h_v \geq 0.1C$.
\end{lemma}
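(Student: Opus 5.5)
We argue by induction on height, from the leaves of $\mathbb T_{1,\ell}$ toward the root, using the exact cavity recursion
\[
    h_{u\to p(u)}=\sum_{w \text{ child of } u}\atanh\big(\tanh(\beta)\tanh(h_{w\to u})\big) \,.
\]
For a boundary leaf we use the convention that its field is $+\infty$ if its spin is plus and $-\infty$ if it is minus. Its message is then $\pm1$, and its contribution to the parent is $\pm\beta$, since $\atanh(\tanh\beta)=\beta$.

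The inductive claim is that every favorable vertex $v$ (leaf or internal) has $h_v\ge 0.1C$, where for a favorable leaf $h_v=+\infty$. The base case is exactly the plus boundary spins, which give infinite field. For the inductive step, take a favorable internal vertex $v$. By Definition~\ref{def:amenable-realization-in-tree}, $v$ has fewer than $0.1d$ non-favorable children, so it has at least $(d-1)-0.1d\ge 0.89d$ favorable children; the root of $\mathbb T_{1,\ell}$ has $d-1$ children, as does every internal vertex.

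We bound the two groups of children separately.

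\emph{Non-favorable children.} We use no information about them. Each contributes $\atanh(\tanh\beta\cdot m)\ge -\beta$, since $|m|\le1$ and the map is monotone in $m$. Their total contribution is therefore at least $-0.1d\beta=-0.1C$.

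\emph{Favorable children.} By induction each has $h_w\ge 0.1C$, so its message is $m_w\ge\tanh(0.1C)$. Since $\atanh(x)\ge x$ and $\tanh\beta\ge\beta/2$ (because $\beta\le 1/2$, as in the proof of Lemma~\ref{lem-exponential-contraction-BP-fixed-point}), each contributes at least $\frac{\beta}{2}\tanh(0.1C)$. Their total contribution is at least
\[
    0.89d\cdot\frac{C}{2d}\tanh(0.1C)\ge 0.44C\tanh(0.1C)\ge 0.43C \,,
\]
using $C>C_0$ large.

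\emph{Conclusion of the step.} Summing the two groups gives $h_v\ge 0.43C-0.1C\ge 0.1C$, which closes the induction. The bound at the root follows because the root is favorable by assumption.

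The main point to watch is the cruder constant in the second group: if the bound $\tanh\beta\ge\beta/2$ were too lossy, the margin would shrink. Here there is ample room, since $0.43-0.1>0.1$. One could alternatively use $\tanh\beta\ge\beta(1-\beta^2/3)\approx\beta$ for large $d$, but that is unnecessary.

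A minor subtlety is depth-one vertices whose children are boundary leaves. For these the same computation applies directly, with each plus leaf contributing exactly $\beta$ and each minus leaf exactly $-\beta$, giving $h_v\ge(0.9d-1)\beta-0.1d\beta\ge 0.7C$.
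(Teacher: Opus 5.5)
Your proof is correct and follows the paper's argument: induction from the leaves through the cavity recursion, bounding each of the fewer than $0.1d$ non-favorable children below by $-\beta$ and each of the at least $0.9d-1$ favorable children below by roughly $C/(2d)$, which gives $h_v\ge 0.1C$. Your explicit use of $\atanh x\ge x$ and $\tanh\beta\ge\beta/2$ simply makes quantitative the paper's asymptotic $\psi_\beta(0.1C)=[1+o_C(1)]C/d$.
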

\begin{proof}
    We will prove this lemma by induction. The result clearly holds for the leaves of $\mathbb T_{1,\ell}$, with the convention that a fixed spin $+1$ (respectively $-1$) has cavity field $+\infty$ (respectively $-\infty$). Recall that the upward cavity field 
    \begin{align*}
        h_v = \sum_{ u:u \leftarrow v } \psi_\beta(h_u), \quad \psi_\beta(x):= \tanh^{-1}\left( \tanh(\beta) \tanh(x) \right) \,.
    \end{align*}
    Favorability of $v$ implies that $v$ has at least $d-1-0.1d=0.9d-1$ favorable children. Thus, using the induction hypothesis we have
    \begin{align*}
        h_v \geq (d-0.1d-1) \cdot \psi_\beta(0.1C) -0.1d\beta \,.
    \end{align*}
    Since $\beta=\frac{C}{d}$ where $C$ is large, we have
    \begin{align*}
        \psi_\beta(0.1C) = \tanh^{-1}\left( \tanh\left( \frac{C}{d} \right) \tanh(0.1C) \right) = [1+o_C(1)] \frac{C}{d} \,.
    \end{align*}
    We then get
    \begin{equation*}
        h_v \geq (0.9d-1) \cdot \frac{C}{2d} - 0.1d \cdot \frac{C}{d} \geq 0.1 C \,.  \qedhere
    \end{equation*}
\end{proof}

\begin{remark}{\label{lem-comparison-Ising-external-field}}
    Later we will also repeatedly use the following comparison result for Ising measures with different external fields. For two external-field vectors $h,g \in \mathbb R^{S}$ where $S=\mathsf L(\mathbb T_{k,\ell})$, let $\pi_g$ and $\pi_h$ be the Ising measure with external field $g$ and $h$, respectively. We then have (recall $\beta=\frac{C}{d}$ with \eqref{eq-assump-C-d-epsilon})
    \begin{align*}
        \frac{\dd \pi_g(\sigma)}{\dd \pi_h(\sigma)} =\frac{Z_h}{Z_g} \exp\left( -\sum_{x \in S} \sigma_x (h_x-g_x) \right) \le \exp\left( 2\sum_{x \in S} |h_x-g_x| \right) \,.
    \end{align*}
    In addition, for $0 \leq t \leq 1$, let $h(t)=g+t(h-g)$, and let $\pi_t=\pi_{h(t)}$. For every set $A \subseteq \{ -1,+1 \}^{V}$, we have
    \begin{align*}
        \frac{\mathrm{d}}{\mathrm{d}t} \pi_t(A) = \mathsf{Cov}_{\pi_t}\left( \mathbf 1_A, \sum_{x \in S} (h_x-g_x) \sigma_x \right) \,.
    \end{align*}
    Therefore 
    \begin{align*}
        \left| \frac{\mathrm{d}}{\mathrm{d}t} \pi_t(A) \right| \leq \sum_{x \in S} |h_x-g_x| \cdot \left| \mathsf{Cov}_{\pi_t}\left( \mathbf 1_A, \sigma_x \right) \right| \leq \sum_{x \in S} |h_x-g_x| \,,
    \end{align*}
    where the last inequality follows from $\sigma_x \in \{ -1,+1 \}$. Integrating the above inequality from $0$ to $1$ yields that 
    \begin{align*}
        \left\| \pi_g-\pi_h \right\|_{\mathsf{TV}} \leq \sum_{ x\in S } |g_x-h_x| \,.
    \end{align*}
\end{remark}

\section{Structural results for random regular graphs}{\label{sec:structure-random-regular-graph}}

%\textcolor{red}{It's probably obvious but maybe in the notation setting when we define path we specify that is means a self-avoiding path.  In the bootstrap construction, do we want the $P_i$ to be disjoint or just the endpoints?}

We now formalize the desirable properties we require for typical $G \sim \mathcal G_d(n)$, as described in Section~\ref{subsec:proof-overview}. To this end, we introduce the following variant of bootstrap percolation on a graph, which will be useful throughout this section.
\begin{defn}{\label{def-bootstrap-percolation}}
    Given a graph $G$ and $A \subseteq V(G)$ and an integer $\ell\in \mathbb Z_{\ge0}$, consider the path-bootstrap percolation on $G$ with respect to $A$, defined as follows:
    \begin{itemize}
        \item Define $(A)^{\circ}$ to be the set of vertices $v$ such that either $v \in A$ or there are $3$ non-backtracking walks with different starting edges and length bounded by $4\ell$ that connect $v$ and $A$. Equivalently, define  
        \begin{align*}
            (A)^{\circ} := A \cup \Big\{ & v \in V(G): \exists u_1,u_2,u_3 \in A, P_i=(x_{i,0},\ldots,x_{i,m_i})\ (1\le i\le3), \\
            & P_i\mbox{ is non-backtracking},\ x_{i,0}=v,\ x_{i,m_i}=u_i, \\
            & x_{i,1} \neq x_{j,1} \mbox{ for } i \neq j, 1\le m_i \leq 4\ell \mbox{ for } 1 \leq i \leq 3 \Big\} \,.
        \end{align*}
        \item Define $A_{\mathsf{f}} := (((A)^{\circ})^{\circ} \ldots)^{\circ}$. The iteration stops when no new vertex can be added.
    \end{itemize}
\end{defn}
\begin{defn}{\label{def-plus-chain}}
    For a graph $G$ and $A \subseteq V(G)$, define
    \begin{align*}
        \mathcal P(A):= \Big\{ P=(x_0,\ldots,x_m):\ & P\mbox{ is non-backtracking},\ x_0,x_m\in A, \\
        & x_i\notin A\ (1\le i<m),\ 1\le m \leq 2\ell+2 \Big\} \,,
    \end{align*}
    and let $(A)^+=A \cup \cup_{P \in \mathcal P(A)} V(P)$. Members of $\mathcal P(A)$ are called first-return excursions and are counted up to reversal. Their two endpoints may coincide, but no internal position may lie in $A$. In particular, a simple cycle meeting $A$ at exactly one vertex is allowed, whereas repeated circuits through that vertex are not.
\end{defn}
\begin{defn}{\label{def-typical-graph}}
    Define 
    \begin{equation}{\label{eq-def-ell-p}}
        \ell=\ell(p) = \bigg\lfloor \frac{\log(1/p)}{1000\log d} \bigg\rfloor \,.
    \end{equation}
    We say a connected, $d$-regular graph $G=(V,E)$ is \emph{typical}, denoted by $G \in \mathcal G$, if the following conditions hold.
    \begin{enumerate}
        \item[(1)] The events in Lemmas~\ref{lem-FK-giant-size} and \ref{lem-TV-dist-giant-is-plus} hold for $G$.
        \item[(2)] For any constants $\delta,\eta$ such that $0<\delta<\eta<\frac{1}{2}$ and for any set $A \subset V$ with $|A| \leq \delta n$, we have
        \begin{align*}
            \#\Big\{ v \in V: \mathsf{deg}_A(v) \geq \eta d \Big\} \leq \frac{4}{d(\eta-\delta)^2} \cdot |A| \,.
        \end{align*}
        \item[(3)] For every $A\subseteq V$ with $1\le |A|\le \frac{n}{2}$, we have
        \begin{align*}
            \#\Big\{ (x,y) \in E:x\in A, y\notin A \Big\} \geq \frac{d|A|}{4} \,.
        \end{align*} 
        \item[(4)] For all $A \subseteq V$ such that $|A|=pn$ where $p \leq d^{-4}$, we have
        \begin{align*}
            \tau(G_A) \leq \frac{10\log d}{\log(1/p)} \cdot |A| \,.
        \end{align*}
        \item[(5)] For all $A \subseteq V(G)$ such that $1\le |A|=pn$ where $p\leq d^{-5}$, we have $|A_{\mathsf{f}}| \leq 10|A|$. In addition, if $p\le d^{-4000}$, we have $|\mathcal P(A_{\mathsf{f}})|\leq 100|A|$ and $|(A_{\mathsf f})^+|\le200\ell|A|$.
        \item[(6)] For every nonempty $A,A'\subseteq V(G)$ where $|A|=pn$ and $|A'|=qn$, let $W_{m}=W_{m}(A,A')$ denote the number of oriented walks $(x_0,x_1,\ldots,x_m)$ in $G$ such that
        \[
            x_0\in A,\quad x_i\sim x_{i+1}\ (0\leq i<m), \quad x_m \in A' \,.
        \]
        Then $W_{m}(A,A')\leq 2\sqrt{|A||A'|}d^{0.51m}$ for all $1 \le m\leq \frac{\min\{ \log(1/p),\log(1/q) \} }{ 10\log d }$.
    \end{enumerate}
\end{defn}
The main goal of this section is to show the following structural results on a random regular graph $G \sim \mathcal G_d(n)$.
\begin{proposition}{\label{prop-structure-random-regular-graph}}
    Suppose $G\sim\mathcal G_d(n)$ where $d\in\mathbb N$ satisfies \eqref{eq-assump-C-d-epsilon}. Then $\Pb(G \in \mathcal G)=1-o(1)$. 
\end{proposition}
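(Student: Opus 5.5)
The plan is to verify the six conditions of Definition~\ref{def-typical-graph} separately, each with probability $1-o(1)$, and take a union bound. Connectivity of $G\sim\mathcal G_d(n)$ is classical, and condition (1) is exactly the $1-o(1)$ statements of Lemmas~\ref{lem-FK-giant-size} and \ref{lem-TV-dist-giant-is-plus}. All remaining conditions will be proved in the configuration model, which transfers to $\mathcal G_d(n)$ because the simple-graph event has probability bounded away from zero for fixed $d$. The unifying tool is a first-moment bound. For vertex sets $S$ with $|S|=s$, the probability that $k$ prescribed pairings of half-edges all occur inside $S$ is at most $(ds/(dn-2k))^{k}\lesssim (2s/n)^k$ for $k\le dn/4$. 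Hence the expected number of sets $S$ of size $s$ spanning at least $s+t$ edges is at most
\[
\binom ns \binom{ds/2}{s+t}\Big(\frac{2s}{n}\Big)^{s+t}\le \exp\Big(s\log\frac{en}{s}+(s+t)\log\frac{e d s}{n(s+t)}\Big)(\text{const})^{s+t}.
\]

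The edge-counting conditions follow from this bound. For (3), I would reuse the exposure argument in the proof of Lemma~\ref{lem-FK-giant-size} with $\rho=1$. Conditioned on $|E(S,S^c)|<ds/4$, at least $3ds/8$ half-edges of $S$ pair internally, and the probability of this is at most $\exp(-c\,ds\log(n/s))$. Combined with the entropy factor $\binom ns$, this is summable once $d$ is large. For (4), apply the displayed bound with $t=\frac{10\log d}{\log(1/p)}s$. With $s=pn$, the exponent is roughly
\[
s\log(e/p)-t\log(1/p)+O(s\log d)\cdot\text{small}.
\]
The key point is that $(s+t)\log(eds/(n(s+t)))\approx -(s+t)(\log(1/p)-\log d)$. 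This beats the entropy $s\log(e/p)$ as soon as $t\log(1/p)\gtrsim 2s\log d$, which the constant $10$ provides. Summing over $s$ from $1$ to $d^{-4}n$ gives $o(1)$, with the small-$s$ terms contributing $O(n^{-c})$. For (2), let $B=\{v:\deg_A(v)\ge\eta d\}$ with $|B|=b$. Then $A\cup B$ contains at least roughly $b\eta d-(\text{edges counted twice inside }B)$ edges. A first-moment bound on pairs $(A,B)$ having $e(A,B)\ge \eta d b$, where the expected count is about $db|A|/n\le \delta d b$, gives the required bound via a Chernoff-type estimate $\exp(-d b(\eta-\delta)^2/2)$ against the entropy $\binom{n}{b}$. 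This forces $b\le 4|A|/(d(\eta-\delta)^2)$.

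Conditions (5) and (6) are the main obstacle. For (6), I would bound the walk count $W_m(A,A')$ by a spectral estimate. Friedman's theorem gives nontrivial eigenvalue magnitude at most $2\sqrt{d-1}+o(1)$, so
\[
W_m=\langle\mathbf 1_A,\mathcal A^m\mathbf 1_{A'}\rangle\le \frac{d^m|A||A'|}{n}+(2\sqrt{d-1}+o(1))^m\sqrt{|A||A'|}.
\]
Since $m\le\log(1/p)/(10\log d)$, we have $d^m\sqrt{pq}\le d^{0.51m}$, which yields the stated bound with room to spare. For (5), I would show deterministically that a large $A_{\mathsf f}$ creates a dense subgraph, contradicting (4). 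Each vertex added in the bootstrap brings three non-backtracking walks of length at most $4\ell$ to the current set. Taking the union of $A$ with the traces of these walks gives a set $U$ with $|U|\le |A|+O(\ell)\cdot|A_{\mathsf f}\setminus A|$. Each added vertex raises the excess of $G_U$ by at least one, because three branches from a new point into an already connected structure create one extra independent cycle. If $|A_{\mathsf f}|>10|A|$, I would stop the process at the first time exactly $9|A|$ vertices have been added. Then $\tau(G_U)\ge 9|A|-(\#\text{components of }A)$ while $|U|=O(\ell|A|)$, and this should contradict (4) since $\frac{10\log d}{\log(1/p')}\cdot O(\ell |A|)$ is small by the choice $\ell\approx\log(1/p)/(1000\log d)$. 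Bookkeeping of component counts, and of walks that overlap, is the delicate part; I would handle it by charging to a spanning forest. The bounds on $|\mathcal P(A_{\mathsf f})|$ and $|(A_{\mathsf f})^+|$ follow in the same way. Each first-return excursion adds at least one to the excess of $G$ restricted to $A_{\mathsf f}$ together with the excursions, so there are at most $100|A|$ excursions. Each excursion has length at most $2\ell+2$, giving $|(A_{\mathsf f})^+|\le 10|A|+100|A|(2\ell+1)\le 200\ell|A|$. The stronger requirement $p\le d^{-4000}$ makes (4) applicable to these enlarged sets.
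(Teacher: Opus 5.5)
Your item-by-item architecture matches the paper's: a configuration-model first moment for (4), a deterministic reduction of (5) to (4), and Friedman's theorem for (6). For (3) the paper instead uses the spectral bound $|\partial S|\ge(d-\lambda_2)s(1-s/n)$. A first-moment bound also works for large $d$, but not by reusing the exposure argument of Lemma~\ref{lem-FK-giant-size}: its per-step success probability $3/8$ over $ds/4$ steps gives only about $3ds/32$ crossing edges when $s$ is near $n/2$.

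The first real gap is item (2). That statement must hold simultaneously for all $A$ and all $\delta<\eta$, so your union bound must pay $\binom{n}{|A|}\approx e^{|A|\log(en/|A|)}$ for the choice of $A$. At your threshold $b=4|A|/(d(\eta-\delta)^2)$, the Hoeffding factor $\exp(-db(\eta-\delta)^2/2)$ equals only $e^{-2|A|}$. Comparing it with $\binom nb$ alone does not involve $|A|$ at all, so $b\le 4|A|/(d(\eta-\delta)^2)$ does not come out of your computation. The paper imports this statement as Lemma~\ref{lem-edge-expansion}. The constant $4$ comes from a spectral argument, and you already have the tool from (6). Let $M$ be the adjacency matrix, $e(A,B):=\mathbf 1_A^{\top}M\mathbf 1_B=\sum_{v\in B}\deg_A(v)$, and $\lambda=\max\{|\lambda_2(G)|,|\lambda_n(G)|\}\le 2\sqrt{d-1}+o(1)$, so that $\lambda^2\le 4d$. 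The expander mixing lemma then gives
\[
\eta d\,b\le e(A,B)\le \frac{d|A|\,b}{n}+\lambda\sqrt{|A|\,b}\le \delta d\,b+\lambda\sqrt{|A|\,b},
\qquad\text{so}\qquad
b\le\frac{\lambda^2|A|}{d^2(\eta-\delta)^2}\le\frac{4|A|}{d(\eta-\delta)^2},
\]
deterministically and uniformly in $A,\delta,\eta$.

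Item (5) is missing the key structural input. Bounding $|\mathcal P(A_{\mathsf f})|$ by the excess needs every excursion to contribute a unit of excess, and for a general set this is false. Distinct first-return non-backtracking walks can share a trace: for instance, walks between two points of the set through a vertex on a short cycle, winding around that cycle different numbers of times. Then the excess stays $O(1)$ while the number of excursions grows like $\ell$. The paper first uses the closure $(A_{\mathsf f})^{\circ}=A_{\mathsf f}$. A vertex outside $A_{\mathsf f}$ with three trace edges in one or two excursions would have three non-backtracking witnesses of length at most $2\ell+2\le4\ell$ with distinct first edges, so it would already lie in $A_{\mathsf f}$. Hence every excursion is a simple path, or a simple cycle meeting $A_{\mathsf f}$ once, and distinct excursions have disjoint interiors and edge sets. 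Only then does $\tau(H)=N-|A_{\mathsf f}|$ hold (proof of Lemma~\ref{lem-bound-num-path-after-bootstrap}).

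In the first half of (5), your claim that each added vertex raises the excess by one also fails when the new vertex already lies on two earlier witness walks, since its three witnesses may then use only old edges. The paper's degree count (Lemma~\ref{lem-bootstrap-stop}) fixes this: added vertices have degree at least $3$ and all other vertices outside $A$ degree at least $2$ in the union of witnesses. This gives $\tau\ge k/2-|A|$ after $k$ additions, which still contradicts (4).

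Finally, $10|A|+100(2\ell+1)|A|$ exceeds $200\ell|A|$. You need the sharper count of fewer than $50|A|$ excursions that the excess argument actually yields.
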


The rest of this section is devoted to the proof of Proposition~\ref{prop-structure-random-regular-graph}. By Lemmas~\ref{lem-FK-giant-size} and \ref{lem-TV-dist-giant-is-plus}, Item~(1) holds for $G \sim \mathcal G_d(n)$ with probability $1-o(1)$. We first prove that Definition~\ref{def-typical-graph}(2) holds for $G \sim\mathcal G_d(n)$ with probability $1-o(1)$ using the following result proved in \cite{GSS25+}.
\begin{lemma}[Lemma~4.9 in \cite{GSS25+}]{\label{lem-edge-expansion}}
    Suppose $G\sim\mathcal G_d(n)$. Then with probability $1-o(1)$, for any constants $\delta,\eta$ such that $0<\delta<\eta<\frac{1}{2}$ and for any set $S \subseteq V(G)$ with $|S|\leq \delta n$, we have
    \begin{align*}
        \#\left\{ v \in V(G): \mathsf{deg}_S(v) \geq \eta d \right\} \leq \frac{4}{d(\eta-\delta)^2} \cdot |S| \,.
    \end{align*}
\end{lemma}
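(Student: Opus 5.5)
The plan is a first-moment computation in the configuration model, followed by a union bound over sets $S$ and a transfer to the uniform $d$-regular graph by contiguity. Fix $\delta<\eta<\frac12$ and a set $S$ with $|S|=s\le\delta n$. Let $B$ be the set of vertices $v$ with $\mathsf{deg}_S(v)\ge\eta d$, and suppose $|B|\ge b:=\lceil 4s/(d(\eta-\delta)^2)\rceil$. Then there is some $B'$ with $|B'|=b$ and $e(B',S)\ge \eta d b$, counting each edge with both endpoints in $B'\cap S$ twice. It therefore suffices to show that, for $G$ drawn from the configuration model,
\[
\sum_{s\le \delta n}\sum_{|S|=s}\sum_{|B'|=b}\Pb\big(e(B',S)\ge \eta d b\big)=o(1).
\]

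To bound a single probability, I expose the pairing of the $db$ half-edges of $B'$ one at a time. At each step, conditionally on the past, the chance that the partner is a half-edge of $S$ is at most $ds/(dn-2db)\le \delta(1+O(b/n))$. The count $e(B',S)$ is thus stochastically dominated by $\Bin(db,\delta')$ with $\delta'$ slightly larger than $\delta$; the small discrepancy from possible double counting when $B'\cap S\neq\emptyset$ I will absorb by enlarging $\delta'$ a little. Chernoff's bound then gives
\[
\Pb\big(\Bin(db,\delta')\ge \eta db\big)\le \exp\big(-db\,D(\eta\|\delta')\big)\le \exp\big(-2db(\eta-\delta')^2\big).
\]
The number of pairs $(S,B')$ is at most $\binom ns\binom nb\le \exp\big(s\log(en/s)+b\log(en/b)\big)$.

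The main obstacle is the entropy term $s\log(en/s)$. The choice $b\approx 4s/(d(\eta-\delta)^2)$ makes $2db(\eta-\delta)^2\approx 8s$, which beats $s\cdot O(1)$ but not $s\log(n/s)$ when $s$ is small. This is where I must be careful. The fix is to use the full KL divergence, $D(\eta\|\delta)\ge \eta\log(\eta/\delta)-O(1)$, together with the sharper per-step bound $\delta'\approx s/n$ in place of $\delta$. In the regime $s/n\ll\eta$ this gives an exponent of order $db\,\eta\log(\eta n/s)$, which dominates $s\log(en/s)+b\log(en/b)$ once $db\eta\gtrsim s$; that holds since $db\eta\ge 4s\eta/(\eta-\delta)^2\ge 16s\eta$. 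When $s/n$ is comparable to $\delta$, the logarithmic factors are $O(1)$, and the plain quadratic bound $8s$ versus $s\log(e/\delta)$ requires a closer look at constants. To handle this I would allow $B'$ to be taken larger if needed (monotonicity in $b$), since it is the lower bound on $|B|$ that we contradict.

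After summing over $s$ and $b$ the total is $o(1)$. Since simple graphs have probability bounded away from $0$ in the configuration model for fixed $d$, the conclusion transfers to $\mathcal G_d(n)$. Uniformity over all $\delta,\eta$ is obtained because the bound holds for every $S$ of size $s$ with the choice $\delta:=s/n$, which is the worst case.
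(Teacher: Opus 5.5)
Your proposal has a genuine gap. The single-pair estimate fails exactly where the constant $4$ is tight: when $s=|S|$ and $b$ are both linear in $n$ and $\eta-s/n$ is small.

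The threshold is calibrated so that $db(\eta-\delta)^2\approx 4s$. So the available margin is only $\eta-\delta\approx 2\sqrt{s/(db)}$. Your domination parameter $\delta'=ds/(dn-2db)$ exceeds $s/n$ by at least $2(s/n)(b/n)$, which is not ``slightly larger'' once $b/n$ is a constant. With $s=\sigma n$ and $b=\theta n$, the loss $2\sigma\theta$ exceeds the margin $2\sqrt{\sigma/(d\theta)}$ as soon as $\theta^3>1/(\sigma d)$.

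For instance, take $d=10^6$, $\sigma=\delta=0.4$, $\theta=0.05$. This forces $\eta\approx 0.4057$, a nontrivial instance of the lemma: it asserts $|B|\le 0.05n$, whereas $\eta d|B|\le ds$ only gives $|B|\le 0.99n$. Yet $\delta'=0.4/0.9>\eta$, and Chernoff yields nothing. For $b>n/2$, which is allowed since only $b<s/\eta$ is forced, the bound $ds/(dn-2db)$ is meaningless.

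The overlap issue has the same size. An exposure step joining two half-edges of $B'\cap S$ raises $e(B',S)$ by $2$. Dominating these doubled increments adds about $|B'\cap S|/n$ to the parameter and doubles the variance of that part. That is again of order $b/n$, not something absorbed by a slight enlargement of $\delta'$.

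Your proposed remedy for this regime does not work either. You must exclude $|B|\ge b$ with $b=\lfloor 4s/(d(\eta-\delta)^2)\rfloor+1$ exactly. If $|B|$ is just above this threshold, there is no larger $B'\subseteq B$ to pick, so ``taking $B'$ larger'' only proves the inequality with a worse constant. A first-moment proof here would need a genuine large-deviation estimate for edge counts between two linear-size sets in the configuration model. It would have to include the hypergeometric factor $(1-b/n)(1-s/n)$, treat edges inside $B'\cap S$ exactly, and be compared carefully with $\binom ns\binom nb$. None of this is supplied.

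Two smaller points. Uniformity in $\eta$ needs a reduction to $\eta\in\{k/d\}$. The useful bound is $db\eta\ge 4s/\eta$, using $(\eta-\delta)^2\le\eta^2$, not $16s\eta$.

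The paper does not prove this lemma; it imports it from \cite{GSS25+}. The constant $4$ is the signature of the spectral argument the paper already uses in Lemmas~\ref{lem:typical-expander} and \ref{lem:enu-num-returning-path}. Let $\lambda_\star=\max\{|\lambda_2(G)|,|\lambda_n(G)|\}$ and $B=\{v:\deg_S(v)\ge\eta d\}$. Write $\mathbf 1_B=\frac{|B|}{n}\mathbf 1+f$ and $\mathbf 1_S=\frac{|S|}{n}\mathbf 1+g$ with $f,g\perp\mathbf 1$. Then
\[
 \eta d|B|\le\sum_{v\in B}\deg_S(v)=\mathbf 1_B^{\top}A_G\mathbf 1_S=\frac{d|B||S|}{n}+f^{\top}A_Gg\le\delta d|B|+\lambda_\star\sqrt{|B||S|},
\]
so $|B|\le\lambda_\star^2|S|/(d^2(\eta-\delta)^2)$. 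Friedman's theorem, with a small $\varepsilon=\varepsilon(d)$, gives $\lambda_\star^2\le 4d$ with probability $1-o(1)$. Everything is deterministic on this single spectral event, so uniformity over all $\delta,\eta,S$ is automatic, and the argument works for every fixed $d\ge 3$.
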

We then prove that Definition~\ref{def-typical-graph}(3) holds for $G \sim\mathcal G_d(n)$ with probability $1-o(1)$, as incorporated in the following lemma.
\begin{lemma}\label{lem:typical-expander}
    For \(S \subseteq V\), let $\partial S:=\{(x,y) \in E:x\in S,\ y\notin S\}$. For every fixed integer \(d\geq 256\), we have 
    \begin{align}{\label{eq-edge-expansion}}
        \Pb_{G\sim\mathcal G_d(n)}\left( |\partial S|\ge \frac{d|S|}{4} \text{ for every }S\subseteq V(G),\ 1\le |S|\le \frac{n}{2} \right)=1-o(1) \,.
    \end{align}
\end{lemma}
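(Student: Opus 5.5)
We prove \eqref{eq-edge-expansion} with a first-moment computation in the configuration model. Since $d$ is fixed, the configuration model yields a simple graph with probability bounded away from zero, and conditioned on simplicity it is uniform on $\mathcal G_d(n)$. Any event of probability $o(1)$ under the configuration model therefore also has probability $o(1)$ under $\mathcal G_d(n)$, and it suffices to work in the configuration model. Fix $S$ with $|S|=s\le n/2$, and let $e(S)$ be the number of edges with both endpoints in $S$. Then $|\partial S|=ds-2e(S)$, so $|\partial S|<ds/4$ is equivalent to $e(S)>\frac{3}{8}ds$. In other words, at least $\frac{3}{4}ds$ of the $ds$ half-edges of $S$ are matched among themselves.

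For the probability of this event, choose a set $H$ of $2k$ half-edges of $S$ with $2k\ge \lceil 3ds/4\rceil$, pair them perfectly among themselves, and bound the probability that all $k$ prescribed pairs appear in the uniform matching. This gives
\[
    \Pb\big(e(S)\ge k\big)\le \binom{ds}{2k}(2k-1)!!\prod_{i=0}^{k-1}\frac{1}{dn-2i-1}\le \binom{ds}{2k}\Big(\frac{2k}{dn-2k}\Big)^{k}.
\]
With $2k\approx \frac34 ds$ and $s\le n/2$, we have $dn-2k\ge \frac58 dn$, so the last factor is at most $\big(\tfrac{6s}{5n}\big)^{k}$. Also $\binom{ds}{2k}\le 2^{ds}$. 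Taking a union bound over $S$ with $\binom{n}{s}\le (en/s)^s$ gives
\[
    \sum_{s=1}^{n/2}\Big(\frac{en}{s}\Big)^s 2^{ds}\Big(\frac{6s}{5n}\Big)^{3ds/8}.
\]

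We then split into two regimes. For $s\le n/K$ with $K$ a large constant, write $x=s/n$. The summand is $\exp\big(s[\,1+\log(1/x)+d\log 2-\tfrac{3d}{8}\log(5/(6x))\,]\big)$. Since $\tfrac{3d}{8}\ge 2$ for large $d$, the exponent is at most $-s\big(\tfrac{3d}{8}-1\big)\log(1/x)+O(ds)$. For $x\le d^{-C}$ the total contribution is $o(1)$, the terms with $s=1,2$ being $O(n^{-1})$. For $x$ of constant order, however, the crude bound $2^{ds}$ is fatal. This middle and large range is the main obstacle. There we replace the binomial estimate by an exact entropy calculation: the probability that the $ds$ half-edges of $S$ are matched with exactly $j=ds-2k$ crossing edges is
\[
    \frac{\binom{ds}{j}\binom{d(n-s)}{j}\, j!\,(ds-j-1)!!\,(d(n-s)-j-1)!!}{(dn-1)!!}.
\]
By Stirling, its logarithm is $d n\, \Phi(x,j/(dn))+O(\log n)$ for an explicit function $\Phi$. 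One checks that for $j\le ds/4$ this quantity is at most $-c\,dn\,x$ with $c$ an absolute constant: the typical crossing fraction is $(1-x)\ge 1/2$ of the half-edges, and having at most $1/4$ of them crossing costs order $d\,s$ in the exponent. This beats $\binom{n}{s}\le e^{n H(x)}$ once $d\ge 256$, uniformly in $x\in[1/K,1/2]$ and also in the small range, which gives a cleaner uniform argument.

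Concretely, I would use the standard bound for the expected number of sets with small boundary,
\[
    \E\#\big\{S:|S|=s,\ |\partial S|\le ds/4\big\}\le \binom{n}{s}\exp\big(-c' d s\big),
\]
derived from the displayed exact formula via $\binom{a}{b}\le (ea/b)^b$ and the ratio of double factorials. Summing over $s$ then gives $o(1)$ provided $c'd$ exceeds $1+\log(n/s)$ in the small regime, which is handled by the first computation. I expect the careful Stirling check in the middle regime to be the only delicate point, and $d\ge256$ is there to give room for it.
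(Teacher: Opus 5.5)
Your argument is correct, but it takes a genuinely different route from the paper's. The paper does no counting at all. It invokes Friedman's theorem to get $\max\{|\lambda_2(G)|,|\lambda_n(G)|\}\le 2\sqrt{d-1}+1\le d/2$ with high probability, which is where $d\ge256$ is used. It then writes $\mathbf 1_S=\frac{s}{n}\mathbf 1+f$ with $f\perp\mathbf 1$ and concludes deterministically that $|\partial S|=ds(1-\frac{s}{n})-f^{\top}A_Gf\ge(d-\lambda_2)s(1-\frac{s}{n})\ge ds/4$ for $s\le n/2$. That costs two lines, given a deep black box the paper needs anyway for the walk-counting property~(6) of typical graphs.

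Your first-moment computation in the configuration model is essentially Bollob\'as's isoperimetric argument. It is self-contained and avoids Friedman, but all of its content sits in the linear-regime Stirling estimate, which you assert rather than carry out. That step is true and easy to close. With $\gamma=j/(dn)$, Stirling gives
\[
    \Phi(x,\gamma)=x\log x+(1-x)\log(1-x)-\gamma\log\gamma-\tfrac{x-\gamma}{2}\log(x-\gamma)-\tfrac{1-x-\gamma}{2}\log(1-x-\gamma) \,.
\]
This is strictly concave in $\gamma$, with maximum value $0$ at $\gamma=x(1-x)>x/4$. So the worst case over $j\le ds/4$ is $\gamma=x/4$. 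For example, $\Phi(\tfrac12,\tfrac18)=\tfrac12\log 2-\tfrac38\log 3\approx-0.065$, and $H(x)+d\,\Phi(x,x/4)<0$ on $(0,\tfrac12]$ as soon as $d\ge 11$, with the binding point at $x=\tfrac12$.

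Your crude bound also already handles every $s\le n/10$ when $d\ge256$, not just $x\le d^{-C}$. Its exponent per element of $S$ is about $3.3-0.1d$ at $x=\tfrac1{10}$, and it decreases as $x$ decreases. So Stirling is only needed for $s\in[n/10,n/2]$. There the $O(\log n)$ errors and the polynomially many choices of $s$ and $j$ are negligible against $e^{-\Theta(n)}$.
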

\begin{proof}
    Let \(A_G\) be the adjacency matrix of \(G\), and let \(d=\lambda_1(G)\ge\lambda_2(G)\ge\cdots\ge\lambda_n(G)\) be its eigenvalues. By Friedman's theorem \cite{Friedman08}, for every fixed \(d\ge3\) and every \(\varepsilon>0\),
    \begin{align*}
        \Pb_{G \sim \mathcal G_d(n)}\big( \max\{|\lambda_2(G)|, |\lambda_n(G)|\} \le 2\sqrt{d-1}+\varepsilon \big) =1-o(1) \,.
    \end{align*}
    Taking \(\varepsilon=1\), the assumption \(d\ge256\) gives $2\sqrt{d-1}+1\le \frac{d}{2}$. It therefore suffices to show that every \(d\)-regular graph satisfying \(\max\{|\lambda_2(G)|,|\lambda_n(G)|\}\le d/2\) also satisfies the event in \eqref{eq-edge-expansion}. Fix \(S\subseteq V(G)\), put \(s=|S|\), and write (below \(\mathbf 1\) is the all-ones vector)
    \begin{align*}
        \mathbf 1_S=\frac{s}{n}\mathbf 1+f, \quad f\perp\mathbf 1 \,.
    \end{align*}
    Since \(\|f\|_2^2=s(1-s/n)\),
    \begin{align*}
        |\partial S| =ds-\mathbf 1_S^{\mathsf T}A_G\mathbf 1_S =ds\Big(1-\frac{s}{n}\Big)-f^{\mathsf T}A_G f \ge (d-\lambda_2(G))s \Big(1-\frac{s}{n} \Big) \,.
    \end{align*}
    If \(s\le n/2\) and \(\max\{|\lambda_2(G)|,|\lambda_n(G)|\} \le d/2\), the last expression is at least \(ds/4\), proving \eqref{eq-edge-expansion}.
\end{proof}

We then show that Definition~\ref{def-typical-graph}(4) holds for $G \sim\mathcal G_d(n)$ with probability $1-o(1)$, as incorporated in the following lemma.
\begin{lemma}{\label{lem-small-set-edge-density}}
    Suppose that $G\sim\mathcal G_d(n)$ where $d \in \mathbb N$ satisfies \eqref{eq-assump-C-d-epsilon}. Then with probability $1-o(1)$, for all $A \subset V(G)$ such that $|A|=pn$ where $p \leq d^{-4}$, we have
    \begin{align*}
        \tau(G_A) \leq \frac{10\log d}{\log(1/p)} \cdot |A| \,.
    \end{align*}
\end{lemma}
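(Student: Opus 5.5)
We will prove the statement by a first-moment union bound in the configuration model, and then transfer to $\mathcal G_d(n)$. The configuration model conditioned on simplicity is uniform, and it is simple with probability bounded below by a constant depending only on $d$, so any event of probability $o(1)$ in the configuration model also has probability $o(1)$ under $\mathcal G_d(n)$. The statement fails for $A$ with $|A|=a=pn$ exactly when $G_A$ contains at least $m:=a+\lceil \frac{10\log d}{\log(1/p)}a\rceil$ edges. Note that $m\ge a$, and $m$ exceeds $a$ by a factor $1+\kappa$ with $\kappa:=10\log d/\log(1/p)\le 10\log d/(4\log d)$.

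The first step is the single-set estimate. For a fixed set $A$ of size $a$, we choose $m$ disjoint pairs among the $da$ half-edges of $A$; there are at most $\binom{da}{2m}(2m)!/(2^m m!)\le (da)^{2m}/m!$ such choices. Each fixed collection of $m$ pairs is present in the configuration model with probability
\begin{align*}
    \prod_{i=0}^{m-1}\frac{1}{dn-2i-1}\le \Big(\frac{2}{dn}\Big)^m ,
\end{align*}
using $m\le da/2\le dn/4$. Hence
\[
    \Pb\big(e(G_A)\ge m\big)\le \frac{(da)^{2m}}{m!}\Big(\frac{2}{dn}\Big)^m\le \Big(\frac{2e\,d\,a^2}{m n}\Big)^m\le (2e\,d\,p)^m ,
\]
where the last step uses $m\ge a$.

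The second step is the union bound over sets. Summing over $\binom{n}{a}\le (e/p)^a$ choices of $A$ gives a contribution of at most $\exp\{a\log(e/p)+m\log(2edp)\}$ for each $a$. We write $L:=\log(1/p)\ge 4\log d$ and $m=(1+\kappa)a$ with $\kappa a\ge 10a\log d/L$. The exponent is then at most
\[
    a\big[(1+L)-(1+\kappa)(L-\log d-\log(2e))\big].
\]
Expanding, this equals $a[1+\log d+\log(2e)-\kappa(L-\log d-\log 2e)+\kappa\cdot 0]$, up to the cross term $\kappa(\log d+\log 2e)$. Since $L-\log d-\log(2e)\ge \frac{2}{3}L$ for large $d$, we have $\kappa(L-\log d-\log 2e)\ge \frac{20}{3}\log d$. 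This dominates $1+\log d+\log(2e)+\kappa(\log d+\log 2e)$, because $\kappa\le 2.5$ gives a cross term of at most $2.5\log d+O(1)$. The exponent is therefore at most $-2a\log d$ for $d\ge d_0$.

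This bound is uniform in $a$, but it is too weak for constant $a$ unless we also use the factor $(a/n)$ more sharply. For very small $a$, say $a\le \log n$, we keep the exact factor $p^{m-a}\cdot(\text{poly})$. For example, with $a$ bounded the requirement becomes $e(G_A)\ge a+1$, i.e.\ two cycles in $a$ vertices. This has probability $O(n^{-1})\cdot C_a$, and summing over $a\le\log n$ gives $o(1)$ by $\sum_a (Cd)^{a}n^{-(m-a)}$ with $m-a\ge1$. For $a\ge\log n$, the bound $\sum_a d^{-2a}=o(1)$ suffices.

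The main obstacle is this bookkeeping. We have to ensure that the excess $m-a=\lceil\kappa a\rceil\ge1$ always produces a genuinely polynomially small factor $(a/n)^{m-a}$ for small $a$, while the $\log d$ savings handle the combinatorial entropy for large $a$. To do this cleanly, we will write the bound as
\[
    (e/p)^a(2edp)^m=(2e^2d)^a\,(2edp)^{m-a}
\]
and split according to whether $(2edp)^{\kappa}\le d^{-3}$.
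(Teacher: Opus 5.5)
Your strategy is the paper's: a first-moment count in the configuration model, the single-set bound $(2edp)^m$ for $m\ge a$ internal edges, and a union bound over $\binom na\le(e/p)^a$ sets giving at most $d^{-2a}$ for each size $a$. Small sizes are then handled separately, through the polynomial factor coming from one excess edge. The estimates up to that point are correct. The step that fails is the small-size bookkeeping.

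You say that for all $a\le\log n$ you use only $m-a\ge1$. Your bound is then
$$(e/p)^a(2edp)^m=(2e^2d)^a(2edp)^{m-a}\le(2e^2d)^a\cdot 2eda/n.$$
The single term $a=\lfloor\log n\rfloor$ is already of size $n^{\log(2e^2d)-1+o(1)}\to\infty$. The entropy factor from choosing $A$ overwhelms the one factor $n^{-1}$ long before $a$ reaches $\log n$. Your proposed cure, splitting according to whether $(2edp)^\kappa\le d^{-3}$, is vacuous. Indeed $(2edp)^\kappa=(2ed)^\kappa d^{-10}\le(2e)^{2.5}d^{-7.5}\le d^{-3}$ for every $p\le d^{-4}$, so the condition always holds.

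The repair is short, and there are two ways to do it.

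First, you can split as the paper does, at $k_0=\lceil\log n/\log\log n\rceil$. Then $(2e^2d)^{k_0}=n^{o(1)}$, so sizes $a<k_0$ contribute $n^{-1+o(1)}$. Sizes $a\ge k_0$ contribute $\sum_{a\ge k_0}d^{-2a}=o(1)$, because $k_0\to\infty$.

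Second, you can avoid any split by interpolating in your own factorization. Since $m-a\ge\max\{1,\kappa a\}\ge\frac12(1+\kappa a)$ and $2edp<1$,
\[
(2e^2d)^a(2edp)^{m-a}\le(2e^2d)^a\bigl((2ed)^{\kappa}d^{-10}\bigr)^{a/2}\Bigl(\frac{2eda}{n}\Bigr)^{1/2}\le d^{-2a}\Bigl(\frac{2eda}{n}\Bigr)^{1/2}
\]
for large $d$. This sums over $a\ge1$ to $O(n^{-1/2})$.

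A minor point: when $\kappa a$ is an integer, a violation forces $e(G_A)\ge m+1$. So ``fails exactly when $e(G_A)\ge m$'' should read ``fails only if''. This is harmless, since only that implication is used.
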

\begin{proof}
We prove the assertion in the configuration model and then condition on simplicity; for fixed $d$ this conditioning has probability bounded away from zero. For a fixed $k$-element set $A$ and an integer $m\ge k$, a union bound over $m$ disjoint internal pairs gives
\[
 \Pr(|E(G_A)|\ge m)
 \le\binom{dk}{2m}\frac{(2m-1)!!}{\prod_{j=0}^{m-1}(dn-2j-1)}
 \le\left(\frac{e^2dk}{n}\right)^m,
 \qquad k\le n/d^4,
\]
whenever $2m\le dk$; otherwise the event is empty. The last inequality follows from $dn-2m\ge dn-dk\ge dn/2$ and $m\ge k$. In particular, no equality is asserted between the event probability and the pairing union bound.

Put $k_0=\lceil\log n/\log\log n\rceil$. For $1\le k<k_0$, any violation of the claimed excess bound implies $|E(G_A)|\ge k+1$. Therefore
\[
 \sum_{k<k_0}\binom nk\left(\frac{e^2dk}{n}\right)^{k+1}
 \le\sum_{k<k_0}\frac{e^2dk}{n}(e^3d)^k
 =n^{-1+o(1)}.
\]
For $k_0\le k\le n/d^4$, set $p=k/n$ and $\varepsilon=10\log d/\log(1/p)$. A violation requires at least $m=\lfloor(1+\varepsilon)k\rfloor+1$ internal edges. Since $e^2dp<1$, the preceding bound and $\binom nk\le(en/k)^k$ yield
\begin{align*}
 \Pr(\exists A:|A|=k,\ \tau(G_A)>\varepsilon k)
 &\le\exp\{k\log(e/p)+(1+\varepsilon)k\log(e^2dp)\}\\
 &=\exp\{k[-\varepsilon\log(1/p)+(1+\varepsilon)\log d+3+2\varepsilon]\}\\
 &\le e^{-k\log d}.
\end{align*}
Here $\varepsilon\le2.5$, and the last inequality holds for sufficiently large $d$. Summing the geometric series over $k\ge k_0$ gives $o(1)$. Together with the small-$k$ bound this proves the simultaneous assertion for all nonempty $A$; the empty-set case is interpreted separately as trivial.
\end{proof}

Then, we prove that Definition~\ref{def-typical-graph}(5) holds for $G \sim \mathcal G_d(n)$ with probability $1-o(1)$, as incorporated in the following two lemmas.
\begin{lemma}{\label{lem-bootstrap-stop}}
    Suppose that $G \sim \mathcal G_d(n)$, where $d \in \mathbb N$ satisfies \eqref{eq-assump-C-d-epsilon}. Then with probability $1-o(1)$, for all $A \subseteq V(G)$ such that $1\le |A|=pn$ where $p\leq d^{-5}$, we have $|A_{\mathsf{f}}| \leq 10pn$.
\end{lemma}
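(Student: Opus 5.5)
We argue by contradiction via a "densification" argument: if bootstrap closure $A_{\mathsf f}$ grows beyond $10|A|$, then we extract a set $B$ with $|A|\le |B|$ and at most $11|A|$ vertices spanning too many edges, contradicting the sparse-set excess bound of Definition~\ref{def-typical-graph}(4) (Lemma~\ref{lem-small-set-edge-density}). Since $\ell=\ell(p)$ depends on $p$, we first fix the scale: throughout we work with $\ell=\ell(p)$ where $|A|=pn$, so walks have length at most $4\ell\le \frac{\log(1/p)}{250\log d}$.

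The key steps are as follows. Run the bootstrap one vertex at a time, $A=S_0\subset S_1\subset\cdots$, where each $S_{j+1}=S_j\cup\{v_{j+1}\}$ and $v_{j+1}$ has three non-backtracking walks of length at most $4\ell$ from $v_{j+1}$ to $S_j$, with distinct first edges. It is legitimate to process one vertex at a time since $(\cdot)^\circ$ is monotone. Stop at the first $J$ with $J=9|A|$ (if the process never reaches this, we are done). For each added vertex, take the three walks, truncated at their first hit of the current set $S_{j}$ together with the already built union. Let $H$ be the union of $S_J$ and the traces of all these truncated walks. Each added vertex contributes a "tripod": a subgraph whose new vertices number $m$ and whose new edges number at least $m+2$. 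Truncation at the first return guarantees that the walks' new parts hang off the existing structure, and the three distinct first edges ensure genuinely three attachments. This raises the excess by at least $2$ per step, though care is needed when walks overlap each other; in the worst case, two walks merge before hitting $S_j$, but then the merge point is a vertex of degree $\ge3$ in the new part and the count of "independent attachments" is still at least $2$ extra edges. Hence $\tau(H)\ge \tau(G_A)+2J\ge -|A|+18|A|=17|A|$ while $|V(H)|\le 10|A|+3J\cdot 4\ell\le 10|A|(1+12\ell)$.

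To conclude, compare with Lemma~\ref{lem-small-set-edge-density} applied to $B=V(H)$, which has $|B|=p'n$ with $p'\le 200\ell p\le p^{0.99}\le d^{-4}$ (using $p\le d^{-5}$ and $\ell\lesssim \log(1/p)$). It gives $\tau(G_B)\le \frac{10\log d}{0.99\log(1/p)}\cdot 130\ell|A|\le \frac{1300\ell\log d}{0.99\log(1/p)}|A|\le 1.4|A|$ by the choice $\ell\le \log(1/p)/(1000\log d)$. Since $\tau(G_B)\ge\tau(H)\ge 17|A|$, this is a contradiction. The excess is linear in $|A|$ while the vertex blow-up factor is only $O(\ell)$, and the $1/\log(1/p)$ gain in (4) exactly kills that factor. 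The probability statement is inherited from Lemma~\ref{lem-small-set-edge-density}.

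The main obstacle is the excess accounting for a single tripod when its three walks are non-self-avoiding or intersect each other and earlier tripods. The plan there is to view the new part as a connected graph attached to the old structure by $r$ attachment edges with internal cycles, and argue that new edges minus new vertices is at least $2$. The three distinct starting edges at $v_{j+1}$ give degree $\ge 3$ at $v_{j+1}$ in $H$. Every new vertex other than $v_{j+1}$ has degree $\ge2$ in the union, because walks are non-backtracking and end in the old set, so no new leaves arise. A connected graph part with minimum degree $2$, one vertex of degree $\ge3$, and at least one attachment yields excess gain $\ge \frac12\sum(\deg-2)\ge\frac12$. Getting the full $2$ may fail, so I would settle for gain $\ge 1/2$ per step. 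This still gives $\tau(H)\ge -|A|+4.5|A|=3.5|A|>1.4|A|$, enough for the contradiction.
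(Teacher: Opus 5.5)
Your proposal is correct and is essentially the paper's proof: add $9|A|$ vertices one at a time and take the union $H$ of their three witness walks. Degree at least $3$ at the added vertices and at least $2$ at every other vertex outside $A$ give $\tau(H)\ge \tfrac12 J-|A|=3.5|A|$, which contradicts Definition~\ref{def-typical-graph}(4) via $|V(H)|=O(\ell)|A|$ and $\ell\le \log(1/p)/(1000\log d)$. The one thing to tidy is that the excess bound should be stated globally, as $\tau(H)=\tfrac12\sum_{v\in V(H)}(\deg_H(v)-2)$ with degrees taken in the final $H$ (this is what the paper does), rather than as a per-step gain of $1/2$, which can fail when a later $v_{j+1}$ already has degree at least $3$ in the current union.
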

\begin{proof}
    If $\ell=0$, no vertex can be added and $A_{\mathsf f}=A$. We may therefore assume $\ell\ge1$. By \eqref{eq-def-ell-p}, we have $p\le d^{-1000\ell}$; consequently, $120\ell p\le d^{-4}$ and $\log(1/(120\ell p))\ge500\ell\log d$ for sufficiently large $d$.
    Denote by $N=10pn$ and suppose on the contrary that $|A_{\mathsf{f}}| > N$. Then clearly we can find $v_1,\ldots,v_{0.9N}$ such that if we define $A_0=A$ and $A_i=A_{i-1}\cup \{ v_i \}$, then there exist three non-backtracking walks $P_{i,1},P_{i,2},P_{i,3}$ connecting $v_{i}$ and $A_{i-1}$ and the first edges of these walks are distinct. Each walk has length at most $4\ell$ and can be stopped upon first reaching $A_{i-1}$. Define 
    \begin{align*}
        G_0 = G_A \mbox{ and } G_i = G_{i-1} \cup P_{i,1} \cup P_{i,2} \cup P_{i,3} \,.
    \end{align*}
    We then have the following observation:
    \begin{itemize}
        \item $|V(G_{0.9N})| \leq |A|+(12\ell-2)\cdot 0.9N \leq 12\ell N=120\ell pn \le d^{-4}n$. 
        \item The degree of $v_1,\ldots,v_{0.9N}$ in $G_{0.9N}$ is at least $3$.
        \item The degree of $v \in V(G_{0.9N}) \setminus A$ in $G_{0.9N}$ is at least $2$. Indeed, at its first appearance, a vertex not among the added starting vertices is internal to a non-backtracking witness and therefore has distinct preceding and succeeding neighbors in its trace.
    \end{itemize}
    Thus, we have
    \begin{align*}
        2|E(G_{0.9N})| &= \sum_{ v \in V(G_{0.9N}) } \mathsf{deg}_{G_{0.9N}}(v) \geq  \sum_{ v \in V(G_{0.9N}) \setminus A } \mathsf{deg}_{G_{0.9N}}(v) \\
        &\geq 3\times 0.9N + \sum_{ \substack{ v \in V(G_{0.9N}) \setminus A \\ v \not\in \{ v_1,\ldots,v_{0.9N} \} } } \mathsf{deg}_{G_{0.9N}}(v) \geq 2.7N + 2(|V(G_{0.9N})|-0.9N-|A|) \,.
    \end{align*}
    Thus, we have
    \begin{align*}
        \tau(G_{0.9N}) &= |E(G_{0.9N})|-|V(G_{0.9N})| \geq \frac{0.9N-2|A|}{2} \geq \frac{N}{3} \\
        &\geq \frac{1}{36\ell} \cdot |V(G_{0.9N})| \overset{\eqref{eq-assump-C-d-epsilon},\eqref{eq-def-ell-p}}{>} \frac{ 10\log d }{ \log(1/(120\ell p)) } \cdot |V(G_{0.9N})| \,,
    \end{align*}
    thereby contradicting Lemma~\ref{lem-small-set-edge-density} applied to the induced graph on $V(G_{0.9N})$, whose excess is at least $\tau(G_{0.9N})$.
\end{proof}

\begin{lemma}{\label{lem-bound-num-path-after-bootstrap}}
    Suppose $G \sim\mathcal G_d(n)$ where $d \in\mathbb N$ satisfies \eqref{eq-assump-C-d-epsilon}. Then with probability $1-o(1)$, for all $A \subseteq V(G)$ such that $1\le |A|=pn$ where $p\leq d^{-4000}$, we have $|\mathcal P(A_{\mathsf{f}})| \leq 100pn$ and $|(A_{\mathsf f})^+|\le200\ell pn$. 
\end{lemma}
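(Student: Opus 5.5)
We reduce, as in the proof of Lemma~\ref{lem-bootstrap-stop}, to the small-set excess bound of Lemma~\ref{lem-small-set-edge-density}, and work on the event where both Lemma~\ref{lem-small-set-edge-density} and Lemma~\ref{lem-bootstrap-stop} hold. Fix $A$ with $|A|=pn$, $p\le d^{-4000}$, and write $B:=A_{\mathsf f}$; by Lemma~\ref{lem-bootstrap-stop}, $|B|\le 10pn$. If $\ell=0$ then excursions have length at most $2$ and the claims are routine, so assume $\ell\ge1$. Then $p\le d^{-1000\ell}$, and in particular $\log(1/p)\ge 1000\ell\log d$.

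\emph{Main step: counting excursions.} Suppose $|\mathcal P(B)|>100pn$, and pick $N:=\lceil 100pn\rceil$ distinct first-return excursions $P_1,\dots,P_N\in\mathcal P(B)$. Set $H_0:=G_B$ and add the excursions one at a time, $H_j:=H_{j-1}\cup P_j$. Each excursion has length at most $2\ell+2$ and internal vertices outside $B$, so
\[
|V(H_N)|\le |B|+(2\ell+1)N\le 10pn+(2\ell+1)N\le 4\ell N\le 500\ell pn\le d^{-4}n .
\]
We claim each new excursion that is not already contained in $H_{j-1}$ raises the excess by at least one. Its trace, traversed from one endpoint in $B$ to the other, leaves the current graph and later re-enters it, and each such departure/re-entry pair adds one more edge than new vertices. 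The first edge leaves $B$; an excursion that is a cycle at one vertex is also covered. An excursion entirely contained in $H_{j-1}$ contributes nothing, so this case needs a separate count.

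\emph{Excursions already contained in the union.} This is the expected obstacle: the union $H_{j-1}$ may already contain many distinct excursions. To handle it, we take a maximal family of excursions that each add a new edge, so that every excursion in $\mathcal P(B)$ lies in their union $H$. Since $G_B$ is included in $H$, we have $\tau(H)\ge \tau(G_B)+\#\{\text{family}\}$. A non-backtracking path in $H$ from $B$ to $B$ avoiding $B$ internally is determined by a path through the ``outside'' part $H\setminus G_B$. That part has cyclomatic number $O(\tau(H))$ plus a tree structure of controlled size, and the number of such excursions of length at most $2\ell+2$ in a graph of excess $k$ should be bounded by $O(k)$ times a factor involving $\ell$. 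Lemma~\ref{lem-small-set-edge-density} gives
\[
\tau(H)\le \frac{10\log d}{\log(1/(500\ell p))}\,|V(H)|\le \frac{|V(H)|}{40\ell}.
\]
Combined with $|V(H)|\le 4\ell\cdot\#\{\text{family}\}+10pn$, this forces the family to have size $O(pn)$, and then a constant multiple (below $100$) bounds $|\mathcal P(B)|$ itself. The numerical constants must be tracked here: $\log(1/(500\ell p))\ge 900\ell\log d$ provides the room needed. If bounding nested excursions through the excess proves messy, an alternative is to argue directly that two excursions with the same trace are equal up to reversal. Then any excursion inside the union is a new path in a graph of excess $O(pn/\ell)$, whose number of $B$-to-$B$ non-backtracking segments is at most $|B|+O(\text{excess})$.

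\emph{Second claim.} This is immediate from the first: $|(B)^+|\le |B|+(2\ell+1)|\mathcal P(B)|\le 10pn+(2\ell+1)100pn\le 200\ell pn$. A minor correction is needed since $(2\ell+1)\cdot100>200\ell$; it is handled either by sharpening the first bound or by using that the internal vertices number at most $2\ell+1$, and a slightly tighter count (e.g.\ $|\mathcal P|\le 90pn$ from the excess argument) suffices.
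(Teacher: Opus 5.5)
There is a genuine gap. It sits exactly at the step you flag as the obstacle: bounding the excursions that lie inside the union of other excursions. Your excess bookkeeping for a family in which each excursion adds a new edge is fine, and it does bound the size of that family by $O(pn)$. But you never use anything about $B=A_{\mathsf f}$ beyond $|B|\le 10pn$, and with only that information the conclusion is false. Take $B=\mathcal N_G(w_1)\cup\dots\cup\mathcal N_G(w_k)$ for far-apart vertices $w_i\notin B$. The length-two walks $b\,w_i\,b'$ give $|\mathcal P(B)|\ge k\binom d2=\frac{d-1}{2}|B|$, far more than $100pn$. Yet each star adds only $d-1$ to the excess.

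Trees make this worse. A tree of depth at most $\ell$ outside $B$ whose $L$ leaves each have one edge into $B$ adds $L-1$ to the excess but contributes $\binom L2$ excursions, with $L$ of order $d^{\ell}$. So the heuristic that a graph of excess $k$ carries $O(k)$ excursions ``times a factor involving $\ell$'' would need a factor of order $d^{\ell}$, which destroys the constant $100$. Your fallback, that two excursions with the same trace coincide up to reversal, does not address this branching at all.

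The missing ingredient is that $A_{\mathsf f}$ is closed under the bootstrap map, i.e.\ $(A_{\mathsf f})^\circ=A_{\mathsf f}$. Consider the trace of one excursion, or of the union of two. A vertex $w\notin A_{\mathsf f}$ with three incident trace edges would have three non-backtracking walks to $A_{\mathsf f}$, obtained by following suffixes or reversed prefixes. These walks have distinct first edges and length at most $2\ell+2\le 4\ell$, which forces $w\in A_{\mathsf f}$, a contradiction. Hence every excursion is either a simple path between distinct vertices of $A_{\mathsf f}$ or a simple cycle through a single vertex of $A_{\mathsf f}$. Moreover, distinct excursions have disjoint interiors and disjoint edge sets.

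This removes your obstacle entirely: no excursion lies in the union of others. If $H$ consists of $A_{\mathsf f}$ together with the traces of $N$ excursions, then $\tau(H)=N-|A_{\mathsf f}|$ exactly. Taking $N=50pn$ and comparing with $\tau(H)\le |V(H)|/(50\ell)\le 4pn$ from Lemma~\ref{lem-small-set-edge-density} gives $|\mathcal P(A_{\mathsf f})|<50pn$.

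That sharper constant is also what the second claim needs: $|A_{\mathsf f}|+(2\ell+1)\cdot 50pn\le(100\ell+60)pn\le 200\ell pn$. Your suggested $90pn$ would only give $(180\ell+100)pn$, which exceeds $200\ell pn$ when $\ell=4$, and $\ell=4$ is allowed when $p\le d^{-4000}$.
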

\begin{proof}
    We work on the events in Lemmas~\ref{lem-small-set-edge-density} and \ref{lem-bootstrap-stop}. Since $p\le d^{-4000}$, we have $\ell\ge4$. We first record a deterministic consequence of $(A_{\mathsf f})^\circ=A_{\mathsf f}$. Consider the trace of one excursion in $\mathcal P(A_{\mathsf f})$, or the union of the traces of two excursions. If a vertex $w\notin A_{\mathsf f}$ has three incident edges in this union, each such edge can be followed along a suffix of one excursion, or a reversed prefix, to reach $A_{\mathsf f}$. The resulting three non-backtracking walks have different first edges and length at most $2\ell+2\le4\ell$, contradicting closure. On the other hand, every internal vertex of an excursion has trace degree at least two, by non-backtracking. Hence every excursion is either a simple path between two distinct vertices of $A_{\mathsf f}$, or a simple cycle meeting $A_{\mathsf f}$ at exactly one vertex. Distinct excursions have disjoint interiors and disjoint edge sets: if two shared an internal vertex, their continuations in both directions along the degree-two trace would be identical, up to reversal.

    Denote by $N=50pn$. From Lemma~\ref{lem-bootstrap-stop} we know that $|A_{\mathsf{f}}|\leq 0.2N$. Suppose on the contrary that $|\mathcal P(A_{\mathsf{f}})| \geq N$. Then there exist distinct excursions $P_1,\ldots,P_N$ such that $V(P_i) \cap A_{\mathsf{f}} = \mathsf{EndP}(P_i)$ and $|E(P_i)|\leq 2\ell+2$. Let $H$ consist of all vertices of $A_{\mathsf f}$ and the trace edges of these $N$ selected excursions; vertices of $A_{\mathsf f}$ not used by the selected excursions are included as isolated vertices. In particular, $H$ is not the full enlargement, and we do not add all the edges of $G_{A_{\mathsf f}}$ separately. Thus, we have
    \begin{align*}
        |V(H)| & =|A_{\mathsf f}|+\sum_{i=1}^N \bigl(|E(P_i)|-1\bigr) \leq (2\ell+1)N+|A_{\mathsf f}| \le(100\ell+60)pn \le200\ell pn\le d^{-4}n\,; \\
        \tau(H) & =\sum_{i=1}^N|E(P_i)|-|V(H)|=N-|A_{\mathsf f}| \geq 0.8N = 40pn \,.
    \end{align*}
    By \eqref{eq-def-ell-p}, $\log(1/(200\ell p))\ge500\ell\log d$ for sufficiently large $d$. Therefore Lemma~\ref{lem-small-set-edge-density}, applied to the induced graph on $V(H)$, gives
    \[
        \tau(H)\le\tau(G_{V(H)})\le\frac{10\log d}{\log(n/|V(H)|)}|V(H)|\le\frac{|V(H)|}{50\ell}\le4pn.
    \]
    This contradicts $\tau(H)\ge40pn$. Thus $|\mathcal P(A_{\mathsf f})|<50pn$, proving the asserted weaker bound. Finally, summing the disjoint excursion interiors yields
    \[
        |(A_{\mathsf f})^+|\le|A_{\mathsf f}|+(2\ell+1)|\mathcal P(A_{\mathsf f})|\le(100\ell+60)pn\le200\ell pn. \qedhere
    \]
\end{proof}
Finally, we prove that Definition~\ref{def-typical-graph}(6) holds for $G \sim \mathcal G_d(n)$ with probability $1-o(1)$, as incorporated in the following lemma.
\begin{lemma}{\label{lem:enu-num-returning-path}}
    Suppose $G \sim\mathcal G_d(n)$ where $d \in\mathbb N$ satisfies \eqref{eq-assump-C-d-epsilon}. Then with probability $1-o(1)$, for all $A,A' \subseteq V(G)$, we have $|W_{m}(A,A')| \leq 2\sqrt{|A||A'|} d^{0.51m}$ for $1 \le m\leq \frac{\min\{ \log(1/p),\log(1/q) \} }{ 10\log d }$.
\end{lemma}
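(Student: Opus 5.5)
The plan is to use the spectral approach of Lemma~\ref{lem:typical-expander}, via Friedman's theorem, together with a non-backtracking correction for short walks. Write $W_m(A,A')=\mathbf 1_A^{\mathsf T}A_G^m\mathbf 1_{A'}$. Decompose $\mathbf 1_A=\frac{|A|}{n}\mathbf 1+f$ and $\mathbf 1_{A'}=\frac{|A'|}{n}\mathbf 1+f'$ with $f,f'\perp\mathbf 1$. This gives
\[
W_m(A,A')=\frac{|A||A'|}{n}d^m+f^{\mathsf T}A_G^mf' \le pqn\,d^m+\lambda^m\sqrt{|A||A'|},
\]
where $\lambda=\max\{|\lambda_2|,|\lambda_n|\}\le 2\sqrt{d-1}+1$ holds with probability $1-o(1)$.

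The first term is handled by the range of $m$. Since $m\le \min\{\log(1/p),\log(1/q)\}/(10\log d)$, we have $d^{m}\le \min\{p,q\}^{-1/10}$. Hence
\[
pqn\,d^m=\sqrt{|A||A'|}\sqrt{pq}\,d^m\le \sqrt{|A||A'|}\,(pq)^{1/2}\min\{p,q\}^{-1/10}\le\sqrt{|A||A'|},
\]
which is well below $\sqrt{|A||A'|}d^{0.51m}$.

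The second term is the real obstacle, because $(2\sqrt{d-1}+1)^m$ is not bounded by $d^{0.51m}$. Indeed $2\sqrt{d}=d^{0.5+\log 2/\log d}$, and $\log 2/\log d>0.01$ unless $d>2^{100}$. This step therefore needs more than the plain spectral bound. There are two options.

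The first option is to enlarge $d_0$. Assumption \eqref{eq-assump-C-d-epsilon} allows $d_0$ to be taken sufficiently large, so for $d\ge d_0$ with $d_0\ge 2^{200}$ we get $2\sqrt{d-1}+1\le d^{0.505}$. Then the second term is at most $\sqrt{|A||A'|}d^{0.505m}$, and the sum of the two terms is at most $2\sqrt{|A||A'|}d^{0.51m}$, as required. I would take this route, since the paper repeatedly invokes "for sufficiently large $d$".

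The second option is a fallback in case one wants only moderate $d$. Replace $A_G^m$ by the non-backtracking expansion through the Ihara--Bass identity. Every walk of length $m$ decomposes into a non-backtracking walk of length $m-2j$ with $j$ backtracks inserted. The number of insertion patterns is at most $\binom{m}{j}$, and each backtrack contributes a factor of at most $d$ per backtrack pair. Friedman's bound gives the non-backtracking polynomials $q_k$, after projecting off $\mathbf 1$, operator norm $O(k(d-1)^{k/2})$. Summing over $j$ gives a bound of the form $(\sqrt{d}+O(1))^m\cdot\mathrm{poly}(m)$, which again fits $d^{0.51m}$ once $d$ is large.

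Finally, the statement must hold simultaneously for all $A,A'$ and all admissible $m$. No union bound is needed: the estimate is deterministic on the single spectral event from Friedman's theorem, which has probability $1-o(1)$.
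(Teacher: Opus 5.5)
Your first route is correct and is essentially the paper's proof: Friedman's theorem with $d_0$ large enough that the nontrivial eigenvalues are at most $d^{0.51}$, the split of $\mathbf 1_A,\mathbf 1_{A'}$ into constant and mean-zero parts, Cauchy--Schwarz on the mean-zero term, and the range of $m$ to absorb $npq\,d^m$. The non-backtracking fallback is unnecessary and could not rescue moderate $d$: closed walks from a vertex with a tree-like neighbourhood already number roughly $(2\sqrt{d-1})^m$, not $(\sqrt d+O(1))^m$, so the statement itself genuinely requires $d$ large.
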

\begin{proof}
    Let \(M=M_G\) be the adjacency matrix of \(G\), and let \(d=\lambda_1(G)\ge\lambda_2(G)\ge\cdots\ge\lambda_n(G)\) be its eigenvalues. By Friedman's theorem \cite{Friedman08} and \eqref{eq-assump-C-d-epsilon}, 
    \begin{align*}
        \Pb_{G \sim \mathcal G_d(n)}\Big( |\lambda_2(G)|,|\lambda_n(G)|\le d^{0.51} \Big) =1-o(1) \,.
    \end{align*}
    It remains to show that $|\lambda_2(G)|,|\lambda_n(G)| \le d^{0.51}$ implies that $|W_m(A,A')| \leq 2\sqrt{|A||A'|}d^{0.51m}$ for all $A,A' \subseteq V(G)$ and $1 \le m\leq \frac{\min\{ \log(1/p),\log(1/q) \} }{ 10\log d }$. Fix nonempty sets $A,A'\subseteq V(G)$, put $p=|A|/n$ and $q=|A'|/n$. The number of walks under consideration is $W_{m}(A,A')=\mathbf 1_A^{\top}M^m\mathbf 1_{A'}$. Decompose the two indicator vectors into their constant and mean-zero parts: $\mathbf 1_A=p\mathbf 1+u, \mathbf 1_{A'}=q\mathbf 1+v$ where $u,v \perp \mathbf 1$. Since $M\mathbf 1=d\mathbf 1$, the cross terms vanish and hence
    \[
        W_{m}(A,A')=npq\cdot d^m+u^{\top}M^m v \le |A||A'|d^{m}/n + u^{\top}M^m v \,.
    \]
    Moreover,
    \[
        \|u\|_2^2=np(1-p)\leq np=|A|, \quad \|v\|_2^2=nq(1-q)\leq nq=|A'| \,.
    \]
    The restriction of $M$ to $\mathbf 1^\perp$ has operator norm $\lambda_\star(G)=\max\{ |\lambda_2(G)|,|\lambda_n(G)| \} \le d^{0.51}$, so the Cauchy--Schwarz inequality yields
    \[
        |u^{\top}M^mv| \leq d^{0.51m} \|u\|_2\|v\|_2 \leq d^{0.51m} \sqrt{|A||A'|}  \,.
    \]
    We have therefore proved the deterministic estimate
    \[
        W_{m}(A,A') \leq \left( d^{m}|A||A'|/n + d^{0.51m} \sqrt{|A||A'|} \right) \le 2d^{0.51m}\sqrt{|A||A'|} \,,
    \]
    where the last inequality follows from $1 \le m\leq \frac{\min\{ \log(1/p),\log(1/q) \} }{ 10\log d }$.
\end{proof}
We finish this section with some facts on $A_{\mathsf{f}}$. Recall the isomorphism classes $\mathbb T_{k,\ell}$ defined in Definition~\ref{def:specific-tree}.
\begin{cor}{\label{cor-structure-A-final}}
    The set $A_{\mathsf{f}}$ satisfies the following conditions whenever $G$ is simple and $d$-regular and $\ell\ge4$:
    \begin{itemize}
        \item[(1)] For any $v_1,v_2 \in A_{\mathsf{f}}$ and for any $u_i \in \mathcal N_G(v_i) \setminus (A_{\mathsf{f}})^{+}$ with $(v_1,u_1) \neq (v_2,u_2)$, we have 
        \begin{align*}
            \mathcal B\Big( u_1,G_{\setminus \{v_1\}},\ell \Big),\ \mathcal B\Big( u_2,G_{\setminus \{v_2\}},\ell \Big) \mbox{ are pairwise vertex-disjoint trees} \,.
        \end{align*}
        \item[(2)] For any $v_1,v_2 \in (A_{\mathsf{f}})^+$ with $\mathsf{dist}(v_i,A_{\mathsf{f}})=m_i$ and for any $u_i \in \mathcal N_{G}(v_i) \setminus (A_{\mathsf{f}})^{+}$ with $(v_1,u_1) \neq (v_2,u_2)$, we have
        \begin{align*}
            \mathcal B\Big( u_1,G_{\setminus \{v_1\}},\ell \Big),\ \mathcal B\Big( u_2,G_{\setminus \{v_2\}},\ell \Big) \mbox{ are pairwise vertex-disjoint trees} \,.
        \end{align*}
        \item[(3)] For any $v \in (A_{\mathsf{f}})^+$ and for any $u \sim v, u \notin (A_{\mathsf f})^+$, define $\mathbf T_u$ to be the graph induced by all the vertices that can be connected to $u$ by a path of length at most $\ell$ that does not intersect $(A_{\mathsf{f}})^+$. Then the trees $\{ \mathbf T_u \}$ are pairwise vertex-disjoint. In addition, we have $\mathbf T_u \sim \mathbb T_{1,\ell}$, and
        \[
            \mathbf T_u=\mathcal B\bigl(u,G_{\setminus(A_{\mathsf f})^+},\ell\bigr)=\mathcal B\bigl(u,G_{\setminus\{v\}},\ell\bigr).
        \]
        The only edge joining a vertex of $\mathbf T_u$ to $(A_{\mathsf f})^+$ is $uv$.
    \end{itemize}
\end{cor}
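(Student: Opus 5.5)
The approach is a single deterministic contradiction principle: any failure of tree-ness or disjointness in the stated balls produces a short non-backtracking connection that either (a) creates a vertex outside $A_{\mathsf f}$ with three non-backtracking walks of length at most $4\ell$ to $A_{\mathsf f}$, contradicting the closure $(A_{\mathsf f})^\circ=A_{\mathsf f}$; or (b) yields a first-return excursion of length at most $2\ell+2$ that is not already absorbed into $(A_{\mathsf f})^+$. The only inputs are the closure property of $A_{\mathsf f}$ from Definition~\ref{def-bootstrap-percolation} and the definition of $\mathcal P(\cdot)$ and $(\cdot)^+$ from Definition~\ref{def-plus-chain}. No randomness is needed. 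The structure of excursions established in the proof of Lemma~\ref{lem-bound-num-path-after-bootstrap} is deterministic as well: each excursion is a simple path or a cycle through one vertex of $A_{\mathsf f}$, and distinct excursions have disjoint interiors.

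For item (1), suppose $\mathcal B(u_1,G_{\setminus\{v_1\}},\ell)$ contains a cycle. Take a cycle $Q$ of length at most $2\ell+1$ within distance $\ell$ of $u_1$, and a shortest path $R$ from $u_1$ to $Q$. Then the branch vertex $w=R\cap Q$ has three directions: two along $Q$ and one back along $R$ to $u_1$ and then to $v_1\in A_{\mathsf f}$. Going around $Q$ in either direction and returning along $R$ to $v_1$ gives non-backtracking walks of length at most $(2\ell+1)+2(\ell+1)\le 4\ell+3$. Here I must be careful: I would use the radius-$\ell$ geometry more sharply (the cycle lies within distance $\ell$, so going halfway around costs at most $\ell+\tfrac12$), and I would use $\ell\ge4$ to absorb the constants. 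If $w\notin A_{\mathsf f}$, this contradicts closure. If $w\in A_{\mathsf f}$, the walk $v_1u_1\cdots w$ is a first-return excursion of length at most $\ell+1$, so $u_1\in(A_{\mathsf f})^+$, contradicting the choice of $u_1$.

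Disjointness is handled the same way. If the two balls intersect, we get a non-backtracking walk $v_1u_1\cdots u_2v_2$ of length at most $2\ell+2$. Since $(v_1,u_1)\neq(v_2,u_2)$, it is a genuine non-backtracking walk (when $v_1=v_2$ we have $u_1\ne u_2$). Truncating at the first interior vertex in $A_{\mathsf f}$ gives an excursion containing $u_1$, so $u_1\in(A_{\mathsf f})^+$, again a contradiction. A subtle case is that the shortest connection might backtrack through $v_1$ itself, but $G_{\setminus\{v_1\}}$ deletes those edges.

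Item (2) reduces to item (1) by replacing $v_i$ with the $A_{\mathsf f}$-endpoint of the excursion through $v_i$. The prefix from $A_{\mathsf f}$ to $v_i$ has length $m_i$, and $v_i$ is interior to a degree-two trace, so $u_i$ is a third direction. Any cycle or collision then produces a vertex of the excursion interior with three short non-backtracking routes to $A_{\mathsf f}$, or a new excursion through $u_i$. The length budget becomes $m_i+\ell+\dots$, and this is the step I expect to be the main obstacle. The lengths must stay within $4\ell$ for closure or $2\ell+2$ for excursions, which requires using $m_i\le\ell+1$ together with the ball radius. If the bound fails, I would measure the ball from the nearer endpoint and exploit that the branch point lies on the excursion.

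Item (3) then follows. $\mathbf T_u$ is contained in $\mathcal B(u,G_{\setminus\{v\}},\ell)$, which is a tree by item (2). It avoids $(A_{\mathsf f})^+$ except through $uv$, since a second edge from $\mathbf T_u$ into $(A_{\mathsf f})^+$ gives a third direction or a new excursion, as before. Hence every vertex has full degree $d$ inside, so $\mathbf T_u\cong\mathbb T_{1,\ell}$, and the balls in $G_{\setminus(A_{\mathsf f})^+}$ and $G_{\setminus\{v\}}$ coincide. Pairwise disjointness is item (2).
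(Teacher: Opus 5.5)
Your strategy is the paper's. Every failure of disjointness or tree-ness is turned into one of two contradictions. The first is a first-return excursion of length at most $2\ell+2$ through $u_i$. The second is three non-backtracking witnesses of length at most $4\ell$ at a vertex outside $A_{\mathsf f}$; in Item~(2) this is often $v_i$ itself, with $v_iu_i$ as the third first edge, so Item~(2) is not literally a reduction to Item~(1). The paper packages this as one claim: no non-backtracking walk of length between $2$ and $2\ell+2$ joins two vertices of $(A_{\mathsf f})^+$ through the exterior. It proves everything first for balls in $G_{\setminus(A_{\mathsf f})^+}$ and only at the end identifies them with the balls in $G_{\setminus\{v_i\}}$. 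Your Item~(3) then goes through as you describe. Two points in your sketch need fixing, though neither needs a new idea.

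First, the length accounting is misplaced.
\begin{itemize}
\item In Item~(1) the three witnesses start at $w$ and traverse the stem once, so they have length $|Q|+|R|+1\le 3\ell+2\le 4\ell$. Your bound $4\ell+3$ double-counts the stem. No sharpening is needed there, and $\ell\ge4$ could not absorb a $+3$ against $4\ell$ anyway.
\item The budget is genuinely tight in Item~(2), where you must append the excursion segment from $v_1$ to $A_{\mathsf f}$. With the separate bounds $|Q|\le 2\ell+1$ and $|R|\le\ell$ you are back at $4\ell+3$.
\item The remedy is to take $Q$ from a breadth-first non-tree edge $xy$, with $w$ the lowest common ancestor of $x$ and $y$. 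Then $2|R|+|Q|=\mathsf{dep}(x)+\mathsf{dep}(y)+1\le 2\ell+1$, and the witnesses have length at most $3\ell+3$.
\item The paper instead takes the third witness at $v_1$ to be the closed walk $v_1u_1\cdots u_1v_1$ (length $\le 2\ell+3$) followed by the shorter excursion segment. This is exactly where $3\ell+4\le 4\ell$, i.e.\ $\ell\ge4$, is used.
\item In either version, use that excursion segment, which has length $\le\ell+1$ and first edge inside $(A_{\mathsf f})^+$. Do not use a path realizing $m_i$, which could begin with $v_iu_i$ and create a backtrack.
\end{itemize}

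Second, the disjointness step as written does not produce a non-backtracking walk.
\begin{itemize}
\item Gluing a radial path in $G_{\setminus\{v_1\}}$ to one in $G_{\setminus\{v_2\}}$ at a common vertex $z$ can backtrack at $z$.
\item The second half may also pass through $v_1$, e.g.\ $z=u_1$ and the path toward $u_2$ begins $u_1v_1$, because $G_{\setminus\{v_2\}}$ keeps the edges at $v_1$. Your remark about $G_{\setminus\{v_1\}}$ covers only the first half.
\item To fix it, first prove that no path of length at most $\ell$ from $u_i$ in $G_{\setminus\{v_i\}}$ meets $(A_{\mathsf f})^+$. This is the observation you reserve for Item~(3), and it does not depend on Items~(1)--(2).
\item Then both balls lie in $G_{\setminus(A_{\mathsf f})^+}$. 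Join $u_1,u_2$ by a shortest path there and attach $v_1u_1$ and $u_2v_2$. This is exactly the paper's order.
\end{itemize}
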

\begin{proof}
    Write $S=A_{\mathsf f}$ and $B=(A_{\mathsf f})^+$. The deterministic argument at the beginning of the proof of Lemma~\ref{lem-bound-num-path-after-bootstrap} shows that the excursions in $\mathcal P(S)$ are simple paths or simple cycles, with pairwise disjoint interiors; that argument uses only closure and $2\ell+2\le4\ell$, not the size of $A$. Every $v\in B\setminus S$ therefore has two distinct first-edge directions along its excursion leading to $S$. The corresponding lengths sum to at most $2\ell+2$, so the shorter is at most $\ell+1$. In particular, every vertex of $B$ has a path to $S$ inside $G_B$ of length at most $\ell+1$. Moreover, every $v\in B\setminus S$ has exactly two neighbors in $B$: a third edge into $B$, followed by a shortest excursion segment to $S$, would give a third non-backtracking witness of length at most $\ell+2$. There is no immediate reversal at the joining vertex, since an additional edge into an excursion interior cannot be one of that excursion's two edges, by interior-disjointness.

    We first exclude a non-backtracking walk of length between $2$ and $2\ell+2$ with endpoints in $B$ and all internal vertices outside $B$. If both endpoints lie in $S$, the walk is a first-return excursion in $\mathcal P(S)$, contradicting the definition of $B$. Otherwise, reverse the walk if needed so that its starting vertex $v$ belongs to $B\setminus S$, and append a path inside $G_B$ from the other endpoint to $S$. The concatenation is non-backtracking because the arriving edge lies outside $B$ and the departing edge lies inside $B$. Its length is at most $3\ell+3\le4\ell$, and its first edge leaves $B$. Together with the two directions along the excursion containing $v$, it would give three bootstrap witnesses and force $v\in S$, a contradiction. The endpoints of the excluded walk are allowed to coincide.

    We now prove Items~(1) and (2) simultaneously, first using the balls in $G_{\setminus B}$. Suppose on the contrary that
    \begin{align*}
        \mathcal B\Big( u_1,G_{\setminus B},\ell \Big) \cap \mathcal B\Big( u_2,G_{\setminus B},\ell \Big) \neq \emptyset \,.
    \end{align*}
    A shortest exterior path between $u_1$ and $u_2$, together with the attachment edges $v_1u_1$ and $u_2v_2$, gives an excluded non-backtracking walk of length at most $2\ell+2$. This also covers $v_1=v_2$: the attachment edges are distinct, so the walk is still non-backtracking. If $u_1=u_2$, the two attachment edges alone give the excluded length-two walk. Thus the exterior balls are pairwise vertex-disjoint.

    Now suppose on the contrary that $\mathcal B( u_1,G_{\setminus B},\ell )$ has a cycle $C$. Take a non-tree edge in a breadth-first spanning tree rooted at $u_1$. The tree paths from $u_1$ to the endpoints of that edge, followed by the edge and a return along the tree, form a non-backtracking closed walk of length at most $2\ell+1$. Adding the attachment edge $v_1u_1$ at its beginning and end gives a walk from $v_1$ back to $v_1$, through the exterior of $B$, of length at most $2\ell+3$.

    If $v_1\in B\setminus S$, append its shorter excursion segment to $S$. This gives a non-backtracking walk of length at most $3\ell+4\le4\ell$ whose first edge leaves $B$. Together with the two excursion directions at $v_1$, it forces $v_1\in S$, a contradiction. If $v_1\in S$, write the breadth-first witness as a stem from $v_1$ to a cycle vertex $w$, one traversal of the simple cycle, and the reverse stem. Let $a\ge1$ be the stem length and $c$ the cycle length, so that $2a+c\le2\ell+3$. From $w$, the stem toward $v_1$ and the two full cycle traversals followed by the stem are three non-backtracking walks to $S$ with distinct first edges, of lengths at most $a+c\le2\ell+2\le4\ell$. This would put $w\in S$, although the cycle lies outside $B$. Thus the exterior balls are trees.

    There is no additional edge from $\mathcal B(u_1,G_{\setminus B},\ell)$ into $B$: the tree path starting with $v_1u_1$, followed by such an edge, would be an excluded exterior walk of length at most $\ell+2$. Likewise, a path of length at most $\ell$ from $u_1$ in $G_{\setminus\{v_1\}}$ cannot visit $B$. Its prefix up to the first visit, preceded by $v_1u_1$, would be an excluded exterior walk of length at most $\ell+1$. Consequently,
    \[
        \mathcal B\bigl(u_1,G_{\setminus\{v_1\}},\ell\bigr)=\mathcal B\bigl(u_1,G_{\setminus B},\ell\bigr).
    \]
    The same identity holds for $u_2,v_2$, proving Items~(1) and (2) as stated.

    Based on Items~(1) and (2), the graphs $\mathbf T_u$ in Item~(3) are these pairwise disjoint exterior trees, each with its unique attachment edge $uv$. Since $G$ is $d$-regular, the root has $d-1$ children and every non-root vertex at depth less than $\ell$ has one parent and $d-1$ children. All of these neighbors belong to the radius-$\ell$ ball, and none can give an additional edge inside the tree or into $B$. Hence $\mathbf T_u\simeq\mathbb T_{1,\ell}$, proving Item~(3).
\end{proof}
\begin{figure}[!ht]
    \centering
    \vspace{0cm}
    \includegraphics[width=0.7\linewidth]{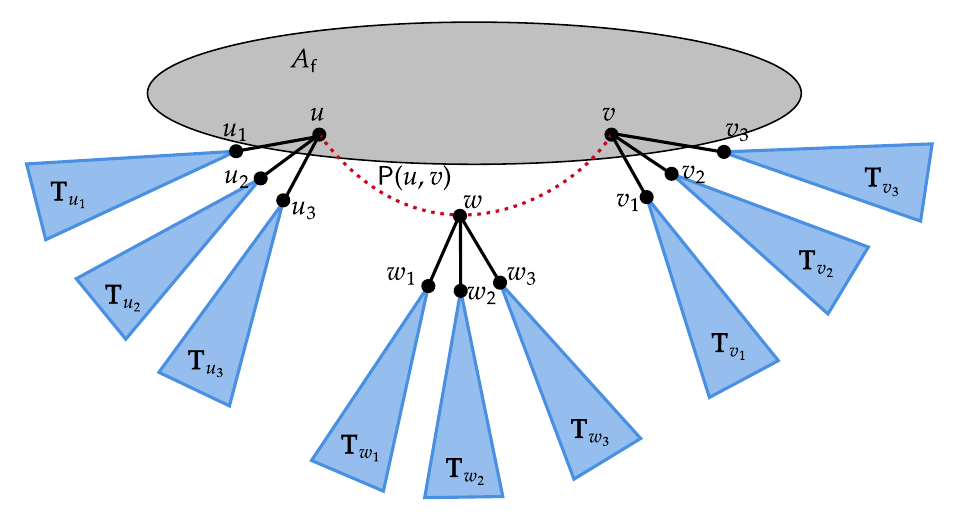}
    \caption{\noindent An illustration of the structure of $A_{\mathsf f}$. Here $u,v \in A_{\mathsf f}$, the red dashed curve is the path $\mathsf P(u,v) \in \mathcal P(A_{\mathsf f})$, $w \in V(\mathsf P(u,v))$, and $\{ u_i \},\{ v_i \},\{ w_i \}$ are neighbors of $u,v,w$, respectively. The blue triangles are the trees $\{ \mathbf T_{u_i} \}, \{ \mathbf T_{v_i} \},\{ \mathbf T_{w_i} \}$. }
    \label{fig:A-decomposition}
\end{figure}

\section{Initial analysis of disagreement clusters}{\label{sec:initial-analysis}}

This section establishes the preliminary sparsity estimate \eqref{eq-initial-sparsity}. We record the relevant events in the following lemma.
\begin{lemma}{\label{lem-bound-initial-steps}}
    Fix $G \in \mathcal G$ and an initial configuration $X_0=x_0$ with $m(x_0)\geq\epsilon$. Define the event $\mathcal E_{\ref{lem-bound-initial-steps}}=\mathcal E_{\star} \cap \mathcal E_{\star\star}$, where
    \begin{align}
        &\mathcal E_{\star} = \Big\{ |\mathcal B_t| \leq e^{-0.004Cd}n \mbox{ for all } 0 \leq t \leq n \Big\} \,; \label{eq-def-E-diamond-1} \\
        &\mathcal E_{\star\star}=\Big\{ |\mathcal D_t| \leq 4e^{-0.002Cd}n \mbox{ for all } t_0 \leq t \leq n \Big\} \,. \label{eq-def-E-diamond-2}
    \end{align}
    Here $t_0=t_0(C,d)$ is a sufficiently large constant. We then have $\Pb(\mathcal E_{\ref{lem-bound-initial-steps}}) \ge 1-n^{-12}$ for all $G \in \mathcal G$.
\end{lemma}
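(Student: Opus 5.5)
We treat the two events separately. For $\mathcal E_\star$, we use that $\widehat X^{\pi_+}_t\sim\pi_+$ for every $t$, since the restricted chain is stationary. By Definition~\ref{def-typical-graph}(1), $\pi_+$ is within $4e^{-10^{-4}Cn}$ in total variation of $\pi^{\mathsf{giant}}_{G,+}$, so it suffices to work in the Edwards--Sokal coupling conditioned on the giant being plus. The first step is a fixed-time bound. For $\eta=1/C'$, Lemma~\ref{lem-FK-giant-size} shows that with probability $1-e^{-\Omega(n)}$ the complement $S$ of $\mathcal C_1(\omega)$ has $|S|\le \eta n$. A vertex $v$ lies in $\mathcal B_t$ only if at least $0.25d$ of its neighbours carry minus spins. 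Every such neighbour lies in $S$, and non-giant clusters receive independent fair signs, so the minus set $M\subseteq S$ has size about $|S|/2$. We would like Definition~\ref{def-typical-graph}(2) applied to $M$ with $\eta=0.25$ to give $|\{v:\deg_M(v)\ge0.25d\}|\le \frac{64}{d}|M|$. This is only polynomial in $1/d$, so it is not enough for the target $e^{-0.004Cd}$. The exponential bound has to come from the configuration-level large deviation of $|S|$ itself: the proof of Lemma~\ref{lem-FK-giant-size} gives $\pi^{\mathsf{RC}}(|S|\ge \eta n)\le e^{-0.01C\eta n}$ for every $\eta\ge 100/C$, but this does not cover $\eta=e^{-\Omega(Cd)}$.

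We therefore plan a direct count. If $|\mathcal B_t|=bn$, there is a set $B$ of $bn$ vertices and a set $M$ of at most $d\,bn$ vertices such that every $v\in B$ has at least $0.25d$ neighbours in $M$, and all of $M$ is disconnected from the giant. By Lemma~\ref{lem-domination-Ber-FK-perco}, each of the roughly $0.25d\cdot bn$ edges between $B$ and $M$ is closed with probability at most $1-\rho\le e^{-C/(2d)}$ under domination, up to care about dependence, which we control with the configuration-model exposure from the proof of Lemma~\ref{lem-FK-giant-size}. We also need a cut-isolation factor for $M$, and Definition~\ref{def-typical-graph}(4) bounds the internal edges. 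Together these should give a probability of $e^{-cC bn d}$ against $\binom{n}{bn}\binom{dn}{dbn}$ choices. This beats the entropy exactly when $b\le e^{-0.004Cd}$, giving a fixed-time bound of $e^{-\Omega(n)}$. This step is where I expect the main difficulty, namely obtaining a per-vertex cost of order $Cd$ rather than $C$. The fallback is to use the plus-giant conditioning: each $v\in\mathcal B_t$ needs at least $0.25d$ neighbours that are not $\omega$-connected to the giant, and each neighbour independently has a $\rho$-chance edge into $v$'s cluster. To pass from fixed times to all $t\le n$, we take a grid of mesh $n^{-3}$ and union-bound over $n^4$ grid points. Between grid points, with high probability at most $O(\log n)$ clocks ring per vertex neighbourhood, and this only changes $|\mathcal B_t|$ by a negligible amount; we absorb it by proving the fixed-time bound with the constant $0.005$.

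For $\mathcal E_{\star\star}$, we use monotonicity of the grand coupling and sandwich. Let $X^+_t$ be the all-plus chain and $X^{\nu}_t$ the chain started from $x_0$. We want to show that after a constant time $t_0$, both $X^+_t$ and the lower chain have at most $2e^{-0.002Cd}n$ minus spins outside the plus-phase-typical set. For the upper side, $X^+_t\ge X^{\pi_+}_t$, and the disagreement is bounded by the number of minus spins of $X^{\pi_+}_t$ relative to $X^+_t$. For the lower side, starting from $m(x_0)\ge\epsilon$ with $C\epsilon\ge 100\log C$, we run a drift argument on the set $N_t$ of minus spins. A vertex whose field is at least $0.5d$ (not in $\mathcal B_t$) and which has at most $0.01d$ minus neighbours flips to minus with probability at most $e^{-0.96C}$. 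Using Definition~\ref{def-typical-graph}(2) with $\delta=(1-\epsilon)/2$ and $\eta$ slightly larger, the number of vertices with many minus neighbours contracts, so $|N_t|$ decays geometrically in expectation to the level $e^{-0.002Cd}n$ within constant time $t_0$. Concentration follows from Azuma-type bounds for Poisson updates over the time window, which give a failure probability of $e^{-\Omega(n)}$ per time. We then union-bound over a grid as before, and finally combine the two sides to obtain $|\mathcal D_t|\le 4e^{-0.002Cd}n$.
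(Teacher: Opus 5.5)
\textbf{Main gap: the argument for $\mathcal E_{\star\star}$.} The sandwich $|\mathcal D_t|\le|\{X^+_t\ne X^{\pi_+}_t\}|+|\{X^+_t\ne X_t\}|$ is harmless in itself. The problem is the mechanism you use to make these sets small, which cannot reach the target.

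\begin{itemize}
\item You run a drift on the minus set $N_t$ of a single chain and claim it decays to $e^{-0.002Cd}n$. But at every update the new spin is minus with conditional probability at least $(1+e^{2C})^{-1}$, whatever the neighbours are. So after time $1$ every chain has at least $(1-e^{-1})n/(1+e^{2C})$ minus spins in expectation.
\item Since $d$ is large, $4e^{-0.002Cd}\ll e^{-2C}$. The target therefore lies far below the thermal minus density, and the claimed geometric decay of $|N_t|$ is false.
\item Your bound ``flips to minus with probability at most $e^{-0.96C}$'' is a \emph{spontaneous} creation rate. It produces order $e^{-0.96C}n$ new minus spins per unit time, which fixes an equilibrium at roughly that level.
\item For the upper side $\{X^+_t\ne X^{\pi_+}_t\}$ you give no mechanism at all. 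Bounding it by the minus spins of $X^{\pi_+}_t$ only yields $e^{-0.02C}n$ (Lemma~\ref{lem-uniform-bound-bad-over-time}).
\end{itemize}

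\textbf{What is missing.} You need a drift on the \emph{coupled disagreement set itself}, under the common update variables. The paper does this directly for $\mathcal D_t=\{X_t\ne X^{\pi_+}_t\}$. No ordering is needed, because $a(v)=|g(H_t(v))-g(H^+_t(v))|$ is symmetric.

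\begin{itemize}
\item A vertex none of whose neighbours disagree has identical fields in both chains, so it is in agreement after its update with probability one.
\item Let $R(\mathcal D)=\{v:\deg_{\mathcal D}(v)\ge 0.01d\}$. For $v\notin\mathcal B_t\cup R(\mathcal D)$, both fields exceed $0.48d$, and the mean-value theorem gives $a(v)\le\frac{4C}{d}e^{-0.96C}\deg_{\mathcal D}(v)$.
\item Using $\sum_v\deg_{\mathcal D}(v)=d|\mathcal D|$, the total creation rate is at most $4Ce^{-0.96C}|\mathcal D|+|R(\mathcal D)|+|\mathcal B_t|$.
\end{itemize}

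So creation is proportional to $|\mathcal D|$, up to the floor $e^{-0.004Cd}n$ supplied by $\mathcal E_\star$. This is exactly what allows decay down to $2e^{-0.002Cd}n$, as in \eqref{eq-one-step-bias}. You would also need three further pieces:
\begin{itemize}
\item a dense-regime estimate for $1000/C\le p\le\frac12-\frac{\epsilon}{4}$, where fields are at least $0.4\epsilon d$ off the exceptional sets given by Definition~\ref{def-typical-graph}(2);
\item a barrier argument (the paper uses the supermartingale $2^{Z_k}$) to keep the disagreement below $(\frac12-\frac{\epsilon}{4})n$, since the drift estimate fails above that level;
\item an excursion bound showing the disagreement stays below $4e^{-0.002Cd}n$ all the way up to time $n$.
\end{itemize}

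\textbf{On $\mathcal E_\star$.} Your plan is essentially the paper's route (Lemmas~\ref{lem-FK-non-giant-cluster-bound}, \ref{lem-est-bad-minus-cluster-fixed-time}, \ref{lem-uniform-bound-bad-over-time}). The cost of order $Cd$ per bad vertex comes from Definition~\ref{def-typical-graph}(4), which forces $\Omega(d|B|)$ distinct minus neighbours, each of which must miss the plus giant at cost $e^{-\Omega(C)}$. Your count as written has two slips:
\begin{itemize}
\item $M$ must be chosen inside $\mathcal N_G(B)$, giving $2^{d|B|}$ choices. Choosing among $\binom{dn}{dbn}$ sets costs entropy about $d\,bn\log(1/b)=0.004Cd^2\,bn$, which swamps the $e^{-cCd\,bn}$ gain.
\item The union bound beats the entropy for $b\ge e^{-0.004Cd}$, not $b\le e^{-0.004Cd}$.
\end{itemize}
The closed-edge factors $1-\rho$ and your ``fallback'' only give $e^{-O(C)}$ per bad vertex, and are not needed.
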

We prove Lemma~\ref{lem-bound-initial-steps} in Section~\ref{subsec:proof-initial-sparsity}, using the bad-cluster estimates developed in Section~\ref{subsec:subcritial-bad-cluster}.

\subsection{Subcriticality of bad clusters}{\label{subsec:subcritial-bad-cluster}}

We first argue that the bad cluster $\mathcal B_t$ behaves subcritically under $\pi_+$, and the core is to show the following lemma.
\begin{lemma}{\label{lem-FK-non-giant-cluster-bound}}
    Fix $G \in \mathcal G$, and consider the random-cluster measure on $G$ with $q=2$ and $p=1-e^{-2\beta}$ where $\beta=\frac{C}{d}$ with $C,d$ satisfying \eqref{eq-assump-C-d-epsilon}. For any $A \subset V(G)$, define 
    \begin{align*}
        A_{\mathsf{RC}}:= \cup_{v \in A} \mathcal C(v) \,,
    \end{align*}
    where $\mathcal C(v)$ is the open cluster of $v$ in the random-cluster representation. Then, for any fixed $A \subset V(G)$ such that $|A|\le 10^{-4}n$, we have 
    \begin{align}
        &\pi_G^{\mathsf{RC}}\Big( A_{\mathsf{RC}} \cap \mathcal C_{1}=\emptyset \Big) \leq 2e^{-0.2C|A|} \,. \label{eq-prob-not-in-giant} 
    \end{align}
\end{lemma}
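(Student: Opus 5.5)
We reduce to a cut-type bound for the random-cluster measure, with the percolation comparison as the main tool. On the event $\{A_{\mathsf{RC}}\cap\mathcal C_1=\emptyset\}$, the set $S:=A_{\mathsf{RC}}$ is a union of open clusters disjoint from the giant. Hence every edge of $\partial S$ is closed. We split into two cases according to $|S|$.

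\emph{Large case, $|S|\ge 0.01n$.} Since $S$ misses $\mathcal C_1$, we have $|\mathcal C_1|\le 0.99n$. By Definition~\ref{def-typical-graph}(1) and Lemma~\ref{lem-FK-giant-size} with $\eta=0.01$, this has probability at most $e^{-10^{-4}Cn}$. Since $|A|\le 10^{-4}n$, this is at most $e^{-C|A|}$.

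\emph{Small case, $|S|=s\le 0.01n$.} We take a union bound over the possible sets $S\supseteq A$ with $|S|=s$. We may take each component of $G_S$ to intersect $A$, so $S$ is determined by a spanning forest of $G_S$ whose trees each meet $A$. Lemma~\ref{lem-enu-forest-on-regular-graph}, summed over the number of trees $r\le|A|$, bounds the number of such $S$ by roughly $2^{|A|}\,4^{s}\,d^{\,s-|A|}$.

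For a fixed $S$, we need $\pi^{\mathsf{RC}}_G(\text{all edges of }\partial S\text{ closed})$. By Definition~\ref{def-typical-graph}(3), $|\partial S|\ge ds/4$. To bound the closing probability, we use the conditional bound from the proof of Lemma~\ref{lem-domination-Ber-FK-perco}: whatever the other edges do, each edge is open with probability at least $\rho\ge p/2\ge C/(2d)$. Revealing the edges of $\partial S$ one at a time therefore gives
\[
\pi^{\mathsf{RC}}_G(\partial S\text{ closed})\le(1-\rho)^{ds/4}\le e^{-Cs/8}.
\]

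Combining the count with this bound, the contribution from size $s$ is at most
\[
2^{|A|}\exp\bigl(s\log(4d)-|A|\log d-Cs/8\bigr).
\]
This is the main obstacle: the entropy factor $d^{s}$ is not beaten by $e^{-Cs/8}$, because $C<C_0\log d$ may be far smaller than $\log d$. A naive enumeration over all $S$ therefore fails.

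The fix I would try is to not enumerate $S$ freely. Instead, I would explore the open clusters of $A$ edge by edge, in the spirit of the proof of Lemma~\ref{lem-FK-giant-size}. We compare with Bernoulli$(p)$ percolation from above: the conditional probability that an edge is open is at most $p\le 2C/d$. From below we use Bernoulli$(\rho)$. In the upper comparison, the clusters are subcritical-like only locally, so this exploration also needs the expansion input.

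Concretely, I would couple $\omega\ge\xi\sim\mathsf{Ber}_G(\rho)$. Then $\{A_{\mathsf{RC}}\cap\mathcal C_1(\omega)=\emptyset\}$ forces $A$ to avoid the $\xi$-giant, and I work with $\xi$. The $\xi$-giant has size at least $0.99n$ except with probability $e^{-0.015Cn}$, by the proof of Lemma~\ref{lem-FK-giant-size}. For Bernoulli percolation we can use the sequential half-edge argument of that proof directly on $S=A_\xi$. Each vertex of $A_\xi$ has about $d$ half-edges, and each one independently connects to the giant with probability about $\rho\cdot 0.99$. The probability that $A_\xi$ of size $s$ avoids the giant is then at most $(1-0.3\rho)^{ds/2}\le e^{-0.1Cs}$. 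The entropy issue disappears because the exploration reveals $A_\xi$ itself rather than union-bounding over it.

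Summing over $s\ge|A|$ gives $\sum_{s\ge|A|}e^{-0.1Cs}$. This is not yet $2e^{-0.2C|A|}$, so the constants must be tuned. I would sharpen the argument by counting only edges from $A$ itself: there are $d|A|$ of them, and at most $O(|A|)$ of these are internal, by Definition~\ref{def-typical-graph}(4). The resulting bound is $(1-0.99\rho)^{(d-O(1))|A|}\lesssim e^{-0.49C|A|}$, which is more than enough for $2e^{-0.2C|A|}$.
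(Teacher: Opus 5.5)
There is a genuine gap, and it comes from misdiagnosing the ``entropy obstruction.'' Your case split and your large case ($|S|\ge 0.01n$, handled by typicality (1)) match the paper. In the small case, however, you throw away information. On the event, $S=A_{\mathsf{RC}}$ has $\partial S$ closed, and it also contains an \emph{open} spanning forest $F$ with $s-k$ edges ($k=|A|$), each tree containing exactly one vertex of $A$.

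Every single-edge conditional opening probability lies in $[\rho,p]$. Successive conditioning therefore gives, for disjoint edge sets,
\[
\pi_G^{\mathsf{RC}}(F\text{ open},\ \partial S\text{ closed})\le p^{s-k}(1-\rho)^{|\partial S|}.
\]
Since $dp\le 2C$, the factor $p^{s-k}$ cancels the $(4d)^{s-k}$ forest count. Together with Definition~\ref{def-typical-graph}(3), the union bound becomes
\[
\sum_{s\ge k}\binom{s-1}{k-1}(4dp)^{s-k}(1-\rho)^{ds/4}=\left(\frac{(1-\rho)^{d/4}}{1-4dp(1-\rho)^{d/4}}\right)^{k}\le e^{-0.21Ck},
\]
because $4dp(1-\rho)^{d/4}\lesssim 8Ce^{-C/4}\ll 1$. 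To reach the constant $0.2$ you need $\rho=\tanh(C/d)$, not merely $\rho\ge C/(2d)$. This is the paper's proof. You mention the upper comparison with $p$ but never use it where it matters.

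The replacement argument does not work as written.
\begin{itemize}
\item The sequential half-edge argument from Lemma~\ref{lem-FK-giant-size} reveals a uniformly random pairing, so it averages over the graph. Here $G\in\mathcal G$ and $A$ are fixed, so there are no half-edges to reveal.
\item The claim that each edge out of $A_\xi$ (or out of $A$) ``independently connects to the giant with probability about $0.99\rho$'' has no justification on a fixed graph. Whether a given neighbour lies in the $\xi$-giant is decided by the same percolation configuration and is correlated with the exploration of $A_\xi$.
\item Even an annealed bound for one fixed $A$ could not be upgraded to all $A$ on a typical $G$: there are $\binom nk$ choices of $A$ against a gain of only $e^{-cCk}$.
\item The $e^{-0.015Cn}$ bound you cite for the $\xi$-giant is also annealed, whereas typicality (1) concerns $\omega$ only.
\item The final ``sharpening'' that counts only the $d|A|$ edges leaving $A$ inherits all of these problems. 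Avoiding the giant does not require those edges to be closed, since they may be open into small clusters. That is exactly why one must control the whole cluster $S$.
\end{itemize}

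An exploration proof on a fixed graph can be made rigorous. You would reveal boundary edges one at a time and note that termination at size $s$ means exactly $s-k$ successes and at least $ds/4$ failures. A binomial lower-tail bound then finishes the argument. But that is the same open-versus-closed cancellation as above, not the half-edge argument you proposed.
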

\begin{proof}
    The assertion is immediate for $A=\emptyset$. Put $k=|A|\ge1$ and $\rho=p/(p+2(1-p))=\tanh(C/d)$. Every single-edge conditional opening probability belongs to $[\rho,p]$. By successive conditioning, for disjoint edge sets $F,H$,
    \[
        \pi_G^{\mathsf{RC}}(F\text{ open},H\text{ closed})\le p^{|F|}(1-\rho)^{|H|}.
    \]
    On $\{A_{\mathsf{RC}}\cap\mathcal C_1=\emptyset,\ |\mathcal C_1|>0.99n\}$, set $S=A_{\mathsf{RC}}$ and $s=|S|$. Then $k\le s<0.01n$, every edge of $\partial S$ is closed, and there is an open spanning forest of $S$ with exactly $k$ components, each containing exactly one marked vertex of $A$. Indeed, split an open spanning tree in each component by successively separating a leaf of the minimal subtree joining its marked vertices. The resulting forest has $s-k$ edges. Order its roots by the fixed labels of $A$. Summing Lemma~\ref{lem-enu-tree-on-regular-graph} over the positive compositions of $s$ into $k$ parts bounds the number of candidate forests by $\binom{s-1}{k-1}(4d)^{s-k}$. By Definition~\ref{def-typical-graph}(3), $|\partial S|\ge ds/4$. Consequently,
    \begin{align*}
        \pi_G^{\mathsf{RC}}(A_{\mathsf{RC}}\cap\mathcal C_1=\emptyset,\ |\mathcal C_1|>0.99n) \le\sum_{s=k}^{\infty}\binom{s-1}{k-1}(4dp)^{s-k}(1-\rho)^{ds/4} =\left(\frac{(1-\rho)^{d/4}}{1-4dp(1-\rho)^{d/4}}\right)^k.
    \end{align*}
    For sufficiently large $C$ the last display is at most $e^{-0.21Ck}$. Definition~\ref{def-typical-graph}(1), through Lemma~\ref{lem-FK-giant-size}, bounds the complementary event $\{|\mathcal C_1|\le0.99n\}$ by $e^{-10^{-4}Cn}$. Since $k\le10^{-4}n$, this is at most $e^{-Ck}$. Adding the two bounds proves \eqref{eq-prob-not-in-giant}.
\end{proof}

\begin{lemma}{\label{lem-est-bad-minus-cluster-fixed-time}}
    Fix $G \in \mathcal G$ and let $\sigma \sim \pi_{G,+}$. Define $\mathcal M$ to be the collection of $v \in V$ such that $\sigma(v)=-1$. Then, for all $|M|\leq C^{-2}n$ we have
    \begin{align}
        \pi_+\big( M \subseteq \mathcal M \big) \leq 3e^{-C|M|/9} \,.  \label{eq-prob-set-is-minus}
    \end{align}
    In addition, define $\mathcal B^\sharp$ to be the collection of $v \in V$ such that $\sum_{u \sim v} \sigma(u) \le 0.6d$. Then for all $|B|\leq e^{-0.02d}n$,
    \begin{align}
        \pi_+\big( B \subseteq \mathcal B^\sharp \big) \leq 3e^{-0.01Cd|B|} \,.  \label{eq-prob-set-is-bad}
    \end{align}
\end{lemma}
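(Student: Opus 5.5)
Both estimates go through the giant-plus measure $\pi^{\mathsf{giant}}_{G,+}$ and the Edwards--Sokal coupling. By Definition~\ref{def-typical-graph}(1) and Lemma~\ref{lem-TV-dist-giant-is-plus}, $\pi_+(E)\le \pi^{\mathsf{giant}}_{G,+}(E)+4e^{-10^{-4}Cn}$ for every event $E$. Since $|M|\le C^{-2}n$ and $|B|\le e^{-0.02d}n$, this additive error is at most $e^{-C|M|/9}$, respectively at most $e^{-0.01Cd|B|}$; the latter needs $0.01Cd\,e^{-0.02d}<10^{-4}C$, which holds for large $d$. Under $\pi^{\mathsf{giant}}_{G,+}$, sample $\omega\sim\pi^{\mathsf{RC}}_G$, set $\sigma\equiv+1$ on $\mathcal C_1$, and give every other cluster an independent fair sign. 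The conditioning event has probability $1/2$, so $\pi^{\mathsf{giant}}_{G,+}(E)\le 2\,\pi^{\mathsf{ES}}(E)$-type bounds are available whenever convenient.

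\emph{Proof of \eqref{eq-prob-set-is-minus}.} If $M\subseteq\mathcal M$, then $M\cap\mathcal C_1=\emptyset$. Let $j\ge1$ be the number of distinct clusters meeting $M$. Each such cluster is minus independently with probability $1/2$. I would pick a subset $A\subseteq M$ containing one marked vertex per cluster, so $|A|=j$, and apply Lemma~\ref{lem-FK-non-giant-cluster-bound} together with the forest enumeration in its proof.

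A cleaner route is to rerun the computation in that proof for $S=M_{\mathsf{RC}}$ with $k=|M|$ marked vertices. On the event in question, $S\supseteq M$ and $S$ misses the giant, so $\partial S$ is closed. Moreover, $S$ carries an open spanning forest with $j$ components, each containing at least one vertex of $M$. Counting these forests with Lemma~\ref{lem-enu-forest-on-regular-graph} gives at most $\binom{|M|}{j}\binom{2s-j}{s-j}d^{s-j}$ choices. The weight of each configuration is at most $p^{s-j}(1-\rho)^{ds/4}2^{-j}$, using Definition~\ref{def-typical-graph}(3). Since $s\ge|M|$, the factor $(1-\rho)^{ds/4}\le e^{-C s/5}$ beats both the $\binom{|M|}{j}\le 2^{|M|}$ combinatorial factor and the $4^s(dp)^{s-j}\le(8C)^{s}$ factor, giving roughly $e^{-C|M|/9}$. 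Here we also use $|M|\le C^{-2}n\le 10^{-4}n$, so that $s<0.01n$ on $|\mathcal C_1|>0.99n$. Adding the giant-size failure $e^{-10^{-4}Cn}$ completes the bound.

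\emph{Proof of \eqref{eq-prob-set-is-bad}.} If every $v\in B$ has field at most $0.6d$, then every $v\in B$ has at least $0.2d$ minus neighbours. Let $M=\mathcal M\cap N(B)$. The number of edges from $B$ to $M$ is at least $0.2d|B|$. By Definition~\ref{def-typical-graph}(2), applied to $M$ with $\eta=0.2$ and $\delta=0.1$, if $|M|\le 0.1n$ then $|B|\le \frac{400}{d}|M|$, i.e.\ $|M|\ge d|B|/400$. This alone is too weak: it gives only $e^{-Cd|B|/3600}$, whereas we need exponent $0.01Cd$.

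The sharper route is to count directly. For each $v\in B$, choose a set of $\lceil0.2d\rceil$ minus neighbours, and let $M$ be their union. Condition (4) (sparsity/low excess, Definition~\ref{def-typical-graph}(4)) controls how much these neighbour sets overlap: a set $M$ with $|M|\le m$ receiving $0.2d|B|$ edges from $B$ forces the excess of $G_{B\cup M}$ to be about $0.2d|B|-|B|-m$. So $m\ge(0.2d-1-o(1))|B|$ once $|B|/n\le e^{-0.02d}$, because $\frac{10\log d}{\log(1/p)}\le\frac{10\log d}{0.02d}$ is small.

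Now take a union bound over the choices of $M$. There are at most $\binom{d}{0.2d}^{|B|}\le e^{0.5d|B|}$ of them, and each is all-minus with probability at most $3e^{-C|M|/9}\le 3e^{-C(0.19d)|B|/9}$ by \eqref{eq-prob-set-is-minus}. This applies since $|M|\le d|B|\le C^{-2}n$. The product is at most $3e^{-(0.021C-0.5)d|B|}\le 3e^{-0.01Cd|B|}$ for $C$ large.

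\emph{Main obstacle.} I expect the main difficulty to be the overlap step: ensuring that the union $M$ of minus neighbourhoods is genuinely of size $\Omega(d|B|)$ with constant close to $0.2$ rather than $1/400$, and doing so uniformly. This requires a careful use of the excess bound, and possibly a split into the cases $|B|$ small versus moderate, with enough slack left to absorb the $e^{0.5d|B|}$ entropy of choosing the neighbour sets.
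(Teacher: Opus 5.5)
Your proposal is correct and follows essentially the paper's proof. The first bound comes from comparing $\pi_+$ with $\pi^{\mathsf{giant}}_{G,+}$ via Lemma~\ref{lem-TV-dist-giant-is-plus} together with the non-giant cluster estimate. The second comes from the excess bound of Definition~\ref{def-typical-graph}(4), which forces $B$ to have at least about $0.19d|B|$ minus neighbours, followed by a union bound over neighbour subsets and the first estimate.

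The differences are cosmetic. Your rerun of the forest count is unnecessary, because Lemma~\ref{lem-FK-non-giant-cluster-bound} applies verbatim with $A=M$: its bound is already stated in terms of $|A|$ and allows several marked vertices per cluster. In the overlap count, you should subtract the edges of $G_B$ that can be counted twice, and use $\binom{d}{\lceil 0.2d\rceil}\le 2^d$ rather than $e^{0.5d}$; neither change affects the conclusion.
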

\begin{proof}
    Under the Edwards--Sokal law conditioned on the largest cluster being plus, the bond marginal remains $\pi_G^{\mathsf{RC}}$: conditional on every bond configuration the selected largest cluster has a fair independent sign. If $M$ consists of minus spins, it therefore misses the largest cluster. Lemmas~\ref{lem-FK-non-giant-cluster-bound} and \ref{lem-TV-dist-giant-is-plus} give, for $1\le|M|\le C^{-2}n$,
    \[
        \pi_+(M\subseteq\mathcal M) \le2e^{-0.2C|M|}+4e^{-10^{-4}Cn} \le3e^{-C|M|/9} \,.
    \]
    The empty-set case is immediate.

    For the second assertion suppose $B\ne\emptyset$ and let $N_-:=\{u:\sigma(u)=-1,\ u\sim v\text{ for some }v\in B\}$ and $S=B\cup N_-$. On $B\subseteq\mathcal B^\sharp$, there are at least $0.2d|B|$ oriented incidences from $B$ to $N_-$. Each edge is counted at most once except for edges internal to $B$. Hence
    \[
        |E(G_S)|\ge0.2d|B|-|E(G_B)|.
    \]
    Both $|B|\le e^{-0.02d}n$ and $|S|\le(d+1)|B|$ are small enough that Definition~\ref{def-typical-graph}(4) gives $|E(G_B)|\le1.001|B|$ and $|E(G_S)|\le1.001|S|$, after increasing $d_0$. It follows that
    \[
        |N_-|\ge |S|-|B|\ge\frac{0.2d-1.001}{1.001}|B|-|B|\ge0.19d|B| \,.
    \]
    Choose $m=\lceil0.18d|B|\rceil$. For sufficiently large $n$, $m\le0.19d|B|$ and $m\le C^{-2}n$. Thus some $m$-element subset of $\mathcal N_G(B)$ consists entirely of minus spins. There are at most $2^{d|B|}$ such subsets, so the first assertion yields
    \[
        \pi_+(B\subseteq\mathcal B^\sharp) \le 2^{d|B|}\,3e^{-Cm/9} \le3\exp\{-(0.02C-\log2)d|B|\} \le3e^{-0.01Cd|B|} \,.
    \]
    Again the empty-set case is immediate.
\end{proof}
\begin{remark}{\label{remark:same-est-extend-to-tree}}
The same estimates, without the prefactor $3$, hold under $\phi_{k,\ell}^+$ for $k\le10^4$, with $M$ a set of free vertices and $B$ a set of free vertices whose neighbor sums include all fixed boundary spins:
\[
 \phi_{k,\ell}^+(M\subseteq\mathcal M)\le e^{-0.1C|M|},
 \qquad
 \phi_{k,\ell}^+(B\subseteq\mathcal B^\sharp)\le e^{-0.01Cd|B|} \,.
\]
For completeness, the free spins in each of these trees stochastically dominate independent spins with minus probability $e^{-C}$. In a regular descendant branch with plus leaves, induction in the exact cavity recursion gives every upward free-vertex field at least $0.9C$: for large $C,d$, $(d-1)\psi_\beta(0.9C)\ge0.9C$, and the boundary messages are $+\infty$. Thus a branch vertex, even conditional on a minus parent, has total field at least $0.9C-\beta\ge0.8C$. The root of $\mathbb T_{k,\ell}$ has field at least $(d-k)\psi_\beta(0.9C)-k\beta\ge0.8C$. Sample the root first and then the descendants in their tree order; every conditional minus probability in this sampling is at most $e^{-1.6C}\le e^{-C}$. This constructs the asserted product domination. 

The first displayed estimate follows immediately. For the second, a bad free vertex has at least $0.2d$ minus neighbors. Fixed-minus attachments account for at most $10^4|B|$ incidences. Let $N_-$ be the remaining free minus neighbors and put $S=B\cup N_-$. The forest inequalities $|E(S)|\le|S|$ and $|E(B)|\le|B|$ then imply $|N_-|\ge(0.2d-10^4-2)|B|\ge0.18d|B|$. A union bound over subsets of the at most $d|B|$ neighbors and product domination give
\[
 \phi_{k,\ell}^+(B\subseteq\mathcal B^\sharp)
 \le2^{d|B|}e^{-0.18Cd|B|}
 \le e^{-0.01Cd|B|} \,.
\]
The same proof applies when there are no free vertices, by the empty-set convention.
\end{remark}

\subsection{Proof of Lemma~\ref{lem-bound-initial-steps}}{\label{subsec:proof-initial-sparsity}}

The goal of this section is to prove Lemma~\ref{lem-bound-initial-steps}. Denote $q=e^{-0.002Cd}$ and $q'=e^{-0.02C}$. We first prove that $\Pb(\mathcal E_{\star})=1-\exp(-n^{1-o(1)})$ via the following lemma.
\begin{lemma}{\label{lem-uniform-bound-bad-over-time}}
    Define $\mathcal M_t=\{ v \in V: \widehat{X}^{\pi_+}_t(v)=-1 \}$, and define
    \begin{equation*}
        \mathcal E_{\ref{lem-uniform-bound-bad-over-time}} := \Big\{ |\mathcal M_t|\leq q'n,\ |\mathcal B_t|\leq q^2 n \text{ for every }0\leq t\leq n \Big\} \,.
    \end{equation*}
    We then have $\Pb(\mathcal E_{\ref{lem-uniform-bound-bad-over-time}})\geq 1-e^{-\Omega(n)}$. In particular, this implies that $\Pb(\mathcal E_{\star})=1-\exp(-\Omega(n))$. 
\end{lemma}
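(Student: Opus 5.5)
The plan is to combine stationarity of the restricted chain $\widehat X^{\pi_+}_t$ under $\pi_+$ with the fixed-time estimates of Lemma~\ref{lem-est-bad-minus-cluster-fixed-time}, and then use a time discretization plus a union bound to get control uniformly over $[0,n]$. The argument has three steps: a fixed-time bound, a bound on fluctuations between grid times, and the union bound.

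\emph{Step 1 (fixed-time bounds).} Since $\widehat X^{\pi_+}_0\sim\pi_+$ and $\pi_+$ is reversible for the restricted chain, $\widehat X^{\pi_+}_t\sim\pi_+$ for every fixed $t$. We apply \eqref{eq-prob-set-is-minus} with a union bound over sets $M$ of size $m=\lceil q'n/2\rceil$ (this is at most $C^{-2}n$ by \eqref{eq-assump-C-d-epsilon}). This gives
\[
\pi_+\big(|\mathcal M|\ge q'n/2\big)\le \binom{n}{m}3e^{-Cm/9}\le 3\exp\big(m\log(2e/q')-Cm/9\big).
\]
Since $\log(1/q')=0.02C$, the exponent is at most $-cCm$, so the bound is $e^{-\Omega(n)}$.

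For $\mathcal B_t$ we use the set $\mathcal B^\sharp$ with threshold $0.6d$, which contains $\mathcal B$. With $b=\lceil q^2n/2\rceil\le e^{-0.02d}n$, \eqref{eq-prob-set-is-bad} gives
\[
\binom{n}{b}3e^{-0.01Cd\,b}\le 3\exp\big(b(0.004Cd+1+\log 2)-0.01Cd\,b\big)=e^{-\Omega(n)}.
\]
This works because $\log(1/q^2)=0.004Cd<0.01Cd$.

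\emph{Step 2 (discretization).} Let the grid be $t_j=j\delta$ with $\delta=n^{-1}$, so there are $n^2$ grid points in $[0,n]$, and write $N_j$ for the total number of clock rings in $V\times[t_j,t_{j+1}]$. By Poisson concentration, $\Pb(N_j\ge \eta n)\le e^{-\Omega(n)}$ for a small constant $\eta$; it suffices to take $\eta\le q^2/(2d)$. One spin flip changes $|\mathcal M_t|$ by at most $1$. It changes $|\mathcal B_t|$ by at most $d$, since only the neighbors' fields move. Hence on the event $\{N_j<\eta n\}$, any time in $[t_j,t_{j+1}]$ has $|\mathcal M_t|\le|\mathcal M_{t_j}|+\eta n$ and $|\mathcal B_t|\le|\mathcal B_{t_j}|+d\eta n$. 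With the half-thresholds from Step 1, this yields $|\mathcal M_t|\le q'n$ and $|\mathcal B_t|\le q^2n$ on the whole interval. Alternatively, one can use the rougher threshold $0.6d$ against $0.5d$ directly, but the count argument above already suffices.

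\emph{Step 3 (union bound).} Summing over the $n^2$ grid intervals the failure probabilities from Steps 1 and 2 gives $n^2e^{-\Omega(n)}=e^{-\Omega(n)}$. The consequence for $\mathcal E_\star$ follows immediately, because $q^2=e^{-0.004Cd}$. I expect the main care to lie in Step 1, namely checking that the entropy factor $\log(1/q')$ (respectively $\log(1/q^2)$) is beaten by the decay rate $C/9$ (respectively $0.01Cd$); the chosen exponents $0.02C$ and $0.004Cd$ are exactly what make this margin positive.
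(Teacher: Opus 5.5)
Your proof is correct and follows the paper's strategy: stationarity of the restricted chain under $\pi_+$, the fixed-time estimates of Lemma~\ref{lem-est-bad-minus-cluster-fixed-time} with a binomial union bound (using $\mathcal B_t\subseteq\mathcal B^\sharp$), and then a union bound over $O(n^2)$ time points.

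The only difference is how you pass from fixed times to all $t\in[0,n]$. The paper evaluates the chain at its ring times $\tau_k$. It uses three facts: the embedded jump chain is also $\pi_+$-stationary, there are at most $2n^2$ rings except with probability $e^{-\Omega(n^2)}$, and the configuration is constant between rings. This lets it work directly with the full thresholds $q'n$ and $q^2n$. Your deterministic grid with spacing $1/n$ instead requires the half-thresholds and the per-ring bookkeeping (each ring changes $|\mathcal M_t|$ by at most $1$ and $|\mathcal B_t|$ by at most $d$). You handle this correctly: each grid interval contains a $\mathrm{Pois}(1)$ number of rings, and $\eta\le q^2/(2d)\le q'/2$ covers both counts.
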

\begin{proof}
    Put $k_M=\lceil q'n\rceil$ and $k_B=\lceil q^2n\rceil$. For sufficiently large $n$, $k_M\le C^{-2}n$ and $k_B\le e^{-0.02d}n$, so Lemma~\ref{lem-est-bad-minus-cluster-fixed-time} applies at both sizes. Using its stated exponents and $\binom nk\le(en/k)^k$, we obtain
    \begin{align}
        \pi_+(|\mathcal M|\ge q'n) &\le3\binom n{k_M}e^{-Ck_M/9}  \le3e^{-(C/9-1-0.02C)k_M} \le3e^{-0.08Ck_M}=e^{-\Omega_{C,d}(n)} \,, \label{eq-minus-tail}\\
        \pi_+(|\mathcal B^\sharp|\ge q^2n) &\le3\binom n{k_B}e^{-0.01Cd k_B} \le3e^{-(0.01Cd-1-0.004Cd)k_B} \le3e^{-0.005Cd k_B}=e^{-\Omega_{C,d}(n)} \,.  \label{eq-bad-tail}
    \end{align}
    Here $\log(en/k_M)\le1+0.02C$ and $\log(en/k_B)\le1+0.004Cd$; the final inequalities hold for large $C,d$.
    Let $0=\tau_0<\tau_1<\tau_2<\cdots$ be the successive rings of the superposition of the $n$ rate-one clocks, and let $N(t)$ be the number of rings in $[0,t]$. Thus $N(n)\sim\mathsf{Pois}(n^2)$ and
    \begin{equation}{\label{eq-poisson-upper}}
        \Pb\bigl(N(n)>2n^2\bigr)\leq e^{-\Omega(n^2)} \,.
    \end{equation}
    The restricted chain is stationary with law $\pi_+$.  Consequently, each $\widehat{X}_{\tau_k}^{\pi_+}$ has law $\pi_+$.  Combining \eqref{eq-minus-tail}, \eqref{eq-bad-tail}, and \eqref{eq-poisson-upper}, and taking a union bound over $0\leq k\leq2n^2$ yields $\Pb(\mathcal E_{\ref{lem-uniform-bound-bad-over-time}})\geq 1-e^{-\Omega(n)}$. 
\end{proof}

We then prove $\Pb(\mathcal E_{\star\star})=1-\exp(-\Omega_{C,d}(n))$. Based on Lemmas~\ref{lem-equal-to-restricted-dynamics} and \ref{lem-uniform-bound-bad-over-time}, it suffices to show that $\Pb(\mathcal E_{\star\star}^c; \mathcal E_{\ref{lem-equal-to-restricted-dynamics}};\mathcal E_{\ref{lem-uniform-bound-bad-over-time}})\leq \exp(-\Omega_{C,d}(n))$. We first give a simple bound for $|\mathcal D_0|$.
\begin{lemma}{\label{lem-bound-D-0}}
    Suppose that the initial configuration is an arbitrary configuration
    \(X_0=x_0\in\{-1,+1\}^{V}\) satisfying $m(x_0)=\frac{1}{n}\sum_{v\in V}x_0(v)\ge \epsilon$. Uniformly over every such \(x_0\), we have
    \begin{align*}
        \Pb\Big( |\mathcal D_0| > (\tfrac{1}{2}-\tfrac{\epsilon}{3})n; \mathcal E_{\ref{lem-equal-to-restricted-dynamics}}; \mathcal E_{\ref{lem-uniform-bound-bad-over-time}} \Big) = 0 \,.
    \end{align*}
\end{lemma}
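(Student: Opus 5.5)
On the event $\mathcal E_{\ref{lem-equal-to-restricted-dynamics}}$ we have $X^{\pi_+}_0=\widehat X^{\pi_+}_0$. On $\mathcal E_{\ref{lem-uniform-bound-bad-over-time}}$, taken at $t=0$, the set $\mathcal M_0$ of minus spins of $\widehat X^{\pi_+}_0$ has size at most $q'n=e^{-0.02C}n$. The statement is then a deterministic counting fact: the probability is zero because the event is empty. So the plan is to bound $|\mathcal D_0|$ pointwise on $\mathcal E_{\ref{lem-equal-to-restricted-dynamics}}\cap\mathcal E_{\ref{lem-uniform-bound-bad-over-time}}$, using only the magnetization of $x_0$ and the sparsity of minus spins in $X^{\pi_+}_0$.

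Let $P_0=\{v:x_0(v)=+1\}$ and $N_0=\{v:x_0(v)=-1\}$. From $m(x_0)\ge\epsilon$ we get $|N_0|=\frac{1-m(x_0)}{2}n\le(\frac12-\frac\epsilon2)n$.

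A vertex $v$ lies in $\mathcal D_0$ in one of two ways. Either $x_0(v)=-1$ and $X^{\pi_+}_0(v)=+1$, so $v\in N_0$. Or $x_0(v)=+1$ and $X^{\pi_+}_0(v)=-1$, so $v\in\mathcal M_0$. Hence
\[
|\mathcal D_0|\le|N_0|+|\mathcal M_0|\le\Big(\frac12-\frac\epsilon2\Big)n+e^{-0.02C}n .
\]

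It remains to check that $e^{-0.02C}\le\epsilon/6$; this is the only nontrivial point. By \eqref{eq-assump-C-d-epsilon} we have $C\epsilon>100\log C$, i.e.\ $\epsilon> 100\log C/C$. So it suffices that $e^{-0.02C}\le 16\log C/C$, which holds once $C_0$ is large enough. This gives $|\mathcal D_0|\le(\frac12-\frac\epsilon3)n$ on the intersection of the two events, uniformly in $x_0$, so the displayed probability is $0$.

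The only place requiring care is this constant comparison. If the stated assumption were insufficient, the fallback would be to redefine $q'$ smaller, which is harmless for Lemma~\ref{lem-uniform-bound-bad-over-time} since the tail bound \eqref{eq-minus-tail} has room to spare.
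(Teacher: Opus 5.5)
Your proposal is correct and follows the paper's proof. The paper writes $\mathcal D_0=\mathcal S_0\triangle\mathcal M_0$ and bounds $|\mathcal D_0|\le|\mathcal S_0|+|\mathcal M_0|\le\bigl(\tfrac{1-\epsilon}{2}+e^{-0.02C}\bigr)n$ on $\mathcal E_{\ref{lem-uniform-bound-bad-over-time}}$, then concludes by citing \eqref{eq-assump-C-d-epsilon}. That citation is exactly the comparison $e^{-0.02C}\le\epsilon/6$, which you derive explicitly from $C\epsilon>100\log C$, so your fallback of redefining $q'$ is not needed.
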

\begin{proof}
    Fix an arbitrary configuration \(x_0\in\{-1,+1\}^{V}\) with \(m(x_0)\ge\epsilon\), and define its set of minus spins by $\mathcal S_0:=\{v\in V:x_0(v)=-1\}$. Then $m(x_0) \ge \epsilon$ implies that $|\mathcal S_0| \le \frac{(1-\epsilon)n}{2}$. Recall from Lemma~\ref{lem-uniform-bound-bad-over-time} that $|\mathcal M_0|\le e^{-0.02C}n$. Since the initial disagreement set is exactly the symmetric difference
    \[
        \mathcal D_0 = \big\{ v\in V:x_0(v)\neq X^{\pi^+}_0(v) \big\} = \mathcal S_0\triangle \mathcal M_0 \,.
    \]
    Therefore, on \(\mathcal E_{\ref{lem-uniform-bound-bad-over-time}}\),
    \begin{align*}
        |\mathcal D_0| =|\mathcal S_0 \triangle \mathcal M_0| \le |\mathcal S_0| + |\mathcal M_0| \le \left( \tfrac{1-\epsilon}{2} + e^{-0.02C} \right)n \overset{\eqref{eq-assump-C-d-epsilon}}{\le} (\tfrac{1}{2}-\tfrac{\epsilon}{3})n \,,
    \end{align*}
    finishing the proof of this lemma.
\end{proof}

Let $Z_k=|\mathcal D_{\tau_k}|$, $p_{-}=\frac{1}{2}-\frac{\epsilon}{3}$, and $p_+=\frac{1}{2}-\frac{\epsilon}{4}$. Fix $k$ and work conditionally on $\mathcal F_{\tau_k}$, given that the event $\mathcal E_{\ref{lem-equal-to-restricted-dynamics}} \cap \mathcal E_{\ref{lem-uniform-bound-bad-over-time}}$ holds up to time $\tau_k$. Abbreviate
\begin{align*}
    \mathcal D_k = \mathcal D_{\tau_k}, \quad \mathcal M_k=\mathcal M_{\tau_k}, \quad p_k = \frac{|\mathcal D_k|}{n} \,.
\end{align*}
For a configuration $\sigma \in \{ -1,+1 \}^V$ and $v \in V$, define 
\begin{align*}
    h(\sigma;v):=\sum_{w\sim v}\sigma(w), \quad g(h):=\frac{e^{\beta h}}{e^{\beta h}+e^{-\beta h}} =\frac{1}{1+e^{-2\beta h}} \,.
\end{align*}
If the next ring is at $v$, then under the common update uniform random variable the probability that the two newly sampled spins disagree is
\begin{equation}{\label{eq-av-definition}}
    a(v):= \Big| g\big(h(X_{\tau_k};v)\big) -g\big(h(X_{\tau_k}^{\pi_+};v)\big) \Big| \,.
\end{equation}
Since the update vertex is uniform on $V$,
\begin{align}
    \Pb( Z_{k+1}=Z_k-1 \mid \mathcal F_{\tau_k} ) &=\frac{1}{n} \sum_{v\in \mathcal D_k}(1-a(v)) \,;  \label{eq-down-transition}\\
    \Pb( Z_{k+1}=Z_k+1 \mid \mathcal F_{\tau_k} ) &=\frac{1}{n} \sum_{v\notin \mathcal D_k} a(v) \,. \label{eq-up-transition}
\end{align}
The core is to show the following lemma on the drift of $Z_k$.
\begin{lemma}
    Write $\mathcal D=\mathcal D_k$, $\mathcal M=\mathcal M_k$, and $p=p_k$. Whenever $2q\leq p \leq p_+$, one has
    \begin{equation}{\label{eq-one-step-bias}}
        \Pb(Z_{k+1}=Z_k-1\mid\mathcal F_{\tau_k})\geq0.8p \,; \quad  \Pb(Z_{k+1}=Z_k+1\mid\mathcal F_{\tau_k})\leq0.2p \,.
    \end{equation}
\end{lemma}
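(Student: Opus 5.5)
The plan is to bound the two transition probabilities \eqref{eq-down-transition} and \eqref{eq-up-transition} through a single estimate on the total disagreement rate. Since
\[
\Pb(Z_{k+1}=Z_k-1\mid\mathcal F_{\tau_k})\ge p-\tfrac1n\sum_{v\in\mathcal D}a(v),
\qquad
\Pb(Z_{k+1}=Z_k+1\mid\mathcal F_{\tau_k})\le\tfrac1n\sum_{v\notin\mathcal D}a(v),
\]
it suffices to show $\sum_{v\in V}a(v)\le 0.2pn$. I will split $V$ into an \emph{exceptional} set $\mathcal X$, where I only use $a(v)\le1$, and its complement, where I use the mean-value bound from the heuristic in Section~\ref{subsec:proof-overview}. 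The goal is then $|\mathcal X|\le0.1pn$ and $\sum_{v\notin\mathcal X}a(v)\le 0.1pn$. Throughout I work on $\mathcal E_{\ref{lem-equal-to-restricted-dynamics}}\cap\mathcal E_{\ref{lem-uniform-bound-bad-over-time}}$ at time $\tau_k$, so $X^{\pi_+}=\widehat X^{\pi_+}$, $|\mathcal M|\le q'n$ and $|\mathcal B_{\tau_k}|\le q^2n\le\frac{q}{2}p\,n$, using $p\ge2q$.

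\emph{Small regime, $p\le0.1$.} Let $\mathcal X=\mathcal B_{\tau_k}\cup\{v:\deg_{\mathcal D}(v)\ge0.2d\}$. Definition~\ref{def-typical-graph}(2), with $\delta=0.1$ and $\eta=0.2$, bounds the second set by $\frac{400}{d}pn$, so $|\mathcal X|\le(q+400/d)pn\le0.1pn$. For $v\notin\mathcal X$, with $k(v):=\deg_{\mathcal D}(v)$, we have $H^+(v)>0.5d$ and $H(v)\ge0.5d-2k(v)\ge0.1d$. The mean-value theorem then gives
\[
a(v)\le 2k(v)\cdot2\beta e^{-0.2C}.
\]
Summing and using $\sum_v k(v)=d|\mathcal D|$ yields $\sum_{v\notin\mathcal X}a(v)\le4Ce^{-0.2C}pn\le0.1pn$.

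\emph{Large regime, $0.1<p\le p_+$.} Now $\mathcal D$ is too large for the previous degree bound, because $\eta=0.2$ may be below $\delta=p$. Instead I will control the actual field of $X$. The minus spins of $X$ lie in $\mathcal D\cup\mathcal M$, which has density at most
\[
\delta:=p_++q'\le\tfrac12-\tfrac{\epsilon}{5},
\]
using $e^{-0.02C}\ll\epsilon$ from \eqref{eq-assump-C-d-epsilon}. Let
\[
\mathcal X=\mathcal B_{\tau_k}\cup\bigl\{v:\deg_{\mathcal D\cup\mathcal M}(v)\ge(\tfrac12-\tfrac{\epsilon}{10})d\bigr\}.
\]
Definition~\ref{def-typical-graph}(2), with $\eta-\delta\ge\epsilon/10$, bounds the second set by $\frac{400}{d\epsilon^2}n$. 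This is at most $0.05\cdot0.1\,n\le0.05pn$ after taking $d_0$ large relative to $\epsilon$; if $\epsilon$ is tied to $C$, I will use $d$ large enough instead. For $v\notin\mathcal X$ both fields are at least $\frac{\epsilon}{5}d$, so the same mean-value argument gives $a(v)\le4\beta k(v)e^{-0.4C\epsilon}$. Summing, the total is at most $4Ce^{-0.4C\epsilon}pn$, which is $\le0.1pn$ because $C\epsilon>100\log C$.

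The main obstacle is the large-$p$ regime, specifically checking that the constants in Definition~\ref{def-typical-graph}(2) are compatible with the bias. The factor $\frac{4}{d(\eta-\delta)^2}$ degenerates like $\epsilon^{-2}$, so I need $d\epsilon^2$ large. If that fails under the precise dependence $\epsilon=\epsilon(C,d)$, my fallback is to bound $\sum_{v\in\mathcal X}$ more cleverly. For instance, vertices with at least $(\frac12-\frac\epsilon{10})d$ minus neighbors in $X$ could be compared with a single-step drift of the magnetization $\sum_v g(H(v))$ against $|\mathcal D|$, but I expect the direct count to suffice. A second routine check is that the thresholds $0.2d$ and $\frac{\epsilon}{5}d$ lie in the range where $g'(h)\le2\beta e^{-2\beta h}$ applies uniformly, with $h\ge0$ on the whole segment between $H(v)$ and $H^+(v)$.
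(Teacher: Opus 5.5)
Your proof is correct, but it is organized differently from the paper's.

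\textbf{How the paper argues.} The paper splits at $p=1000/C$ rather than $p=0.1$, and treats the two transitions separately.
\begin{itemize}
\item In the dense case it removes two exceptional sets, $R_{\mathcal D}=\{v:\deg_{\mathcal D}(v)\ge(p+0.01\epsilon)d\}$ and $R_{\mathcal M}=\{v:\deg_{\mathcal M}(v)\ge(100C^{-1}+0.01\epsilon)d\}$. Off them it uses only the crude uniform bound $a(v)\le e^{-0.8C\epsilon}$, since both fields are at least $0.4\epsilon d$. This is why it needs $p\ge1000/C$: both $ne^{-0.8C\epsilon}$ and $|R_{\mathcal M}|$ must be absorbed into a small multiple of $pn$.
\item In the sparse case it uses the degree threshold $0.01d$. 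It applies the mean-value bound for up-moves and the uniform bound $e^{-0.96C}$ for down-moves.
\end{itemize}

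\textbf{What you do differently.}
\begin{itemize}
\item Your single estimate $\sum_{v\in V}a(v)\le0.2pn$ yields both inequalities at once.
\item You use the mean-value bound $a(v)\le 4\beta k(v)e^{-2\beta h}$, with $h$ a common lower bound on the two fields, in both regimes.
\item Your coarser threshold $0.2d$ still leaves both fields above $0.1d$, and the resulting decay $e^{-0.2C}$ suffices. This lets the sparse argument run all the way up to $p=0.1$.
\item In the dense regime you use one exceptional set, defined through degrees into $\mathcal D\cup\mathcal M$, which contains every minus spin of either chain. This is cleaner than the paper's pair of sets.
\end{itemize}

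\textbf{What each approach buys.} Your dense-regime set has size bound $400n/(d\epsilon^2)$, which is not proportional to $p$. That is exactly why you need $p$ bounded below by a constant there. The paper's separate thresholds scale with $|\mathcal D|$ and $|\mathcal M|$ respectively, so its dense regime reaches down to $p\asymp 1/C$, at the cost of a second threshold and a separate down-move estimate.

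\textbf{The obstacle you flag is not one.} From \eqref{eq-assump-C-d-epsilon} you have $C\epsilon>C_0$ and $C<C_0\log d$. Hence $\epsilon>C_0/C>1/\log d$, so $400/(d\epsilon^2)\le 400(\log d)^2/d$, which is small once $d_0$ is large. The paper's dense case relies on the same fact: its bound $|R_{\mathcal D}|\le10^{-4}|\mathcal D|$ requires $d\epsilon^2\ge4\cdot10^{8}$. Your other side condition, $e^{-0.02C}\le\epsilon/20$, likewise follows from $C\epsilon>100\log C$. No fallback argument is needed.
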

\begin{proof}
    The proof is divided into a dense and a sparse regime.

    \noindent{\bf Case~1:} $1000/C \leq p \leq p_+$. Define
    \begin{align*}
        &R_{\mathcal D} := \Big\{ v\in V:\deg_{\mathcal D}(v) \geq (p+0.01\epsilon)d \Big\} \,;  \\
        &R_{\mathcal M} := \Big\{ v\in V:\deg_{\mathcal M}(v)\geq (100C^{-1}+0.01\epsilon)d \Big\} \,.
    \end{align*}
    Definition~\ref{def-typical-graph}(2) gives
    \begin{equation}{\label{eq-dense-exceptional-bounds}}
        |R_{\mathcal D}|\leq \frac{ 4|\mathcal D| }{ d(0.01\epsilon)^2 } \overset{\eqref{eq-assump-C-d-epsilon}}{\leq} 10^{-4}|\mathcal D|, \quad |R_{\mathcal M}|\leq \frac{ 4|\mathcal M| }{ d(0.01\epsilon)^2 } \overset{\eqref{eq-assump-C-d-epsilon}}{\leq} 10^{-4}|\mathcal M| \,.
    \end{equation}
    For $v\not\in R_{\mathcal D}\cup R_{\mathcal M}$,
    \begin{equation*}
        h(X_{\tau_k}^{\pi_+};v) = d-2\deg_{\mathcal M}(v) \geq \big(1-200C^{-1} -0.02\epsilon \big)d \,.
    \end{equation*}
    Moreover, the two configurations have opposite spins precisely on $\mathcal D$, and hence
    \begin{align}
        h(X_{\tau_k};v) &= h(X_{\tau_k}^{\pi_+};v) -2\sum_{ \substack{ w\sim v \\ w\in \mathcal D } } X_{\tau_k}^{\pi_+}(w) \geq h(X_{\tau_k}^{\pi_+};v)-2\deg_{\mathcal D}(v)\notag\\
        &\geq \big(1-2p-200C^{-1}-0.04\epsilon \big)d \ge 0.4\epsilon d \,.  \label{eq:x-field-dense}
    \end{align}
    Here the last inequality follows from $p\leq p_+=\frac{1}{2}-\frac{\epsilon}{4}$, and \eqref{eq-assump-C-d-epsilon}. Both fields in \eqref{eq-av-definition} are therefore at least $0.4\epsilon d$.  Since $1-g(h)\leq e^{-2\beta h}$ for $h\geq0$, it follows that
    \begin{equation}{\label{eq-dense-av}}
        a(v)\leq e^{-0.8C\epsilon} \quad \mbox{for every} \quad v\notin R_{\mathcal D}\cup R_{\mathcal M} \,.
    \end{equation}
    Let $\rho:=\frac{|R_{\mathcal D}\cup R_{\mathcal M}|}{n}$. Because $p\geq 1000C^{-1}$, \eqref{eq-dense-exceptional-bounds} implies
    \begin{equation*}
        \rho\leq \frac{4(p+1000C^{-1})}{d(0.01\epsilon)^2}  \leq 1.1\times10^{-3}p \,.
    \end{equation*}
    Also, $e^{-0.8C\epsilon}\leq10^{-3}p$. Using \eqref{eq-down-transition} and retaining only vertices in $\mathcal D\setminus(R_{\mathcal D}\cup R_{\mathcal M})$,
    \begin{align*}
        \Pb(Z_{k+1}=Z_k-1\mid\mathcal F_{\tau_k}) \geq(p-\rho)\bigl(1-e^{-0.8C\epsilon}\bigr) \geq0.8p \,.
    \end{align*}
    Similarly, using \eqref{eq-up-transition}, bounding $a(v)$ by $1$ on the exceptional set and by \eqref{eq-dense-av} elsewhere,
    \begin{align*}
        \Pb(Z_{k+1}=Z_k+1\mid\mathcal F_{\tau_k}) \leq\rho+e^{-0.8C\epsilon} \leq 0.2p \,.
    \end{align*}
    This proves \eqref{eq-one-step-bias} in this case.

    \noindent{\bf Case~2:} $2q \leq p \leq 1000C^{-1}$. Define $R(\mathcal D):=\{v\in V:\deg_{\mathcal D}(v)\geq0.01d\}$. Since $|\mathcal D|\leq 1000C^{-1}n$, applying Definition~\ref{def-typical-graph}(2) we have 
    \begin{equation}{\label{eq-sparse-exceptional-bound}}
        |R(\mathcal D)|\leq \frac{4|\mathcal D|}{d(0.01-p)^2} \leq 10^{-3}|\mathcal D| \,.
    \end{equation}
    Consider first a vertex $v\in \mathcal D\setminus (\mathcal B_{\tau_k}\cup R(\mathcal D))$. Under the event $\mathcal E_{\ref{lem-equal-to-restricted-dynamics}}$, the ordinary and restricted plus chains agree. Thus $v\notin \mathcal B_{\tau_k}$ gives $h(X_{\tau_k}^{\pi_+};v)>0.5d$. Moreover, $v\notin R(\mathcal D)$ implies $\deg_{\mathcal D}(v)<0.01d$, and therefore
    \begin{equation}{\label{eq-sparse-positive-fields}}
        h(X_{\tau_k};v) \geq h(X_{\tau_k}^{\pi_+};v)-2\deg_{\mathcal D}(v) >0.48d \,.
    \end{equation}
    It follows that
    \begin{equation}{\label{eq-sparse-av-down}}
        a(v)\leq e^{-0.96C} \quad \mbox{for} \quad v\in \mathcal D\setminus (\mathcal B_{\tau_k}\cup R(\mathcal D)) \,.
    \end{equation}
    Since $|\mathcal B_{\tau_k}|\leq e^{-0.004Cd}n$, equations \eqref{eq-down-transition}, \eqref{eq-sparse-exceptional-bound}, and \eqref{eq-sparse-av-down} yield
    \begin{align}
        \Pb\big( Z_{k+1}=Z_k-1 \mid \mathcal F_{\tau_k} \big) \geq \big( p-e^{-0.004Cd}-10^{-3}p \big)\big( 1-e^{-0.96C} \big) \geq 0.8p \,.  \label{eq-sparse-down}
    \end{align}
    Here we used \eqref{eq-assump-C-d-epsilon} and
    \begin{equation*}
        \frac{e^{-0.004Cd}}{p}\leq\frac{e^{-0.004Cd}}{2q} =\frac12e^{-0.002Cd}\leq5\times10^{-4} \,.
    \end{equation*}
    We next bound upward moves. If $v\notin \mathcal D$ and $\deg_{\mathcal D}(v)=0$, then the two local fields are identical and $a(v)=0$. Suppose instead that
    \begin{align*}
        v\notin \mathcal D\cup \mathcal B_{\tau_k}\cup R(\mathcal D) \quad\text{and}\quad \deg_{\mathcal D}(v)>0 \,.
    \end{align*}
    Both endpoint fields in \eqref{eq-av-definition} are at least $0.48d$ by the same calculation as in \eqref{eq-sparse-positive-fields}.  Furthermore,
    \begin{align*}
        \Big| h(X_{\tau_k};v)-h(X_{\tau_k}^{\pi_+};v) \Big| \leq 2\deg_{\mathcal D}(v) \,.
    \end{align*}
    Since
    \begin{align*}
        g'(h) =\frac{2\beta e^{-2\beta h}}{(1+e^{-2\beta h})^2} \leq2\beta e^{-2\beta h} \,,
    \end{align*}
    the mean-value theorem gives
    \begin{equation}{\label{eq-sparse-av-up}}
        a(v) \leq4\beta e^{-0.96C}\deg_{\mathcal D}(v) =\frac{4C e^{-0.96C}}{d}\deg_{\mathcal D}(v) \,.
    \end{equation}
    Finally, we have the simple observation
    \begin{equation*}
        \sum_{v\in V}\deg_{\mathcal D}(v)=d|\mathcal D| \,.
    \end{equation*}
    Thus, by \eqref{eq-up-transition}, bounding $a(v)$ by $1$ on $\mathcal B_{\tau_k}\cup R(\mathcal D)$ and using \eqref{eq-sparse-av-up} elsewhere,
    \begin{align}
        \Pb\big( Z_{k+1}=Z_k+1\mid\mathcal F_{\tau_k} \big) &\leq e^{-0.004Cd} + 10^{-3}p + \frac{4Ce^{-0.96C}}{dn}\sum_{v\in V} \deg_{\mathcal D}(v) \nonumber  \\
        &\leq e^{-0.004Cd} + 10^{-3}p + 4Ce^{-0.96C}p \leq 0.2p \,.  \label{eq-sparse-up}
    \end{align}
    Equations \eqref{eq-sparse-down} and \eqref{eq-sparse-up} prove \eqref{eq-one-step-bias} in this regime. 
\end{proof}

We can now finish the proof of Lemma~\ref{lem-bound-initial-steps}.
\begin{proof}[Proof of Lemma~\ref{lem-bound-initial-steps}]
    Define the stopping time
    \begin{align*}
        \zeta = \min\Bigg\{ \inf_{k \geq 0} \Big\{ \widehat{X}_{\tau_k}^{\pi_+} \neq X_{\tau_k}^{\pi_+} \Big\}, \ \inf_{k \geq 0} \Big\{ |\mathcal B_{\tau_k}| > q^2n \Big\}, \ \inf_{k \geq 0} \Big\{ |\mathcal M_{\tau_k}| > q'n \Big\} \Bigg\} \,.
    \end{align*}
    Whenever $2q\leq Z_k/n \leq p_+$ holds and $k<\zeta$, equation \eqref{eq-one-step-bias} implies
    \begin{equation}{\label{eq-additive-drift}}
        \E\big[ Z_{k+1}-Z_k\mid\mathcal F_{\tau_k} \big] \leq \frac{-0.6Z_k}{n} \,.
    \end{equation}
    It also gives the exponential supermartingale estimate
    \begin{align}
        \E\Big[ 2^{Z_{k+1}-Z_k}\mid\mathcal F_{\tau_k} \Big] &=1 +\Pb(Z_{k+1}=Z_k+1\mid\mathcal F_{\tau_k}) -\frac12\Pb(Z_{k+1}=Z_k-1\mid\mathcal F_{\tau_k})\nonumber \\
        &\leq1+0.2p-0.4p\leq1 \,. \label{eq-exponential-supermartingale}
    \end{align}
    Set $L_n:=\lfloor 2qn\rfloor$ and $H_n:=\lceil p_+n\rceil$. Define the stopping time 
    \begin{align*}
        \zeta_L = \inf_{k \geq 0}\Big\{ Z_k \in (-\infty,L_n] \Big\}; \quad \zeta_H = \inf_{k \geq 0}\Big\{ Z_k \in [H_n,\infty) \Big\} \,.
    \end{align*}
    By \eqref{eq-exponential-supermartingale}, $\{ 2^{Z_k}:0 \leq k \leq \zeta \wedge \zeta_L \wedge \zeta_H \}$ is a nonnegative supermartingale. Hence, 
    \begin{align}
        \Pb\big( \zeta_H \leq \zeta_L \wedge \zeta,\ Z_0\le p_-n \big) \leq 2^{p_-n-H_n} \leq 2^{-(p_+-p_-)n+1} =2^{-\epsilon n/12+1} \leq \exp(-n^{1-o(1)}) \,. \label{eq-upper-barrier-probability}
    \end{align}
    This optional-stopping argument may be justified first at a bounded stopping time and then by letting the deterministic cutoff tend to infinity. We now show that the lower level is reached after $O(n)$ embedded updates. Define levels
    \begin{align*}
        a_0=p_-, \quad a_{j+1}=\max\Big\{ 2q,\frac{a_j}{2} \Big\} \,.
    \end{align*}
    In addition, let $J$ be the first index for which $a_J=2q$. Then
    \begin{equation}{\label{eq:number-of-levels}}
        J\leq 1+\log_2\Big(\frac{p_-}{2q}\Big) = O_{C,d}(1), \quad a_j \leq 2a_{j+1} \,.
    \end{equation}
    Define $\zeta_j = \inf_{k \geq 0} \{ Z_k \leq a_j n \}$. Suppose a stage starts at an embedded time $s$ with $Z_s\leq a_j n$. By \eqref{eq-additive-drift}, the supermartingale $\{ Z_k: s \leq k \leq \zeta\wedge\zeta_H \wedge \zeta_{j+1} \}$ gives conditional drift at most $-0.6a_{j+1}$ per update. Let $\Delta_i:=\mathbf1_{\{i<\zeta\wedge\zeta_H\wedge\zeta_{j+1}\}} (Z_{i+1}-Z_i)$ and $\xi_i:=\Delta_i-\E[\Delta_i\mid\mathcal F_{\tau_i}]$. Then the partial sums of the $\xi_i$ form a martingale and $|\xi_i|\leq2$. If the next level has not been reached within $4n$ updates, and neither the upper barrier nor $\zeta$ has been reached, then $Z_{s+4n}-Z_s>-a_{j+1}n$, whereas the accumulated predictable drift is at most $-0.6a_{j+1}(4n)=-2.4a_{j+1}n$. The martingale part must therefore be at least $1.4a_{j+1}n$. Azuma-Hoeffding's inequality yields
    \begin{equation}{\label{eq-stage-time}}
        \Pb\Big( \zeta\wedge\zeta_H \wedge \zeta_{j+1} > s+4n \mid \mathcal F_{\tau_s} \Big) \leq e^{-\Omega_{C,d}(n)}  \,.
    \end{equation}
    The same estimate holds when $s$ is a stopping time, by conditioning on $\mathcal F_{\tau_s}$. Since $a_{j+1}\geq2q$ and there are only $J=O_{C,d}(1)$ stages, a union bound gives
    \begin{equation}{\label{eq-embedded-hitting-time}}
        \Pb\Big( \zeta_L\wedge\zeta_H\wedge\zeta>4Jn,\ Z_0\le p_-n \Big) \leq \exp(-n^{1-o(1)}) \,.
    \end{equation}
    Choose $t_0=8J$. Since $N(t_0)\sim\mathsf{Pois}(nt_0)=\mathsf{Pois}(8Jn)$, a Poisson lower-tail bound gives
    \begin{equation}{\label{eq-poisson-lower}}
        \Pb\big( N(t_0)<4Jn \big) \leq e^{-\Omega_{C,d}(n)} \,.
    \end{equation}
    On the event in Lemma~\ref{lem-uniform-bound-bad-over-time} and the phase-restriction event, $Z_0\le p_-n$ and $\zeta>N(t_0)$. If also $N(t_0)\ge4Jn$ but $\zeta_L>N(t_0)$, then either $\zeta_H\le\zeta_L\wedge\zeta$ or $\zeta_L\wedge\zeta_H\wedge\zeta>4Jn$. Consequently, Lemmas~\ref{lem-bound-D-0}, \ref{lem-equal-to-restricted-dynamics}, and \ref{lem-uniform-bound-bad-over-time}, together with \eqref{eq-upper-barrier-probability}, \eqref{eq-embedded-hitting-time}, and \eqref{eq-poisson-lower}, imply
    \begin{equation}{\label{eq:reach-small-set}}
        \Pb\left( \inf_{0\leq t\leq t_0}|\mathcal D_t|>2qn \right) \leq e^{-\Omega_{C,d}(n)} \,.
    \end{equation}
    Equivalently, except on an exponentially unlikely event, the disagreement set reaches size at most $2qn$ no later than time $t_0$.

    Finally, we argue that $Z_k$ stays below $4qn$ after $t_0$. Let $U_n:=\lfloor4qn\rfloor+1$. For all sufficiently large $n$, $L_n+1<U_n<H_n$.  After the first time at which $Z_k\leq L_n$, every path to $U_n$ must contain an excursion that starts at $L_n+1$ and reaches $U_n$ before returning to $(-\infty,L_n]$. Consider any such excursion and condition on its starting history. Apply the stopped supermartingale from \eqref{eq-exponential-supermartingale}, now stopping on first hitting $(-\infty,L_n]$, first hitting $[U_n,\infty)$, or reaching $\zeta$. Optional stopping gives
    \begin{align}
        \Pb_{L_n+1}\big( \zeta_{U}<\zeta_L\wedge \zeta \big) \leq 2^{L_n+1-U_n} \leq e^{-\Omega(1)\cdot qn} \,. \label{eq:excursion-probability}
    \end{align}
    On the event $N(n)\leq2n^2$, there are at most $2n^2$ excursion starts before time $n$. The strong Markov property and a union bound using \eqref{eq:excursion-probability} therefore give
    \begin{align}
        &\Pb\left( \sup_{t_0\leq t\leq n}|\mathcal D_t|>4qn, \zeta>N(n), \inf_{0\leq t\leq t_0}|\mathcal D_t|\leq2qn \right) \nonumber \\
        &\leq2n^2e^{-\Omega(1) \cdot qn}+\Pb(N(n)>2n^2) \leq \exp(-n^{1-o(1)}) \,. \label{eq:persistence}
    \end{align}
    The last step uses that $q=e^{-0.002Cd}$ is a positive constant once $C$ and $d$ are fixed. Combining \eqref{eq:reach-small-set}, \eqref{eq:persistence}, and Lemma~\ref{lem-uniform-bound-bad-over-time} yields the desired result.
\end{proof}

\section{Bootstrap analysis for the disagreement clusters}{\label{sec:bootstrap-disagree-cluster}}

The goal of this section is to prove Lemma~\ref{lem-configuration-are-admissible}.

\subsection{Evolution of the disagreement in spacetime}{\label{subsec:space-time-cluster}}

Fix $G \in \mathcal G$ and $\sigma \in \{ -1,+1 \}^V$. For each $A \subset V$ such that $1\le |A|=pn$ for some $p\le e^{-0.02d}$, define
\begin{equation}{\label{eq-def-ell-t-given-A}}
    \ell = \bigg\lfloor \frac{ \log(1/p) }{ 10^3\log(d) } \bigg\rfloor, \quad t = \frac{ \log(1/p) }{ 10^3C\log(d)^{10} } \,.
\end{equation}
By \eqref{eq-assump-C-d-epsilon}, $p\le e^{-0.02d}\le d^{-4000}$ and $\ell\ge4$ for sufficiently large $d$. In addition, define
\begin{align}{\label{eq:def-paired-vertices}}
    E_A := \left\{ (u,v): v\in (A_{\mathsf{f}})^+, u \notin (A_{\mathsf f})^+, u \sim v \mbox{ in } G \right\} \,.
\end{align}
By Item~(5) of Definition~\ref{def-typical-graph}, we have $|A_{\mathsf{f}}|\leq 10pn, |\mathcal P(A_{\mathsf{f}})| \leq 100pn$ and $|(A_{\mathsf f})^+|\le200\ell pn$. In addition, by Corollary~\ref{cor-structure-A-final}, we see that
\begin{align}{\label{eq:ball-decomposition}}
    V\bigl(\mathcal B((A_{\mathsf{f}})^+,G,\ell+1)\bigr) = (A_{\mathsf f})^+ \sqcup \big( \sqcup_{(u,v) \in E_A} V(\mathbf T_u) \big) \,,
\end{align}
where $\mathbf T_{u} \in \mathbb T_{1,\ell}$. This is a partition of vertex sets, not a disjoint union of graphs: the attachment edges remain, and edges between outer leaves of different trees are not excluded. In particular, $|E_A|\le d|(A_{\mathsf f})^+|\le200d\ell pn$.
Set
\begin{equation}{\label{eq-def-cal-D-t-A}}
    \mathcal D_t^{A,\sigma} := \Big\{ v \in V: X_t^{\sigma}(v) \neq X_t^{\sigma_A}(v) \Big\} \,.
\end{equation}
Then $\mathcal D^{A,\sigma}_0 = A$. Throughout this section, $\mathcal D_s$ denotes $\mathcal D_s^{A,\sigma}$ for the fixed pair $(A,\sigma)$. We now record a causal-arrow diagram that expresses the passage of disagreements over time.  It is similar to the information-percolation approach to coupling high-temperature Ising chains pioneered in \cite{LS16} but rather than giving a grand coupling, it is defined with respect to a particular pair of configurations evolving under the monotone coupling.  Each time a new disagreement is added, it is attributed to one of the neighboring disagreement giving a causal diagram. Recall that for each $x\in V$, let $T_1^x<T_2^x<\cdots$ be the points of its rate-one Poisson clock and let $U_1^x,U_2^x,\ldots$ be the corresponding independent $\operatorname{Unif}[0,1]$ variables. Let $(\mathcal F_s)_{s\geq 0}$ be the completed natural filtration of this marked graphical construction, with the variable $U_k^x$ revealed only at time $T_k^x$. 
\begin{defn}[Oriented causal arrows]\label{def:oriented-causal-arrows}
    Write $\overrightarrow E(G):=\{(y,x):\{x,y\}\in E(G)\}$ for the set of oriented edges of $G$. Fix, once and for all, a deterministic ordering $\mathcal N_G(x)=\{y_1^x,\ldots,y_d^x\}$ for every $x \in V$. For $a,b\in[0,1]$, define the half-open interval 
    \[
        J(a,b):=
        \begin{cases}
            (a,b], &a<b \,;  \\
            (b,a], &b<a \,;  \\
            \emptyset, &a=b \,.
        \end{cases}
    \]
    For $s>0$, $x\in V$, and $0\leq i\leq d$, define the hybrid boundary configuration $\eta_i^{x,s}\in\{-1,+1\}^{N_G(x)}$ by
    \[
        \eta_i^{x,s}(y_j^x):=
        \begin{cases}
            X_{s-}^{\sigma}(y_j^x),&j\leq i \,;  \\
            X_{s-}^{\sigma_A}(y_j^x),&j>i \,.
        \end{cases}
    \]
    %In particular, $\eta_0^{x,s}$ is the $X_{s-}^{\sigma_A}$-boundary at $x$ and $\eta_d^{x,s}$ is the $X_{s-}^{\sigma}$-boundary at $x$.  
    Put
    \[
        g(h):=\frac{1}{1+e^{-2\beta h}}, \quad q_i^x(s):=g\left(\sum_{z\sim x}\eta_i^{x,s}(z)\right), \quad 0\leq i\leq d.
    \]
    For a hypothetical update variable $u\in[0,1]$ at $x$, define
    \[
        K(x,s;u):= \left\{ i\in\{1,\ldots,d\}: y_i^x\in \mathcal D_{s-}^{A,\sigma} \mbox{ and } u\in J\bigl(q_{i-1}^x(s),q_i^x(s)\bigr) \right\} \,.
    \]
    If $K(x,s;u)\neq\emptyset$, let $i_*(x,s;u):=\min K(x,s;u)$. The time-labeled oriented causal-arrow set is the random subset $\mathcal E_{\mathrm{caus}}^{A,\sigma} \subseteq \overrightarrow E(G)\times(0,\infty)$, defined by
    \[
        \mathcal E_{\mathrm{caus}}^{A,\sigma} :=\Big\{ \big(y_{i_*(x,s;U_k^x)}^x,x,s \big): s=T_k^x,\ x\in \mathcal D_s^{A,\sigma}, K(x,s;U_k^x)\neq\emptyset \Big\} \,.
    \]
\end{defn}
In this definition, the triple $(y,x,s)\in\mathcal E_{\mathrm{caus}}^{A,\sigma}$ is interpreted as the collection of oriented spacetime edges $(y,s-)\longrightarrow(x,s)$. By construction, for each new disagreement interval  exactly one incoming arrow is selected representing that the disagreement at the tail of the arrow caused the disagreement at its head.  Moreover, as in standard path coupling arguments, the intensity of arrows on edges is bounded by $C/d$.
\begin{figure}[!ht]
    \centering
    \includegraphics[width=\textwidth]{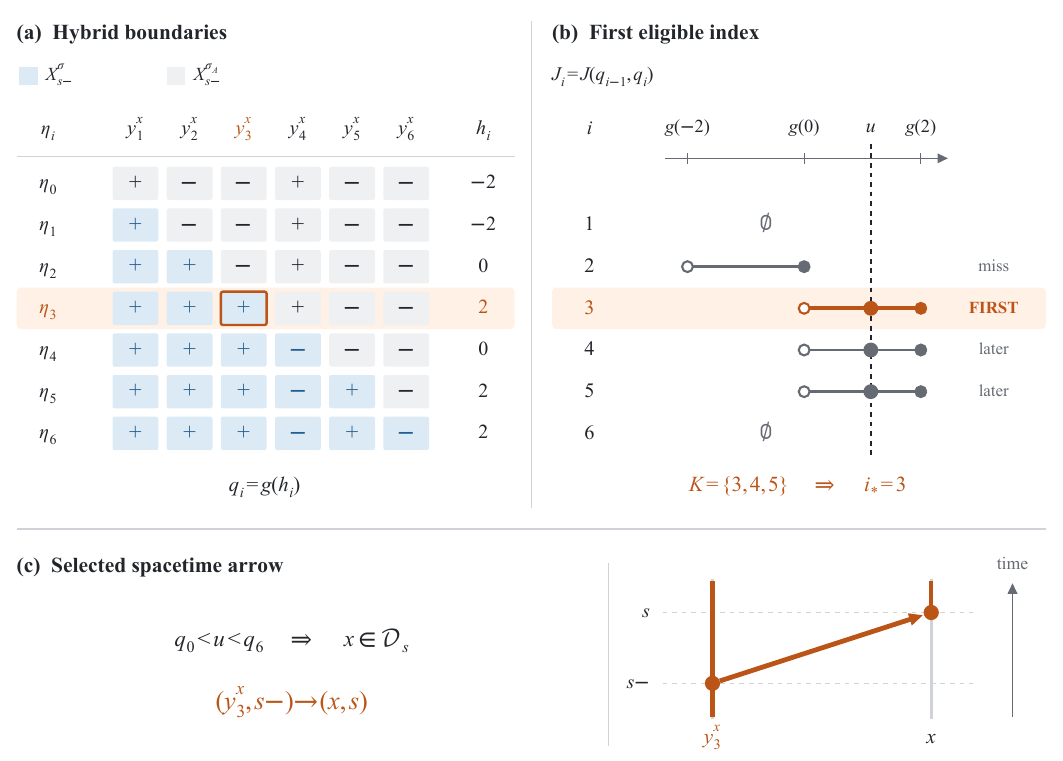}
    \caption{Graphical explanation of the oriented causal-arrow construction at an update time $s=T_k^x$. Write $\eta_i=\eta_i^{x,s}$, $q_i=q_i^x(s)$, $u=U_k^x$, $K=K(x,s;u)$, and $\mathcal{D}_t=\mathcal{D}_t^{A,\sigma}$. (a) In row $\eta_i$, the first $i$ spins (blue) come from $X_{s-}^{\sigma}$ and the remaining spins (gray) from $X_{s-}^{\sigma_A}$. Their sum $h_i=\sum_{j=1}^{6}\eta_i(y_j^x)$ gives the plus-update probability $q_i=g(h_i)$. (b) Row $i$ shows $J_i=J(q_{i-1},q_i)$. Open and filled endpoints indicate an excluded lower endpoint and an included upper endpoint, respectively. For $g(0)<u<g(2)$, the eligible indices are $K=\{3,4,5\}$, so the minimum-index rule selects $i_*=3$ (orange). (c) Since $q_0<u<q_6$, the coupled update gives $X_s^{\sigma_A}(x)=-1$ and $X_s^{\sigma}(x)=+1$. Thus $x\in\mathcal{D}_s$, and the construction records $(y_3^x,s-)\longrightarrow(x,s)$.}
    \label{fig:causal-oriented-arrow}
\end{figure}

\begin{lemma}\label{cl:arrows}
    The set $\mathcal E_{\mathrm{caus}}^{A,\sigma}$ is locally finite.  Moreover:
    \begin{enumerate}
    \item[(1)] If $s=T_k^x$ and $x\in \mathcal D_s^{A,\sigma}$, then $K(x,s;U_k^x)\neq\emptyset$. Consequently, there is a unique selected arrow $(y,x,s)\in\mathcal E_{\mathrm{caus}}^{A,\sigma}$, and it satisfies $y\in \mathcal D_{s-}^{A,\sigma} \cap \mathcal N_G(x)$.
    \item[(2)] For every $(y,x)\in\overrightarrow E(G)$, define
    \[
        N_{y\to x}(r):= \#\bigl\{s\in(0,r]:(y,x,s)\in \mathcal E_{\mathrm{caus}}^{A,\sigma}\bigr\} \,.
    \]
    Then $N_{y\to x}$ admits an $(\mathcal F_r)$-predictable intensity $\lambda_{y\to x}(r)$ satisfying $0\leq \lambda_{y\to x}(r)\leq \frac{C}{d}$ for all $r \ge 0$.
    \end{enumerate}
\end{lemma}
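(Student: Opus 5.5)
Local finiteness comes first. Arrows can only be recorded at clock rings $s=T_k^x$. Almost surely there are finitely many rings in any bounded time window at any vertex, and each ring produces at most one arrow. So on every compact time interval only finitely many arrows touch a given vertex. This gives local finiteness. It also makes the selected arrow unique by definition, since $i_*$ is a minimum.

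For Item~(1), fix $s=T_k^x$ with $x\in\mathcal D_s$ and write $u=U_k^x$. By the grand coupling, $X_s^{\sigma}(x)=+1$ iff $u\le q_d^x(s)$, and $X_s^{\sigma_A}(x)=+1$ iff $u\le q_0^x(s)$. The two updated spins differ precisely when $u$ lies between $q_0$ and $q_d$ in the half-open sense, i.e. $u\in J(q_0,q_d)$. Now telescope along the hybrid sequence $q_0,q_1,\dots,q_d$. If $y_i^x\notin\mathcal D_{s-}$, then $\eta_{i-1}^{x,s}=\eta_i^{x,s}$, so $q_{i-1}=q_i$ and $J(q_{i-1},q_i)=\emptyset$. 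Monotonicity of the coupling ($X^\sigma$ and $X^{\sigma_A}$ are ordered as in Definition~\ref{def-grand-coupling}) together with monotonicity of $g$ makes the sequence $q_i$ monotone. Hence $J(q_0,q_d)$ is the disjoint union of the intervals $J(q_{i-1},q_i)$, with consistent orientation of the half-open endpoints. This disjointness is the only delicate point. Without monotonicity one would only get $J(q_0,q_d)\subseteq\bigcup_i J(q_{i-1},q_i)$, which still suffices, and I would check that covering directly in the non-monotone case to be safe. So $u$ lies in some $J(q_{i-1},q_i)$ with $y_i^x\in\mathcal D_{s-}$, which means $K\neq\emptyset$ and $y_{i_*}^x\in\mathcal D_{s-}\cap\mathcal N_G(x)$.

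For Item~(2), fix an oriented edge $(y,x)$ with $y=y_i^x$. An arrow $y\to x$ at time $s$ requires three things: $s$ is a ring of $x$, $y\in\mathcal D_{s-}$, and $U\in J(q_{i-1}^x(s),q_i^x(s))$. The candidate intensity is
\[
    \lambda_{y\to x}(r):=\mathbf 1\{y\in\mathcal D_{r-}\}\,\Bigl|\bigl\{u\in J(q_{i-1}^x(r),q_i^x(r)):\ i_*(x,r;u)=i,\ x\in\mathcal D_r\text{ under update }u\bigr\}\Bigr| \,.
\]
This set is determined by the left limits at time $r$, so $\lambda_{y\to x}$ is left-continuous and adapted, hence predictable. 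The compensator identity follows from the standard marked-Poisson computation: the clock of $x$ has rate one and the mark $U$ is uniform and independent of $\mathcal F_{r-}$. The intensity is at most the Lebesgue measure of $J(q_{i-1},q_i)$, which is $|q_i-q_{i-1}|$. The two hybrid fields differ by $2$, and $\sup_h g'(h)=\beta/2$. Therefore
\[
    \lambda_{y\to x}(r)\le 2\cdot\tfrac{\beta}{2}=\beta=\tfrac{C}{d} \,.
\]
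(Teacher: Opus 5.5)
Your proof follows the paper's argument essentially step by step, with one incorrect claim in Item~(1) that you must drop. The shared steps are these:
\begin{itemize}
\item you cover $J(q_0^x(s),q_d^x(s))$ by the hybrid intervals $J(q_{i-1}^x(s),q_i^x(s))$;
\item you note that a neighbour outside $\mathcal D_{s-}^{A,\sigma}$ contributes an empty interval;
\item you take the predictable intensity to be the Lebesgue measure of the selecting set, bounded by $|q_i-q_{i-1}|\le 2\sup_h g'(h)=\beta=C/d$.
\end{itemize}

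The claim that $q_0^x(s),\dots,q_d^x(s)$ is monotone is false. The configurations $\sigma$ and $\sigma_A$ are not ordered when $A$ contains sites of both signs of $\sigma$. This is exactly the situation in the application, where $A=\mathcal D_{\tau_i}$ collects two-sided disagreements between $X_{\tau_i}$ and $X^{\pi_+}_{\tau_i}$. The hybrid fields can therefore move up and down, and the disjoint-union decomposition can fail.

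The covering inclusion you mention as a fallback is therefore required, and it is what the paper uses. It holds for any $a_0,\dots,a_d\in[0,1]$: $u\in J(a,b)$ exactly when $\mathbf 1\{u\le a\}\neq\mathbf 1\{u\le b\}$, and a change in this indicator between $a_0$ and $a_d$ must occur at some consecutive pair. Nothing else in your argument uses monotonicity, so with this step substituted the proof is complete.
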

\begin{proof}
    Local finiteness is immediate because, on every bounded time interval, the finite collection of rate-one Poisson clocks has only finitely many rings. Fix an update time $s=T_k^x$. The heat-bath probabilities in the two coupled chains immediately before the update are
    \[
        q_0^x(s)=g\left(\sum_{z\sim x}X_{s-}^{\sigma_A}(z)\right), \qquad q_d^x(s)=g\left(\sum_{z\sim x}X_{s-}^{\sigma}(z)\right) \,.
    \]
    If the update creates a new disagreement interval at $x$, then the two updated spins (which use the same variable $U_k^x$) are different. Hence $U_k^x\in J(q_0^x(s),q_d^x(s))$. For every finite sequence $a_0,\ldots,a_d \in[0,1]$ one has $J(a_0,a_d)\subseteq \cup_{i=1}^d J(a_{i-1},a_i)$. Thus $U_k^x\in J(q_{i-1}^x(s),q_i^x(s))$ for at least one $i$, showing that $K(x,s;U_k^x)\neq\emptyset$. If $y_i^x\notin \mathcal D_{s-}^{A,\sigma}$, then $X_{s-}^{\sigma}(y_i^x)=X_{s-}^{\sigma_A}(y_i^x)$ so $\eta_{i-1}^{x,s}=\eta_i^{x,s}$ and therefore $q_{i-1}^x(s)=q_i^x(s)$. Thus every index obtained above satisfies $y_i^x\in \mathcal D_{s-}^{A,\sigma}$, proving the first assertion.

    We next prove the intensity bound. Fix an oriented edge $(y_i^x,x)\in\overrightarrow E(G)$. Conditionally on $\mathcal F_{s-}$ and on the occurrence of a ring at $x$ at time $s$, let $A_i^x(s)\subseteq[0,1]$ be the set of variable values $u\in[0,1]$ for which the deterministic rule in Definition~\ref{def:oriented-causal-arrows} selects the arrow $y_i^x\to x$. The set $A_i^x(s)$ is determined by the pre-update configurations and hence is $\mathcal F_{s-}$-measurable. By construction,
    \[
        A_i^x(s)\subseteq J\bigl(q_{i-1}^x(s),q_i^x(s)\bigr).
    \]
    Therefore, conditionally on $\mathcal F_{s-}$ and on a ring at $x$, the probability that $y_i^x\to x$ is selected is at most $|q_i^x(s)-q_{i-1}^x(s)|$. The two hybrid fields $\eta^{x,s}_i(\cdot), \eta^{x,s}_{i-1}(\cdot)$ differ only through the spin at $y_i^x$, and hence
    \[
        \left| \sum_{z\sim x}\eta_i^{x,s}(z) - \sum_{z\sim x}\eta_{i-1}^{x,s}(z) \right| \leq 2 \,.
    \]
    Furthermore,
    \[
        |g'(h)| = \frac{2\beta e^{-2\beta h}}{(1+e^{-2\beta h})^2} \leq \frac{\beta}{2} \quad\text{for every}\quad h\in\mathbb R \,.
    \]
    The mean-value theorem therefore gives
    \[
        \bigl|q_i^x(s)-q_{i-1}^x(s)\bigr| \leq 2\sup_{h\in\mathbb R}|g'(h)| \leq \beta =\frac{C}{d} \,.
    \]
    Since the clock at $x$ rings at rate one, the standard compensator theorem for a predictably marked Poisson process shows that $N_{y_i^x\to x}$ has predictable intensity
    \[
        \lambda_{y_i^x\to x}(s)=|A_i^x(s)| \leq \bigl|q_i^x(s)-q_{i-1}^x(s)\bigr| \leq \frac{C}{d} \,.
    \]
    This proves the second assertion.
\end{proof}

Based on Definition~\ref{def:oriented-causal-arrows}, we rewrite the oriented causal arrows as a directed acyclic graph~$\mathscr G_t^{A,\sigma}$ whose vertex set $\mathscr V_t$ is the vertex-time pairs of new disagreements.
\begin{defn}[Causal disagreement clusters]{\label{def:causal-disagreement-cluster}} 
    For $t>0$ and $x\in V$, define the set of disagreement birth times
    \[
        \mathscr H_x(t) := \left\{ a\in[0,t]:
        \begin{array}{l}
            a=0\text{ and }x\in \mathcal D_0^{A,\sigma},\quad\text{or}\\
            a\in(0,t],\ a=T_k^x\text{ for some }k,\ x\in \mathcal D_a^{A,\sigma}
        \end{array}
        \right\} \,.
    \]  
    For $a\in\mathscr H_x(t)$, let
    \[
        \delta_x(a) := \inf\Big\{ r>a: r=T_k^x\text{ for some }k \Big\} \,,
    \]
    with the convention $\inf\emptyset=\infty$, and define the corresponding disagreement interval by
    \[
        I_x(a):=
        \begin{cases}
            [a,\delta_x(a)),&\delta_x(a)\leq t \,, \\
            [a,t],&\delta_x(a)>t \,.
        \end{cases}
    \]
    Let $\mathscr V_t := \{(x,a):x\in V, a\in\mathscr H_x(t)\}$. We now put directed edges on $\mathscr V_t$ as follows. If $(y,x,s)\in\mathcal E_{\mathrm{caus}}^{A,\sigma}$ with $s\leq t$, then $(x,s)\in\mathscr V_t$. Moreover, there is a unique $a\in\mathscr H_y(t)$ such that $s\in I_y(a)$, because $y\in \mathcal D_{s-}^{A,\sigma}$ and the clock at $x$, rather than the clock at $y$, rings at time $s$. Insert the directed edge $(y,a)\longrightarrow(x,s)$. Denote the resulting finite directed graph by $\mathscr G_t^{A,\sigma}$. In particular, $\mathscr G_t^{A,\sigma}$ is acyclic.

    For any $B\subseteq V$, let $\mathscr S_t(B):=\{ (x,a) \in\mathscr V_t:x\in B \}$. Let $\mathsf{Desc}_{t}(B)$ be the set of all vertices of $\mathscr G_t^{A,\sigma}$ that are reachable by a directed path from at least one vertex in $\mathscr S_t(B)$. Define the causal disagreement cluster generated by $\{v\}\times[0,t]$ by
    \[
        \mathcal L_{v,\leq t} := \bigcup_{(x,a)\in\mathsf{Desc}_{t}(\{v\})} \bigl(\{x\}\times I_x(a)\bigr) \subseteq V\times[0,t] \,.
    \]
    In addition, define its time-$t$ section by
    \[
        \mathcal L_{v,t} := \bigl\{ x\in V:(x,t)\in\mathcal L_{v,\leq t} \bigr\} \,.
    \]
\end{defn}
\begin{figure}[htbp]
    \centering
    \includegraphics[width=\textwidth]{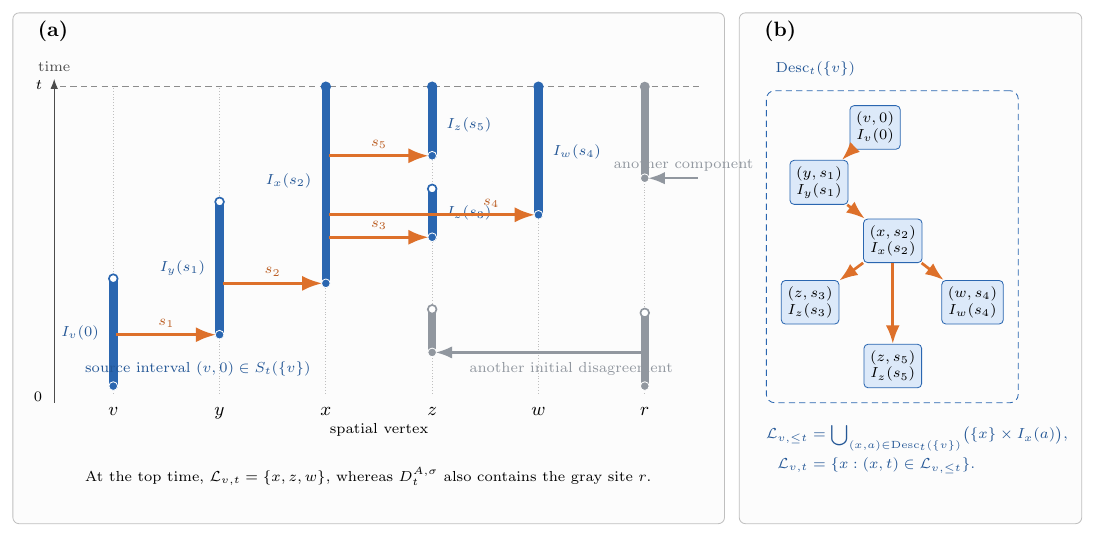}
    \caption{Graphical explanation of the causal disagreement cluster.  Each thick vertical segment is a disagreement interval $I_x(a)$, and an orange arrow at time $s$ joins the unique active parent interval to the disagreement interval born at its head. Collapsing every interval to one vertex gives the acyclic graph $\mathscr G_t^{A,\sigma}$. The blue vertices reachable from $\mathscr S_t(\{v\})$ form $\mathsf{Desc}_t(\{v\})$; their spacetime union is $\mathcal L_{v,\le t}$ and its section at the terminal time is $\mathcal L_{v,t}$. Gray intervals belong to the full disagreement set but not to this causal cluster.}
    \label{fig:causal-disagreement-cluster}
\end{figure}

\begin{defn}[Local tree]{\label{def:local-tree}}
    Define
    \begin{align}{\label{eq-def-R-R'}}
        R_0= \bigg\lfloor \frac{\ell}{C(\log d)} \bigg\rfloor, \quad R' = \bigg\lfloor \frac{\ell}{C(\log d)^3} \bigg\rfloor, \quad R= \bigg\lfloor \frac{\ell}{C(\log d)^5} \bigg\rfloor \,.
    \end{align}
    For a copy $\mathbf T \simeq \mathbb T_{k,\ell}$, define $\mathbf T'\subseteq \T$ induced by all $z \in V(\mathbf T)$ such that $\dist_{\mathbf T}(z,\rho(\mathbf T)) \le R'+1$. In addition, define $\mathbf T_*\subseteq \mathbf T$ induced by all $z \in V(\mathbf T)$ such that $\dist_{\mathbf T}(z,\rho(\mathbf T)) \le R+1$.
\end{defn}

\begin{defn}[Local chain]{\label{def:local-chain}}
    For each $(u,v) \in E_A$, we have $\mathbf T_u \sim \mathbb T_{1,\ell}$. Define $(\mathbf T_u')^{\circ}$ to be the tree obtained by attaching one minus leaf $v$ to the root of $\mathbf T_u'$. Let $(\underline{Y}_s^u)_{0 \le s \le t}$ and $(\overline{Y}_s^u)_{0 \le s \le t}$ be the Ising Glauber dynamics on $(\mathbf T_u')^{\circ}$ as follows:
    \begin{itemize}
        \item The initial conditions of $(\underline{Y}_s^u)_{0 \le s \le t}$ and $(\overline{Y}_s^u)_{0 \le s \le t}$ are given by
        \begin{align*}
            \underline{Y}_0^u(w) =
            \begin{cases}
                \sigma(w), & w \in V(\mathbf T_u') \setminus \mathsf L(\mathbf T_u') \,; \\
                -1, & w \in \mathsf L(\mathbf T_u') \,; \\
                -1, & w=v \,,
            \end{cases}
            \quad
            \overline{Y}_0^u(w) = 
            \begin{cases}
                \sigma(w), & w \in V(\mathbf T_u') \setminus \mathsf L(\mathbf T_u') \,; \\
                +1, & w \in \mathsf L(\mathbf T_u') \,; \\
                -1, & w=v \,.
            \end{cases}
        \end{align*}
        \item The boundary conditions of $(\underline{Y}_s^u)_{0 \le s \le t}$ and $(\overline{Y}_s^u)_{0 \le s \le t}$ are given by
        \begin{align*}
            \underline{Y}_s^u(w) =
            \begin{cases}
                -1, & w \in \mathsf L(\mathbf T_u') \,; \\
                -1, & w=v \,,
            \end{cases}
            \quad
            \overline{Y}_s^u(w) = 
            \begin{cases}
                +1, & w \in \mathsf L(\mathbf T_u') \,; \\
                -1, & w=v \,.
            \end{cases}
        \end{align*}
        \item $(\underline{Y}_s^u)_{0 \le s \le t}$ and $(\overline{Y}_s^u)_{0 \le s \le t}$ use the same Poisson clocks and uniform random variables of $(X_s^{\sigma})_{0 \le s \le t}$.
    \end{itemize}
\end{defn}

For each $(u,v) \in E_A$, define $\mathbf T_u(R)$ to be the set of vertices in $\mathbf T_u$ that have graph distance at most $R$ to $u$, and define $\mathbf T_u(R,R')=\mathbf T_u(R') \setminus \mathbf T_u(R)$. For a generic rooted tree $\mathbf T$ the notation $\mathbf T(R)$ has the same meaning, with distance measured in $\mathbf T$. Define the ``bad'' pairs 
\begin{align}
    &\mathcal V:= \Big\{ (u,v) \in E_A: \mathcal D_{s} \cap \mathbf T_u(R,R') \neq \emptyset \mbox{ for some } 0 \leq s \leq t \Big\} \nonumber \\
    &\quad\quad\quad \cup \Big\{ (u,v) \in E_A: \overline{Y}_s^u(w)> \underline{Y}_s^{u}(w) \mbox{ for some } w \in (\mathbf T_u)_*, 0 \leq s \leq t \Big\}  \,.  \label{eq-def-mathcal-V-1} 
\end{align}
We first make the following observation:
\begin{lemma}{\label{lem-disgreement-cluster-bound-1}}
    Let $r=t/(2000\log C)$. We have $\mathcal D^{A,\sigma}_t \subseteq \mathcal D_t^{(1)} \cup \mathcal D_t^{(2)} \cup \mathcal D_t^{(3)}$, where 
    \begin{align*}
        &\mathcal D^{(1)}_t = \mathcal D_{t}^{A,\sigma} \cap \mathcal B((A_{\mathsf f})^+,G,r) \,; \\
        &\mathcal D^{(2)}_t =  \cup_{(u,v) \in \mathcal V} \mathcal L_{u,t} \,; \\
        &\mathcal D^{(3)}_t =  \cup_{ (u,v) \in E_A \setminus \mathcal V } \big( \mathbf T_u(R) \cap \mathcal L_{u,t} \setminus \mathcal B(u,\mathbf T_u,r-1) \big)  \,.
    \end{align*}
    Thus, we have
    \begin{align}
        \big| \mathcal D^{A,\sigma}_t \big| \leq\ & \big| \mathcal D^{(1)}_t \big| + \big| \mathcal D^{(2)}_t \big| + \big| \mathcal D^{(3)}_t \big| \,.  \label{eq-disagreement-cluster-bound-1}
    \end{align}
\end{lemma}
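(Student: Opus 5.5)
We argue deterministically, pathwise: fix a realization of the graphical construction and a vertex $x\in\mathcal D^{A,\sigma}_t$, and place $x$ into one of the three sets. The inequality \eqref{eq-disagreement-cluster-bound-1} then follows from the union bound on cardinalities. The basic tool is backward tracing in the causal graph $\mathscr G^{A,\sigma}_t$. Let $(x,a)$ be the vertex of $\mathscr V_t$ whose interval $I_x(a)$ contains $t$. By Lemma~\ref{cl:arrows}(1), if $a>0$ there is a unique incoming arrow $(y,a')\to(x,a)$ with $y\sim x$ and $a'<a$. Iterating, and using acyclicity and local finiteness, we obtain a finite backward chain
\[
(x_0,a_0)\to(x_1,a_1)\to\cdots\to(x_m,a_m)=(x,a),\qquad a_0=0,\ x_0\in A .
\]
Consecutive $x_i$ are adjacent in $G$, each $x_i$ lies in $\mathcal D_s$ for $s\in I_{x_i}(a_i)$, and $(x,a)\in\mathsf{Desc}_t(\{x_i\})$ for every $i$. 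Hence $x\in\mathcal L_{x_i,t}$ for all $i$.

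\emph{Case 1.} If $\mathsf{dist}(x,(A_{\mathsf f})^+)\le r$, then $x\in\mathcal D^{(1)}_t$ directly. Assume otherwise. Since $x_0\in A\subseteq(A_{\mathsf f})^+$, the walk $x_0,\dots,x_m$ eventually leaves $(A_{\mathsf f})^+$ for good. Let $i$ be the last index with $x_i\in(A_{\mathsf f})^+$, and set $v=x_i$, $u=x_{i+1}$. Then $(u,v)\in E_A$, and the remaining walk $x_{i+1},\dots,x_m$ never returns to $(A_{\mathsf f})^+$.

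\emph{Case 2.} If $(u,v)\in\mathcal V$, then $x\in\mathcal L_{u,t}\subseteq\mathcal D^{(2)}_t$, since $(x,a)$ descends from $(u,a_{i+1})\in\mathscr S_t(\{u\})$.

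\emph{Case 3.} Suppose $(u,v)\notin\mathcal V$. We must show that $x\in\mathbf T_u(R)$ and $x\notin\mathcal B(u,\mathbf T_u,r-1)$. By \eqref{eq:ball-decomposition} and Corollary~\ref{cor-structure-A-final}(3), the tail walk starting at $u$ stays in $\mathbf T_u$ for as long as it stays within distance $\ell$ of $u$, because $\mathbf T_u$ is joined to $(A_{\mathsf f})^+$ only through the edge $uv$. Since $(u,v)\notin\mathcal V$, no disagreement ever occupies the annulus $\mathbf T_u(R,R')$ during $[0,t]$. Each $x_j$ with $j>i$ is in $\mathcal D_s$ at some time $s\le t$. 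A walk in the tree starting from $u$ that reached depth beyond $R$ would have to pass through depth $R+1\in(R,R']$ at a disagreement time. Because $R<R'$, this is impossible, so the walk stays in $\mathbf T_u(R)$, and in particular $x\in\mathbf T_u(R)\cap\mathcal L_{u,t}$.

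For the exclusion, note that $\mathsf{dist}_G(x,u)\ge\mathsf{dist}(x,(A_{\mathsf f})^+)-1>r-1$. Since $\mathbf T_u$ is an induced tree in $G$, distances in $\mathbf T_u$ are at least those in $G$, so $x\notin\mathcal B(u,\mathbf T_u,r-1)$. Therefore $x\in\mathcal D^{(3)}_t$.

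The step we expect to be the main obstacle is the confinement in Case 3. We need the tree-depth of the tail walk to change by at most one per step, and we need the annulus condition in the definition of $\mathcal V$ to cover the relevant times. The crossing vertex is in $\mathcal D_s$ at its birth time $a_j\in(0,t]$, and membership in $\mathcal V$ is tested over all $0\le s\le t$, so this is covered. We also need the ball identity $\mathbf T_u=\mathcal B(u,G_{\setminus\{v\}},\ell)$ from Corollary~\ref{cor-structure-A-final}(3), so that the walk cannot escape $\mathbf T_u$ through leaf-to-leaf edges before reaching depth $R\ll\ell$. Here we use $R+1\le R'<\ell$.
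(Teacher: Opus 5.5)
Your proposal is correct and follows the paper's proof essentially step for step: you trace back along the unique causal arrows given by Lemma~\ref{cl:arrows}(1), split at the last exit edge $(u,v)\in E_A$ from $(A_{\mathsf f})^+$, and use the disagreement-free annulus $\mathbf T_u(R,R')$ (which needs $R+1\le R'$, guaranteed by the parameter regime) to confine the tail walk to $\mathbf T_u(R)$. The only cosmetic difference is in the exclusion step, where you combine $\mathsf{dist}_G(x,u)\ge\mathsf{dist}(x,(A_{\mathsf f})^+)-1$ with tree distances dominating graph distances, whereas the paper concatenates the edge $uv$ with the tree path from $u$ to $x$; these amount to the same inequality.
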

\begin{proof}
    Fix a realization of the marked graphical construction up to time $t$, and fix $z\in \mathcal D_t^{A,\sigma}$. Starting from $(z,t)$, move backward along the corresponding disagreement interval. If the lower endpoint of the current interval is zero, then the current vertex belongs to $\mathcal D_0^{A,\sigma}=A$, and the procedure stops. Otherwise, the lower endpoint is an update time $s=T_k^x$ at the current vertex $x$, with $x\in \mathcal D_s^{A,\sigma}$. By Lemma~\ref{cl:arrows}(1), there is a unique selected causal arrow $(y,x,s) \in \mathcal{E}_{\mathrm{caus}}^{A, \sigma}$, where $y\in \mathcal D_{s-}^{A,\sigma}\cap N_G(x)$. Follow this arrow backward to the unique disagreement interval at $y$ that contains $s-$, and continue in the same manner. Since only finitely many updates occur in $[0,t]$, time strictly decreases at every use of a causal arrow and the procedure must terminate at time zero. It therefore produces an oriented walk $x_0,x_1,\ldots,x_m$ and times $0<s_1<\cdots<s_m<t$ such that $x_0\in A$, $x_m=z$, and $(x_{i-1},x_i,s_i) \in \mathcal{E}_{\mathrm{caus}}^{A,\sigma}$ for every $1\le i\le m$.

    If $z\in\mathcal{B}((A_{\mathsf f})^+,G,r)$, then $z \in \mathcal D_t^{(1)}$. We may therefore assume that $z\notin\mathcal{B}((A_{\mathsf f})^+,G,r)$. In particular, $z\notin(A_{\mathsf f})^+$.  Since $x_0\in A\subseteq A_{\mathsf f}\subseteq(A_{\mathsf f})^+$, the oriented walk has a last step that leaves $(A_{\mathsf f})^+$. Write this step as $x_{j-1}=v$ and $x_j=u$. Thus $v\in(A_{\mathsf f})^+$, $u\notin(A_{\mathsf f})^+$, $u\sim v$, and $x_i\notin(A_{\mathsf f})^+$ for every $j\le i\le m$. Consequently, $(u,v)\in\mathcal{E}_A$. The terminal directed segment beginning at $(u,s_j)$ and ending at the disagreement interval containing $(z,t)$ shows, by Definition~\ref{def:causal-disagreement-cluster}, that $z\in\mathcal{L}_{u,t}$.
    
    If $(u,v)\in\mathcal{V}$, then $z \in \mathcal D^{(2)}_t$. It remains to consider $(u,v)\notin\mathcal{V}$. The first event in the definition of $\mathcal{V}$ then fails, so $\mathcal D_s^{A,\sigma} \cap\mathbf{T}_u(R,R')=\emptyset$ for every $0\le s\le t$. By the choice of the last exit step, the terminal spatial walk $x_j,x_{j+1},\ldots,x_m$ never returns to $(A_{\mathsf f})^+$.  Corollary~\ref{cor-structure-A-final} and \eqref{eq:ball-decomposition} imply that, while this walk remains within distance $R'$ of $u$, it is contained in the exterior tree $\mathbf{T}_u$.  Moreover, by \eqref{eq-def-R-R'} and \eqref{eq-assump-C-d-epsilon}, any path in $G\setminus(A_{\mathsf f})^+$ that starts at $u$ and leaves $\mathbf{T}_u(R)$ must visit $\mathbf{T}_u(R,R')$. Every vertex of the terminal causal segment is a disagreement at the corresponding time. Hence such a visit would imply $\mathcal D_s^{A,\sigma} \cap \mathbf{T}_u(R,R')\ne\emptyset$ for some $0\le s\le t$, a contradiction. Therefore the whole terminal segment is contained in $\mathbf{T}_u(R)$, and in particular $z\in\mathbf{T}_u(R)$.

    The edge $uv$, followed by the unique path in $\mathbf{T}_u$ from $u$ to $z$, is a path in $G$ from $(A_{\mathsf f})^+$ to $z$ of length $1+\mathsf{dist}_{\mathbf{T}_u}(u,z)$. Since $z\notin\mathcal{B}((A_{\mathsf f})^+,G,r)$, this length is larger than $r$. Consequently, $\mathsf{dist}_{\mathbf{T}_u}(u,z)>r-1$, and hence $z\notin\mathcal{B}(u,\mathbf{T}_u,r-1)$.  We have thus shown that $z\in\mathbf{T}_u(R)\cap\mathcal{L}_{u,t}\setminus \mathcal{B}(u,\mathbf{T}_u,r-1)$ for some $(u,v)\in \mathcal{E}_A \setminus \mathcal{V}$, leading to $z \in \mathcal D_t^{(3)}$. Since $z\in D_t^{A,\sigma}$ was arbitrary, the asserted set inclusion follows. Taking cardinalities gives \eqref{eq-disagreement-cluster-bound-1}. 
\end{proof} 

It remains to bound $|\mathcal D^{(1)}_t|, |\mathcal D^{(2)}_t|$ and $|\mathcal D^{(3)}_t|$ respectively. We first bound $|\mathcal D^{(2)}_t|$ using the following estimate.
\begin{lemma}{\label{lem-bound-far-away-disagreement-cluster}}
    For all $\sigma\in \{ \pm 1 \}^V$ and for all $A \subset V$ with $1\le |A|=pn$ for some $p\le e^{-0.02d}$, we have
    \begin{align*}
        \mathbb E_{\sigma}\Big[ \big| \cup_{ (u,v) \in \mathcal V } \mathcal L_{u,t} \big| \Big] \le e^{-t}pn \,.
    \end{align*}
\end{lemma}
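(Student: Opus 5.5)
We bound $\mathbb E_\sigma|\cup_{(u,v)\in\mathcal V}\mathcal L_{u,t}|$ by combining two estimates: each causal cluster is small in expectation, and each edge of $E_A$ is bad with very small probability. By Lemma~\ref{cl:arrows}(2), arrows along any oriented edge occur with predictable intensity at most $C/d$. Each disagreement interval ends at the next ring of its own clock, which has rate one. Hence the number of interval descendants of a given interval is dominated by a continuous-time branching process. In it, each particle lives an $\mathsf{Exp}(1)$ time and produces offspring at total rate at most $d\cdot C/d=C$. Standard compensator arguments then give
\begin{align*}
\mathbb E\big[|\mathcal L_{u,t}|^2\big]\le e^{O(C)t}, \quad \mathbb E\big[|\mathcal L_{u,t}|\big]\le e^{Ct}.
\end{align*}
This holds uniformly in the history up to the first arrival at $u$. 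Since $|E_A|\le 200d\ell pn$, Cauchy--Schwarz reduces the lemma to
\[
\sum_{(u,v)\in E_A}\Pb\big((u,v)\in\mathcal V\big)^{1/2}e^{O(C)t}\le e^{-t}pn.
\]
By the choice of $t$ in \eqref{eq-def-ell-t-given-A}, $e^{O(C)t}=p^{-o(1)}$. It therefore suffices to show $\Pb((u,v)\in\mathcal V)\le p^{c}$ for a small constant $c$, say $c=10^{-3}/\log d$. Any bound of the form $\exp(-\Omega(R))$ with a good enough constant would also do.

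Next we bound the first event in \eqref{eq-def-mathcal-V-1}: that $\mathcal D_s$ meets $\mathbf T_u(R,R')$ at some time $s\le t$. We argue as in the proof of Lemma~\ref{lem-disgreement-cluster-bound-1}. Any disagreement inside $\mathbf T_u$ traces back along a causal walk that enters through $uv$ and stays in the tree $\mathbf T_u$ until it first reaches distance $R$. A walk reaching depth $R$ in time $t$ needs at least $R$ arrows in time $t$. The expected number of such spacetime chains is at most $d^{R}(C/d)^{R}t^{R}/R!\le (eCt/R)^R$. This is super-exponentially small in $R$, because $R\asymp \ell/(C(\log d)^5)$ while $t\asymp\ell/(C(\log d)^{9})$, so $t/R\ll 1/C$. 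We may also need to handle cluster walks that leave $\mathbf T_u$ through the outer leaves. They do not matter here, since the event concerns only depth between $R$ and $R'$, and any such walk from $(A_{\mathsf f})^+$ has to pass depth $R$ first.

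For the second event, $\overline Y^u_s>\underline Y^u_s$ on $(\mathbf T_u)_*$, we apply the same disagreement-percolation bound to the monotone coupling of the two local chains on $(\mathbf T'_u)^\circ$. Their disagreements start at the leaves $\mathsf L(\mathbf T'_u)$, at depth $R'+1$. To create a disagreement inside $(\mathbf T_u)_*$, at depth at most $R+1$, they must travel distance $R'-R\ge R'/2$ within time $t$. The expected number of such causal chains is at most $d^{R'}\cdot d^{-R'}(eCt/R')^{R'}\cdot\#\text{leaves}$. The leaf count is at most $d^{R'+1}$. This is not immediately small, so I would organize the count as paths from the root to a leaf: there are $d^{R'}$ of them, each contributing $(C/d)^{R'}t^{R'}/(R'!)$. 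The product is again $(eCt/R')^{R'}$, which is tiny.

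The main obstacle is the dependence between the badness events and the clusters $\mathcal L_{u,t}$, and making Cauchy--Schwarz work without losing the factor $e^{O(C)t}$ against a bound of only $p^{c}$. I would first check that the chain-count bounds really give $\Pb((u,v)\in\mathcal V)\le e^{-R}$. Using $R\ge 100\,C t$, this yields
\[
e^{-R/2}\,e^{O(C)t}\cdot 200d\ell\, pn\le e^{-t}pn.
\]
If the second moment of the branching bound turns out to be too crude, the fallback is to reveal the badness event together with a stopping argument. In that case one would apply the strong Markov property at the first time the pair becomes bad, and bound the cluster grown afterwards by $e^{Ct}$ in expectation.
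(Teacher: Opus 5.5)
Your overall structure matches the paper's. You dominate $\mathcal L_{u,t}$ by a rate-$C/d$ growth process to get $\mathbb E|\mathcal L_{u,t}|^2\le 2e^{2Ct}$. You bound $\Pb((u,v)\in\mathcal V)$ by counting ordered transmissions along fixed tree paths with per-edge intensity $C/d$. You then combine these by Cauchy--Schwarz, using $|E_A|\le 200d\ell pn$ and $R\gg Ct$. One small correction: $\mathcal L_{u,t}$ is generated by \emph{all} disagreement intervals at $u$ in $[0,t]$, and new ones can be born repeatedly (e.g.\ from arrows $v\to u$). So the comparison process should keep $u$ as a permanently active source, as in the paper's infection process without healing, rather than follow the descendants of a single interval. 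The Yule bound is unaffected.

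The real gap is in the first event of \eqref{eq-def-mathcal-V-1}. That event concerns the full disagreement set $\mathcal D_s$, and a disagreement in $\mathbf T_u(R,R')$ need not have entered $\mathbf T_u$ through the edge $uv$. The ordinary depth-$\ell$ leaves of $\mathbf T_u$ have $d-1$ neighbours outside $\mathbf T_u$: vertices beyond the $(\ell+1)$-ball, or outer leaves of other exterior trees $\mathbf T_{u'}$ (the paper explicitly does not exclude such edges). A disagreement leaving $(A_{\mathsf f})^+$ through another attachment edge $u'v'$ can reach an outer leaf of $\mathbf T_u$ and descend to depth $R'$ without ever visiting depth $\le R$. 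Your assertion that any walk from $(A_{\mathsf f})^+$ must pass depth $R$ first is therefore false, and $(eCt/R)^R$ does not bound this event.

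The repair, which is what the paper does, uses the fact that disagreements can enter $\mathbf T_u$ only at the root or at an outer leaf. Declare all these entry points active from time $0$; this only enlarges the event. Then add a second term: an outer-leaf entry needs $\ell-R'$ ordered arrows along one of at most $d^\ell$ upward paths. This contributes $d^\ell\bigl(eCt/(d(\ell-R'))\bigr)^{\ell-R'}$, whose logarithm $R'\log d+(\ell-R')\log\bigl(eCt/(\ell-R')\bigr)$ is at most $-\ell\log\log d$.

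Separately, your count for the local-chain event is off. The discrepancies need only $R'-R$ arrows to get from depth $R'+1$ to depth $R+1$, starting from up to $d^{R'+1}$ leaves. No arrows are needed on the upper $R+1$ edges of a root-to-leaf path, so charging $(C/d)^{R'}$ per root-to-leaf path is not justified. The correct bound is $d^{R+1}\bigl(eCt/(R'-R)\bigr)^{R'-R}$, which is still tiny because $(R+1)\log d\ll (R'-R)\log\log d$. With both corrections you get $\Pb((u,v)\in\mathcal V)\le 3e^{-R}$, and your final Cauchy--Schwarz computation goes through.
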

We then bound $|\mathcal D^{(1)}_t|$ using the following lemma. %\textcolor{red}{Should this be $D^1$?}
\begin{lemma}{\label{lem-bound-close-enough-disagreement-cluster}}
    For all $\sigma\in \{ \pm 1 \}^V$ and for all $A \subset V$ with $1\le |A|=pn$ for some $p\le e^{-0.02d}$, we have
    \begin{align*}
        \mathbb E_{\sigma}\Big[ \big| \mathcal D_{t}^{A,\sigma} \cap \mathcal B((A_{\mathsf f})^+,G,r) \big| \Big] \le e^{-t/2} pn \,.
    \end{align*}
\end{lemma}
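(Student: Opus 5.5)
The plan is to dominate the causal disagreement cluster by a branching-type bound that uses only the arrow intensity bound in Lemma~\ref{cl:arrows}(2), and then to use the sparsity of $(A_{\mathsf f})^+$ given by Definition~\ref{def-typical-graph}(5)--(6). The key point is that $t=\log(1/p)/(10^3C(\log d)^{10})$ is tiny compared with $\ell$ and with $\log(1/p)/\log d$. Over time $t$, even the crude bound (every arrow has intensity at most $C/d$) gives a cluster in which each disagreement produces new disagreements at total rate at most $C$. The expected time-$t$ size of a causal cluster is therefore at most $e^{Ct}$, and this is $p^{-o(1)}$.

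First, bound the expected number of spacetime causal chains. Every $z\in\mathcal D_t\cap\mathcal B((A_{\mathsf f})^+,G,r)$ is reached, as in the proof of Lemma~\ref{lem-disgreement-cluster-bound-1}, by an oriented walk $x_0,\dots,x_m=z$ with $x_0\in A$ and arrows at increasing times $s_1<\dots<s_m$. By Lemma~\ref{cl:arrows}(2) and a standard compensator/union argument, the expected number of such chains with a fixed spatial walk of length $m$, ending in a disagreement interval alive at time $t$, is at most $(C/d)^m t^m/m!$. (Survival of the last interval only helps.) Summing over walks,
\[
\mathbb E\big|\mathcal D_t\cap \mathcal B((A_{\mathsf f})^+,G,r)\big| \le \sum_{m\ge0} \frac{(Ct/d)^m}{m!}\, W_m\big(A,\mathcal B((A_{\mathsf f})^+,G,r)\big)\,.
\]
The $m=0$ term needs separate care: a vertex of $A$ remains disagreeing at time $t$ with no new arrow only if its clock has not rung, which happens with probability $e^{-t}$. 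More precisely, I would track surviving chains, where the last interval is never refreshed. Then $\mathbb E|\mathcal D_t|\le \sum_m e^{-t}(Ct/d)^m W_m/m!$, since every interval is killed at rate one independently of the past arrows, and the killing of the terminal interval contributes a factor $e^{-(t-s_m)}$. Combined with the $s$-integrals, this yields the factor $e^{-t}$ up to an $e^{O(t)}$ loss absorbed below.

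Second, estimate the walk counts. The target set $A'=\mathcal B((A_{\mathsf f})^+,G,r)$ has size at most $d^{r+1}\cdot 200\ell pn=p^{1-o(1)}n$, because $r\log d\ll \log(1/p)$. For $m\le \log(1/p)/(20\log d)$, Definition~\ref{def-typical-graph}(6) gives $W_m\le 2\sqrt{|A||A'|}d^{0.51m}$. The weight is then $(Ctd^{-0.49})^m/m!$, so this part of the sum is at most $2\sqrt{|A||A'|}\,e^{Ct d^{-0.49}}$. Here $\sqrt{|A||A'|}\le pn\sqrt{200\ell d^{r+1}}$.

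This is where I expect the main obstacle: the factor $\sqrt{d^{r}\ell}$ is larger than $1$, so the plain $e^{-t}$ from survival is not enough to reach $e^{-t/2}pn$. To handle this, I would first try to exploit the fact that a chain ending at distance $k$ from $A$ needs at least $k$ arrows. I would count walks with the trivial bound $W_m\le |A|d^m$ for short $m$, giving $(Ct)^m/m!$. Only vertices within distance $m$ of $A$ are then reached, and $|A|$ absorbs them, so the sum is $pn\,e^{-t}e^{Ct}$. This still loses $e^{Ct}$, so the crude intensity $C/d$ is insufficient. The true gain must come from the fact that arrows from disagreements with typical neighborhoods have intensity $e^{-\Omega(C)}C/d$ rather than $C/d$, as in the heuristic of Section~\ref{subsec:proof-overview}. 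Since $t\le \log(1/p)/(10^3C(\log d)^{10})$ makes $e^{Ct}$ only a small power of $1/p$, I would instead use the spectral bound of Definition~\ref{def-typical-graph}(6) together with the sparsity of $A'$. A walk of length $m\ge1$ ending in $A'$ contributes $\sqrt{|A'|/|A|}\,d^{-0.49m}$ relative to $|A|$, and the $m=0$ term is $e^{-t}|A|$. I would then take $r$ small enough that $\sqrt{d^{r}\ell}\,C t d^{-0.49}\le e^{-t}$. Because $r=t/(2000\log C)$, this amounts to $d^{t/(4000\log C)}\ll d^{0.49}e^{-t}$, which I would check holds for large $d$ when $t$ is bounded. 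For large $t$, where this fails, I would fall back on the refined intensity.
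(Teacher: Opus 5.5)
Your setup matches the paper's. You trace each $z\in\mathcal D_t\cap\mathcal B((A_{\mathsf f})^+,G,r)$ back along selected arrows to a walk from $A$ of length $m$, and weight each fixed walk by $e^{-t}(Ct/d)^m/m!$. The factor $e^{-t}$ is exact, not ``up to an $e^{O(t)}$ loss'': the clock at $x_i$ must not ring in $(s_i,s_{i+1})$, and these intervals are disjoint with total length $t$. The marks are then integrated from latest to earliest, each costing at most $C/d$. Since the target is $e^{-t/2}$, an unquantified $e^{O(t)}$ loss could not be absorbed.

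The gap is in the walk-counting step. You correctly observe two failures. Applying the spectral count to every $m$ carries the prefactor $\sqrt{|A'|/|A|}$, whose available bound is of order $d^{r/2}=e^{t\log d/(4000\log C)}\gg e^{t/2}$. Applying the trivial count to every $m$ loses $e^{Ct}$. But your conclusion that the intensity bound $C/d$ is insufficient is wrong, and none of your remedies works:
\begin{itemize}
\item The radius $r=t/(2000\log C)$ is fixed by Lemma~\ref{lem-disgreement-cluster-bound-1}; you cannot shrink it.
\item The ``bounded $t$'' regime is empty. Since $p\le e^{-0.02d}$ and $C<C_0\log d$, we have $t\ge 0.02d/(10^3C(\log d)^{10})\to\infty$.
\item The fallback to an intensity $e^{-\Omega(C)}C/d$ is unavailable. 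The lemma is asserted for \emph{every} $\sigma\in\{\pm1\}^V$, so there is no plus-phase environment making neighborhoods typical.
\end{itemize}

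The missing idea is to split the sum over $m$ at $m\approx10r$ and use each count only where it is efficient; the crude intensity then suffices.
\begin{itemize}
\item For $m<10r$, use $\widetilde W_{m,r}\le pn\,d^m$. Because $10r=t/(200\log C)\ll Ct$, the truncated series obeys $\sum_{m<10r}(Ct)^m/m!\le(10r+1)(eCt/10r)^{10r}$. Here $10r\log(eCt/10r)=\frac{t}{200\log C}\log(200eC\log C)\le0.01t$, so this part contributes at most $pn\,e^{-0.9t}$.
\item For $10r\le m\le\ell$, use Definition~\ref{def-typical-graph}(6) together with $r\le0.1m$. The bad factor $d^{0.5r}$ is then absorbed by $d^{0.05m}$, giving $(Ct/d)^m\widetilde W_{m,r}\le200\ell pn\,(Ctd^{-0.44})^m$. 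The sum is $200\ell pn\,e^{Ctd^{-0.44}}\le pn\,e^{0.1t}$, since $\ell\approx C(\log d)^9t$ and $t$ is large.
\item For $m>\ell$, the trivial count and $Ct\ll\ell$ make the tail at most $pn$.
\end{itemize}
Multiplying by the exact $e^{-t}$ gives the lemma.
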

Finally, we bound $|\mathcal D^{(3)}_t|$ using the following lemma. 
\begin{lemma}{\label{lem-growth-rooted-tree-subtree}}
    For all $pn\in \mathbb Z_+$ and $p\le e^{-0.02d}$, we have $\pi_+(\overline{\mathcal A}) \ge 1-n^{-20}$, where $\overline{\mathcal A}$ is the collection of $\sigma\in\{ \pm 1 \}^V$ such that for all $1\le |A|=pn$,
    \begin{align*}
        \mathbb E_{\sigma}\Bigg[ \sum_{ (u,v) \in E_A } \big| \mathbf T_u(R) \cap \mathcal L_{u,t} \setminus \mathcal B(u,\mathbf T_u,r-1) \big| \mathbf 1_{ \{ (u,v) \notin \mathcal V \} } \Bigg] \le e^{-t/C^3} \cdot 100\ell pn \,.
    \end{align*}
\end{lemma}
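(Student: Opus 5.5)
The plan is to fix a pair $(u,v)\in E_A$ and bound the expected contribution $\mathbb E_\sigma\bigl[|\mathbf T_u(R)\cap\mathcal L_{u,t}\setminus\mathcal B(u,\mathbf T_u,r-1)|\,\mathbf 1_{\{(u,v)\notin\mathcal V\}}\bigr]$ by $e^{-t/C^3}$ times a constant, for $\sigma$ in a set of $\pi_+$-probability $1-n^{-20}$. We then sum over the at most $|E_A|\le 200d\ell pn$ pairs. To reach the claimed factor $100\ell pn$ without the factor $d$, we group the pairs by $v\in(A_{\mathsf f})^+$. Only children $u$ whose subtree actually receives a disagreement contribute, and the rate of arrows $v\to u$ is at most $C/d$ by Lemma~\ref{cl:arrows}(2). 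So the expected number of entries into $\mathbf T_u$ over all $d$ children of $v$ is $O(Ct)$ per unit time, not $O(d)$. The bound then becomes $O(Ct)\cdot |(A_{\mathsf f})^+|\cdot\sup(\text{per-entry contribution})$, and it suffices to show that a disagreement entering at $u$ has expected descendant mass at distance $\ge r-1$ of order $e^{-2t/C^3}$.

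On $\{(u,v)\notin\mathcal V\}$, the causal cluster stays inside $\mathbf T_u(R)$. Moreover, the local chains are sandwiched: $\overline Y^u_s=\underline Y^u_s$ on $(\mathbf T_u)_*$ for all $s\le t$. By monotonicity of the grand coupling, $X^\sigma$ and $X^{\sigma_A}$ restricted to $(\mathbf T_u)_*$ are then determined by a local dynamics on $(\mathbf T_u')^\circ$ whose boundary is frozen at the value of the sandwiched chain. We compare the branching of disagreements with the bound $a(x)\le\frac{4C}{d}e^{-0.96C}\deg_{\mathcal D}(x)$ at vertices $x$ that are not bad for the local chain, as in Section~\ref{sec:initial-analysis}. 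Each causal arrow $y\to x$ inside the tree at a good vertex then has intensity at most $\frac{4C}{d}e^{-0.96C}$ instead of $C/d$. Reaching depth $r-1\approx t/(2000\log C)$ requires a causal path of length $r-1$ within time $t$. We enumerate such paths with Lemma~\ref{lem-enu-tree-on-regular-graph}, or simply use $d^{r}$ non-backtracking paths in the tree. The Poisson bound on the times then gives
\[
\sum_{k\ge r-1} d^k\Bigl(\tfrac{4C}{d}e^{-0.96C}\Bigr)^{k-b}\Bigl(\tfrac{C}{d}\Bigr)^{b}\frac{t^k}{k!},
\]
where $b$ counts arrows landing on bad vertices. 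For $b=0$ this is at most $e^{-\Omega(C r)}\le e^{-t/C^3}$, since $r\log C\gtrsim t$.

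To control the bad vertices, we use that the local chain started from $\sigma$ is nearly stationary. Let $\sigma\sim\pi_+$ and use Lemma~\ref{lem-TV-dist-giant-is-plus}. The restriction of $\pi_+$ to $\mathbf T_u'$, conditioned on the exterior, is an Ising measure of type $\phi^+_{1,R'}$ with perturbed boundary. Corollary~\ref{cor-any-signs} and Remark~\ref{lem-comparison-Ising-external-field} control this perturbation, and give $c_{\mathsf{LS}}\ge(9C)^{-1}$ by Lemma~\ref{lem-log-Sobolev-Tkl}. Remark~\ref{remark:same-est-extend-to-tree} gives $\phi(B\subseteq\mathcal B^\sharp)\le e^{-0.01Cd|B|}$ at a fixed time. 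Lemma~\ref{lem-almost-indepdent-in-time-ls} then shows that for a set $B$ of vertices to be bad during $|B|$ disjoint time windows of length $\Delta$ costs $e^{-10^{-3}\rho\cdot0.01Cd|B|}=e^{-\Omega(d)|B|}$. This beats the extra factor $(C/d)/(\frac{4C}{d}e^{-0.96C})$ paid per bad vertex, and also beats the enumeration of which vertices along the path are bad. Lemma~\ref{lem-BAD-lift-ls} converts these annealed statements ($\sigma\sim\pi_+$ jointly with the dynamics) into quenched statements: outside a $\pi_+$-set of measure $p_0^{1/6}$, $\mathbb P_\sigma(E)\le p_0^{1/2}$. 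A union bound over the polynomially many choices of $(u,v)$ and of the relevant events then yields $\pi_+(\overline{\mathcal A})\ge1-n^{-20}$.

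The main obstacle is this last transfer. The disagreement at $u$ is not stationary, and the environment seen by the causal path is the joint process of $X^\sigma$ and $X^{\sigma_A}$. The hypercontractive decorrelation applies only to the stationary chain, that is, the local plus chain started from $\sigma\sim\pi_+$. The field at a tree vertex in $X^{\sigma_A}$ differs from that in $X^\sigma$ by at most $2\deg_{\mathcal D}$. We therefore plan to define badness using $X^\sigma$ only, with threshold $0.6d$ as in $\mathcal B^\sharp$. We will restrict to causal paths along which $\deg_{\mathcal D}\le0.01d$. Paths with many disagreeing neighbours are handled by the tree enumeration of Lemma~\ref{lem:-enu-sequence-longrightarrow-forest}, each extra disagreeing neighbour costing another factor $C/d$. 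We also need the bound to hold uniformly in $A$ for a single $\sigma$. This is why the statement quantifies $\overline{\mathcal A}$ over all $A$. The enumeration over $A$ is absorbed because the estimate is per pair $(u,v)$ and depends on $A$ only through which trees are present, giving only $n^{O(1)}$ local events.
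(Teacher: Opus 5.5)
Your proposal has a genuine gap in its last step, and the problem is structural. You aim for a per-pair bound that holds simultaneously, with $\pi_+$-probability $1-n^{-20}$, for every pair. Since $A$ is arbitrary, this means every one of the $\le dn$ trees in $\mathscr T_\ell$. You get there by a union bound over ``polynomially many'' local events, and this cannot work. When $p$ is of order $e^{-0.02d}$, both $t$ and $\ell$ are $O_{C,d}(1)$. The $\pi_+$-probability that a single tree carries an exceptional configuration near $u$ is then a positive constant independent of $n$. An example is a minus-heavy block of depth $\ge r$ and branching of order $d/C$ whose vertices stay bad throughout $[0,t]$, which makes the local disagreement process supercritical. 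So no union bound reaches $n^{-20}$, and typically a positive fraction of the trees fail. The local events are not polynomially many either, since walks and skeletons are exponentially many in their length. The statement only controls the aggregate sum, and the paper uses exactly this. It proves the per-tree bound $e^{-2t/C^3}$ only for good and typical trees (Definitions~\ref{def-mathcal-A-1-lem-5.4}, \ref{def-mathcal-A-2-lem-5.4}, Lemma~\ref{lem:typical-rooted-tree-growth}). Lemma~\ref{lem-mathcal-A-1-2-is-typical-lem-5.4} then shows that, with probability $1-n^{-24}$, there are at most $e^{-0.001Cs}n$ disjoint non-good trees and at most $e^{-dt/(10^{20}C^5)}n$ disjoint atypical ones. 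This is a large-deviation estimate over $m$ disjoint trees. For non-goodness it uses the minus-set bound of Lemma~\ref{lem-est-bad-minus-cluster-fixed-time}. For atypicality it conditions on the outer-leaf data, dominates the joint interior law by a product law, and uses that atypicality is decreasing in the initial spins. The few exceptional trees are then handled crudely by Cauchy--Schwarz with $\mathbb E|\mathcal L_{u,t}|^2\le 2e^{2Ct}$ (Lemma~\ref{lem-growth-infectious-process}). Your grouping by $v$ is unnecessary: since $t/C^3\gg\log d$, one has $200d\,e^{-2t/C^3}\le 100e^{-t/C^3}$. It would also require bounds that are uniform over the history at random entrance times, and typicality of the time-zero configuration does not give that.

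Your route to a log-Sobolev inequality also fails as stated. The $\pi_+$-conditional law on $\mathbf T_u'$ given the exterior is not a bounded perturbation of $\phi^+_{1,R'}$. The exterior fixes up to $d^{R'+1}$ boundary spins arbitrarily. Corollary~\ref{cor-any-signs} only covers $\le 10^4$ root leaves, and Remark~\ref{lem-comparison-Ising-external-field} gives the total-variation bound $\sum_x|g_x-h_x|$, which is useless here. The missing idea is screening. One requires favorable depth-$\ell$ data at level $R_0$. BP contraction (Lemma~\ref{lem-exponential-contraction-BP-fixed-point}) over the intermediate generations then puts the law on $\mathbf T'$ within a factor $e^{\pm e^{-0.1R'}}$ of the marginal of the full-tree measure $\phi^+_{1,\ell}$ (Lemma~\ref{lem:screening}). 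The log-Sobolev constant of $\phi^+_{1,\ell}$ is reached through the monotone coupling $\overline Y\ge Y$ with the stationary full-tree chain, so that $\mathfrak B_s$ is contained in the bad set of $Y$. Relatedly, $X^\sigma$ and $X^{\sigma_A}$ on $(\mathbf T_u)_*$ are not determined by a frozen-boundary local dynamics, because $X^\sigma(v)$ may be $+1$. Only the one-sided bound $X^\sigma\ge\underline Y=\overline Y$ holds, and that is all you need.

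Finally, your path count with bad heads cannot just multiply intensity bounds by badness probabilities, since arrows and environment are driven by the same clocks. The quenched transfer via Lemma~\ref{lem-BAD-lift-ls} also needs a union bound over roughly $(512d)^M$ skeletons of size $M$. That bound closes only when the bad mass is a constant fraction of $M$, which is why typicality is imposed only on $\mathcal O_i^\diamond$. Trajectories with a small bad fraction are handled by the supermartingale of Lemma~\ref{lem:super-martingale}, with no environment probabilities at all.
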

Based on Lemmas~\ref{lem-bound-far-away-disagreement-cluster}--\ref{lem-growth-rooted-tree-subtree}, we can now finish the proof of Lemma~\ref{lem-configuration-are-admissible}.
\begin{proof}[Proof of Lemma~\ref{lem-configuration-are-admissible}]
    For each integer $1\le k\le e^{-0.02d}n$, denote the set in Lemma~\ref{lem-growth-rooted-tree-subtree}, with $p=k/n$, by $\overline{\mathcal A}(k/n)$, and set
    \[
        \overline{\mathcal A}:=\cap_{1\le k\le e^{-0.02d}n} \overline{\mathcal A}(k/n).
    \]
    There are at most $n$ possible sizes, so $\pi_+(\overline{\mathcal A})\ge1-n^{-19}$. Fix $\sigma\in\overline{\mathcal A}$ and a nonempty $A$ in the admissible range. Lemma~\ref{lem-growth-rooted-tree-subtree} gives
    \[
        \Eb_\sigma[|\mathcal D_t^{(3)}|]\le100\ell e^{-t/C^3}pn\le e^{-t/(2C^3)}pn.
    \]
    Combining this with Lemmas~\ref{lem-bound-far-away-disagreement-cluster} and \ref{lem-bound-close-enough-disagreement-cluster} and \eqref{eq-disagreement-cluster-bound-1} proves \eqref{eq-def-shrink-time}. Thus $\overline{\mathcal A}\subseteq\mathscr A$.

    Let $0=\tau_0<\tau_1<\cdots$ be all rings of the $n$ rate-one clocks, including updates that leave the configuration unchanged. The embedded transition is the average of the restricted one-site heat-bath kernels, so $\widehat X_{\tau_j}^{\pi_+}$ has law $\pi_+$ for every deterministic index $j$. Also $N(n)\sim\operatorname{Pois}(n^2)$. Since the configuration is constant between rings,
    \[
        \Pb(\exists s\le n:\widehat X_s^{\pi_+}\notin\mathscr A) \le (2n^2+1)n^{-19}+\Pb(N(n)>2n^2) \le n^{-12}
    \]
    for all sufficiently large $n$. This proves Lemma~\ref{lem-configuration-are-admissible}. 
\end{proof}

\subsection{Proof of Lemma~\ref{lem-bound-far-away-disagreement-cluster}}{\label{subsec:proof-lem-5.3}}

To control the growth of causal disagreement clusters $\mathcal L_{u,t}$, we will further introduce the following infectious process on $G$ which is a rate $C/d$ contact process without healing.
\begin{defn}{\label{def-infectious-process}}
    Consider the infectious process on $G$ defined as follows. Put an independent exponential clock with rate $\frac{C}{d}$ on each oriented edge. Denote by $\mathcal I_t$ the set of infected vertices at time $t$. Once $u \in \mathcal I_t$, the clock $u \to v$ tries to add $v$ into $\mathcal I_t$. If at time $t$ a vertex $v$ has $k$ neighbors in $\mathcal I_t$, then there are $k$ active clocks trying to add $v$, each with rate $\frac{C}{d}$. Since the minimum of these $k$ exponential clocks has rate $\frac{kC}{d}$, it is equivalent that $v$ will join $\mathcal I$ with rate $\frac{Ck}{d}$. For $(u,v) \in E_A$, define $\mathcal I_{u,t}$ to be the infectious process started from $\mathcal I_{u,0}=\{ u \}$. These auxiliary processes may be sampled independently for different roots. The domination below is a separate marginal comparison for each root; no simultaneous independent domination of all causal clusters is asserted or used.
\end{defn}
From Lemma~\ref{cl:arrows} and the fact that $\mathcal L_{u,0} \subset \mathcal I_{u,0}$, we immediately have the following observation.
\begin{claim}{\label{lem-stochastic-doination-by-infection}}
    $\mathcal L_{u,t}$ is stochastically dominated by $\mathcal I_{u,t}$.
\end{claim}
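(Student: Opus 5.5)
We will construct a monotone coupling in which $\mathcal L_{u,s}\subseteq \mathcal I_{u,s}$ holds for all $0\le s\le t$ almost surely. This gives the stochastic domination. The natural approach is a thinning (compensator) argument, driven by the intensity bound of Lemma~\ref{cl:arrows}(2).

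\textbf{Step 1: the growth rule for $\mathcal L_{u,s}$.} By Definition~\ref{def:causal-disagreement-cluster}, the cluster gains a new vertex $x$ at time $s$ only through a selected arrow $(y,x,s)\in\mathcal E^{A,\sigma}_{\mathrm{caus}}$. Here $y$ carries a disagreement interval already belonging to $\mathsf{Desc}_s(\{u\})$, so $y\in\mathcal L_{u,s-}$. Intervals can end, so vertices may leave $\mathcal L_{u,s}$. We therefore compare with the monotone hull $\widetilde{\mathcal L}_{u,s}:=\bigcup_{r\le s}\mathcal L_{u,r}$, whose only growth occurs through arrows from $\widetilde{\mathcal L}_{u,s-}$.

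\textbf{Step 2: coupling the arrows with edge clocks.} Let $\lambda_{y\to x}(r)\le C/d$ be the predictable intensity from Lemma~\ref{cl:arrows}(2). On each oriented edge $(y,x)$, take a Poisson process of rate $C/d$ on $(0,\infty)\times[0,C/d]$ in the time-height plane. Realize the arrow events on $(y,x)$ as the points whose height is at most $\lambda_{y\to x}(r)$. Concretely, reconstruct the uniform $U^x_k$ as follows: at each ring of $x$, the arrow to $x$ from $y_i^x$ is selected exactly when $U_k^x\in A_i^x(s)$. These sets are disjoint and $\mathcal F_{s-}$-measurable, each of Lebesgue measure at most $C/d$.

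In this way the marked graphical construction becomes a thinning of independent rate-$C/d$ Poisson clocks on the oriented edges, with extra randomness for the remaining parts of $[0,1]$ and for rings where no arrow is selected. The law of the graphical construction is preserved by the standard thinning representation of a point process with predictable intensity bounded by $C/d$. Equivalently, one can cite the domination theorem for counting processes with bounded predictable intensity.

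\textbf{Step 3: induction over arrival times.} Let the edge clocks drive $\mathcal I_{u,\cdot}$ started from $\{u\}$. In the cluster generated by $\{u\}\times[0,t]$, every vertex of $\mathsf{Desc}_t(\{u\})$ is reached from an interval at $u$. After the infection at $u$, the arrows generating the cluster are a subset of the edge-clock rings along edges out of $\widetilde{\mathcal L}$. By induction over the finitely many arrival times in $[0,t]$, we get $\widetilde{\mathcal L}_{u,s}\subseteq\mathcal I_{u,s}$. Hence $\mathcal L_{u,t}\subseteq\mathcal I_{u,t}$.

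\textbf{Main obstacle: realizing the coupling.} The delicate point is Step 2. The intensities $\lambda_{y\to x}$ for the $d$ edges into $x$ all come from the single uniform $U_k^x$ at a single rate-one ring, so they are not independent across edges. The fix is that for each $x$ the sets $A_i^x(s)$ are disjoint with total measure at most $d\cdot C/d$. One can therefore couple a rate-one ring together with $U$ restricted to $\bigcup_i A_i^x$ to the superposition of $d$ independent rate-$C/d$ edge clocks into $x$, filling in the complementary mass independently.

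A subtlety is that vertices of the cluster at $u$ may exist only from times after $0$; the infection started at time $0$ only makes domination easier. The auxiliary processes for different roots are coupled separately, consistent with Definition~\ref{def-infectious-process}.
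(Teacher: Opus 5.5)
Your proposal is correct and follows essentially the paper's argument: you pass to the monotone hull with the root kept permanently infected, bound the entrance intensities by $C/d$ per reached neighbor via Lemma~\ref{cl:arrows}, and couple monotonically with the infection process. Your edge-by-edge realization, built from the disjoint $\mathcal F_{s-}$-measurable selection sets $A_i^x(s)$ (each of measure at most $C/d$), is a more explicit version of the ``usual monotone coupling'' of first-entry counting processes that the paper invokes without construction.
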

To justify this comparison, keep the root permanently infected and ignore all recoveries. Before an as-yet unreached vertex is first reached from the root spacetime line, it has an incoming causal arrow from an already reached vertex. The total rate of such first entries is at most $(C/d)$ times its number of reached neighbors, by Lemma~\ref{cl:arrows}. The usual monotone coupling of these first-entry counting processes with the infection process proves the assertion. 

We now return to the proof of Lemma~\ref{lem-bound-far-away-disagreement-cluster}. We first show the following simple bound for the growth rate of $\mathcal L_{u,t}$. 
\begin{lemma}{\label{lem-growth-infectious-process}}
    We have $\Eb[|\mathcal L_{u,t}|^2] \le 2e^{2Ct}$.
\end{lemma}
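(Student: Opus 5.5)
By Claim~\ref{lem-stochastic-doination-by-infection} it suffices to show $\Eb[|\mathcal I_{u,t}|^2]\le 2e^{2Ct}$ for the infection process started from $\{u\}$. The plan is to dominate $\mathcal I_{u,t}$ by a pure-birth (Yule) process. In the infection process each infected vertex $w$ has $d$ outgoing oriented edges, each with a rate-$C/d$ clock. So the total rate at which new vertices are infected is at most
\[
    \sum_{w\in\mathcal I_{u,t}} \frac{C}{d}\cdot\#\{z\sim w: z\notin \mathcal I_{u,t}\} \le C|\mathcal I_{u,t}| \,.
\]
Hence $N_t:=|\mathcal I_{u,t}|$ is a nondecreasing counting process, with jumps of size one, whose jump intensity is at most $CN_t$. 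By the standard monotone coupling of birth chains, $N_t$ is stochastically dominated by a Yule process $Y_t$ with birth rate $C$ per individual and $Y_0=1$.

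The Yule process has an explicit law: $Y_t$ is geometric with success probability $e^{-Ct}$. This gives
\[
    \Eb[Y_t]=e^{Ct}, \qquad \Eb[Y_t^2]=\frac{2-e^{-Ct}}{e^{-2Ct}}=2e^{2Ct}-e^{Ct}\le 2e^{2Ct} \,.
\]
Because the second moment is monotone under stochastic domination of nonnegative variables, we get
\[
    \Eb[|\mathcal L_{u,t}|^2]\le \Eb[|\mathcal I_{u,t}|^2]\le \Eb[Y_t^2]\le 2e^{2Ct} \,.
\]

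If one prefers to avoid the explicit Yule law, Dynkin's formula gives the same bound. Differentiating, $\frac{d}{dt}\Eb[N_t]\le C\Eb[N_t]$, so $\Eb[N_t]\le e^{Ct}$. Next, $\frac{d}{dt}\Eb[N_t^2]\le C\Eb[(2N_t+1)N_t]$, which yields $\Eb[N_t^2]\le e^{2Ct}\bigl(1+\int_0^t Ce^{-2Cs}e^{Cs}\,ds\bigr)\le 2e^{2Ct}$ by Gr\"onwall. The only mildly delicate point is making the domination precise. The intensity bound $CN_t$ holds pathwise and predictably, so the thinning/coupling argument used to justify Claim~\ref{lem-stochastic-doination-by-infection} applies verbatim here.
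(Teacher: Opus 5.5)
Your proposal is correct and is essentially the paper's proof. You reduce to the infection process via Claim~\ref{lem-stochastic-doination-by-infection}, dominate its size by a Yule process with per-particle birth rate $C$, and use $\Eb[Y_t^2]=2e^{2Ct}-e^{Ct}\le 2e^{2Ct}$; the paper writes the same quantity as $e^{2Ct}+e^{Ct}(e^{Ct}-1)$. Your Gr\"onwall alternative is also valid but not needed.
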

\begin{proof}
    By Claim~\ref{lem-stochastic-doination-by-infection}, it suffices to show that $\Eb[|\mathcal I_{u,t}|^2] \le 2e^{2Ct}$. The total birth rate of $\mathcal I_{u,s}$ is at most $C$ times its current size, since each infected vertex has at most $d$ neighbors and each edge transmits at rate $C/d$. Therefore, $|\mathcal I_{u,s}|$ is stochastically dominated by a Yule process $(Y_s)_{s\ge0}$ with per-particle birth rate $C$ and $Y_0=1$. For this Yule process,
    \begin{equation*}
        \Eb[Y_t^2] = e^{2Ct}+ e^{Ct}(e^{Ct}-1) \le 2e^{2Ct} \,. \qedhere
    \end{equation*}
\end{proof}
We now show that $\mathcal V$ is rare in the following sense. 
\begin{lemma}{\label{lem-V-1-W-1-are-rare}}
    We have $\Pb((u,v) \in \mathcal V) \le 3e^{-\ell/(C(\log d)^5)}$.
\end{lemma}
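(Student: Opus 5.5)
We split $\mathcal V$ into its two defining events and bound each by $e^{-\ell/(C(\log d)^5)}$ or smaller. Write $\mathcal V_1$ for the event that $\mathcal D_s\cap\mathbf T_u(R,R')\ne\emptyset$ for some $s\le t$. Write $\mathcal V_2$ for the event that $\overline Y^u_s(w)>\underline Y^u_s(w)$ for some $w\in(\mathbf T_u)_*$ and $s\le t$. The bound is uniform in $\sigma$ and in the fixed pair $(A,\sigma)$. The only inputs are the rate-$C/d$ bound on causal arrows from Lemma~\ref{cl:arrows} and the tree structure of $\mathbf T_u$ from Corollary~\ref{cor-structure-A-final}.

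\textbf{The event $\mathcal V_1$.} A disagreement in $\mathbf T_u(R,R')$ is reached by backtracking causal arrows, as in the proof of Lemma~\ref{lem-disgreement-cluster-bound-1}. The last exit from $(A_{\mathsf f})^+$ before entering $\mathbf T_u$ can only occur through the edge $uv$, since that is the unique attachment edge of $\mathbf T_u$. So some disagreement at $u$ must propagate through a causal chain inside the tree to depth at least $R$ by time $t$. The chain may also enter $\mathbf T_u$ through its outer leaves, but this requires a chain of length at least $\ell-R'\ge R$ inside the tree. Either way, a causal path of length at least $R$ must be realized within time $t$. Each arrow has intensity at most $C/d$, and the tree has at most $(d-1)^k$ paths of length $k$ from a given vertex. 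By the standard first-moment bound (equivalently, domination by the infection process $\mathcal I_{u,t}$ of Claim~\ref{lem-stochastic-doination-by-infection}), the expected number of realized paths of length $k$ is at most $(Ct)^k/k!$. Summing over $k\ge R$ gives
\[
  \Pb(\mathcal V_1)\le \sum_{k\ge R}\frac{(eCt)^k}{k^k}\le 2\Big(\frac{eCt}{R}\Big)^R .
\]
Compare $t=\log(1/p)/(10^3C(\log d)^{10})$ with $R\approx \log(1/p)/(10^3C(\log d)^6)$: the ratio $eCt/R$ is of order $C(\log d)^{-4}$. By \eqref{eq-assump-C-d-epsilon} ($C<C_0\log d$) this is at most $e^{-1}$ for $d$ large, so $\Pb(\mathcal V_1)\le 2e^{-R}\le 2e^{-\ell/(C(\log d)^5)}\cdot e$. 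Taking slightly more care with the Stirling factor brings this to $\le 2e^{-\ell/(C(\log d)^5)}$.

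\textbf{The event $\mathcal V_2$.} The two local chains $\overline Y^u,\underline Y^u$ on $(\mathbf T'_u)^\circ$ share randomness and are monotonically coupled. They differ only because of their boundary values on the leaves $\mathsf L(\mathbf T'_u)$, at depth $R'+1$. Running the same causal-arrow construction of Definition~\ref{def:oriented-causal-arrows} for this pair, the disagreements originate at the leaves. To reach $(\mathbf T_u)_*$ (depth $\le R+1$) they must follow a causal path of length at least $R'-R\ge R$ within time $t$. The number of such paths ending in $(\mathbf T_u)_*$ is at most $|(\mathbf T_u)_*|\,(d-1)^k$, with $|(\mathbf T_u)_*|\le d^{R+2}$. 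The same Poisson path bound then gives
\[
  \Pb(\mathcal V_2)\le d^{R+2}\sum_{k\ge R'-R}\Big(\frac{eCt}{k}\Big)^k .
\]
Here $R'-R\approx (\log d)^2R$, and $eCt/k\le C(\log d)^{-6}$. So the sum is $\exp(-\Omega((\log d)^3R))$, which absorbs the factor $d^{R+2}=e^{O(R\log d)}$ and gives $\Pb(\mathcal V_2)\le e^{-R}$. Adding the two bounds yields $3e^{-\ell/(C(\log d)^5)}$.

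The main obstacle is justifying the path-counting bound for causal chains. Arrows are only dominated in intensity, not independent, so the argument needs the predictable-intensity statement of Lemma~\ref{cl:arrows}(2): iterated compensators give $\Eb[\#\text{realized paths of length }k]\le (C/d)^k t^k/k!$ per spatial path. A second care point is the degenerate range where $R$ is small. There the bound is trivial, since $3e^{-R}\ge1$ once $R\le 1$.
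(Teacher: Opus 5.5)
\textbf{Gap in the outer-leaf case of $\mathcal V_1$.} Much of your proposal agrees in substance with the paper: the split into $\mathcal V_1,\mathcal V_2$, the per-walk bound $(Ct/d)^k/k!$ from iterated compensators, the root-entry estimate, and your treatment of $\mathcal V_2$ (counting walks by their endpoint in $(\mathbf T_u)_*$). The gap is the outer-leaf alternative in $\mathcal V_1$. You correctly note that it needs length $\ell-R'$. You then weaken this to ``length at least $R$'' and bound by the number of walks \emph{from a given vertex}. That count only accounts for chains entering at $u$.

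Chains entering through the outer leaves may start at any of the $(d-1)^{\ell}$ depth-$\ell$ leaves of $\mathbf T_u$. These carry disagreements of $\mathcal D_s$ arriving from elsewhere in the graph, generally not in the causal cluster $\mathcal L_{u,s}$. So the domination by $\mathcal I_{u,t}$ that you invoke does not cover them either. Once the length is weakened to $R$, the union over entry leaves costs a factor $(d-1)^{\ell}=e^{\Theta(C(\log d)^6R)}$. The gain is only $(eCt/R)^R=e^{-O(R\log\log d)}$. Hence $\Pb(\mathcal V_1)\le 2(eCt/R)^R$ does not follow.

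\textbf{Repair.} Keep the full length $\ell-R'$, and spend the factor $1/d$ per transmission against the branching in the right direction. On a tree, a causal walk (possibly backtracking) from a leaf to depth $R'$ contains time-ordered transmissions along the unique upward geodesic. There are at most $d^{\ell}$ such geodesics in total. So this contribution is at most $d^{\ell}(Ct/d)^{\ell-R'}/(\ell-R')!\le d^{R'}\bigl(eCt/(\ell-R')\bigr)^{\ell-R'}$, which is the paper's middle term. Alternatively, count walks by their endpoint in the annulus, exactly as you did for $\mathcal V_2$. Counting unrestricted walks forward from the leaves would not suffice: it gives roughly $d^{\ell}(eCt/\ell)^{\ell}$ with $Ct/\ell\approx(\log d)^{-9}$.

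\textbf{Smaller points.} The number of length-$k$ walks from a vertex is up to $d^k$, not $(d-1)^k$; this is harmless here. The $\mathcal V_2$ exponent is of order $(\log d)^2R\log\log d$ rather than $(\log d)^3R$, which is still enough to absorb $d^{R+2}$. Your degenerate-case remark should compare with $3e^{-\ell/(C(\log d)^5)}$, not $3e^{-R}$. In the working regime $R$ is large anyway, so that case never arises.
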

\begin{proof}
    Put $a_0=\ell/(C(\log d)^5)$, so $R=\lfloor a_0\rfloor$. Initially there are no disagreements inside $\mathbf T_u$, since $A\subseteq(A_{\mathsf f})^+$ and the exterior trees are disjoint from this set. A backward causal path reaching $\mathbf T_u(R,R')$ must therefore enter this tree either at its root or at an ordinary outer leaf. Keeping all of these possible entry vertices active from time zero only enlarges the event. The first alternative requires $R$ ordered transmissions along a downward path from $u$; the second requires $\ell-R'$ ordered transmissions along an upward path from an outer leaf. There are at most $d^R$ and $d^\ell$ such paths, respectively.

    For the two local chains, the only initial and boundary discrepancies are the ordinary leaves of $\mathbf T_u'$ at depth $R'+1$. To reach their comparison region $\mathbf T_*$, which has depth $R+1$, a causal discrepancy path requires $R'-R$ ordered transmissions. There are at most $d^{R'+1}$ possible upward paths. The same single-edge intensity bound $C/d$ applies to this pair of chains.

    For a prescribed path of length $j$, iterated conditioning on its successive transmission counting processes bounds the probability of ordered transmissions before $t$ by $(Ct/d)^j/j!\le(eCt/(dj))^j$. No independence of the causal arrows is needed. Consequently the three contributions are at most
    \[
        \left(\frac{eCt}{R}\right)^R,\qquad d^\ell\left(\frac{eCt}{d(\ell-R')}\right)^{\ell-R'},\qquad d^{R'+1}\left(\frac{eCt}{d(R'-R)}\right)^{R'-R} \,.
    \]
    Each is at most $e^{-a_0}$ in our working regime. Indeed $Ct\le2\ell/(\log d)^9$, so the logarithm of the first is at most $-2R\le-a_0$. The logarithm of the second is
    \[
        R'\log d+(\ell-R')\log\!\left(\frac{eCt}{\ell-R'}\right)\le-\ell\log\log d,
    \]
    and that of the third is
    \[
        (R+1)\log d+(R'-R)\log\!\left(\frac{eCt}{R'-R}\right)\le-R'\log\log d.
    \]
    These inequalities follow uniformly from $C\le C_0\log d$ and the rounded-radius comparisons after \eqref{eq-def-ell-t-given-A}. Adding the three contributions proves the lemma.
\end{proof}
We can now finish the proof of Lemma~\ref{lem-bound-far-away-disagreement-cluster}.
\begin{proof}[Proof of Lemma~\ref{lem-bound-far-away-disagreement-cluster}]
    By Definition~\ref{def-typical-graph}(5) and Corollary~\ref{cor-structure-A-final}, we have $|E_A| \le 200d\ell np$. By Lemma \ref{lem-growth-infectious-process}, Lemma~\ref{lem-V-1-W-1-are-rare} and Cauchy-Schwarz inequality, we have
    \begin{align*}
        \mathbb E_{\sigma}\Big[ \big| \cup_{ (u,v) \in \mathcal V } \mathcal L_{u,t} \big| \Big] &\le \left( \sum_{(u,v) \in E_A} \Eb_\sigma\Big[ |\mathcal L_{u,t}|^2 \Big] \right)^{1/2} \left( \sum_{(u,v)\in E_A} \Pb_\sigma\Big( (u,v) \in \mathcal V \Big) \right)^{1/2} \\
        &\le \left( 200d\ell np \cdot 6e^{2Ct} \right)^{1/2} \left( 200d\ell np \cdot 3e^{-\ell/(C(\log d)^5)} \right)^{1/2} \overset{\eqref{eq-def-ell-t-given-A}, \eqref{eq-assump-C-d-epsilon}}{\le} e^{-t}pn \,. \qedhere
    \end{align*}
\end{proof}

\subsection{Proof of Lemma~\ref{lem-bound-close-enough-disagreement-cluster}}{\label{subsec:proof-lem-5.8}}

Write $\mathcal D_t=\mathcal D_{t}^{A,\sigma}$. Fix $z\in \mathcal D_t$. Starting from $(z,t)$, move backward as follows. Suppose the current spacetime point is $(x,u)$ with $x\in \mathcal D_u$. If the clock at $x$ has no ring in $(0,u)$, then $x\in \mathcal D_0=A$, and the procedure stops. Otherwise, let $s<u$ be the last ring of the clock at $x$ before $u$. Since there is no later ring at $x$ and $x\in \mathcal D_u$, one has $x\in \mathcal D_s$. Hence by Definition~\ref{def:causal-disagreement-cluster}, an oriented causal arrow $y\to x$ was inserted at time $s$, with $y\in \mathcal D_{s-}$. Jump to $(y,s-)$ and continue. Only finitely many updates occur before time $t$, and time decreases strictly at every jump. Thus the procedure terminates and produces an oriented walk $x_0,x_1,\ldots,x_m$ and times $0<s_1<\cdots<s_m<t$ such that the following conditions hold:
\begin{itemize}
    \item $x_0\in A,x_m=z$;
    \item An oriented causal arrow $(x_{i-1},x_i,s_i) \in \mathcal E_{\rm caus}^{A,\sigma}$;
    \item With $s_0:=0$ and $s_{m+1}:=t$, the clock at $x_i$ has no ring in $(s_i,s_{i+1})$.
\end{itemize}
\begin{lemma}{\label{lem:prob-causal-time-sequence}}
    For every fixed oriented walk $w=(x_0,\ldots,x_m)$, the probability that there exist times $(s_0,\ldots,s_m)$ for which the above three conditions hold is at most $e^{-t}\frac{(Ct/d)^m}{m!}$.
\end{lemma}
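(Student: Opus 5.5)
The plan is to bound the probability by integrating over the ordered transition times the joint density of the $m$ arrow events together with the no-ring events, peeling off one step at a time by successive conditioning. Fix $w=(x_0,\ldots,x_m)$. By Lemma~\ref{cl:arrows}(2), each counting process $N_{x_{i-1}\to x_i}$ has an $(\mathcal F_r)$-predictable intensity bounded by $C/d$. For the no-ring requirement, note that an arrow into $x_i$ at time $s_i$ requires a ring of $x_i$'s clock at $s_i$. The condition that $x_i$ has no ring in $(s_i,s_{i+1})$ is an event for the rate-one Poisson clock at $x_i$ over that interval. The intervals $(s_i,s_{i+1})$, $0\le i\le m$, partition $(0,t)$ up to endpoints, so the total "no-ring" exposure is $\sum_i(s_{i+1}-s_i)=t$, which should produce the factor $e^{-t}$.

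To make this rigorous I would work with the multivariate point process consisting of the arrow processes along the edges of $w$ and the clock processes at $x_0,\ldots,x_m$. I would bound the expected number of tuples $(s_1<\cdots<s_m)$ satisfying the conditions, which dominates the probability. Define $\Phi$ as the sum over such tuples of the indicator. By the compensator formula applied iteratively (a Campbell/Mecke-type identity for predictable intensities), one gets
\[
\mathbb E[\Phi]\le \int_{0<s_1<\cdots<s_m<t}\prod_{i=1}^m\frac{C}{d}\;\mathbb E\Big[\prod_{i}\mathbf 1\{\text{no ring at }x_i\text{ in }(s_i,s_{i+1})\}\,\Big|\,\text{rings at }x_i\text{ at }s_i\Big]\,\dd s \,.
\]
The first step is to condition on the past up to $s_m$: the no-ring event for $x_m$ on $(s_m,t)$ concerns future clock points at $x_m$. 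These are independent of $\mathcal F_{s_m}$ and have probability $e^{-(t-s_m)}$. The arrow at $s_m$ contributes intensity at most $C/d$. Then peel back to $s_{m-1}$, and so on. Each factor yields $(C/d)e^{-(s_{i+1}-s_i)}$, and the initial factor yields $e^{-s_1}$ for $x_0$. The ordered-simplex integral of the constant $e^{-t}(C/d)^m$ then gives $e^{-t}(Ct/d)^m/m!$.

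The main obstacle is the case where the walk revisits a vertex, so that $x_i=x_j$ for some $i\ne j$. Then no-ring intervals and arrow times refer to the same clock, and the no-ring events are not independent of the arrow intensities. Since the intervals $(s_i,s_{i+1})$ are disjoint, the required no-ring windows at a common clock lie in disjoint time intervals. I would handle this in the peeling by conditioning at the earlier time: the no-ring window for $x_i$ on $(s_i,s_{i+1})$ lies after $s_i$ and before $s_{i+1}$, so it is evaluated with the Poisson clock's future increments over a window disjoint from the later steps. To keep this clean, I would do the peeling forward in time instead. On $(s_i,s_{i+1})$, the only constraints are that the clock at $x_i$ does not ring and that an arrow $x_i\to x_{i+1}$ occurs at $s_{i+1}$. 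The latter has intensity at most $C/d$ given $\mathcal F_{s_{i+1}-}$, and it is only ever asserted at a single instant. Multiplying the survival $e^{-(s_{i+1}-s_i)}$ by the intensity bound gives the density of the next step, uniformly in the history. This yields the stated product bound with no independence assumptions beyond the Poisson structure of each clock.
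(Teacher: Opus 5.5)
Your proof is correct, but it is organized differently from the paper's. The paper separates clocks from marks. It uses the factorial-moment (Mecke) formula to insert rings at $x_i$ at prescribed times $0<s_1<\cdots<s_m<t$. It then evaluates the no-ring event exactly as $e^{-t}$, because the windows $(s_i,s_{i+1})$ are disjoint in time even when several belong to the same clock. Finally, conditionally on the complete clock schedule, it integrates the uniform marks at $s_m,s_{m-1},\ldots,s_1$ (latest first), each contributing at most $C/d$ by the bound $|q_i^x-q_{i-1}^x|\le C/d$ from the proof of Lemma~\ref{cl:arrows}. It therefore never conditions at a random time.

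You instead use the predictable intensity bound of Lemma~\ref{cl:arrows}(2) as a black box. You run a time-ordered recursion for the expected-count densities of valid prefixes, $f_1(s)\le\frac{C}{d}e^{-s}$ and $f_{i+1}(r)\le\frac{C}{d}\int_0^r e^{-(r-s)}f_i(s)\dd s$. After the final survival factor $e^{-(t-s_m)}$, this gives $e^{-t}(Ct/d)^m/m!$.

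The step you should state explicitly is the survival factor at the random time $s_i$. Arrows into $x_i$ occur only at the ring times $T_k^{x_i}$, which are stopping times. The strong Markov property of that clock then gives $\Pb(T_{k+1}^{x_i}\ge r\mid\mathcal F_{T_k^{x_i}})=e^{-(r-T_k^{x_i})}$ on $\{T_k^{x_i}<r\}$. The prefix count times $\mathbf 1\{\text{no ring at }x_i\text{ in }(s_i,r)\}$ is then predictable in $r$, so the compensator identity applies.

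Your intermediate display, which pulls the intensity bounds out of a Palm-type conditional expectation, is asserted rather than derived. The intensities and the no-ring indicators are dependent, so the compensator formula does not give it directly. The forward recursion makes it unnecessary, though. Because the intensities are bounded uniformly, the same recursion read backward also works, so revisits are not an obstacle in either direction.

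The paper's route buys an exact one-shot computation of $e^{-t}$, using only the independence of marks given clocks. Yours needs only bounded predictable intensities plus the Poisson structure of each clock. It therefore transfers to any arrow construction with that intensity bound, and it is the same iterated-conditioning mechanism the paper uses in Lemma~\ref{lem-V-1-W-1-are-rare}.
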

\begin{proof}
Count the ordered update tuples that realize the walk, and bound the probability in question by their expected number. Fix $0<s_1<\cdots<s_m<t$ and insert rings at $x_i$ at times $s_i$. The factorial-moment formula for independent rate-one Poisson clocks gives Lebesgue measure $\dd s_1\cdots\dd s_m$ for these insertions, also when the walk revisits vertices. The forbidden intervals $(s_i,s_{i+1})$, assigned to the clocks at $x_i$, are disjoint in time and have total length $t$. Their joint no-ring probability is therefore $e^{-t}$; none contains an inserted ring.

Condition on this complete clock schedule, and integrate the independent update marks at $s_m,s_{m-1},\ldots,s_1$, in that order. For any values of the earlier marks, the set of marks producing the prescribed arrow has length at most $C/d$, by Lemma~\ref{cl:arrows}. Integrating from latest to earliest avoids conditioning an earlier arrow on a future arrow event. Thus the conditional probability of all prescribed arrows is at most $(C/d)^m$. The expected number of admissible tuples is at most
\[
 e^{-t}(C/d)^m\int_{0<s_1<\cdots<s_m<t}\dd s_1\cdots\dd s_m
 =e^{-t}\frac{(Ct/d)^m}{m!} \,.
\]
For $m=0$, this is just the probability of no ring at $x_0$ before $t$.
\end{proof}
For $m\geq0$, let $\widetilde W_{m,r}$ be the number of oriented $m$-step walks $(x_0,\ldots,x_m)$ such that $x_0\in A$ and $x_m\in \mathcal B((A_{\mathsf f})^+,G,r)$.
Based on Lemma~\ref{lem:prob-causal-time-sequence}, we have
\begin{equation}{\label{eq:bound-close-disagreement-by-walks}}
    \mathbb E_{\sigma}\Big[ \big| \mathcal D_{t}^{A,\sigma} \cap \mathcal B((A_{\mathsf f})^+,G,r) \big| \Big] \le e^{-t} \sum_{m\geq0}\frac{(Ct/d)^m}{m!} \cdot \widetilde W_{m,r} \,.
\end{equation}
We shall use two bounds on $\widetilde W_{m,r}$. First, for every $m\geq0$, we clearly have
\begin{equation}{\label{eq:enu-walks-naive}}
   \widetilde W_{m,r}\leq |A|d^m=pn d^m \,.
\end{equation}
Second, since $A\subseteq A_{\mathsf f}$, Definition~\ref{def-typical-graph}(5) implies 
\[
    |A| \le |A_f|\leq10pn \quad \mbox{and} \quad \big|\mathcal B((A_{\mathsf f})^+,G,r)\big| \le 2d^r|(A_{\mathsf f})^+| \le 400d^r \ell pn \,.
\]
Definition~\ref{def-typical-graph}(6) then gives whenever $1\leq m\leq \ell$ and $r\leq0.1m$,
\begin{equation}{\label{eq:enu-walks-careful}}
    \widetilde W_{m,r} \leq W_{m}\big( A,\mathcal B((A_{\mathsf f})^+,G,r) \big) \leq 2\sqrt{|A||\mathcal B((A_{\mathsf f})^+,G,r)|} d^{0.51m} \leq 200\ell pnd^{0.51m+0.5r} \,.
\end{equation}
Now we provide the proof of Lemma~\ref{lem-bound-close-enough-disagreement-cluster}.
\begin{proof}[Proof of Lemma~\ref{lem-bound-close-enough-disagreement-cluster}]
    We divide \eqref{eq:bound-close-disagreement-by-walks} into the following parts.

    \noindent{\bf Part 1: contribution from $0\leq m<10r$.} By \eqref{eq:enu-walks-naive}, we have 
    \begin{align*}
        e^{-t} \sum_{0\le m<10r}\frac{(Ct/d)^m}{m!} \cdot \widetilde W_{m,r} \le pne^{-t}\sum_{0\leq m\leq 10r}\frac{(Ct)^m}{m!} \le pn e^{-0.9t} \,.
    \end{align*}

    \noindent{\bf Part 2: contribution from $10r \le m \le \ell$.} In this case the bound \eqref{eq:enu-walks-careful} holds, so 
    \begin{align*}
        e^{-t} \sum_{10r \le m\le\ell}\frac{(Ct/d)^m}{m!} \cdot \widetilde W_{m,r} \le 200\ell pne^{-t} \sum_{m\geq0} \frac{\bigl(Ctd^{-0.44}\bigr)^m}{m!} = 200\ell pne^{ -(1-Cd^{-0.44})t } \overset{\eqref{eq-assump-C-d-epsilon}}{\le} pne^{-0.9t} \,.
    \end{align*}

    \noindent{\bf Part 3: contribution from $m \ge \ell+1$.} By \eqref{eq:enu-walks-naive}, we have
    \begin{align*}
        e^{-t} \sum_{m\ge\ell+1}\frac{(Ct/d)^m}{m!} \cdot \widetilde W_{m,r} \le pne^{-t}\sum_{m\ge\ell+1}\frac{(Ct)^m}{m!} \leq pne^{-t} \frac{(Ct)^{\ell+1}}{(\ell+1)!} \left(1-\frac{Ct}{\ell+1} \right)^{-1} \overset{\eqref{eq-def-ell-t-given-A}}{\le} pne^{-t} \,.
    \end{align*}
    Finally, combining the contribution from the three parts yields Lemma~\ref{lem-bound-close-enough-disagreement-cluster}.
\end{proof}

\subsection{Proof of Lemma~\ref{lem-growth-rooted-tree-subtree}}{\label{subsec:proof-lem-5.5}}

By Corollary~\ref{cor-structure-A-final}, the trees $\{ \mathbf T_u:(u,v) \in E_A \mbox{ for some } v \in (A_{\mathsf f})^+ \}$ are pairwise disjoint as well. Every child subtree $\mathbf T_u$ occurring above is a rooted tree of type $\mathbb T_{1,\ell}$. Let $\mathscr T_{\ell}$ be the family of all distinct rooted balls $\mathcal B(u,G_{\setminus\{v\}},\ell)\simeq\mathbb T_{1,\ell}$ with $u\sim v$ in $G$. It is clear that $|\mathscr T_{\ell}|\le dn$, since every member is determined by an oriented edge $(u,v)$, and Corollary~\ref{cor-structure-A-final}(3) shows that every exterior tree used here belongs to this family. Recall \eqref{eq-def-R-R'} and Definition~\ref{def:local-tree}. Also recall that for each $\mathbf T \in \mathscr T_{\ell}$ such that $\rho(\mathbf T)=u$, we denote by $\partial_R \mathbf T$ the set of vertices in $\mathbf T$ that have distance $R$ to $u$, and recall that for each $w \in \partial_R \mathbf T$, we denote by $\mathbf T_w$ the child subtree of $\mathbf T$ rooted at $w$. Clearly $\mathbf T_w \in \mathbb T_{1,\ell-R}$. 
\begin{defn}{\label{def-mathcal-A-1-lem-5.4}}
    We say $\tau \in \{ \pm 1 \}^{V(\mathbf T)}$ is good, if for every $u \in \partial_{R_0} \mathbf T$, $\tau$ is favorable with respect to $u$. Let $s:=\lceil0.1d\rceil^{\ell-R_0}$. Define $\overline{\mathcal A}_{1}$ to be the set of $\sigma \in \{ \pm 1 \}^V$ such that there are at most $e^{-0.001Cs}n$ disjoint $\mathbf T\in \mathscr T_{\ell}$ such that $\sigma|_{\mathbf T}$ is not good.
\end{defn}
Given a tree $\mathbf T \in \mathscr T_{\ell}$ and $D,D' \subset V(\mathbf T)$, we say $D \Longrightarrow D'$ if every connected component of $D'$ intersects $D$. Recall $\mathbf T_*$ defined in Definition~\ref{def:local-tree}. Put $\Delta=0.01$ and $T=\lceil t/\Delta\rceil$. Define $I_j=[(j-1)\Delta,j\Delta]$ for $1 \le j \le T$. For $1 \le i \le T$, define
\begin{align}
    \mathcal O_i(\mathbf T):= \Big\{ & (B_i,D_i,\ldots,B_{T},D_T): D_s \neq \emptyset, D_s \subset V(\mathbf T_*), B_s \subset D_{s} \mbox{ for } i \le s \le T \nonumber  \\
    & \rho(\mathbf T) \Longrightarrow D_i \Longrightarrow \ldots \Longrightarrow D_{T} \Big\} \,. \label{eq-def-trajectory}
\end{align}
In addition, define
\begin{equation}{\label{eq-def-good-trajectory}}
    \mathcal O_{i}^{\diamond}(\mathbf T):= \Big\{ (B_i,D_i,\ldots,B_{T},D_T) \in \mathcal O_i(\T): \sum_{i\le s\le T} |B_s| \ge 10^{-4} C^{-2} \sum_{i \le s \le T} |D_s| \Big\} \,.
\end{equation}
\begin{defn}{\label{def-mathcal-A-2-lem-5.4}}
    Recall the local chain $\underline{Y}_t=\underline{Y}_t^u, \overline{Y}_t=\overline{Y}_t^u$ in Definition~\ref{def:local-chain}. Given $\tau=\sigma|_{\mathbf T} \in \{ \pm 1 \}^{V(\mathbf T)}$, define 
    \begin{align*}
        \mathfrak B_t = \Big\{ v \in V(\mathbf T_*): \sum_{u \in\mathcal N_{\T}(v)} \overline{Y}_t(u) - \mathbf 1_{v=\rho(\mathbf T)} \le 0.5d \Big\}, \quad \mathfrak B_{[s,t]} = \cup_{ s \le r \le t } \mathfrak B_{r} \,.
    \end{align*}
    We say $\tau\in \{ \pm 1 \}^{V(\mathbf T)}$ is typical, if for all $1 \le i \le T$ and $(B_i,D_i,\ldots,B_T,D_T) \in \mathcal O^{\diamond}_i(\mathbf T)$, we have
    \begin{align*}
        \Pb_{\tau}\Big( B_i \subseteq \mathfrak B_{I_i}, \ldots, B_T \subseteq \mathfrak B_{I_T} \Big) \le \exp\Big( -10^{-6} C^{-2} d \sum_{j=i}^{T} |B_j| \Big) \,.
    \end{align*}
    Define $\overline{\mathcal A}_{2}$ to be the set of $\sigma \in \{ \pm 1 \}^V$ such that there are at most $e^{-dt/(10^{20}C^5)}n$ disjoint $\mathbf T\in \mathscr T_{\ell}$ such that $\sigma|_{\mathbf T}$ is not typical.
\end{defn}
\begin{lemma}{\label{lem-mathcal-A-1-2-is-typical-lem-5.4}}
    We have the following estimates:
    \begin{enumerate}
        \item[(1)] $\pi_+(\overline{\mathcal A}_{1}^c) \le n^{-24}$;
        \item[(2)] $\pi_+(\overline{\mathcal A}_{2}^c \cap \overline{\mathcal A}_{1}) \le n^{-24}$.
    \end{enumerate}
\end{lemma}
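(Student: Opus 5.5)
We prove both parts by the same scheme. First, for a single tree $\mathbf T\in\mathscr T_\ell$, we bound the probability under $\pi_+$ that $\sigma|_{\mathbf T}$ fails the property. Then we pass from one tree to many disjoint trees by a decorrelation argument, and finish with a union bound over the at most $\binom{dn}{k}$ families of $k$ disjoint trees.

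\textbf{Part (1).} Fix $\mathbf T$ with root $u$. Suppose $\sigma|_{\mathbf T}$ is not good. Then some $w\in\partial_{R_0}\mathbf T$ is non-favorable. By Definition~\ref{def:amenable-realization-in-tree}, this produces a complete $\lceil 0.1d\rceil$-ary witness tree below $w$, of depth $\ell-R_0$, whose $s$ terminal boundary spins are all minus. So the event is contained in a union of events of the form $\{M\subseteq\mathcal M\}$ with $|M|=s$. We count witnesses as follows:
\begin{itemize}
\item there are $d^{R_0}$ choices of $w$;
\item there are at most $\binom{d}{\lceil 0.1d\rceil}^{1+\lceil 0.1d\rceil+\cdots}\le 2^{d\cdot 2s/\lceil0.1d\rceil}=e^{O(s)}$ choices of witness.
\end{itemize}
For $k$ disjoint trees $\mathbf T^{(1)},\ldots,\mathbf T^{(k)}$, all failing, the union of their witness leaf sets $M$ has size $ks$. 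Equation~\eqref{eq-prob-set-is-minus} then gives probability at most $3e^{-Cks/9}$, provided $ks\le C^{-2}n$. We may assume this by truncating at $k=\lceil e^{-0.001Cs}n\rceil$, since $s$ is at most $n^{o(1)}$ by the choice of $\ell$. Multiplying the counting factors, the bound becomes
\[
\binom{dn}{k}\bigl(d^{R_0}e^{O(s)}\bigr)^k e^{-Cks/9}.
\]
Since $\log(edn/k)\le 0.001Cs+\log(ed)$ and $C$ is large, this is $e^{-\Omega(Cks)}\le n^{-24}$.

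\textbf{Part (2).} The single-tree bound is the main obstacle. Fix a good $\tau$ and a trajectory in $\mathcal O^\diamond_i(\mathbf T)$. The local chain $\overline Y$ lives on $(\mathbf T')^\circ$, with plus leaves at depth $R'+1$ and one minus leaf at the root. Its initial condition is $\tau$ restricted to $\mathbf T'$, and its stationary measure is of type $\phi^+_{1,R'+1}$. Lemma~\ref{lem-log-Sobolev-Tkl} gives $c_{\mathsf{LS}}\ge (9C)^{-1}$.

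We want a bound in which the chain started from $\tau$ behaves like the chain started from stationarity. The plan is to take the expectation over $\tau$ drawn from the $\pi_+$-marginal on $\mathbf T$. Goodness, via Lemmas~\ref{lem-amenable-boundary-condition-positive-field} and \ref{lem-exponential-contraction-BP-fixed-point}, shows that the conditional law on $\mathbf T'$ given the boundary at depth $R_0$ is within bounded density ratio of $\phi^+$ after an $e^{O(\beta)}$-perturbation, using Remark~\ref{lem-comparison-Ising-external-field}. From there the argument is:
\begin{enumerate}
\item By Lemma~\ref{lem-almost-indepdent-in-time-ls}, with $c\ge 10^{-4}/C$, the stationary probability of the event $\{B_j\subseteq\mathfrak B_{I_j}\ \forall j\}$ is at most $\prod_j \Pb(B_j\subseteq\mathfrak B_{I_j})^{c}$.
\item Each factor $\Pb(B_j\subseteq\mathfrak B_{I_j})$ is bounded by a union over at most $d^{O(|B_j|)}$ time-discretized versions of \eqref{eq-prob-set-is-bad}, as extended to trees in Remark~\ref{remark:same-est-extend-to-tree}. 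This gives $e^{-0.005Cd|B_j|}$.
\item The product is therefore $\exp(-10^{-6}C^{-1}d\sum|B_j|)$, which beats the target by a factor $C$ in the exponent.
\end{enumerate}

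The gap in the exponent pays for two things. First, it covers a Markov-inequality step converting ``averaged over $\tau$'' into ``for all but a small $\pi_+$-fraction of $\tau$''; this is in the spirit of Lemma~\ref{lem-BAD-lift-ls}. Second, it covers a union over all trajectories in $\mathcal O_i^\diamond$. By Lemma~\ref{lem:-enu-sequence-longrightarrow-forest} there are at most $3(128d)^{\sum|D_j|}2^{\sum|D_j|}$ such trajectories, and on $\mathcal O^\diamond$ this is $\exp(O(C^2\log d\sum|B_j|))$, which is negligible against $C^{-1}d\sum|B_j|$ for large $d$. The outcome is that a single good tree is atypical with $\pi_+$-probability at most $\exp(-c'dt/C^4)$. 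Here we use that $\sum_j|B_j|$ can be as small as about $1$ per window, while the minimal relevant scale is $T\approx 100t$ windows. We expect to redo this step carefully.

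To decorrelate across $k$ disjoint trees, we use the Edwards--Sokal representation as in Lemma~\ref{lem-est-bad-minus-cluster-fixed-time}. An atypical tree requires minus spins, or bad vertices, inside the tree with the stated exponent. For disjoint trees these sets are disjoint, so one application of \eqref{eq-prob-set-is-minus} to their union yields a product bound. The goodness condition on $\overline{\mathcal A}_1$ is what makes the conditional boundary comparison valid. A final union bound over $k\ge e^{-dt/(10^{20}C^5)}n$ families then gives $n^{-24}$.
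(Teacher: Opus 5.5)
Your Part (1) is the paper's argument: complete $\lceil 0.1d\rceil$-ary witnesses, $e^{O(s)}$ of them per tree, \eqref{eq-prob-set-is-minus} applied to the disjoint union of witness leaves, and a split according to whether $e^{-0.001Cs}n\ge1$. It is fine.

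Part (2), however, has genuine gaps. The first concerns which chain is stationary. You want to apply Lemma~\ref{lem-almost-indepdent-in-time-ls} to the truncated chain $\overline Y$, whose reversible law is $\phi^+_{1,R'+1}$, after comparing the law of $\tau|_{\mathbf T'}$ with ``$\phi^+$''.

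If that means $\phi^+_{1,R'+1}$, the comparison is false. Integrating out all spins at depth at least $R'+1$ replaces the factor $e^{\beta\sigma_w}$ from each fixed plus child $y$ of a depth-$R'$ vertex $w$ by $e^{\psi_\beta(h_y)\sigma_w}$. Since $h_y<C$, one has $\beta-\psi_\beta(h_y)\ge\tanh(\beta)\,e^{-2C}$. So each of the $\asymp d^{R'}$ depth-$R'$ vertices has its field shifted by at least about $\frac C4e^{-2C}$ in the same direction, and the density ratio is unbounded as $R'$ grows. Monotonicity does not rescue this: the plus boundary makes $\phi^+_{1,R'+1}$ stochastically larger than the law of $\tau$ under $\mu^\eta_{\mathbf T'}$, while the bad-block events are decreasing.

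If instead you mean the $\mathbf T'$-marginal of $\mu_+=\phi^+_{1,\ell}$ (which is what Lemma~\ref{lem:screening} gives), that measure is not invariant for $\overline Y$. What is missing is the paper's monotone bridge. Extend $\tau$ to $\zeta\sim\mu_+$ and run the stationary full-tree chain $Y$ with the same clocks. Then $\overline Y_s\ge Y_s$ on $\mathbf T'$, so $\mathfrak B_s$ is contained in the bad set of $Y$. Hence $\mathbb P_\tau(B_j\subseteq\mathfrak B_{I_j}\ \forall j)\le\mathbb P_\zeta(E_\Gamma)$ for every extension $\zeta$, where $E_\Gamma$ is the corresponding event for $Y$. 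Only then are Lemmas~\ref{lem-almost-indepdent-in-time-ls} and \ref{lem-BAD-lift-ls} for $\mu_+$ usable.

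The second gap is the source of the $e^{-\Omega(dt)}$ decay for a single tree. You attribute it to skeletons spanning $T\approx100t$ windows, but $\mathcal O_i^\diamond(\mathbf T)$ allows every starting block $i$. For $i=T$, the skeleton $(B_T,D_T)=(\{u\},\{u\})$ with $u=\rho(\mathbf T)$ has threshold $e^{-10^{-6}C^{-2}d}$. Markov's inequality bounds the measure of initial states violating it only by $p_\Gamma e^{10^{-6}C^{-2}d}$, which does not decay with $t$, whereas the target $e^{-dt/(10^{15}C^5)}$ does. The paper handles this by applying Lemma~\ref{lem-BAD-lift-ls} at time $(i-1)\Delta$. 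Its exponent $\varphi_i=\frac16+\frac1{12}e^{2\rho(i-1)\Delta}$ grows with the time gap, giving $(T-i+1)\varphi_i\ge T/(6000C)$ uniformly in $i$.

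The third gap is the decorrelation across disjoint trees. Atypicality is a statement about dynamical probabilities started from $\tau$, not the event that a prescribed vertex set is minus. So there is no witness set to feed into \eqref{eq-prob-set-is-minus}. Even a connected minus witness would lose, since $e^{-C|M|/9}$ does not beat a $(4d)^{|M|}$ entropy factor when $C\ll\log d$. The paper instead proceeds as follows:
\begin{enumerate}
\item it bounds under the unconditioned $\pi$, losing a factor $2$;
\item it conditions on everything outside the free interiors of $m$ fixed disjoint good trees, noting that distinct interiors interact only through root--root edges;
\item it uses the monotone heat-bath coupling to show that the conditional law dominates the product of single-tree laws in which each root's attached neighbor is fixed to minus;
\item it combines the fact that atypicality is a decreasing event with Lemma~\ref{lem:one-tree}, which is uniform over good $\eta$.
\end{enumerate}

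A smaller point: your per-block bound $e^{-0.005Cd|B_j|}$ via time discretization is not justified, since spins at different times within a window are not a single Gibbs sample. The paper's route is $\mathcal B_{[a,a+\Delta]}\subseteq\mathcal B^\sharp_a\cup\mathcal U_{[a,a+\Delta]}$, with the initial state independent of the clocks. It yields only $e^{-0.009d|B|}$ (Lemma~\ref{prop:one-block}), which is all that is needed.
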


We first show how to derive Lemma~\ref{lem-growth-rooted-tree-subtree} based on Lemma~\ref{lem-mathcal-A-1-2-is-typical-lem-5.4}. Fix $\sigma \in\{\pm1\}^{V}$ that is typical with respect to $\mathbf T$ where $\rho(\mathbf T)=u$ and $(u,v)\in E_A$, and write $\tau=\sigma|_{\mathbf T}$. Let $(\underline{Y}_s)_{0 \le s \le t}, (\overline{Y}_s)_{0 \le s \le t}$ be local chain on $(\mathbf T')^{\circ}$ as in Definition~\ref{def:local-chain}, and define the bad vertex set $\mathfrak B_t$ with respect to $\overline{Y}_t$. By \eqref{eq-def-mathcal-V-1} and monotonicity, we have when $(u,v) \notin \mathcal V$ 
\begin{align}
    (X_s^{\sigma}|_{\mathbf T_*})_{0 \le s \le t} \succeq (\underline{Y}_s|_{\mathbf T_*})_{0 \le s \le t} = (\overline{Y}_s|_{\mathbf T_*})_{0 \le s \le t} \,.  \label{eq:local-monotone-comparison}
\end{align}
We now introduce a space-time cluster $\overline{\cL}_t$ on $\T_*$ to stochastically dominate the disagreements.  We set $\overline{\cL}_0=\{ u \}$, use the same Poisson clock as $(X_t)$, and use the following update rule. Let 
\begin{equation}
    m_s(x):=\big| \mathcal N_{\mathbf T}(x)\cap \overline{\cL}_{s-} \big| + \mathbf 1_{x=u} \,.
\end{equation} 
When the clock at $x \in V(\mathbf T_*)$ rings at time $s$, all vertices other than $x$ retain their previous occupation status, and $x$ is declared to belong to $\overline{\cL}_s$ with conditional probability
\begin{equation}\label{eq:extended-update-rule}
    p_s(x):=
    \begin{cases}
        0, &x\notin\overline{\cL}_{s-}\text{ and }m_s(x)=0 \,; \\
        \min\{ 1, \frac{C}{d} m_s(x) \}, &x\in\mathfrak B_{s-} \text{ or } m_s(x)>d/100\,; \\ 
        \frac{e^{-C/2}}{d}m_s(x), &\text{otherwise} \,.
    \end{cases}
\end{equation}

\begin{lemma}{\label{lem:bound-by-extended-spacetime-lem-5.4}}
    For any $(u,v)\in E_A$, we have
    \begin{equation*}
        \E_{\sigma}\Big[ \big| (\mathbf T(R)\cap \mathcal L_{u,t}) \setminus \mathcal B(u,\mathbf T,r-1) \big| \cdot \mathbf 1_{\{(u,v) \notin \mathcal V\}} \Big] \le \E_{\sigma}\Big[ \big| \overline\cL_t \setminus \mathcal B(u,\mathbf T,r-1) \big| \Big] \,.
    \end{equation*}
\end{lemma}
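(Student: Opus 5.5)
We construct a coupling under which, on the event $\{(u,v)\notin\mathcal V\}$, the restricted causal cluster is contained in the auxiliary cluster. Concretely, we aim for
\[
\bigl(\mathbf T(R)\cap\mathcal L_{u,s}\bigr)\subseteq\overline{\mathcal L}_s \quad\text{for all } 0\le s\le t.
\]
Taking the terminal time, removing $\mathcal B(u,\mathbf T,r-1)$ and taking expectations then gives the claim. The cluster $\overline{\mathcal L}$ uses the same clocks as $X$. We realize its occupation decision at an update time $s=T_k^x$ through the same uniform $U_k^x$ whenever possible, extended by auxiliary independent randomness for the remaining mass. The inclusion is checked inductively over the finitely many update times in $[0,t]$ that fall in $\mathbf T_*$.

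\textbf{Step 1: localization.} On $\{(u,v)\notin\mathcal V\}$, the argument in the proof of Lemma~\ref{lem-disgreement-cluster-bound-1} shows that every causal path from $(u,\cdot)$ stays inside $\mathbf T_u(R)$ without returning to $(A_{\mathsf f})^+$, because $\mathcal D$ never meets $\mathbf T_u(R,R')$. Hence every arrow creating a vertex of $\mathcal L_{u,\cdot}\cap\mathbf T(R)$ comes from a tree neighbour already in the cluster, or from the root line at $u$. This root-line contribution is the reason for the term $\mathbf 1_{x=u}$ in $m_s(x)$. Consequently, the number of causally relevant disagreeing neighbours of $x$ is at most $m_s(x)$ as long as the inclusion holds up to time $s-$. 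Survival of an existing member is handled because a vertex stays in $\mathcal L$ only if a fresh arrow is selected at its update, and that arrow is again counted by $m_s(x)$.

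\textbf{Step 2: rate comparison.} Condition on $\mathcal F_{s-}$ and a ring at $x\in\mathbf T(R)$. By Lemma~\ref{cl:arrows}(2), the set of $u$-values for which $x$ enters or stays in $\mathcal L_{u,s}$ has Lebesgue measure at most $\sum_i|q_i^x-q_{i-1}^x|$, summed over neighbours $y_i^x$ in the cluster. This total is at most $(C/d)m_s(x)$, which covers the second case of \eqref{eq:extended-update-rule}. In the third case we have $x\notin\mathfrak B_{s-}$ and $m_s(x)\le d/100$. Using \eqref{eq:local-monotone-comparison}, i.e. $X^\sigma_{s-}\succeq\underline Y_{s-}=\overline Y_{s-}$ on $\mathbf T_*$, the plus-chain field at $x$ is at least $0.5d$ up to the root correction. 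Flipping at most $m_s(x)\le d/100$ neighbours keeps every hybrid field $\sum_z\eta_i^{x,s}(z)$ at least roughly $0.48d$. The bound $g'(h)\le2\beta e^{-2\beta h}$ then gives each increment at most $(4C/d)e^{-0.96C}\le e^{-C/2}/d$. Since the mark set is $\mathcal F_{s-}$-measurable and has measure at most $p_s(x)$, we choose the set of $u$-values triggering occupation of $\overline{\mathcal L}$ to contain it, padding with extra mass up to $p_s(x)$. This gives the inductive inclusion.

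\textbf{Main obstacle.} The delicate point is the monotone comparison in the third case: the hybrid fields involve $X^\sigma$ and $X^{\sigma_A}$, not $\overline Y$. We need that disagreements in the neighbourhood are exactly the ones counted by $m_s(x)$. Disagreements outside $\mathcal L_{u}$ (other causal clusters entering $\mathbf T$) must be excluded; on $\{(u,v)\notin\mathcal V\}$ this follows from Step 1 and the tree structure of Corollary~\ref{cor-structure-A-final}(3). We also need the lower field bound to hold for both chains, which uses $X^{\sigma_A}$ differing from $X^\sigma$ only on those counted neighbours. I would verify this carefully, including the root, whose external neighbour $v$ is fixed to minus in $(\mathbf T')^\circ$ and accounts for the $-\mathbf 1_{v=\rho}$ correction in $\mathfrak B_t$.
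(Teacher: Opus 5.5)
Your proposal is correct and is essentially the paper's argument. On $\{(u,v)\notin\mathcal V\}$ you bound the conditional probability of an inner-ball disagreement after an update by $p_s(x)$, using $X^\sigma\ge\underline Y=\overline Y$ on $\mathbf T_*$ and a field difference of at most $2m_s(x)$. You then pad the $\mathcal F_{s-}$-measurable mark set up to $p_s(x)$ to obtain an inclusion coupling.

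The paper differs in two ways. It makes the restriction predictable via the stopping time $\zeta$, and it tracks all of $\mathcal D_s\cap\mathbf T(R)$ before $\zeta$ rather than the causal cluster, which avoids the ancestry issue in your ``main obstacle''. Also, your Step~1 claim that causal paths from $u$ stay in $\mathbf T_u(R)$ is false, since the cluster can exit through $uv$. What you actually need is that before $\zeta$ every disagreement in $\mathbf T(R)$ has causal ancestry entering through $v\to u$, and hence lies in $\mathcal L_{u,s}$, so that all disagreeing neighbours are counted by $m_s(x)$. This does follow from the empty annulus and Corollary~\ref{cor-structure-A-final}(3).
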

\begin{proof}
Throughout this proof $\mathcal D_s=\mathcal D_s^{A,\sigma}$ and $\mathbf T=\mathbf T_u$. Define
\[
 \zeta:=\inf\Big\{s\ge0:\mathcal D_s\cap\mathbf T(R,R')\ne\emptyset
 \ \text{or}\ \underline Y_s|_{\mathbf T_*}\ne\overline Y_s|_{\mathbf T_*}\Big\}.
\]
This is a stopping time for the joint graphical construction. Its annulus is exactly the one in \eqref{eq-def-mathcal-V-1}, namely $R<\dist_{\mathbf T}(u,x)\le R'$. Hence $\zeta>t$ on $\{(u,v)\notin\mathcal V\}$. Define
\begin{equation}\label{eq:stopped-cluster}
 \cL_s^\circ:=\begin{cases}
  \mathcal D_s\cap\mathbf T(R),&s<\zeta,\\
  \emptyset,&s\ge\zeta.
 \end{cases}
\end{equation}
We use all inner-ball disagreements, not just descendants of the root. Since $\mathcal L_{u,t}\subseteq\mathcal D_t$, this gives
\begin{equation}\label{eq:typical-rooted-tree-estimate-relax-0}
 \big|\mathbf T(R)\cap\mathcal L_{u,t}\setminus\mathcal B(u,\mathbf T,r-1)\big|
 \mathbf1_{\{(u,v)\notin\mathcal V\}}
 \le\big|\cL_t^\circ\setminus\mathcal B(u,\mathbf T,r-1)\big|.
\end{equation}
This inner restriction is important: the globally defined $\mathcal L_{u,t}$ can leave through $uv$ and later re-enter the exterior tree at an outer leaf, without crossing the inner annulus. No domination of its outer part is asserted or needed.

We construct a coupling with $\cL_s^\circ\subseteq\overline{\cL}_s$ by induction over updates, revealing auxiliary uniforms only at their update times. Initially $\cL_0^\circ=\emptyset\subseteq\{u\}$, since $A$ is disjoint from $\mathbf T$. Before $\zeta$, for every $x\in\mathbf T(R)$ all its tree neighbors belong to $\mathbf T_*$. Those at depth $R+1$ are in $\mathbf T(R,R')$ and are not disagreements. The only graph neighbor outside this tree neighborhood is the attachment $v$ at $x=u$. Consequently, whenever the inclusion holds before an update at $s<\zeta$,
\begin{equation}\label{eq:locality-bound}
 r_s(x):=|\mathcal N_G(x)\cap\mathcal D_{s-}|
 \le |\mathcal N_{\mathbf T}(x)\cap\cL_{s-}^\circ|+\mathbf1_{\{x=u\}}
 \le m_s(x),\qquad x\in\mathbf T(R).
\end{equation}
This is a spatial observation about all disagreements and does not require a claim that every disagreeing neighbor has root ancestry.

Monotonicity gives $X_s^\sigma|_{\mathbf T'}\ge\underline Y_s|_{\mathbf T'}$ for all $s$. Before $\zeta$, $\underline Y_s=\overline Y_s$ on $\mathbf T_*$. Writing
\[
 H_s^\sigma(x)=\sum_{y\sim x}X_{s-}^\sigma(y),\qquad
 H_s^{\sigma_A}(x)=\sum_{y\sim x}X_{s-}^{\sigma_A}(y),\qquad
 H_s^{\overline Y}(x)=\sum_{y\in\mathcal N_{\mathbf T}(x)}\overline Y_{s-}(y)-\mathbf1_{\{x=u\}},
\]
we therefore have $H_s^\sigma(x)\ge H_s^{\overline Y}(x)$ for every $x\in\mathbf T(R)$. In particular $x\notin\mathfrak B_{s-}$ implies $H_s^\sigma(x)>d/2$. By \eqref{eq:locality-bound},
\begin{equation}\label{eq:field-difference}
 |H_s^\sigma(x)-H_s^{\sigma_A}(x)|\le2r_s(x)\le2m_s(x).
\end{equation}
Thus, when also $m_s(x)\le d/100$, both fields are at least $0.48d$. For $g(h)=(1+e^{-2\beta h})^{-1}$, $g'(h)\le2\beta e^{-2\beta h}$ at nonnegative $h$, so
\begin{equation}\label{eq:small-disagreement-probability}
 |g(H_s^\sigma(x))-g(H_s^{\sigma_A}(x))|
 \le4\beta m_s(x)e^{-0.96C}
 \le e^{-C/2}\frac{m_s(x)}d.
\end{equation}
In all cases $\sup_h g'(h)\le\beta/2$ and \eqref{eq:field-difference} give the bound $\min\{1,Cm_s(x)/d\}$. If $m_s(x)=0$, the two updated spins agree. Hence the conditional probability $q_s(x)$ of an inner disagreement after an update is at most $p_s(x)$ in \eqref{eq:extended-update-rule}, conditional on the full joint history immediately before the update.

Here is an explicit coupling that preserves the original graphical law. At an update of $x\in\mathbf T(R)$ for which killing has not occurred before the update, first perform the original spin updates. If the two updated spins disagree, occupy $x$ in the extended process; otherwise occupy it with probability $(p_s(x)-q_s(x))/(1-q_s(x))$, using an additional fresh uniform. When $q_s(x)=1$, necessarily $p_s(x)=1$. This gives occupation probability exactly $p_s(x)$ and preserves inclusion. Use this same rule if the current update triggers $\zeta$, and then erase the stopped set. At other vertices, or if killing has already occurred before the current update, use a fresh uniform with occupation probability $p_s(x)$. At time $\zeta$ the entire stopped set is erased, which also preserves inclusion. Thus $\cL_s^\circ\subseteq\overline{\cL}_s$ for all $s$. Taking expectations and using \eqref{eq:typical-rooted-tree-estimate-relax-0} proves the lemma.

The marginal law of the reference chain remains the original law started from $\tau=\sigma|_{\mathbf T}$. The joint law of the reference chain and the extended process need not be determined by $\tau$ alone, and they need not be independent. The estimates below hold uniformly over the couplings just constructed: they use only the predictable occupation probabilities \eqref{eq:extended-update-rule} and the unchanged marginal law of the reference chain.
\end{proof}

It remains to bound the growth of $\overline{\mathcal L}_t \setminus \mathcal B(u,\mathbf T,r-1)$. Recall \eqref{eq-def-bad-cluster}. Let
\begin{equation*}
    b_s:=\#\Big\{ x\in \mathfrak B_{s-}:x\in \overline{\cL}_{s-}\text{ or }m_s(x)>0 \Big\} \,.
\end{equation*}
This includes the root whenever the root is bad, even if the root is not currently occupied, because $m_s(u)\ge 1$. Fix $\alpha:=10^{-3}$. Let $\zeta$ be the first time at which the occupied process becomes empty, and stop the following supermartingale at $\zeta$. The original extended process is not made absorbing: it may start a new episode through a later root update. We first make the following observation.
\begin{lemma}{\label{lem:super-martingale}}
    The process
    \begin{equation}{\label{eq:A.11}}
        M_r:=\exp\left\{ \alpha|\overline{\cL}_{r\wedge\zeta}| +\frac{\alpha}{8}\int_0^{r\wedge\zeta}|\overline{\cL}_q|\dd q -3\alpha\int_0^{r\wedge\zeta}b_q\dd q \right\}
    \end{equation}
    is a nonnegative supermartingale. 
\end{lemma}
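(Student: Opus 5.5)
We prove the supermartingale property by computing the generator of the jump process $(\overline{\cL}_r)$, stopped at $\zeta$, applied to $M_r$, and checking that the resulting drift is nonpositive. Between jumps, $M_r$ evolves deterministically with logarithmic derivative $\frac{\alpha}{8}|\overline{\cL}_r|-3\alpha b_r$. Jumps occur only at clock rings of vertices $x\in V(\mathbf T_*)$, and each changes $|\overline{\cL}|$ by $\pm1$. For $r<\zeta$ we must therefore show
\[
  \sum_{x}\Big[(e^{\alpha}-1)\,\mathbf 1_{x\notin\overline{\cL}_{r-}}\,p_r(x)+(e^{-\alpha}-1)\,\mathbf 1_{x\in\overline{\cL}_{r-}}\,(1-p_r(x))\Big]+\frac{\alpha}{8}|\overline{\cL}_{r-}|-3\alpha b_r\le 0 .
\]
Once this holds, $M$ is a local supermartingale. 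It is nonnegative and bounded on bounded time intervals, since $|\overline{\cL}|\le|V(\mathbf T_*)|$ is finite. Hence it is a genuine supermartingale. The predictability of $p_r(x)$, which depends only on the pre-update history, gives the compensator form exactly as in Lemma~\ref{cl:arrows}.

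We split the vertices into three classes. Write $L=|\overline{\cL}_{r-}|$.

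\textbf{Removal gain from occupied good vertices.} Consider $x\in\overline{\cL}_{r-}$ that is not bad and has $m_r(x)\le d/100$. Then $p_r(x)\le e^{-C/2}/100$, so the removal term contributes at most $-(1-e^{-\alpha})(1-e^{-C/2})\le -0.99\alpha$ per such vertex.

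\textbf{Creation from the ``otherwise'' case.} Summed over all $x$, this contributes at most
\[
  (e^{\alpha}-1)\frac{e^{-C/2}}{d}\sum_x m_r(x)\le 2\alpha e^{-C/2}(L+1)\le 2\alpha e^{-C/2}(L+1),
\]
using $\sum_x|\mathcal N_{\mathbf T}(x)\cap\overline{\cL}|\le dL$. The extra $+1$ comes from the root indicator. Since $L\ge1$ before $\zeta$, this term is negligible compared with $\alpha L/8$.

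\textbf{Vertices of the first case.} These are $x\in\mathfrak B_{r-}$ or vertices with $m_r(x)>d/100$, and their creation rate can be of order $1$. For bad vertices we charge each occurrence to $b_r$: a bad $x$ with $m_r(x)>0$ or $x\in\overline{\cL}$ contributes at most $e^{\alpha}-1\le 2\alpha$, and this is absorbed by $-3\alpha b_r$. The occupied bad vertices lose their removal gain, which costs at most another $\alpha$ each; the coefficient $3$ covers both. The dense vertices $\{x:m_r(x)>d/100\}$ need a separate argument. We use the tree structure of $\mathbf T_*$: in a tree, a vertex with more than $d/100$ neighbors in $\overline{\cL}$ has those neighbors as distinct children, or children plus parent. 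Counting edges of the forest induced on $\overline{\cL}\cup\{x\}$ then shows that the number of such $x$ is at most $100L/d+1$. Their creation contribution is therefore at most $2\alpha(100L/d+1)$, which again is small relative to $\alpha L/8$ for large $d$. The same count bounds the loss of removal gain for occupied dense vertices by $O(\alpha L/d)$.

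\textbf{Collecting terms.} The drift is at most
\[
  -0.99\alpha L+\frac{\alpha}{8}L+O\!\Big(\alpha e^{-C/2}L+\frac{\alpha L}{d}+\alpha\Big)+(2\alpha+\alpha-3\alpha)\,b_r .
\]
There is a residual additive $O(\alpha)$ coming from the root term $\mathbf 1_{x=u}$ and the $+1$ in the dense count. This is dominated by $0.8\alpha L$ because $L\ge1$ before $\zeta$. Hence the drift is nonpositive.

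The main obstacle is the dense-vertex count. We must ensure that vertices with $m_r(x)>d/100$ that are not bad cannot proliferate. The tree bound $|\{x:m_r(x)>d/100\}|\lesssim L/d+1$ is what makes this work, and it is the step that must be checked carefully, including the root's extra unit in $m_r(u)$.
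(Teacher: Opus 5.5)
Your generator computation is the same as the paper's. Occupied good, sparse sites give a death drift of order $-\alpha|\overline{\cL}|$. The ``otherwise'' births are controlled by $\sum_x m(x)\le d|\overline{\cL}|+1$. Bad sites are charged to $-3\alpha b$, and the dense sites $\{m>d/100\}$ are controlled by counting children in the tree. The only organizational difference is that the paper bounds the occupied-site term as $-(1-e^{-\alpha})|D|+(e^{\alpha}-1)\sum_{x\in D}p_s(x)$. That reduces everything to a single bound on $\sum_x p_s(x)$, whereas you split the sites into classes.

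One step does not close as you wrote it. Your dense count $100L/d+1$ carries an additive $+1$, which puts a constant term of order $\alpha$ into the drift bound (about $3\alpha$ in your accounting). That term is not dominated by $(0.99-\tfrac18)\alpha L$ when $L\in\{1,2,3\}$, so ``because $L\ge1$'' does not justify it.

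The fix is the children argument you already point to. Every dense vertex has more than $d/100-1$ children in $\overline{\cL}$. This includes $u$: all of its tree neighbors are children, which exactly compensates the extra unit in $m(u)$. Since each occupied vertex is a child of at most one vertex, you get
\[
\#\{x:m(x)>d/100\}\le \frac{L}{d/100-1}\le \frac{200L}{d},
\]
with no additive term. Equivalently, no dense vertex exists unless $L>d/100-1$. The only remaining constant is then the $2\alpha e^{-C/2}$ coming from the root indicator in $\sum_x m(x)$, and $L\ge1$ does absorb that.
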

\begin{proof}
    Let $\mathfrak G_s$ be the predictable generator of the localized occupied process, before it hits the empty set. Then it suffices to show that for every nonempty $D\subseteq V(\T_*)$,
    \begin{equation}{\label{eq:A.6}}
        e^{-\alpha|D|} \cdot \mathfrak G_s\big( e^{\alpha|D|} \big) \le \Big( -\frac{\alpha}{8}|D|+3\alpha b_s \Big) \,.
    \end{equation}
    For a deterministic set $D \subset V(\T_{*})$, set
    \[
        m_D(x):=|\mathcal N_{\T}(x)\cap D|+\mathbf 1_{\{x=u\}} \,.
    \]
    In the following calculation, $p_s(x)$ and $b_s$ are evaluated with current occupied set $D$. Root $\T$ at $u$. At a non-root vertex, at most one contribution to $m_D(x)$ comes from its parent; at $u$, the extra contribution $\mathbf 1_{\{x=u\}}$ plays the same role. Thus $m_D(x)>d/100$ implies that $x$ has more than $d/100-1\ge d/200$ children in $D$, for $d\ge200$. Since every vertex of $D$ is a child of at most one vertex,
    \begin{equation}{\label{eq:A.4-1}}
        \#\Big\{ x\in V(\T_{*}):m_D(x)>d/100 \Big\} \le \frac{200}{d}|D| \,.
    \end{equation}
    Also, $\sum_{x\in V(\T_*)}m_D(x)\le d|D|+1$. By \eqref{eq:extended-update-rule}, at a vertex that is neither bad nor has $m_D(x)>d/100$, the occupation probability is $e^{-C/2}m_D(x)/d$. All remaining probabilities are at most one, and every bad vertex with a nonzero occupation probability is counted by $b_s$. Hence \eqref{eq:A.4-1} gives
    \begin{align*}
        \sum_{x\in V(\T_*)}p_s(x)
        \le \frac{e^{-C/2}}{d}(d|D|+1)+\frac{200}{d}|D|+b_s \le \left(2e^{-C/2}+\frac{200}{d}\right)|D|+b_s \,.
    \end{align*}
    The last inequality uses $|D|\ge1$, which is why we stop at the first visit to the empty set. Every birth increases the cardinality by one, and every death decreases it by one. Therefore
    \begin{align*}
        e^{-\alpha|D|}\mathfrak G_s\big(e^{\alpha|D|}\big)
        &= (e^\alpha-1)\sum_{x\in V(\T_*)\setminus D}p_s(x) +(e^{-\alpha}-1)\sum_{x\in D}(1-p_s(x)) \\
        &\le -(1-e^{-\alpha})|D| +(e^\alpha-1)\sum_{x\in V(\T_*)}p_s(x) \\
        &\le \alpha\left(-\frac12+4e^{-C/2}+\frac{400}{d}\right)|D|+2\alpha b_s \\
        &\le -\frac{\alpha}{4}|D|+2\alpha b_s \le -\frac{\alpha}{8}|D|+3\alpha b_s \,.
    \end{align*}
    Here we used $1-e^{-\alpha}\le e^\alpha-1$, $1-e^{-\alpha}\ge\alpha/2$, $e^\alpha-1\le2\alpha$, and \eqref{eq-assump-C-d-epsilon}. This proves \eqref{eq:A.6}. Multiplying by the finite-variation exponential factor in \eqref{eq:A.11} gives a nonpositive predictable drift up to $\zeta$. The stopped process is a nonnegative local supermartingale, and hence a supermartingale. The same predictable calculation applies after any stopping time at which the occupied set is nonempty, conditionally on the entire joint history and without restarting the local reference chain.
\end{proof}

We then record the deterministic consequence of Definition~\ref{def-mathcal-A-2-lem-5.4} that is used below.
\begin{lemma}\label{lem:typical-rooted-tree-growth}
    The following holds uniformly in $\mathbf T \in\mathscr T_{\ell}$. If $\rho(\mathbf T)=u$ where $(u,v)\in E_A$ and $\sigma \in\{\pm1\}^{V}$ is typical with respect to $\mathbf T$, then
    \begin{equation}\label{eq:typical-rooted-tree-estimate}
        \E_{\sigma}\Big[ |\mathbf T(R)\cap \mathcal L_{u,t} \setminus \mathcal B(u,\T,r-1) | \cdot \mathbf 1_{\{(u,v) \notin \mathcal V\}} \Big] \le e^{-2t/C^3} \le 10^{-10} \,.
    \end{equation}
\end{lemma}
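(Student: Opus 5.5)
By Lemma~\ref{lem:bound-by-extended-spacetime-lem-5.4}, it suffices to show
\[
\E_\sigma\big[|\overline{\cL}_t\setminus\mathcal B(u,\T,r-1)|\big]\le e^{-2t/C^3}.
\]
The plan has three steps: use the supermartingale $M$ of Lemma~\ref{lem:super-martingale}, paying an entropy cost for the bad-vertex term $b_q$; use typicality of $\tau$ to make that cost summable; and handle distance with a separate speed estimate.

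\textbf{Step 1: a single episode, and the role of typicality.} The extended process starts from $\{u\}$ and may die. Once it is empty, it restarts only through a root update; since $m_s(u)=1$, this happens at rate at most $C/d$ while $u$ is good. I would decompose $[0,t]$ into episodes and control one episode first. Within an episode, $M$ is a supermartingale with $M_0=e^{\alpha}$. Without the bad term, this gives
\[
\E\Big[e^{\alpha\int_0^{t\wedge\zeta}|\overline\cL_q|\,\dd q/8}\Big]\lesssim 1 .
\]
To deal with $e^{3\alpha\int b_q\,\dd q}$, I would discretize time into the blocks $I_j$ of length $\Delta=0.01$. On each block, record $D_j$, the set of vertices occupied during $I_j$, and $B_j\subseteq D_j\cup\partial D_j$, the bad vertices charged to $b$. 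Because occupation only spreads along tree edges from $u$, the sequence satisfies $\rho(\T)\Rightarrow D_i\Rightarrow\cdots\Rightarrow D_T$, after enlarging each $D_j$ by its neighbours into a connected family. Hence the trajectory lies in $\mathcal O_i(\T)$.

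I would then split on whether $\sum|B_j|\ge 10^{-4}C^{-2}\sum|D_j|$.
\begin{itemize}
\item If it fails, then $3\alpha\int b\le \Delta\cdot 3\alpha\cdot 10^{-4}C^{-2}\sum|D_j|$, which is dominated by the $\tfrac{\alpha}{8}\int|\overline\cL|$ gain up to an $O(\Delta)$ boundary correction. So $M$ alone controls this case.
\item If it holds, the trajectory lies in $\mathcal O^\diamond_i$. Typicality then gives probability at most $\exp(-10^{-6}C^{-2}d\sum|B_j|)$. The count of trajectories from Lemma~\ref{lem:-enu-sequence-longrightarrow-forest}, times the choices of $B_j\subseteq D_j$, is at most $3(256d)^{\sum|D_j|}\le 3(256d)^{10^4C^2\sum|B_j|}$.
\end{itemize}

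\textbf{Main obstacle.} I expect the hardest point to be that this entropy factor $e^{O(C^2\log d)\sum|B_j|}$ is not beaten by $e^{-10^{-6}C^{-2}d\sum|B_j|}$ unless $d\gg C^4\log d$. This is guaranteed by $C<C_0\log d$ with $d$ large. I would verify this carefully, and also check that the coupling issue is harmless: typicality is a statement about the marginal law of $\overline Y$, and only the marginal is needed for a union bound over deterministic trajectories.

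\textbf{Step 2: assembling the expectation.} Combining the two cases, Hölder or a direct union bound gives
\[
\E\big[|\overline\cL_{t}|\,\mathbf 1_{\{t<\zeta\}}\big]\lesssim e^{-ct}
\]
with $c\asymp\alpha/8\cdot$(a constant). This follows from $|\overline\cL_t|\le \alpha^{-1}M$-type bounds together with the fact that $\int_0^{t}|\overline\cL_q|\,\dd q\ge t$ on survival. Summing over restarts costs at most a factor $(1+Ct/d)$, which is negligible. The rate $c\approx 10^{-4}$ is far larger than $2/C^3$, so $e^{-2t/C^3}$ follows.

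\textbf{Step 3: the distance constraint.} The set difference with $\mathcal B(u,\T,r-1)$ is an additional gain and is needed when $t$ is small. Reaching distance $r=t/(2000\log C)$ requires $r$ ordered births along a path. I would use Lemma~\ref{lem:prob-causal-time-sequence}-style counting: along each tree path the per-edge rate is at most $C/d$, and there are $d^r$ paths, giving a bound of $(eCt/r)^r$. Each choice is $d$-fold, so summed over paths and against the ordinary growth this is $\le e^{-r\log\log C}$, which is tiny, and it yields the final constant $10^{-10}$. This term absorbs the case where $t$ is not large enough for Step 2 to give $10^{-10}$ on its own.
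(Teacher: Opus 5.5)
\noindent You have the right ingredients: the reduction to $\overline{\cL}_t$, the supermartingale of Lemma~\ref{lem:super-martingale}, a typicality-based skeleton union bound, and a finite-speed estimate. The way you assemble them, however, has a genuine gap.

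\textbf{The root is a permanent source.} Since $m_s(u)\ge1$ always, every ring of $u$ occupies it with probability at least $e^{-C/2}/d$, and $u$ stays occupied until its next ring. Hence $\E_\tau|\overline{\cL}_t|\ge\Pr(u\in\overline{\cL}_t)\ge e^{-C/2}/d$ for every $t$. So no bound of the form $\E|\overline{\cL}_t|\lesssim(1+Ct/d)e^{-ct}$ can hold. Summing the per-episode decay over entrances correctly gives $\int_0^t\frac Cd e^{-c(t-s)}\,\dd s\approx\frac{C}{cd}$, which is a constant. This is much larger than $e^{-2t/C^3}$ once $t$ is large, e.g.\ $t$ of order $\log n$ when $p\approx 1/n$. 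So discarding $\mathcal B(u,\mathbf T,r-1)$ is essential for all $t$, not only for small $t$.

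\textbf{Step~3 does not supply this.} With $r=t/(2000\log C)$ you have $eCt/r=2000eC\log C>1$, so $(eCt/r)^r$ is enormous. A finite-speed bound over the whole window $[0,t]$ therefore gives nothing.

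\textbf{The typicality branch has the same defect.} An episode started shortly before $t$ can have $M=\sum_j|D_j|=O(1)$. Then $3T\sum_M(512d)^Me^{-10^{-10}dM/C^4}$ is not small, and it even grows with $T$.

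\textbf{The missing idea: split by episode age.} Split the unique surviving terminal episode at the age threshold $a=\lfloor r\rfloor/(100C)$.
\begin{itemize}
\item[(i)] \emph{Young episodes (age $<a$).} Choose each birth's parent uniformly among its $m_q(x)$ sources, so each oriented edge carries birth-arrows at intensity at most $p_q(x)/m_q(x)\le C/d$. Reaching distance $j\ge h:=\lfloor r\rfloor$ then needs $j$ ordered arrows along the unique tree path within a window of length $a$. This costs $\sum_{j\ge h}(eCa/j)^j\le 2\cdot4^{-h}$.
\item[(ii)] \emph{Old episodes, non-typicality cases.} The history-uniform conditional bounds give $e^{-c(t-s)}$ with $t-s\ge a$. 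Integrated against the $C/d$ entrance intensity, this gives $\frac{3C}{\alpha dc}e^{-ca}$.
\item[(iii)] \emph{Old episodes, typicality case.} Typicality concerns the reference chain from time $0$, so it cannot be invoked conditionally at a random entrance. Instead, take an unconditional union bound over skeletons with $M\ge\lfloor a/(2\Delta)\rfloor$, then apply Cauchy--Schwarz against the second moment of the whole process.
\end{itemize}
Since $a\asymp t/(C\log C)$, all these terms fall below $e^{-2t/C^3}$.

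\textbf{Two smaller points.}
\begin{itemize}
\item[(i)] In your first case the correction is not $O(\Delta)$. One only has $\sum_j|U_j|\le K/\Delta+2J+2$, where $J$ is the number of occupation changes. You must therefore separately dispose of the event $\{J>20CK\}$; this is the paper's $\mathsf F_2$. It is handled by an exponential counting-process supermartingale, using that the jump rate is at most $3C|\overline{\cL}|$, together with Cauchy--Schwarz.
\item[(ii)] Do not enlarge $D_j$ by all neighbours. That multiplies $|D_j|$ by roughly $d$, which destroys both the ratio condition and the skeleton count. Take $D_j=U_j\cup B_j\cup\{u\}$ instead.
\end{itemize}
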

\begin{proof}
    By Lemma~\ref{lem:bound-by-extended-spacetime-lem-5.4}, it suffices to prove
\begin{equation}\label{eq:typical-rooted-tree-estimate-goal}
 \E_\tau\big[|\overline{\cL}_t\setminus\mathcal B(u,\mathbf T,r-1)|\big]\le e^{-2t/C^3},
 \qquad \tau=\sigma|_{\mathbf T}.
\end{equation}
All probabilities below are under the original local reference chain started from $\tau$. In particular, this reference chain is not restarted when the occupied process enters a new episode. An episode starts at time $0$, with state $\{u\}$, or at a later transition from $\emptyset$ to $\{u\}$, and ends at the next visit to $\emptyset$. The entrance times are stopping times. Their counting process, excluding time zero, has predictable intensity at most $C/d$, since only a root update can create an occupation from the empty set. 

Fix an episode entrance $s<t$, and kill this episode at its ending time $\zeta_s$. Put $Z_{s,t}=|\overline{\cL}_t|\mathbf1_{\{\zeta_s>t\}}$. Uniformly in the entire joint history at $s$, its birth rate while nonempty is at most
\[
 (C/d)\sum_xm_q(x)\le C|\overline{\cL}_q|+C/d\le2C|\overline{\cL}_q|.
\]
Comparison with a rate-$2C$ Yule process started from one particle gives
\begin{equation}\label{eq:A.13}
 \E_\tau[Z_{s,t}^2\mid\mathcal F_s]\le2e^{4C(t-s)}.
\end{equation}
Its total jump rate, including deaths, is at most $3C|\overline{\cL}_q|$ before $\zeta_s$.
On survival to $t$, let $i$ be the first block meeting $[s,t]$, and form
\begin{align*}
 U_j&=\bigcup_{q\in I_j\cap[s,t]}\overline{\cL}_q,\\
 B_j&=\bigcup_{q\in I_j\cap[s,t]}\{x\in\mathfrak B_q:x\in\overline{\cL}_q\text{ or }m_q(x)>0\},\\
 D_j&=U_j\cup B_j\cup\{u\},\qquad i\le j\le T.
\end{align*}
The same definitions use pre-update occupation where needed at a jump time; adding the finitely many left limits does not change any of the estimates. Every newly occupied vertex attaches to an occupied neighbor or the root source. Every added vertex in $B_j$ attaches to $U_j$ or is the root. Thus the first union is connected to $u$, and every component of the next union intersects the preceding union or the permanently included copy of $u$. Hence $(B_i,D_i,\ldots,B_T,D_T)\in\mathcal O_i(\mathbf T)$.

Write $K=\int_s^t|\overline{\cL}_q|\,dq$, let $J$ be the number of occupation changes during this episode up to $t$, and put $H=\sum_j|B_j|$, $M=\sum_j|D_j|$. These integrals and counts are stopped at $\zeta_s$ off the survival event. With $\eta=10^{-4}C^{-2}$, partition the survival event into
\[
 \mathsf F_0(s)=\{H<\eta M,\ J\le20CK\},\quad
 \mathsf F_1(s)=\{H\ge\eta M\},\quad
 \mathsf F_2(s)=\{H<\eta M,\ J>20CK\}.
\]
We first bound $\mathsf F_0(s)$ and $\mathsf F_2(s)$ conditionally on $\mathcal F_s$; these bounds do not use typicality. An occupied time interval of length $a$ meets at most $a/\Delta+2$ blocks. There are at most $J+1$ such intervals, initially counting the root. Consequently $\sum_j|U_j|\le K/\Delta+2J+2$. There are at most $K/\Delta+2$ blocks on survival, since $K\ge t-s$. On $\mathsf F_0(s)$ this implies
\[
 (1-\eta)M\le2K/\Delta+2J+4,\qquad
 M\le100C(K+1),\qquad
 \int_s^t b_q\,dq\le\Delta H\le K/100+1/100.
\]
Apply Lemma~\ref{lem:super-martingale} from the stopping time $s$, stopping at $\zeta_s$. Its proof is a predictable generator estimate, so it applies conditionally on every joint history at $s$. Since $|\{u\}|=1$, $K\ge t-s$ on survival, and $z\le\alpha^{-1}e^{\alpha z}$,
\begin{equation}\label{eq:A.19}
 \E_\tau[Z_{s,t}\mathbf1_{\mathsf F_0(s)}\mid\mathcal F_s]
 \le\alpha^{-1}e^{\alpha+3\alpha/100-\alpha(t-s)/12}
 \le\frac2\alpha e^{-c(t-s)},\qquad c:=\alpha/12.
\end{equation}
For $\mathsf F_2(s)$ use the exponential counting-process supermartingale
\[
 \exp\left\{J_{s,q}-(e-1)\int_s^{q\wedge\zeta_s}3C|\overline{\cL}_v|\,dv\right\},
\]
where $J_{s,q}$ counts changes up to $q\wedge\zeta_s$. On $\mathsf F_2(s)$ at time $t$ its exponent is at least $14CK\ge14C(t-s)$. Therefore its conditional probability is at most $e^{-14C(t-s)}$. By \eqref{eq:A.13} and conditional Cauchy--Schwarz,
\begin{equation}\label{eq:A.24}
 \E_\tau[Z_{s,t}\mathbf1_{\mathsf F_2(s)}\mid\mathcal F_s]\le2e^{-5C(t-s)}.
\end{equation}

Set $h=\lfloor r\rfloor$ and $a=h/(100C)$. Then $h\ge r/2\ge1$ and $a<t$. At time $t$ there is at most one surviving episode. If it has age at least $a$, its entrance belongs to $[0,t-a]$. Sum \eqref{eq:A.19} and \eqref{eq:A.24} over actual entrances in that interval, including time zero. First condition at each entrance, and only then use its predictable intensity bound. Since $5C\ge c$ and $2/\alpha+2\le3/\alpha$, the contribution from old episodes in $\mathsf F_0\cup\mathsf F_2$ is at most
\begin{equation}\label{eq-bound-good-entrance}
 \frac3\alpha e^{-ct}+\frac{3C}{\alpha d}\int_0^{t-a}e^{-c(t-s)}\,ds
 \le\frac3\alpha e^{-ct}+\frac{3C}{\alpha dc}e^{-ca}.
\end{equation}
This compensation step uses only the history-uniform conditional estimates just proved.

For an old episode in $\mathsf F_1$, we instead take an unconditional union bound under the original law started from $\tau$. Its realized skeleton belongs to $\mathcal O_i^\diamond(\mathbf T)$ and has $M\ge\lfloor a/(2\Delta)\rfloor$. For any fixed such skeleton, typicality gives
\[
 \Pr_\tau(B_j\subseteq\mathfrak B_{I_j}\text{ for all }j)
 \le e^{-10^{-6}C^{-2}dH}\le e^{-10^{-10}dM/C^4}.
\]
Lemma~\ref{lem:-enu-sequence-longrightarrow-forest}, the at most $2^M$ choices of the $B_j$'s, and the at most $2^{M-1}$ positive compositions give at most $3(512d)^M$ skeletons with total size $M$ for a fixed initial block. There are at most $T$ initial blocks. Thus the probability of an old surviving episode in $\mathsf F_1$ is at most
\[
 3T\sum_{M\ge\lfloor a/(2\Delta)\rfloor}(512d)^M e^{-10^{-10}dM/C^4}
 \le e^{-da/(10^{12}C^4)}.
\]
The last geometric-series estimate follows from the working bounds after \eqref{eq-def-ell-t-given-A}; in particular $a\ge t/(4\cdot10^5 C\log C)$ and $T\le101t$.
For the full occupied process, including all entrances, $|\overline{\cL}_q|+1$ is dominated by a rate-$C$ Yule process started from two particles, since its total birth rate is at most $C(|\overline{\cL}_q|+1)$. Hence $\E_\tau[|\overline{\cL}_t|^2]\le6e^{2Ct}\le6e^{4Ct}$. Cauchy--Schwarz bounds the contribution of these old $\mathsf F_1$ episodes by
\begin{equation}\label{eq:A.22}
 \sqrt6\,e^{2Ct-da/(2\cdot10^{12}C^4)}\le e^{-t}.
\end{equation}
No conditional typicality at the random entrance is asserted in this argument.

Finally, consider an episode younger than $a$. At each birth of the extended process, choose its parent uniformly from the $m_q(x)$ occupied neighbors and, at the root, the additional source. The resulting birth-arrow intensity on each oriented edge is at most $p_q(x)/m_q(x)\le C/d$. Every vertex of the young episode has ancestry from the root entirely after time $t-a$. On a tree, reaching a vertex at distance $j$ requires $j$ ordered arrows on its unique root path, even if the full ancestral walk has backtracking. Iterated conditioning on the bounded arrow intensities therefore gives
\begin{equation}\label{eq:finite-speed-short-age}
\begin{aligned}
 &\E_\tau\big[|\overline{\cL}_t\setminus\mathcal B(u,\mathbf T,r-1)|;
                 \text{the terminal episode has age }<a\big]\\
 &\qquad\le\sum_{j\ge h}d^j\frac{(Ca/d)^j}{j!}
 \le\sum_{j\ge h}\left(\frac{eCa}{j}\right)^j
 \le2\cdot4^{-h}.
\end{aligned}
\end{equation}
The definition $h=\lfloor r\rfloor$ matches the integer distances outside the radius-$r-1$ ball.

Adding \eqref{eq-bound-good-entrance}, \eqref{eq:A.22}, and \eqref{eq:finite-speed-short-age} yields
\[
 \E_\tau[|\overline{\cL}_t\setminus\mathcal B(u,\mathbf T,r-1)|]
 \le\frac3\alpha e^{-ct}+\frac{3C}{\alpha dc}e^{-ca}+e^{-t}+2\cdot4^{-h}
 \le e^{-2t/C^3}.
\]
This proves both assertions of the lemma.
\end{proof}

Now we provide the proof of Lemma~\ref{lem-growth-rooted-tree-subtree}.
\begin{proof}[Proof of Lemma~\ref{lem-growth-rooted-tree-subtree}]
    Based on Lemma~\ref{lem-mathcal-A-1-2-is-typical-lem-5.4}, it suffices to show that for all $\sigma \in \overline{\mathcal A}_{1} \cap \overline{\mathcal A}_{2}$, we have the conclusion of Lemma~\ref{lem-growth-rooted-tree-subtree} holds. Fix $\sigma\in\overline{\mathcal A}_{1} \cap \overline{\mathcal A}_{2}$ and retain the arbitrary set $A$ with $|A|=pn$. Split
    \begin{align*}
        H_A :=\big\{ (u,v)\in E_A:\sigma|_{\mathbf T_u}\mbox{ is typical} \big\}, \quad H_A^c:= E_A\setminus H_A \,.
    \end{align*}
    Because the child trees in $E_A$ are pairwise disjoint, from $\sigma\in\overline{\mathcal A}_{2}$ we have $|H_A^c|\le e^{-dt/(10^{20}C^5)}n$. For $(u,v) \in H_A$, Lemma~\ref{lem:typical-rooted-tree-growth} yields 
    \[
        \E_\sigma\Big[ \big| \mathbf T_u(R) \cap \mathcal L_{u,t} \setminus \mathcal B(u,\mathbf T_u,r-1) \big| \cdot \mathbf 1_{\{(u,v)\notin \mathcal V\}} \Big] \le e^{-2t/C^3}\,.
    \]
    Thus, the total typical-tree contribution is at most $200e^{-2t/C^3}d\ell pn$. For atypical trees, by Claim~\ref{lem-stochastic-doination-by-infection}, Lemma~\ref{lem-growth-infectious-process}, and Cauchy-Schwarz inequality, we have
    \begin{align*}
        &\E_\sigma\Bigg[ \sum_{(u,v)\in E_A} \big| \mathbf T_u(R) \cap \mathcal L_{u,t} \setminus \mathcal B(u,\mathbf T_u,r-1) \big| \mathbf 1_{\{(u,v)\notin \mathcal V\}} \mathbf 1_{ \{ (u,v) \in H_A^c \} } \Bigg] \\
        \le\ & \E_{\sigma}\Big[ |H_A^c| \Big]^{1/2} \E_{\sigma}\Bigg[ \sum_{(u,v)\in E_A} |\mathbf T_u(R) \cap \mathcal L_{u,t}|^2 \mathbf 1_{\{(u,v)\notin \mathcal V\}} \bigg]^{1/2} \\
        \le\ & \big( e^{-dt/(10^{20}C^5)}n \big)^{1/2} \cdot \big( 6e^{2Ct} \cdot 200d\ell pn \big)^{1/2} \overset{\eqref{eq-assump-C-d-epsilon}}{\le} e^{-t} pn \,.
    \end{align*}
    This proves Lemma~\ref{lem-growth-rooted-tree-subtree}.
\end{proof}

\subsubsection{Proof of Lemma~\ref{lem-mathcal-A-1-2-is-typical-lem-5.4}(1)}

Let
\[
    b:=\lceil0.1d\rceil, \qquad h:=\ell-R_0, \qquad s:=b^{h}, \qquad \gamma:=\frac{C}{1000} \,.
\]
We first count the possible certificates that a fixed tree is not good. If $\mathbf T\in\mathscr T_{\ell}$ is not good, then there is some $u\in\partial_{R_0} \mathbf T$ and a complete $b$-ary witness subtree $\mathbf Q$ below $u$ of height $\ell-R_0$ whose terminal leaves are all minus (with no spin restriction on its internal vertices). Let $\mathsf L(\mathbf Q)$ be the set of leaves of $\mathbf Q$. Then $|\mathsf L(\mathbf Q)|=b^{\ell-R_0}=s$ and $\mathsf L(\mathbf Q) \subseteq \mathcal M(\sigma)$, where $\mathcal M(\sigma):=\{v:\sigma(v)=-1\}$. There are at most $d^{R_0}$ choices for $u$. A complete $b$-ary tree with $s=b^{\ell-R_0}$ leaves has
\begin{align*}
    1+b+\cdots+b^{\ell-R_0-1}=\frac{s-1}{b-1}
\end{align*}
internal vertices. At each internal vertex, its $b$ witness-children can be chosen in at most $\binom{d}{b}$ ways. Consequently, the number of possible witnesses for a fixed $\mathbf T$ is at most
\[
    d^{R_0}\binom{d}{b}^{(s-1)/(b-1)} \le e^{5s} \,.
\]
If $\overline{\mathcal A}_{1}^c$ holds, then there exist pairwise vertex-disjoint trees $\mathbf T_{1},\ldots,\mathbf T_{m}\in\mathscr T_{\ell}$ with $m=\lfloor e^{-\gamma s} n \rfloor +1$ and each restriction $\sigma|_{\mathbf T_i}$ is not good. Choose one witness $\mathbf Q_i$ in each $\mathbf T_i$. Since the trees are vertex-disjoint, the terminal sets $\mathsf L(\mathbf Q_i)$ are disjoint. Thus $M:=\cup_{i=1}^{m} \mathsf L(\mathbf Q_i)$ satisfies $|M|=ms$. Whenever $ms\leq C^{-2}n$, Lemma~\ref{lem-est-bad-minus-cluster-fixed-time} gives
\begin{equation*}
    \pi_{+}\bigl(M\subseteq\mathcal M(\sigma)\bigr) \leq 3e^{-Cms/9} \le e^{-0.1Cms} \,.
\end{equation*}
Also recall that $|\mathscr T_{\ell}|\leq dn$. A union bound over the choices of the trees and their witnesses therefore yields
\begin{align}{\label{eq-bound-prob-mathcal-A-1-lem-5.4}}
    \pi_{+}(\overline{\mathcal A}_{1}^c) \leq \binom{dn}{m} e^{5sm} e^{-0.1Cms} \overset{\eqref{eq-assump-C-d-epsilon}}{\leq} \exp\left\{  m\left(1+d\log2+\log\frac{n}{m}-\frac{C}{20}s\right) \right\} \,.
\end{align}
We consider two cases.
    
\medskip
\noindent {\bf Case 1:} $e^{-0.001Cs}n\geq1$. Then $m\leq2e^{-0.001C s}n$, and
\begin{align*}
    ms\leq 2s e^{-0.001Cs}n \overset{\eqref{eq-assump-C-d-epsilon}}{\leq} C^{-2}n \,,
\end{align*}
so Lemma~\ref{lem-est-bad-minus-cluster-fixed-time} is applicable. Since $m>e^{-0.001Cs}n$, Equation~\eqref{eq-bound-prob-mathcal-A-1-lem-5.4} gives
\begin{align*}
    \pi_{+}(\overline{\mathcal A}_{1}^c) \le \exp( -48 \times 0.001Csm ) \le n^{-48} \,,
\end{align*}
where the last inequality follows from $1 \le 0.001Cs\leq\log n$ in this case and the fact that $x\mapsto xe^{-x}$ is decreasing on $[1,\infty)$. 

\medskip
\noindent {\bf Case 2:} $e^{-0.001Cs}n<1$. Here $m=1$, and thus for all sufficiently large $n$, $ms=s\le d^\ell \leq C^{-2}n$, so Lemma~\ref{lem-est-bad-minus-cluster-fixed-time} is again applicable. Since $0.001Cs>\log n$, Equation~\eqref{eq-bound-prob-mathcal-A-1-lem-5.4} gives
\begin{align*}
    \pi_{+}(\overline{\mathcal A}_{1}^c) &\leq \exp( 1+d\log2+\log n-0.05Cs ) \\
    &< \exp( 1+d\log2-49\log n) \leq n^{-24} \,.
\end{align*}
Combining the two cases proves Lemma~\ref{lem-mathcal-A-1-2-is-typical-lem-5.4}(1).

\subsubsection{Proof of Lemma~\ref{lem-mathcal-A-1-2-is-typical-lem-5.4}(2)}

The key to proving Lemma~\ref{lem-mathcal-A-1-2-is-typical-lem-5.4}(2) is to set up the following proposition. Let \(\mathbf T \sim \mathbb T_{1,\ell}\). The ordinary depth-\(\ell\) leaves carry a boundary condition \(\eta\in\{\pm1\}^{\partial_{\ell}\mathbf T}\), and the attached root leaf carries sign \(-1\). Also, recall Definition~\ref{def-mathcal-A-1-lem-5.4}. Note that whether $\sigma\in\{ \pm 1 \}^V$ is good for $\mathbf T$ only depends on the spins on $\partial_{\ell}\mathbf T$. Let \(\mu_{\mathbf T^\circ}^{\eta}\) be the Ising Gibbs measure on the unfrozen vertices of \(\mathbf T^\circ\), with ordinary boundary \(\eta\) and attached root sign \(-1\). Let $\mu_{\mathbf T'}^{\eta} :=(\mu_{\mathbf T^\circ}^{\eta})_{V(\mathbf T')}$ be its marginal on \(\mathbf T'\). The descendants below level \(R'+1\) are therefore represented by effective cavity fields on \(\partial_{R'+1}\mathbf T\). 
\begin{lemma}\label{lem:one-tree}
    Uniformly over every good \(\eta\) and every \(\mathbf T \in \mathscr T_{\ell}\),
    \[
        \mu_{\mathbf T'}^{\eta}\bigl( \tau \mbox{ is atypical for } \mathbf T \big) \le e^{-dt/(10^{15}C^5)} \,.
    \]
\end{lemma}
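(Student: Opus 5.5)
We run the local chain $\overline Y$ from $\tau\sim\mu^{\eta}_{\mathbf T'}$. The measure that $\overline Y$ preserves is not $\mu^\eta_{\mathbf T'}$ but the Ising measure $\nu$ on $(\mathbf T')^\circ$ with plus leaves at depth $R'+1$ and a minus attached leaf. So we first compare the two. Because $\eta$ is good, Lemma~\ref{lem-amenable-boundary-condition-positive-field} shows that every vertex of $\partial_{R_0}\mathbf T$ has cavity field at least $0.1C$. Lemma~\ref{lem-exponential-contraction-BP-fixed-point}, applied from depth $R_0$ up to depth $R'+1$, then puts each effective message within $2^{-(R_0-R')}$ of $m_*$. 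Consequently $\mu^\eta_{\mathbf T'}$ is an Ising measure on $\mathbf T'$ whose leaves at depth $R'+1$ carry effective external fields at least $0.09C$, rather than infinite plus fields.

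Next we replace the starting law $\mu^\eta_{\mathbf T'}$ by a stationary one. Since the $\overline Y$ chain freezes the leaves at $+1$, the natural route is the interior marginal. A cleaner route is to note that $\overline Y$ dominates the chain started from the measure with these positive finite fields. That chain is itself a reversible Glauber dynamics for an Ising measure $\phi$ of $\phi^{+}_{1,R'+1}$ type, with fields in place of frozen spins. By Corollary~\ref{cor-any-signs} and Lemma~\ref{lem-bounded-perturbation}, or by adapting Lemma~\ref{lem-log-Sobolev-Tkl}, $\phi$ has $c_{\mathsf{LS}}\ge (9C)^{-1}$. Monotonicity makes this legitimate: $\mathfrak B_t$ is defined through $\overline Y$, and $\overline Y\succeq$ the chain started from a configuration sampled from $\mu^\eta_{\mathbf T'}$ and evolved with the finite fields. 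Bad events are decreasing, so an upper bound for the lower chain transfers to $\overline Y$. The cost is that the leaf fields are finite rather than $+\infty$. We absorb this through Remark~\ref{lem-comparison-Ising-external-field}, or by enlarging the bad threshold from $0.5d$ to $0.6d$ (matching $\mathcal B^\sharp$), which tolerates a few deviating leaves.

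Now fix $i$ and a skeleton $(B_i,D_i,\dots,B_T,D_T)\in\mathcal O_i^\diamond(\mathbf T)$. The single-block events are $\{B_j\subseteq\mathfrak B_{I_j}\}$. Lemma~\ref{lem-almost-indepdent-in-time-ls} with $\rho=(9C)^{-1}$ reduces the joint probability to $(\prod_j \phi(B_j\subseteq\mathfrak B_{I_j}))^{c}$ with $c\ge10^{-3}/(9C)$. For each block, a union bound over the update times inside $I_j$ together with Remark~\ref{remark:same-est-extend-to-tree} gives $\phi(B_j\subseteq\mathfrak B_{I_j})\le e^{-0.005Cd|B_j|}$. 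This needs care: the bad sets at different times within a block must be handled by decomposing $B_j$ over at most $|B_j|$ time slots, which can be reduced to a stationary fixed-time estimate over a few update epochs at cost $2^{O(|B_j|)}$. The resulting joint bound is $\exp(-c\cdot 0.005Cd\sum_j|B_j|)\approx \exp(-10^{-6}C^{-1}d\sum|B_j|)$, better than the target by a factor $C$. That margin pays for the $e^{-4A}$ perturbation losses and the transfer to $\tau\sim\mu^\eta_{\mathbf T'}$.

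The main obstacle is that typicality requires the bound under $\mathbb P_\tau$ for a fixed $\tau$, whereas what we have is an annealed bound over $\tau$. We handle this with Lemma~\ref{lem-BAD-lift-ls}, or simply with Markov's inequality. Let $p_{\mathcal O}$ be the stationary probability of a skeleton event. Then the probability that $\mathbb P_\tau(\cdot)>p_{\mathcal O}^{1/2}$ is at most $p_{\mathcal O}^{1/2}$. Halving the exponent keeps it above $10^{-6}C^{-2}d$. We then take a union bound over $i\le T$ and all skeletons, using Lemma~\ref{lem:-enu-sequence-longrightarrow-forest} to count at most $3(512d)^M$ skeletons of total size $M$. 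Membership in $\mathcal O^\diamond$ gives $\sum|B_j|\ge10^{-4}C^{-2}M$, and $M\ge T-i+1$. The sum is therefore geometric in $M$ with ratio $512d\,e^{-10^{-10}dC^{-4}}$, which is small when $d\gg C^4\log d$; this holds under \eqref{eq-assump-C-d-epsilon} since $C<C_0\log d$. The sum is dominated by $M$ of order one, which gives only $e^{-\Omega(d/C^4)}$ and not $e^{-dt/(10^{15}C^5)}$. To reach the $t$-dependence we use the fact that $D_T\ne\emptyset$ for a skeleton starting at $i$, so $M\ge T-i+1$, and we treat small $T-i$ by a direct cruder bound. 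This is the step I expect to need the most adjustment.
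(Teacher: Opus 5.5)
Your proposal has a genuine gap at the step you yourself expected to need adjustment: obtaining the $t$-dependence. With Markov's inequality, the set of $\tau$ for which $\Pr_\tau(E_\Gamma)$ exceeds $p_\Gamma^{1/2}$ has mass at most $p_\Gamma^{1/2}$, whatever the starting block $i$ of $\Gamma$. After the $3(512d)^M$ count, the union bound is therefore dominated by skeletons starting in the last block (e.g.\ $i=T$, $D_T=B_T=\{u\}$), and it gives only $e^{-\Omega(d/C^4)}$, with no dependence on $t$.

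A ``direct cruder bound for small $T-i$'' cannot repair this. For that single skeleton, $\tau\mapsto\Pr_\tau(u\in\mathfrak B_{I_T})$ has stationary mean $p_\Gamma$, which does not depend on $t$. The only reason few $\tau$ make it large is that the chain has largely forgotten $\tau$ by time $(T-1)\Delta\approx t$. This forgetting has to be quantified on average over $\tau$, because for worst-case $\tau$ (e.g.\ all minus on $\mathbf T'$) the event remains likely up to time $t$. The $t$-dependence also matters downstream: in Lemma~\ref{lem-growth-rooted-tree-subtree} the number of atypical trees must be $e^{-dt/(10^{20}C^5)}n\ll pn$, with $p$ possibly of order $1/n$.

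The missing idea is to apply Lemma~\ref{lem-BAD-lift-ls} at the starting time of the skeleton, rather than Markov's inequality. The event $E_\Gamma=\{B_j\subseteq\mathfrak B_{I_j},\ i\le j\le T\}$ depends only on the path after time $(i-1)\Delta$. Hence the stationary mass of initial states with $\Pr_\tau(E_\Gamma)\ge p_\Gamma^{1/2}$ is at most $p_\Gamma^{\varphi_i}$, where $\varphi_i=\frac16+\frac1{12}e^{2\rho(i-1)\Delta}$. This compensates exactly for the weakness of $M\ge T-i+1$ when the skeleton starts late. With $\rho=1/(10C)$ (so $2\rho\Delta=1/(500C)$) and $e^x\ge1+x$,
\[
(T-i+1)\,\varphi_i\ \ge\ \frac{T-i+1}{12}\Big(1+\frac{i-1}{500C}\Big)\ \ge\ \frac{T}{6000C}
\]
uniformly in $i$. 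So each $i$ contributes $e^{-\Omega(dT/C^4)}$, and summing over $i\le T$ gives the claimed bound. At $i=1$ the lift is weaker than Markov ($\varphi_1=\frac14$), which is harmless because there $M\ge T$.

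Your comparison chain, reversible for $\mu^\eta_{\mathbf T'}$ itself with finite leaf fields and dominated by $\overline Y$, is a legitimate alternative to the paper's route, and it is compatible with this fix. The paper instead uses Lemma~\ref{lem:screening} to pass to $\mu^+_{\mathbf T'}$, then the comparison $\overline Y\ge Y$ with the full-tree stationary chain for $\phi^+_{1,\ell}$, valid for every extension of $\tau$.

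Two smaller points. First, the log-Sobolev constant of your chain should come from Lemma~\ref{thm:lee}, which allows arbitrary external fields, together with Lemma~\ref{lem-marginal}. A bounded perturbation of a frozen-plus measure does not work, since that perturbation is unbounded. Second, your per-block bound $e^{-0.005Cd|B_j|}$ is not justified by the sketch, because bad events at different times inside a block do not reduce to one fixed-time stationary estimate. The inclusion $\mathcal B_{[s,s+\Delta]}\subseteq\mathcal B^\sharp_s\cup\mathcal U_{[s,s+\Delta]}$ behind Lemma~\ref{prop:one-block} gives only $e^{-0.009d|B|}$, and that suffices.
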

We first present the proof of Lemma~\ref{lem-mathcal-A-1-2-is-typical-lem-5.4}(2) given Lemma~\ref{lem:one-tree}.
\begin{proof}[Proof of Lemma~\ref{lem-mathcal-A-1-2-is-typical-lem-5.4}(2) assuming Lemma~\ref{lem:one-tree}]
Since $\pi_+=\pi(\cdot\mid\sum_v\sigma(v)\ge0)$ and the conditioning event has probability at least $1/2$, it suffices to bound the unconditioned probability and multiply by two. Put $a=dt/(10^{20}C^5)$, $r=\lfloor e^{-a}n\rfloor+1$, and $m=\lceil r/2\rceil$. If $\overline{\mathcal A}_2^c\cap\overline{\mathcal A}_1$ occurs, there are $r$ pairwise vertex-disjoint atypical trees, of which at most $e^{-0.001Cs}n\le\tfrac12e^{-a}n$ are non-good. The last comparison follows from $s=\lceil0.1d\rceil^{\ell-R_0}$ and the parameter bounds after \eqref{eq-def-ell-t-given-A}. Thus at least $m$ of these trees are both good and atypical.

Fix $m$ disjoint trees $\mathbf T_1,\ldots,\mathbf T_m$. Condition on their ordinary outer-leaf spins and on all vertices outside their free interiors. Goodness is determined by these conditioned outer-leaf spins. The remaining interiors need not be independent: the attached root neighbor of one tree can be the root of another. No such root is additionally conditioned on. Instead compare the conditional Gibbs law on the union of free interiors with the product law that fixes the attached neighbor of every root to minus. At a non-root free vertex all incident edges stay within its tree or meet an already fixed ordinary leaf. Any edge between two distinct free interiors must therefore join their roots. In the actual conditional law each external root spin is at least minus, whereas the comparison law uses minus there. The monotone heat-bath coupling, with the same fixed ordinary-leaf data, shows that the actual conditional law stochastically dominates this product law.

Atypicality is decreasing in the free initial spins: the plus-boundary local chain is monotone in its initialization, and every bad-block event is decreasing. Conditional on good leaf data, Lemma~\ref{lem:one-tree} and the product comparison therefore give
\[
    \pi(\mathbf T_1,\ldots,\mathbf T_m\text{ all good and atypical}) \le e^{-dtm/(10^{15}C^5)}.
\]
Integrating over the fixed boundary data preserves this bound. There are at most $dn$ rooted trees in $\mathscr T_\ell$, so
\begin{equation}\label{eq-bound-prob-mathcal-A-2-lem-5.4}
    \pi(\overline{\mathcal A}_2^c\cap\overline{\mathcal A}_1) \le\binom{dn}{m}e^{-10^5am} \le e^{-5\cdot10^4am} \,.
\end{equation}
Indeed $m>e^{-a}n/2$, so $\log(edn/m)\le a+\log(2ed)\le2a$ in our regime. If $a\le\log n$, then $a\ge1$ and monotonicity of $xe^{-x}$ on $[1,\infty)$ gives $am\ge\tfrac12na e^{-a}\ge\tfrac12\log n$. If $a>\log n$, then $m=1$ and $am>\log n$. In either case twice the right-hand side of \eqref{eq-bound-prob-mathcal-A-2-lem-5.4} is at most $n^{-24}$ for sufficiently large $n$, proving the claim.
\end{proof}

We now focus on the proof of Lemma~\ref{lem:one-tree}. Let $\mu_{\T'}^{+}$ be the measure obtained by taking $\eta$ to be the plus boundary condition in $\mu_{\mathbf T'}^{\eta}$. We first bound the Radon-Nikodym derivative between $\mu_{\T'}^{+}$ and $\mu_{\mathbf T'}^{\eta}$, as incorporated in the following lemma.
\begin{lemma}\label{lem:screening}
    For every good boundary condition \(\eta\), and every \(\T\simeq \mathbb T_{1,\ell}\), we have the pointwise estimate
    \[
        \exp(-\varepsilon_R) \le \frac{\dd\mu_{\T'}^{\eta}}{\dd\mu_{\T'}^{+}} \le \exp(\varepsilon_R), \quad \varepsilon_R \le e^{-0.1R'} \,.
    \]
\end{lemma}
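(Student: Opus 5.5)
The plan is to express both measures on $\mathbf T'$ through effective cavity fields on the level-$(R'+1)$ vertices. Integrating out the descendants below level $R'+1$, each $w\in\partial_{R'+1}\mathbf T$ receives an effective external field $h^\eta_w$, equal to the sum over its children $c$ of $\psi_\beta(h^\eta_{c})$, where $h^\eta_c$ is the upward cavity field of $c$ computed in $\mathsf{Des}_{\mathbf T}(c)$ with boundary $\eta$. Similarly $h^+_w$ is computed with plus boundary. The interior of $\mathbf T'$, with its attached minus root leaf, is the same in both measures. Hence, by the formula for $\dd\pi_g/\dd\pi_h$ in Remark~\ref{lem-comparison-Ising-external-field},
\[
\frac{\dd\mu^\eta_{\mathbf T'}}{\dd\mu^+_{\mathbf T'}}(\sigma)=\frac{Z_+}{Z_\eta}\exp\Big(\sum_{w\in\partial_{R'+1}\mathbf T}\sigma_w(h^\eta_w-h^+_w)\Big).
\]
The log of this ratio therefore has absolute value at most $2\sum_w|h^\eta_w-h^+_w|$. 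It suffices to show $\sum_w |h^\eta_w-h^+_w|\le \tfrac12 e^{-0.1R'}$.

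The first step is to use goodness. Since $\eta$ is good, every vertex of $\partial_{R_0}\mathbf T$ is favorable. Its descendant subtree is a copy of a $d$-regular branch of height $\ell-R_0$, each vertex having $d-1$ children. The favorability recursion of Definition~\ref{def:amenable-realization-in-tree} applied in that branch gives, via Lemma~\ref{lem-amenable-boundary-condition-positive-field}, an upward cavity field of at least $0.1C$ at every vertex of $\partial_{R_0}\mathbf T$. The corresponding message is then $\tanh(0.1C)\ge 0.1$.

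The second step feeds these messages upward from level $R_0$ to level $R'+2$. The BP map on the $(d-1)$-ary branches is the map $\Phi$ in Lemma~\ref{lem-exponential-contraction-BP-fixed-point}, which is monotone and $\tfrac12$-Lipschitz in each of its $d-1$ coordinates. I would use a coordinatewise version of that contraction rather than only its endpoint form. On the interval $I=[0.1,1]$, the derivative of $\Phi$ with respect to each input is at most
\[
\tanh\beta\,\sech^2(s)\big/\big(1-\tanh^2\beta\big)\lesssim \frac{C}{d}e^{-C/20}.
\]
Consequently the total $\ell^1$ sensitivity over the $d-1$ children is at most $\tfrac12$. For a child $c$ of a vertex $w\in\partial_{R'+1}\mathbf T$, the message difference $|m^\eta_c-m^+_c|$ is then bounded by $2^{-(R_0-R'-2)}$. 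This follows by induction down the $R_0-R'-2$ generations, with both boundary conditions yielding messages in $I$.

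The third step converts this to fields and sums. Since $|\psi_\beta'|\le\tanh\beta\le C/d$ in the message variable, we get $|h^\eta_w-h^+_w|\le C\,2^{-(R_0-R'-2)}$. Summing over $|\partial_{R'+1}\mathbf T|\le d^{R'+1}$ vertices gives
\[
\varepsilon_R\lesssim C\,d^{R'+1}2^{-(R_0-R'-2)}.
\]
Because $R'\le R_0/(\log d)^2$, we have $R_0-R'\ge \tfrac12 R_0\ge (\log d)^2R'/2$. This makes $2^{-(R_0-R'-2)}$ overwhelm $d^{R'+1}$ and gives the bound $e^{-0.1R'}$, at least once $R'$ exceeds a constant. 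Small $R'$, including $R'=0$, needs care: there $R_0$ itself must supply the smallness, which requires $\ell\ge C\log d$, and I would state this case using the working bounds after \eqref{eq-def-ell-t-given-A}.

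The main obstacle is the second step, namely making the contraction genuinely coordinatewise with inhomogeneous inputs. Lemma~\ref{lem-exponential-contraction-BP-fixed-point} as stated only compares each message with $m_*$, giving $2^{-r}$. Its bound $|m_c-m_*|\le 2^{-(R_0-R'-2)}$ applies to both $\eta$ and $+$, so this already suffices via the triangle inequality. I would therefore invoke that lemma directly on each child subtree of height $\ell-R'-2$ whose level-$R_0$ leaves carry messages $\ge0.1$.
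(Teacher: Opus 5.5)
Your proposal is correct and follows the paper's argument. It uses effective fields on $\partial_{R'+1}\mathbf T$, and goodness plus Lemma~\ref{lem-amenable-boundary-condition-positive-field} to make the level-$R_0$ messages at least $0.1$. It then applies Lemma~\ref{lem-exponential-contraction-BP-fixed-point} to both the good and the all-plus boundary, uses the triangle inequality, and sums over at most $d^{R'+1}$ boundary vertices, a sum beaten by $2^{-(R_0-R')}$. In the regime $p\le e^{-0.02d}$ the radii $R'$ and $R_0$ are automatically large, so no separate small-$R'$ case arises.

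The only difference is where you convert message differences into field differences. You do it one generation lower, through $\psi_\beta$, which costs a factor of order $C$. (Its derivative in the message variable is at most $\tanh\beta/(1-\tanh^2\beta)$ rather than $\tanh\beta$, a harmless constant.) The paper instead applies $\atanh$ at the level-$(R'+1)$ vertex itself and pays $\cosh^2 C\le e^{2C}$. Both losses are absorbed by the same radius comparison.
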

\begin{proof}
    For an oriented edge \(v\to p(v)\) where $p(v)$ is the parent of $v$, write \(h_{v\to p(v)}\) for the upward cavity field and \(m_{v\to p(v)}=\tanh h_{v\to p(v)}\). The cavity field recursion is given by
    \begin{align*}
        h_{v\to p(v)} = \sum_{w:w\to v} \psi_\beta(h_{w\to v}), \quad \psi_\beta(h) = \tanh^{-1}\big( \tanh\beta \tanh h \big) \,.
    \end{align*}
    Goodness and Lemma~\ref{lem-amenable-boundary-condition-positive-field} imply that every upward message sent by a vertex at level \(R_0\) has cavity field at least \(0.1C\), and hence magnetization message at least \(\tanh(0.1C)>0.1\). Fix \(y\in\partial_{R'+1}\T\). Between \(y\) and level \(R_0\) there are $L:=R_0-R'-1$ generations. Applying the BP contraction estimate of Lemma~\ref{lem-exponential-contraction-BP-fixed-point} once to the good boundary and once to the all-plus boundary gives
    \begin{align*}
        \big| m_{y\to p(y)}^{\eta}-m_* \big| + \big| m_{y\to p(y)}^{+}-m_* \big| \le 2^{1-L}  \,.
    \end{align*}
    After one BP iteration both magnetization messages are at most $\tanh C$, so the derivative of $\operatorname{arctanh}$ is at most $\cosh^2C\le e^{2C}$ on the interval between them. Therefore $|h_y^\eta-h_y^+| \le e^{2C} 2^{1-L}$, where \(h_y^\eta\) and \(h_y^+\) are the effective descendant fields at \(y\). After integrating out all descendants below level \(R'+1\), the two measures differ only through the potential $V(\tau) = \sum_{y\in\partial_{R'+1}\T} (h_y^\eta-h_y^+)\tau_y$. Therefore
    \begin{align*}
        \osc(V) \le 2|\partial_{R'+1}\T| \cdot e^{2C}2^{1-L} \le 4d^{R'+1} e^{2C} 2^{-(R_0-R'-1)} \overset{\eqref{eq-def-R-R'}, \eqref{eq-assump-C-d-epsilon}}{\le} e^{-0.1R'} \,.
    \end{align*}
    Normalization of the density gives the pointwise ratio bound.
\end{proof}

For the next estimates, let $\mu_+:=\phi_{1,\ell}^+$ on the full tree $\mathbf T^\circ$, and let $(Y_s)_{s\ge0}$ be its stationary Glauber dynamics. Ordinary depth-$\ell$ leaves are fixed plus and the attached root leaf is fixed minus. For a configuration $\tau$ on this full tree, define
\[
    \cB_s:=\left\{v\in V(\mathbf T_*): \sum_{w\in\mathcal N_{\mathbf T}(v)}Y_s(w)-\mathbf1_{\{v=u\}}\le0.5d\right\}, \qquad \cB_{[a,b]}:=\bigcup_{a\le s\le b}\cB_s \,.
\]
The process $Y$ here is the full stationary chain, not a chain having the marginal law $\mu_{\mathbf T'}^+$ as its invariant law. Define the stronger bad set
\begin{align*}
    \cB^\sharp(\tau) := \Big\{ v\in V(\T_*): \sum_{ \substack{ w\sim v , w\in V(\T) }} \tau(w) - \mathbf 1_{\{v=u\}} \le 0.6d \Big\} \,.
\end{align*}
In addition, for a time interval \(I\) of length \(\Delta\), let $\mathcal U_I$ be the collection of $v\in V(\T_*)$ such that at least $0.05d$ distinct dynamic neighbors of \(v\) receive an update during \(I\). We first bound the tail probability of $|\mathcal U_I|$.
\begin{lemma}\label{lem:update-set}
    Independently of the spin configuration, for every \(W\subseteq V(\T_*)\) and every interval \(I\) of length \(\Delta\),
    \[
        \mathbb P(W\subseteq\mathcal U_I) \le \exp\{-0.01d|W|\} \,.
    \]
\end{lemma}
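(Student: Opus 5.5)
The plan is to reduce the event $\{W\subseteq\mathcal U_I\}$ to a statement about the Poisson clocks alone. The clocks are independent of the spin configuration, which gives the claimed uniformity. For each vertex $w\in V(\T)$, let $\xi_w$ be the indicator that the clock at $w$ rings at least once during $I$. The $\xi_w$ are i.i.d.\ Bernoulli with parameter $q:=1-e^{-\Delta}\le\Delta=0.01$, and they do not depend on the spins.

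\textbf{Deterministic reduction.} On $\{W\subseteq\mathcal U_I\}$, every $v\in W$ has at least $0.05d$ updated neighbours. Let $S:=\{w:\xi_w=1,\ w\sim v\text{ for some }v\in W\}$ be the set of updated vertices adjacent to $W$. Counting incidences between $W$ and $S$ gives at least $0.05d|W|$ edges. Since $\T$ is a tree, the forest $\T[W\cup S]$ has fewer than $|W\cup S|\le|W|+|S|$ edges. Hence
\[
    |S|\ge 0.05d|W|-|W|\ge 0.04d|W|
\]
once $d$ is large. So $W\subseteq\mathcal U_I$ forces some set $S'\subseteq\mathcal N_{\T}(W)$ of exactly $k:=\lceil0.04d|W|\rceil$ vertices to have all clocks ringing in $I$.

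\textbf{Union bound.} The neighbourhood satisfies $|\mathcal N_{\T}(W)|\le d|W|$. The number of candidate sets $S'$ is therefore at most $\binom{d|W|}{k}\le 2^{d|W|}$. A crude bound of this kind would leave an exponent of order $d\log 2$ against $k\log(1/q)\approx0.04d\cdot 4.6$. That is $0.18d$ versus $0.69d$ per vertex of $W$, which loses. I therefore use the sharper estimate
\[
    \binom{d|W|}{k}\le\Bigl(\frac{ed|W|}{k}\Bigr)^k\le(25e)^k .
\]
Each fixed $S'$ has probability $q^k\le 0.01^k$. The total is then at most
\[
    (25e\cdot 0.01)^k=(0.68)^k\le e^{-0.38k}\le e^{-0.015d|W|},
\]
which is below the claimed $e^{-0.01d|W|}$.

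\textbf{Main obstacle.} The difficulty is entirely in this constant bookkeeping. The tree/forest argument is needed to turn $0.05d$ updated neighbours per vertex into $\Omega(d|W|)$ \emph{distinct} updated vertices. The binomial must be bounded through the ratio $d|W|/k$ rather than by $2^{d|W|}$. If the constants turn out tighter than stated, there is a fallback. The variable $|S|$ is dominated by a sum of $d|W|$ independent Bernoulli$(q)$ variables with mean at most $0.01d|W|$. A Chernoff bound for exceeding $0.04d|W|$ gives
\[
    \exp\{-d|W|\,\mathrm{KL}(0.04\,\|\,0.01)\}\le e^{-0.03d|W|},
\]
which yields the claim directly without any union bound.
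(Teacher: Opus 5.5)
Your proof is correct and is essentially the paper's argument. The forest bound turns $0.05d$ updated neighbours per vertex into at least $0.04d|W|$ distinct updated vertices in $\mathcal N_{\T}(W)$, and your union bound $\binom{d|W|}{k}q^k$ is the standard proof of the binomial tail $\Pb(\Bin(d|W|,q)\ge 0.04d|W|)$ that the paper invokes.

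There are two slips, neither of which affects the conclusion. First, an edge with both endpoints in $W\cap S$ is counted twice among the incidences, so you only get $|S|\ge(0.05d-2)|W|$; the paper avoids this by removing $W$ from the updated set and gets $(0.05d-3)|W|$. Second, in your fallback $\mathrm{KL}(0.04\,\|\,0.01)\approx 0.026$, not $\ge 0.03$, though this is still well above the $0.01$ needed.
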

\begin{proof}
    Let \(Z\) be the set of dynamic vertices that receive at least one update during \(I\). The indicators \(\{\mathbf 1_{\{u\in Z\}}\}\) are independent Bernoulli variables with parameter $1-\e^{-\Delta}<0.011$. On \(W\subseteq\mathcal U_I\), there are at least \(0.05d|W|\) oriented incidences from \(W\) to \(Z\). Incidences along edges internal to \(W\) contribute at most \(2|W|\). With \(Z_0:=(Z\cap\mathcal N_\T(W))\setminus W\), the forest bound gives at most \(|W|+|Z_0|-1\) edges from \(W\) to \(Z_0\). Therefore $|Z_0|\ge(0.05d-3)|W|\ge0.04d|W|$ for sufficiently large \(d\). Since \(Z_0\subseteq \mathcal N_\T(W)\setminus W\), a set of size at most \(d|W|\), it follows from a union bound that
    \begin{equation*}
        \mathbb P\big( W\subseteq\mathcal U_I \big) \le \mathbb P\Big(    \Bin(d|W|,0.011)\ge0.04d|W| \Big) \le \exp(-0.01d|W|) \,. \qedhere
    \end{equation*}
\end{proof}

We now give a tail bound for the bad block $\cB_{[u,u+\Delta]}$. 
\begin{lemma}\label{prop:one-block}
    For every \(B\subseteq V(\T_*)\), we have
    \begin{align*}
        \Pb_{\mu_{+}}\big( B\subseteq\cB_{[u,u+\Delta]} \big) \le \exp(-0.009d|B|) \,.
    \end{align*}
\end{lemma}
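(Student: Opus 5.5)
The plan is to reduce the dynamic event $\{B\subseteq\cB_{[u,u+\Delta]}\}$ to two static events at the start of the block: (a) many vertices of $B$ are in the stronger bad set $\cB^\sharp(Y_u)$, and (b) many vertices of $B$ are in $\mathcal U_I$ with $I=[u,u+\Delta]$. The key deterministic observation is the following. Take $v\in\cB_{[u,u+\Delta]}\setminus\cB^\sharp(Y_u)$. At time $u$ the neighbor sum at $v$ exceeds $0.6d$, and at some time in $I$ it is at most $0.5d$. Each flip changes the sum by $2$, so at least $0.05d$ distinct neighbors must have been updated during $I$, i.e. $v\in\mathcal U_I$. (The fixed minus root leaf and the fixed plus ordinary leaves never update, so they do not affect this count.) Hence
\[
\{B\subseteq\cB_{[u,u+\Delta]}\}\subseteq\bigcup_{B_1\sqcup B_2=B}\{B_1\subseteq\cB^\sharp(Y_u)\}\cap\{B_2\subseteq\mathcal U_I\}.
\]

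Next I bound each piece. By stationarity $Y_u\sim\mu_+=\phi^+_{1,\ell}$. Remark~\ref{remark:same-est-extend-to-tree} (with $k=1$) then gives $\mu_+(B_1\subseteq\cB^\sharp)\le e^{-0.01Cd|B_1|}$. Lemma~\ref{lem:update-set} gives $\Pb(B_2\subseteq\mathcal U_I)\le e^{-0.01d|B_2|}$. Since the update set depends only on the Poisson clocks in $I$, I would avoid using independence, even though it is plausible by the strong Markov property at time $u$. Instead I bound the intersection by the minimum of the two probabilities, which is at most their geometric mean:
\[
\Pb(\cdot)\le e^{-0.005Cd|B_1|-0.005d|B_2|}\le e^{-0.005d|B|}.
\]
This is too weak, so I will use independence after all. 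The clocks on $(u,u+\Delta]$ are independent of $Y_u$, and $\mathcal U_I$ is a function of those clocks alone. This gives the product bound $e^{-0.01Cd|B_1|-0.01d|B_2|}\le e^{-0.01d|B|}$.

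Finally I take the union bound over the $2^{|B|}$ splittings:
\[
\Pb\big(B\subseteq\cB_{[u,u+\Delta]}\big)\le 2^{|B|}e^{-0.01d|B|}\le e^{-0.009d|B|}\quad\text{for }d\ge d_0.
\]

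The main obstacle is the deterministic reduction. I must check that the neighbor set used in $\cB_s$ (tree neighbors, including fixed leaves, minus the root indicator) matches the one in $\cB^\sharp$ and in the definition of $\mathcal U_I$ ("dynamic neighbors"). The point is that a drop of the field by more than $0.1d$ forces at least $0.05d$ distinct updated free neighbors. This holds because only free neighbors can change. A secondary point is that $B\subseteq V(\T_*)$ consists of free vertices, so the Remark applies.
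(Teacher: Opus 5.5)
Your proposal is correct and is essentially the paper's proof. It uses the deterministic inclusion $\cB_{[u,u+\Delta]}\subseteq\cB^\sharp(Y_u)\cup\mathcal U_{[u,u+\Delta]}$, the independence of $Y_u\sim\mu_+$ from the clocks on the block, Remark~\ref{remark:same-est-extend-to-tree} and Lemma~\ref{lem:update-set} for the two factors, and a sum over the $2^{|B|}$ splittings of $B$. The geometric-mean detour can simply be dropped from a write-up, since the independence you end up using is exactly what the paper invokes.
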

\begin{proof}
    Suppose \(v\in\cB_s\) for some \(s\in[u,u+\Delta]\). If \(v\notin\cB^\sharp_u\), then its initial neighbor sum is greater than \(0.6d\), while at time \(s\) it is at most \(0.5d\). Since changing one neighbor spin changes the sum by at most \(2\), at least \(0.05d\) \emph{distinct} dynamic neighbors of \(v\) must have received an update during \([u,s]\). Hence $\cB_{[u,u+\Delta]} \subset \cB^\sharp_u \cup \mathcal U_{[u,u+\Delta]}$. In addition, the initial spins and the Poisson clocks are independent. Therefore, by summing over the set \(S\subseteq B\) and using Remark~\ref{remark:same-est-extend-to-tree} and Lemma~\ref{lem:update-set}, we get
    \begin{align*}
        \Pb_{\mu_{+}}\big( B\subseteq\cB_{[u,u+\Delta]} \big) \le \sum_{S\subseteq B} e^{-0.01Cd|S|} e^{-0.01d|B\setminus S|} \le e^{-0.009d|B|} 
    \end{align*}
    for sufficiently large \(C,d\).
\end{proof}

Now we provide the proof of Lemma~\ref{lem:one-tree}, thereby finishing the proof of Lemma~\ref{lem-mathcal-A-1-2-is-typical-lem-5.4}(2).
\begin{proof}[Proof of Lemma~\ref{lem:one-tree}]
By Lemma~\ref{lem:screening}, it suffices to prove
\begin{equation}\label{eq:one-tree-goal}
    \mu_{\mathbf T'}^+(\tau\text{ is atypical})\le\tfrac12 e^{-dt/(10^{15}C^5)} \,,
\end{equation}
since $e^{\varepsilon_R}\le2$. Sample a full initial configuration $\zeta\sim\mu_+$, and couple its full stationary chain $Y$ to the local chain $\overline Y$ initialized from $\tau=\zeta|_{\mathbf T'}$. The local chain has fixed plus spins at its artificial outer boundary, and both chains have the same fixed minus root neighbor. Monotonicity gives $\overline Y_s\ge Y_s$ throughout $\mathbf T'$ for all $s\ge0$. Every neighbor entering a bad-set test in $\mathbf T_*$ belongs to $\mathbf T'$. Hence, for every deterministic full configuration $\zeta$ extending $\tau$ and every skeleton $\Gamma=(B_i,D_i,\ldots,B_T,D_T)$,
\[
    \Pr_\tau(B_j\subseteq\mathfrak B_{I_j}\text{ for all }j) \le\Pr_\zeta(B_j\subseteq\cB_{I_j}\text{ for all }j) \,.
\]
This inequality, valid for every extension, is what allows us to use the full-tree log-Sobolev inequality; no stationarity of the truncated plus-boundary chain under $\mu_{\mathbf T'}^+$ is assumed.

Fix $1\le i\le T$ and $\Gamma\in\mathcal O_i^\diamond(\mathbf T)$, and put $K=\sum_{j=i}^T|B_j|$, $M=\sum_{j=i}^T|D_j|$. Then $K\ge10^{-4}C^{-2}M$ and $M\ge T-i+1\ge1$. Write $E_\Gamma=\{B_j\subseteq\cB_{I_j}\text{ for every }i\le j\le T\}$ and $p_\Gamma=\Pr_{\mu_+}(E_\Gamma)$. Lemma~\ref{lem-log-Sobolev-Tkl} gives $c_{\mathsf{LS}}(\mu_+)\ge\rho:=1/(10C)$. Lemmas~\ref{lem-almost-indepdent-in-time-ls} and \ref{prop:one-block} therefore give
\[
    p_\Gamma\le\left(\prod_{j=i}^T e^{-0.009d|B_j|}\right)^{10^{-3}\rho} \le e^{-9\cdot10^{-7}dK/C} \,.
\]
If $p_\Gamma=0$, the event has zero conditional probability for every full initial state, since the finite-volume Gibbs measure has full support. Such a skeleton cannot witness atypicality. Otherwise $0<p_\Gamma<1$, and, for sufficiently large $C$,
\[
    e^{-10^{-6}C^{-2}dK}\ge p_\Gamma^{1/2} \,.
\]
Failure of typicality for this skeleton means that the local conditional probability is strictly greater than the threshold on the left. For every full extension $\zeta$ it then follows that $\Pr_\zeta(E_\Gamma)\ge p_\Gamma^{1/2}$. The event $E_\Gamma$ begins at time $(i-1)\Delta$, not $i\Delta$. Applying Lemma~\ref{lem-BAD-lift-ls} at this time and using $K\ge10^{-4}C^{-2}M$ bounds the $\mu_{\mathbf T'}^+$-measure of these local initial states by
\[
    p_\Gamma^{\varphi_i}\le\exp\left\{-\frac{9\cdot10^{-11}d}{C^3}M\varphi_i\right\}, \qquad \varphi_i:=\frac16+\frac1{12}e^{2\rho(i-1)\Delta} \,.
\]
For fixed $i$ and $M$, there are at most $3(512d)^M$ skeletons: apply Lemma~\ref{lem:-enu-sequence-longrightarrow-forest}, then choose all $B_j\subseteq D_j$ in at most $2^M$ ways and the positive sizes of the $D_j$'s in at most $2^{M-1}$ ways. Set $\lambda=9\cdot10^{-11}d/C^3$. Our working bounds imply $\log(512d)\le\lambda\varphi_i/2$ and $\lambda\varphi_i/2\ge\log2$. A union bound and a geometric sum thus give
\[
    \mu_{\mathbf T'}^+(\tau\text{ is atypical}) \le\sum_{i=1}^T\sum_{M\ge T-i+1}3(512d)^M e^{-\lambda M\varphi_i} \le6\sum_{i=1}^T e^{-\lambda(T-i+1)\varphi_i/2} \,.
\]
There is no zero-mass terminal skeleton: when $i=T$, the lower limit is $M\ge1$. To estimate the last sum, put $k=T-i+1$ and use $e^x\ge1+x$ and $2\rho\Delta=1/(500C)\le1$. Then
\[
    k\varphi_i\ge\frac{k}{12}\left(1+\frac{i-1}{500C}\right) \ge\frac{k+i-1}{6000C}=\frac{T}{6000C}.
\]
Consequently,
\[
    \mu_{\mathbf T'}^+(\tau\text{ is atypical}) \le6T e^{-dT/(2\cdot10^{14}C^4)} \le\tfrac12 e^{-dT/(4\cdot10^{14}C^4)} \le\tfrac12 e^{-dt/(10^{15}C^5)}.
\]
For the middle inequality use $\log(12T)\le dT/(4\cdot10^{14}C^4)$, which follows from the working bound on $d$ for every $T\ge1$; for the last use $T\ge100t$. This proves \eqref{eq:one-tree-goal} and hence the lemma.
\end{proof}

\subsection*{Acknowledgments} ZL was partially supported by the NSFC Major Program (Project No. 12595294) the NSFC Key Program (Project No. 12231002). AS was partially supported by a Simons Investigator grant.

\subsection*{Statement on AI usage}

AI was used in the writing of this paper only for proofreading and copy-editing.

\bibliographystyle{alpha}

\end{document}